\newtheorem{theorem}{Theorem}[section]
\newtheorem{lemma}{Lemma}[section]
\newtheorem{proposition}{Proposition}[section]
\newtheorem{corallary}{Corollary}[section]
\newtheorem{condition}{Condition}
\title{Dirichlet-to-Neumann map for Poincar\'{e}-Einstein Metrics in Even Dimensions}
\author{Fang Wang}
\date{}
\begin{document}
\begin{abstract}
We study the linearization of the Dirichlet-to-Neumann map for Poincar\'{e}-Einstein metrics in even dimensions on an arbitrary compact manifold with boundary. By fixing a suitable gauge, we make the linearized Einstein equation elliptic. In this gauge the linearization of the Dirichlet-to-Neumann map appears as the scattering matrix for an elliptic operator of 0-type, modified by some differential operators. We study the scattering matrix by using the 0-calculus and generalize a result of Graham for the case of the standard hyperbolic metric on a ball. 
\end{abstract} 
\maketitle

\section[]{Introduction}\label{intro}
Let $M$ be the interior of a compact $(n+1)$-dimensional smooth manifold $\overline{M}$ with non-empty boundary $\partial \overline{M}$. Let $x$ be a fixed smooth defining function for
$\partial \overline{M}$, i.e. $x\in C^{\infty}(\overline{M})$, $x\geq 0$ on $\overline{M}$, $x^{-1}(0)=\partial \overline{M}$ and $dx|_{\partial \overline{M}}\neq 0$. Let $m\in \mathbb{N}_0$ and $0<\alpha<1$. We say a metric $g_+$ on $M$ is $C^{m,\alpha}$ (resp. $C^{m}$, $C^{\infty}$) \textit{conformally compact} if $x^2g_+$ extends continuously to a $C^{m,\alpha}$ (resp. $C^{m}$, $C^{\infty}$) metric on $\overline{M}$, denoted by $\overline{g}$. For a $C^{0}$ metric $\gamma$ on $\partial \overline{M}$, the \textit{conformal class} of $\gamma$ is the set of all $C^{0}$ metircs on $\partial\overline{M}$ which is conformally equivalent to $\gamma$, denoted by $[\gamma]$, i.e.
$$[\gamma]=\{e^{2u}\gamma: u\in C^{0}(\partial\overline{M})\}.$$
If $g_+$ is $c^{m,\alpha}$ conformally compact, $[x^2g_+|_{T\partial\overline{M}}]$ is said to be the \textit{conformal infinity} of $g_+$.

Suppose that $n\geq 3$ and odd. For $0< \alpha<1$, a $C^{2,\alpha}$ \textit{Poincar\'{e}-Einstein
metric} is a $C^{2,\alpha}$ conformally compact metric on $M$ with conformal infinity $[\gamma]$ which satisfies the condition
\begin{eqnarray}\label{eq-nonlinear einstein}
\left\{
\begin{array}{rcll}
Ric(g_+)&=&-ng_+ &\quad\textrm{on $M$},\\
x^2g_+|_{T\partial M}&\in& [\gamma] &\quad \textrm{on $\partial\overline{M}$}.
\end{array}
\right.
\end{eqnarray}
If $g_+$ is a Poincar\'{e}-Einstein metric with conformal infinity
$[\gamma]$, then for a choice of representative $g_0\in [\gamma]$,
there exists a defining function $\rho$ such that for some
$\epsilon>0$ and an identification of a neighborhood of $\partial
\overline{M}$ with $[0,\epsilon)\times \partial \overline{M}$,
$g_+$ takes the \textit{geodesic normal form}
\begin{equation}\label{normal}
g_+=\rho^{-2}(d\rho^2+g(\rho))
\end{equation}
where $g(\rho)$ is a 1-parameter family of metrics on $\partial
\overline{M}$ satisfying $g(0)=g_0$. Here $\rho$ is called the
\textit{geodesic defining function} for $(g_+,g_0)$. If $g_0$ is
$C^{\infty}$, a boundary regularity theorem shows that $g(\rho)\in
C^{\infty}([0,\epsilon)\times\partial \overline{M}; S^2T^*\partial
\overline{M})$ (See [A1][A4][CDLS][H]). Moreover the Taylor
expansion of $g(\rho)$ is even to order $n$, i.e.
\begin{equation}\label{normal series} g(\rho)=
g_0+\rho^2g_2+...+\rho^{n-1}g_{n-1}+\rho^ng_n+O(\rho^{n+1})\end{equation}
where $g_i\in C^{\infty}(\partial \overline{M},
S^2T^{*}\partial \overline{M})$ for $i=2,4,..., n-1,n$, and $g_n$ is transverse-traceless on $(\partial \overline{M}, g_0)$,
i.e.
\begin{equation}\label{transverse traceless cond}
\mathrm{tr}_{g_0}g_n=0,\quad \mathrm{\delta}_{g_0}g_n=0, \end{equation}
where in any local coordinates $(U;y^1,...,y^n)$ of $\partial\overline{M}$,
$$\mathrm{tr}_{g_0}g_n=[g_0]^{kl}[g_n]_{kl},\quad \mathrm{\delta}_{g_0}g_n=[g_0]^{jk}[g_n]_{lj;k}dy^l.$$
Notice that
for $i=2,4,...,n-1$, $g_i$ is locally determined by the
boundary metric $g_0$; however, $g_n$ is locally
undetermined subject only to the two conditions in
(\ref{transverse traceless cond}). The terms $g_0$ and $g_n$ play
the role of Dirichlet data and Neumann data respectively in this
problem.

The \textit{Dirichlet-to-Neumann relation} $\mathscr{N}$ is defined as the set of pairs $(\phi, \psi)$ such that there exists a Poincar\'{e}-Einstein metric $g_+$ on $M$ with geodesic normal
form (\ref{normal}) and Taylor expansion (\ref{normal series})
such that $g_0=\phi$, $g_n=\psi$. $\mathscr{N}$ satisfies the
following equivariance properties with respect to diffeomorphisms
and conformal changes:
\begin{equation}\label{diff inv}
(\phi,\psi)\in \mathscr{N} \Longleftrightarrow
(\Phi^*\phi,\Phi^*\psi)\in \mathscr{N},\quad \Phi=\overline{\Phi}|_{\partial\overline{M}},\ \forall \overline{\Phi}\in
\textrm{Diff}(\overline{M}),
\end{equation}
\begin{equation}\label{conformal inv}
(\phi,\psi)\in \mathscr{N} \Longleftrightarrow
(e^{2u}\phi,e^{(2-n)u}\psi)\in\mathscr{N},\quad \forall u\in
C^{\infty}(\partial \overline{M}).
\end{equation}
If for a fixed boundary metric $\phi$, the Poincar\'{e}-Einstein
metric $g_+$, with $g_0=\phi$ in the geodesic normal
form, is uniquely determined up to diffeomorphism restricting to
identity on the boundary, then the relation $\mathscr{N}$ defines
the \textit{Dirichlet-to-Neumann map} on the boundary denoted by
$\mathcal{N}: \phi\mapsto\psi$. However, the global existence and
uniqueness of Poincar\'{e}-Einstein metrics fails for a general
compact manifold $\overline{M}$.

In this paper, we study the linearization of the
Dirichlet-to-Neumann map under the assumption that it is well
defined. For this goal, we do not need the full strength of uniqueness, the global uniqueness, of Poincar\'{e}-Einstein metric,
but only need to assume the uniqueness of the solution to the
linearized equation at a fixed Poincar\'{e}-Einstein metric $g_+$,
\begin{equation}\label{linear eq}
 Ric'_{g_+}h+nh=0
\end{equation}
up to the deformation generated by a vector field vanishing on the
boundary. See more details in Section \ref{sec-proof}.

For $0<\alpha<1$, let $PE^{m,\alpha}(M)\subset x^{-2}C^{m,\alpha}(\overline{M};S^2T^*\overline{M})$ be the
subspace of $C^{m,\alpha}$ Poincar\'{e}-Einsten metrics on $M$, which can be
given the $C^{m,\alpha}$ topology on $\overline{M}$ via a fixed
compactification as in (\ref{eq-nonlinear einstein}). Obviously,
this topology is independent of the choice of smooth defining
function. Let $Met^{\infty}(\partial \overline{M})$ denote the space of
$C^{\infty}$ metrics on $\partial \overline{M}$. Clearly, the tangent space
of $Met^{\infty}(\partial \overline{M})$ is $C^{\infty}(\partial
\overline{M};S^2T^*\partial \overline{M})$. Hence, our
Dirichlet-to-Neumann map and its linearization, if well defined,
are
$$\mathcal{N}: Met^{\infty}(\partial \overline{M})\longrightarrow
C^{\infty}(\partial \overline{M};S^2T^*\partial \overline{M}),$$
$$d\mathcal{N}: C^{\infty}(\partial \overline{M};S^2T^*\partial \overline{M})\longrightarrow
C^{\infty}(\partial \overline{M};S^2T^*\partial \overline{M}).$$

To set up the problem, let $g_0\in Met^{\infty}(\partial \overline{M})$ be a
representative of the conformal infinity of some $C^{\infty}$
conformal compact Poincar\'{e}-Einstein metric $g_+$. Assume that
there exists a neighborhood $W$ of $g_0$, $W\subset
Met^{\infty}(\partial \overline{M})$, such that for each
$\tilde{g}_0\in W$, there exists a unique Poinca\'{e}-Einstein
metric $\tilde{g}_+\in PE^{2,\alpha}$, up to diffeomorphisms
restricting to the identity on $\partial \overline{M}$, such that
$x^2\tilde{g}_+|_{T\partial \overline{M}}\in [\tilde{g}_0]$. Then
the Dirichlet-to-Neumann map is well defined on $W$. Let
'$\mathrm{tf}_{g_0}h_0$' be the trace-free part of a symmetric 2-tensor $h_0$ with respect to $g_0$.
Our main theorem is the following:
\begin{theorem}\label{thm-main}
The linearization of the Dirichlet-to-Neumann map at $g_0$, $d\mathcal{N}$,
is a pseudo-differential operator of order $n$ with principal symbol
$$\sigma_n(d\mathcal{N})=2^{-n} \frac{\Gamma(-\frac{n}{2})}{\Gamma(\frac{n}{2})}
\sigma_n(\triangle^{\frac{n}{2}}_{g_0} (\mathrm{tf}_{g_0}-\mathrm{tf}_{g_0}\delta^*_{g_0} (\delta_{g_0}\mathrm{tf}_{g_0}\delta^*_{g_0})^{-1} \delta_{g_0}\mathrm{tf}_{g_0}))$$
where $h_0\in C^{\infty}(\partial \overline{M};S^2T^*\partial \overline{M})$.
Moreover, if
$$\Sigma=\{h_0\in C^{\infty}(\partial\overline{M}; \mathscr{S}^2T^*\partial
\overline{M}):\mathrm{tr}_{g_0}h_0=0, \delta_{g_0}h_0=0\}$$
is the subspace of trace-free and divergence-free symmetric 2-tensor fields, then
$d\mathcal{N}|_{\Sigma}$ is elliptic with principal symbol
$$\sigma_n(d\mathcal{N}|_{\Sigma})=2^{-n} \frac{\Gamma(-\frac{n}{2})}{\Gamma(\frac{n}{2})} \sigma_n(\triangle^{\frac{n}{2}}_{g_0}).$$
\end{theorem}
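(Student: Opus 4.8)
The plan is to turn $d\mathcal N$ into a scattering matrix in Mazzeo's $0$-calculus, modified by boundary differential operators coming from a gauge choice. \emph{Step 1 (gauge fixing).} Linearize $Ric(g_+)+ng_+=0$ and kill the diffeomorphism freedom by the Bianchi gauge: with $\beta_{g}h=\delta_{g}h+\tfrac12 d(\mathrm{tr}_{g}h)$, the operator $P h := Ric'_{g_+}h+nh+\delta^*_{g_+}\beta_{g_+}h$ is, relative to $g_+$, an elliptic $0$-differential operator of Laplace type ($P=-\tfrac12\Delta_L+n$, $\Delta_L$ the Lichnerowicz Laplacian) acting on sections of the $0$-bundle $S^2(\,{}^0T^*\overline{M})$. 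If $Ph=0$ then $\omega:=\beta_{g_+}h$ satisfies, by the linearized contracted Bianchi identity on the Einstein background, a second-order elliptic $0$-equation of Hodge--de Rham type on $1$-forms; prescribing the boundary asymptotics of $\omega$ to vanish — which, given tangential Dirichlet data, is achieved only after correcting that data by a boundary operator built from $(\delta_{g_0}\mathrm{tf}_{g_0}\delta^*_{g_0})^{-1}$ — forces $\omega\equiv 0$, so $h$ solves the true linearized equation $Ric'_{g_+}h+nh=0$.

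\emph{Step 2 ($0$-calculus and the scattering matrix).} Split $S^2(\,{}^0T^*\overline{M})|_{\partial\overline{M}}=\mathbb R\,\tfrac{d\rho^2}{\rho^2}\oplus\big({}^0T^*\partial\overline{M}\otimes\tfrac{d\rho}{\rho}\big)\oplus S^2(\,{}^0T^*\partial\overline{M})$ and further $S^2(\,{}^0T^*\partial\overline{M})=\mathbb R g_0\oplus\mathrm{tf}$. The indicial roots of $P$ on the trace-free tangential summand are $0$ and $n$, so the Dirichlet datum $\mathrm{tf}_{g_0}h_0$ sits at order $\rho^0$ and the Neumann datum at order $\rho^n$; on the other summands the roots are shifted, and those components together with $\omega$ are recovered from the boundary data by differential operators on $\partial\overline{M}$. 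Because $n$ is odd, $n\notin\tfrac n2+\mathbb N$, so the formal Fefferman--Graham recursion produces no log term; under the standing uniqueness hypothesis the Dirichlet problem for $P$ then has a polyhomogeneous solution whose Poisson operator lies in the $0$-calculus, and the map sending the $\rho^0$-coefficient to the $\rho^n$-coefficient — after subtracting the locally determined terms $\dot g_2,\dots,\dot g_{n-1}$ (differential operators of order $\le n-1$ in $h_0$) — is the scattering matrix $S(n)$, a classical pseudodifferential operator on $\partial\overline{M}$ of order $n$. Carrying the $0$-gauge solution back to the geodesic normal form alters $h_n$ only by a differential operator of order $<n$ in $h_0$, since its generating vector field vanishes on $\partial\overline{M}$; hence $d\mathcal N = S(n)\circ(\text{gauge projection on }S^2T^*\partial\overline{M}) + (\text{order}<n)$, so $d\mathcal N$ is a $\Psi$DO of order $n$.

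\emph{Step 3 (principal symbol).} Two computations suffice. First, on the trace-free transverse directions, freeze coefficients at a boundary point: then $P$ and its Hodge companion $L$ become the model operators on $\mathbb H^{n+1}$ treated by Graham; the normal operator of $P$ is $|\xi|^2$ times the identity on the fibre, so the model equation on each component is the scalar one with solutions $\rho^{n/2}K_{n/2}(|\xi|\rho)$ and $\rho^{n/2}I_{\pm n/2}(|\xi|\rho)$, and matching the $\rho^0$ and $\rho^n$ asymptotics (as in Graham--Zworski) gives the normalization $2^{-n}\,\Gamma(-\tfrac n2)/\Gamma(\tfrac n2)\,|\xi|^n$, regular precisely because $n$ is odd. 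Second, differentiating the equivariance relations (\ref{conformal inv}) and (\ref{diff inv}) yields $d\mathcal N(2u\,g_0)=(2-n)u\,g_n$ and $d\mathcal N(2\delta^*_{g_0}\omega)=\mathcal L_{\omega^\sharp}g_n$, both of order $0$ in the respective potentials, so $\sigma_n(d\mathcal N)$ annihilates the pure-trace and longitudinal directions and is supported on the symbolic trace-free transverse subspace $\{h:\mathrm{tr}_{g_0}h=0,\ \iota_\xi h=0\}$. That subspace is exactly the range of $\sigma_0\big(\mathrm{tf}_{g_0}-\mathrm{tf}_{g_0}\delta^*_{g_0}(\delta_{g_0}\mathrm{tf}_{g_0}\delta^*_{g_0})^{-1}\delta_{g_0}\mathrm{tf}_{g_0}\big)$ (the operator $\delta_{g_0}\mathrm{tf}_{g_0}\delta^*_{g_0}$ on $1$-forms is elliptic, with invertible symbol for $\xi\ne0$, which is all that enters $\sigma_n$), and since $\Delta_{g_0}$ commutes with this projector symbolically, the two computations assemble to the stated $\sigma_n(d\mathcal N)$. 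Restricted to $\Sigma$ the projector is the identity, so $d\mathcal N|_\Sigma$ has the scalar symbol $2^{-n}\Gamma(-\tfrac n2)/\Gamma(\tfrac n2)\,\sigma_n(\Delta^{n/2}_{g_0})\cdot\mathrm{id}$, which is invertible, i.e.\ $d\mathcal N|_\Sigma$ is elliptic.

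The step I expect to be hardest is Step 2: the Bianchi-gauge bookkeeping — showing that prescribing $\mathrm{tf}_{g_0}h_0$ forces $\omega=\beta_{g_+}h$ to vanish only after the $(\delta_{g_0}\mathrm{tf}_{g_0}\delta^*_{g_0})^{-1}$-correction, controlling the indicial roots of $P$ and of $L$ on every fibre component so that no component contributes a resonance in the range $(0,n)$ or a log term, and verifying that the passage from the $0$-elliptic solution to the geodesic normal form costs only operators of order $<n$. The $0$-calculus construction of the Poisson and scattering operators and the symbol computation are then routine adaptations of the Mazzeo--Melrose parametrix and of Graham's hyperbolic-ball calculation.
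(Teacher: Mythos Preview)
Your proposal follows essentially the same architecture as the paper --- Bianchi gauge, identification of the Neumann data with a scattering matrix in the $0$-calculus, and linearized equivariance to compute the symbol --- but two points deserve correction or sharpening.

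\textbf{Gauge-fixing order and the role of $(\delta_{g_0}\mathrm{tf}_{g_0}\delta^*_{g_0})^{-1}$.} You solve the gauged equation $Ph=0$ first and then argue that $\omega=\beta_{g_+}h$ vanishes, claiming this requires correcting the Dirichlet data by $(\delta_{g_0}\mathrm{tf}_{g_0}\delta^*_{g_0})^{-1}$. The paper does it in the opposite order: starting from the geodesic-normal-form linearization $\tilde h$, it explicitly solves $J_g\omega=-2\beta_g\tilde h$ for the gauge $1$-form (where $J_g=\Delta_g+n$ on $1$-forms, studied in detail in Section~\ref{sec-gauge fix}), and sets $h=\tilde h+\delta^*_g\omega$. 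This works for \emph{any} $g_0'$ and yields $\beta_g h=0$ with leading term $h_0=\mathrm{tf}_{g_0}g_0'$ --- trace-free but \emph{not} divergence-free --- so no $(\delta_{g_0}\mathrm{tf}_{g_0}\delta^*_{g_0})^{-1}$ correction is needed at this stage. That operator enters only at the very end (your Step~3, the paper's final paragraph of the proof), via the linearized equivariance relations (\ref{diff inv})--(\ref{conformal inv}), to project a general $g_0'$ onto $\Sigma$ where the scattering symbol is scalar. Your Step~3 already does this correctly; the parenthetical in Step~1 is a red herring.

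\textbf{Absence of the $x^n\log x$ term.} Your claim ``$n$ odd $\Rightarrow n\notin\tfrac n2+\mathbb N$ $\Rightarrow$ no log'' is the scalar argument; it is correct on $\Sigma$, where the normal operator of $L_g$ diagonalizes (Corollary~\ref{cor-elliptic of scattering}), but not obviously on all of $\mathscr V^1$, because the off-diagonal blocks $E_{ij}$ of the normal operator couple $\mathscr V^1$ to $\mathscr V^3$ and the relevant pole set in the paper is $n+\tfrac{\mathbb N_0}{2}$ (Proposition~\ref{prop-scattering pole}), which \emph{does} contain $\lambda=n$. The paper closes this gap not by parity but by the explicit gauge-fixing construction (Proposition~\ref{prop-gauge choice}): the gauged $h$ has first logarithmic term at order $x^{n+1}\log x$, hence the coefficient of $x^n\log x$ --- which is precisely the residue $A_n$ of $S_1(\lambda)$ at $\lambda=n$ --- vanishes on all of $\mathscr V^1$, and $S_1(n)$ is a genuine $\Psi$DO of order $n$. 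You correctly flag this bookkeeping as the hardest step; the paper's way through it is the formal series solution of $J_g\omega=-2\beta_g\tilde h$ in Section~\ref{sec-gauge fix}, tracking the indicial roots of $J_g$ rather than those of $L_g$.
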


Our work is inspired by the short paper [Gr1] of
Graham, who studied the Dirichlet-to-Neumann map in the case
$M=B^{n+1}=\{y=(y^0,...,y^n)\in
\mathbb{R}^{n+1}:\sum_{i=0}^{n}|y^i|^2<1\}$. Choosing a defining
function $\rho=\frac{1}{2}(1-|y|^2)$, the hyperbolic metric
$g_+=\rho^{-2}\sum_{i=0}^n(dy^i)^2$ is a Poincar\'{e}-Einstein
metric with prescribed infinity $\rho^2g_+|_{T\mathbb{S}^n}=g_0$,
where $g_0=|d\theta|^2$ is the standard sphere metric. Based on his
earlier joint work [GL] with Lee, the Poincar\'{e}-Einstein metric
uniquely exists with prescribed infinity sufficiently closed to $g_0$,
up to the diffeomorphisms restricting to identity on the sphere.
Therefore, the Dirichlet-to-Neumann map is well defined in a
neighborhood of $g_0$ in $C^{k,\alpha}(S^n; S^2T^*S^n)$ for $k>n$.
Graham gave a precise formula for the linearization of the
Dirichlet-to-Neumann map at $g_0=|d\theta|^2$, which is a pseudo-differential operator
and especially, is elliptic on the trace-free and divergence-free sections of (0,2)-tensors with respect to $g_0$. The generalization here of Graham's result does not give an exact formula, but only the principal symbol for this operator. See Section \ref{sec-proof}.

The outline of this paper is as follwos. In Section 2, we set up the problem and reduce it to the study of two 0-type differential operators. In Section 3, we introduce the 0-calculus due to Mazzeo and Melrose, which is used to deal with the geometric operators associated to some asymptotically hyperbolic metric in the interior of a compact manifold and is the main technique used here since a Poincar\'{e}-Einstein metric is clearly asymptotically hyperbolic. In Section 4, we choose a suitable gauge, the Bianchi gauge, to make the linearized Einstein equation elliptic in the $0$-calculus and write out the linearized relation of a family of Poincar\'{e}-Einstein metrics in this gauge and in the geodesice normal form. In Section 5, using the method introduced in [MM], [Ma], [GZ] and [JS], we construct the parametrix, Poisson kernel and scattering matrix for the linearied Einstein equation in the chosen gauge. In Section 6, we prove the main theorem and conclude that the scattering matrix, modified by some differential operators, provides the linearization of the Dirichlet-to-Neumann map. Finally, in Section 7, we also deal with the $N$ point problem for $N\geq 2$, to give a better description of the Dirichlet-to-Neumann map following the constructions in Sections 2-6. 

I would like to thank Richard Melrose for his help on this paper. The many discussions we had on these topics were indispensable and without his patience and guidance, this paper would not exist.

\section[]{Geometric Setting and Linearization}{\label{sec-geom setting}}
Let $x$ be the fixed smooth defining function as in
Section \ref{intro}. Suppose the dimension of $\partial \overline{M}$, n, is $\geq 3$ and odd. For any $q\in \partial \overline{M}$, let $(U;y^1,...,y^n)$ be the local coordinate
patch around $q$ in $\partial \overline{M}$. Then there exists
$\epsilon >0$, such that $([0,\epsilon)\times U;x,y^1,...,y^n)$ is
a local coordinate patch around $q$ in $\overline{M}$. For simplicity, denote
by $y^0=x$ and $\{y^{\nu}\}_{0\leq\nu\leq n}=(x,y)$ with $y=(y^1,...,y^n)$.

Suppose that for some $0<\alpha<1$, $g_{+}$ is a
$C^{2,\alpha}$ conformally compact Poincar\'{e}-Einstein metric
on $M$, for which there is a smooth representative, $g_0$, in the conformal
infinity $[x^2g_+|_{T\partial \overline{M}}]$. Let
$\rho=\rho(x,y)$ be the geodesic defining function for
$(g_+,g_0)$. Let $\overline{g}$ be the $C^{2,\alpha}$ extension of
$x^2g_+$ to $\overline{M}$. According to [Le], $\rho$ has the
following properties.
\begin{lemma}
$\rho\in C^{1,\alpha}(\overline{M})\cap C^{2,\alpha}(M)$ and
has the following properties
\begin{itemize}
\item[(a)] $\rho=\rho(x,y)=e^{v(y)}x+O(x^2)$, where $v\in
C^{2,\alpha}(\partial \overline{M})$ satisfies
$g_0=e^{2v(s,y)}x^2g_+(x)|_{T\partial \overline{M}}$;
\item[(b)]
$\rho^2g_+\in C^{2,\alpha}(M;\mathscr{S}^2T^*M)$ has a
$C^{1,\alpha}$ extension to $\overline{M}$, such that
$\rho^2g_+|_{T\partial \overline{M}}=g_0$;
\item[(c)]
$|d\rho|^2_{\tilde{g}}=1$ in a neighborhood of $\partial
\overline{M}$.
\end{itemize}
\end{lemma}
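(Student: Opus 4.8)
The plan is to realize $\rho$ in the form $\rho=e^{w}x$ for a function $w$ on $\overline{M}$, and to convert the three requirements into a single first-order equation for $w$. With $\overline{g}=x^2g_+$, so that $\rho^2g_+=e^{2w}\overline{g}$, the requirement that $\rho$ be a geodesic defining function, i.e. $|d\rho|^2_{\rho^2g_+}=1$ near $\partial\overline{M}$ (this is property (c), with $\tilde g=\rho^2g_+$), is equivalent, by the conformal computation
\[
|d\rho|^2_{\rho^2g_+}=\rho^{-2}e^{2w}|dx+x\,dw|^2_{g_+}=|dx+x\,dw|^2_{\overline{g}},
\]
to the eikonal-type equation $|dx+x\,dw|^2_{\overline{g}}=1$ near $\partial\overline{M}$. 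The normalization $\rho^2g_+|_{T\partial\overline{M}}=g_0$ in property (b) forces $w|_{\partial\overline{M}}=v$, where $e^{2v}$ is the positive conformal factor with $g_0=e^{2v}\,\overline{g}|_{T\partial\overline{M}}$; since $g_0$ is smooth and $\overline{g}\in C^{2,\alpha}$, this gives $v\in C^{2,\alpha}(\partial\overline{M})$. Once such a $w\in C^{1,\alpha}(\overline{M})\cap C^{2,\alpha}(M)$ is found, property (a) follows from $w=v(y)+O(x)$, property (b) because $\rho^2g_+=e^{2w}\overline{g}$ inherits the regularity of $w$ and $\overline{g}$ and restricts to $e^{2v}\,\overline{g}|_{T\partial\overline{M}}=g_0$, and property (c) is the equation itself. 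So everything reduces to solving the eikonal equation with prescribed boundary value $v$ in the stated H\"older classes.

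The equation degenerates on $\{x=0\}$, where its value is $|dx|^2_{\overline{g}}-1$, independent of $dw$; this is where the Einstein hypothesis enters. The leading term of $Ric(g_+)+ng_+=0$ in conformally compact form forces $|dx|^2_{\overline{g}}\equiv 1$ on $\partial\overline{M}$ (a conformally compact Einstein metric is asymptotically hyperbolic, which needs only $\overline{g}\in C^2$). Hence $1-|dx|^2_{\overline{g}}\in C^{2,\alpha}(\overline{M})$ vanishes on $\{x=0\}$, and a Hadamard-type division lemma in H\"older spaces gives $h:=x^{-1}(1-|dx|^2_{\overline{g}})\in C^{1,\alpha}(\overline{M})$. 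Dividing by $x$, the equation becomes
\[
2\langle dx,dw\rangle_{\overline{g}}+x\,|dw|^2_{\overline{g}}=h,
\]
which is non-characteristic with respect to the level sets of $x$: at $\{x=0\}$ it is a transport equation along $2\nabla^{\overline{g}}x$, and this vector field is transverse to $\partial\overline{M}$ because $dx(\nabla^{\overline{g}}x)=|dx|^2_{\overline{g}}=1$.

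Finally I would solve this non-characteristic first-order equation on a collar of $\partial\overline{M}$ by the method of characteristics (Hamilton's ODEs for the symbol), with Cauchy data $w=v$, tangential derivatives $dv$, and normal derivative fixed by the equation (uniquely, since the term linear in that derivative is nondegenerate), then extend $\rho$ to a global defining function; uniqueness of the geodesic defining function associated with $(g_+,g_0)$ comes for free from uniqueness in this Cauchy problem. The delicate point, which is the substance of [Le], is the regularity: the characteristic flow is driven by a field assembled from $\overline{g}$, $h$ and their first derivatives, and a careful H\"older analysis --- exploiting the $x$-factor in front of the nonlinearity $|dw|^2_{\overline{g}}$, which makes it a controllable perturbation of the transport equation near the boundary --- yields $w\in C^{1,\alpha}(\overline{M})$, while on $M$ the equation is uniformly nondegenerate and $\overline{g}\in C^{2,\alpha}$, so a bootstrap gives $w\in C^{2,\alpha}(M)$; hence $\rho=e^{w}x\in C^{1,\alpha}(\overline{M})\cap C^{2,\alpha}(M)$. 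I expect this regularity count --- tracking the one-derivative loss and exploiting exactly the boundary degeneracy removed by the Einstein condition --- to be the main obstacle, the algebraic reductions being routine.
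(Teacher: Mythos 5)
Your proposal is correct and follows essentially the same route as the paper: write $\rho=e^{w}x$, reduce properties (a)--(c) to the non-characteristic first-order equation $2\langle dx,dw\rangle_{\overline{g}}+x|dw|^2_{\overline{g}}=x^{-1}(1-|dx|^2_{\overline{g}})$, use the Einstein (asymptotically hyperbolic) condition to make the right-hand side $C^{1,\alpha}$ up to the boundary, and solve by Hamilton--Jacobi theory with Cauchy data $w|_{\partial\overline{M}}=v$, recovering $w\in C^{1,\alpha}(\overline{M})\cap C^{2,\alpha}(M)$. The paper's proof realizes the Hölder regularity by flowing out the Hamilton vector field of the symbol $F(p,\xi)$ and integrating the resulting closed $C^{0,\alpha}$ one-form, which is the same characteristic-flow regularity count you sketch.
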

\begin{proof}
For any $q\in \partial \overline{M}$, choose a local coordinate $([0,\epsilon)\times U;x,y) = ([0,\epsilon)\times U;y^{\nu})$ around $q$ and write $\rho=e^{u(x,y)}x$. Then $u$ satisfies $u(0,y)=v(y)$ and $|d(e^ux)|^2_{e^{2u}\overline{g}}=1$ in $U$. The constant length condition gives
$$2\overline{g}^{0\nu}\partial_{\nu}u+x|du|^2_{\overline{g}} =\frac{1-|dx|^2_{\overline{g}}}{x}.$$ 
This equation is a non-characteristic first order PDE for $u$, which can be solved in a neighborhood of $\partial \overline{M}$ by Hamilton-Jacobi theory. Let $F:T^{*}\overline{M}\longrightarrow \mathbb{R}$ denote the function 
\begin{equation}\label{eq-geodesic}
F(p,\xi)=2\langle dx(p),\xi\rangle_{\overline{g}}+x(p)|\xi|_{\overline{g}}-
\frac{1-|dx(p)|^2_{\overline{g}}}{x(p)}
\end{equation}
for $p\in \overline{M}$, $\xi\in T_p^*\overline{M}$. Solving (\ref{eq-geodesic}) is equivalent to finding a function $u$ on $\overline{M}$ such that $F(p,du)=0$. Since $\overline{g}$ is $C^{2,\alpha}$, the first two terms in $F$ are $C^{2,\alpha}$ functions on $T^*\overline{M}$. Letting $b(p)=\frac{1-|dx(p)|^2_{\overline{g}}}{x(p)}$, then $b\in c^{1,\alpha}(\overline{M})\cap C^{2,\alpha}(M)$ and $b|_{\partial \overline{M}} \in C^{2,\alpha}(\partial \overline{M})$. Thus the Hamilton vector field $X_F$ is in $C^{0,\alpha}(\overline{M})\cap C^{1,\alpha}(M)$ and its flow $\varphi:R\times T^*\overline{M} \longrightarrow T^*\overline{M}$ is a $C^{0,\alpha}$ map whenever it is defined. Let
$$\hat{\omega}(y)=du|_{\partial \overline{M}} =\partial_xu(0,y)dx +\sum_{i=1}^n\partial_{y_i}v(y)dy^i =v_0(y)dx +\sum_{i=1}^nv_i(y)dy^i,$$ 
where $v_i(y)=\partial_{i}v(y)$. By solving (\ref{eq-geodesic}) at the boundary, $v_0$ is uniquely
determined and $v_0\in C^{1,\alpha}(\partial \overline{M})$. Let $\psi$ be the flow-out by $X_F$ from the set $\{(p,\hat{\omega})\in T^*\overline{M}: p=(0,y)\in \partial \overline{M}\}$. Since $F(p,\hat{\omega})=0$ for $p\in \partial \overline{M}$, the image of $\psi$ is a $C^{0,\alpha}$ lagrangian
submanifold with the boundary of $T^*\overline{M}$ contained in $F^{-1}$; moreover, it is $C^{1,\alpha}$ in the interior. Since it is transverse to the fibers of $T^*\overline{M}$ along $\partial \overline{M}$,
so near $\partial \overline{M}$, the flow out is the image of a closed 1-form $\omega \in C^{0,\alpha}(\overline{M}; T^*\overline{M})\cap C^{1,\alpha}(M;T^*M)$. So locally, $\omega=du$ for some function $u\in C^{1,\alpha}(\overline{M})\cap C^{2,\alpha}(M)$. With $u(0,y)=v(y)$ on the boundary, $u$ is uniquely determined and clearly $\rho=e^{u(x,y)}x$ satisfies $(a)$, $(b)$, $(c)$.
\end{proof}

\begin{proposition}\label{prop-smoothness}
There exists a $C^{1,\alpha}$ diffeomorphism $\Phi:\overline{M}\longrightarrow \overline{M}$ satisfying
$\Phi|_{\partial\overline{M}}=Id$, with a $C^{2,\alpha}$ restriction $\Phi|_{M}: M\longrightarrow M$, such that in a neighborhood of the boundary,
\begin{equation}
\tilde{g}_+=\Phi^{*}g_+=x^{-2}(dx^2+g(x))
\end{equation}
where $g(x)$ is smooth family of metrics on $\partial \overline{M}$ parametrized by $x$ with Taylor expansion even up to order $n$, i.e.
\begin{equation}
g(x)=g_0+x^2g_2+\cdots+x^{n-1}g_{n-1}+x^ng_n+O(x^{n+1}).
\end{equation}
Here $g_0$ is the smooth representative of $x^{2}g_+|_{T\partial \overline{M}}$ as above and $g_i\in C^{\infty}(\partial \overline{M};S^2T^*\partial M)$ for $i=2,4,...,n-1$ and $n$.
\end{proposition}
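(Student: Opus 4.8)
The plan is to realize $\Phi$ as a cut-off extension of the normal collar attached to the geodesic defining function $\rho$ supplied by the previous Lemma, and then to extract the Taylor expansion of $g(x)$ from the Einstein equation together with the boundary regularity theorem. Throughout I set $\hat g:=\rho^2 g_+$, the representative from part (b) of the Lemma, so that $\hat g\in C^{1,\alpha}(\overline M)$ with $\hat g|_{T\partial\overline M}=g_0$ and $|d\rho|_{\hat g}\equiv 1$ near $\partial\overline M$. Since $|d\rho|_{\hat g}\equiv 1$ there, the integral curves of $\mathrm{grad}_{\hat g}\rho$ are $\hat g$-geodesics meeting $\partial\overline M$ orthogonally along which $\rho$ increases at unit speed; following them from the boundary defines $\Psi\colon [0,\epsilon)\times\partial\overline M\to\overline M$ with $\Psi(0,\cdot)=\mathrm{id}$ and $\rho\circ\Psi(t,y)=t$, in which $\Psi^*\hat g=dt^2+g(t)$ for a family $g(t)$ of metrics on $\partial\overline M$ with $g(0)=g_0$, hence $\Psi^*g_+=t^{-2}(dt^2+g(t))$. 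Because $\hat g$ and $\rho$ are only $C^{1,\alpha}$ up to the boundary, $\mathrm{grad}_{\hat g}\rho$ is merely $C^{0,\alpha}$ there, so rather than a naive Picard iteration I would read $\Psi$ off the Hamilton--Jacobi/Lagrangian construction already used to prove the Lemma — it produces $\rho$ by a flow on $T^*\overline M$ whose projection to $\overline M$ gives exactly this collar, and the flow-out is a graph near $\partial\overline M$, which forces uniqueness — obtaining $\Psi\in C^{1,\alpha}$ up to $\{t=0\}$ and $C^{2,\alpha}$ (indeed smooth, by interior elliptic regularity for Einstein metrics) on the interior, as in [Le].

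\emph{Globalization.} Fix a smooth collar $\kappa\colon [0,\epsilon_0)\times\partial\overline M\hookrightarrow\overline M$ with $x\circ\kappa=t$ and $\kappa(0,\cdot)=\mathrm{id}$. On a small collar neighborhood, $\Phi_0:=\Psi\circ\kappa^{-1}$ is a diffeomorphism onto a neighborhood of $\partial\overline M$ with $\Phi_0|_{\partial\overline M}=\mathrm{id}$; I would then extend $\Phi_0$ to a global $\Phi\in\mathrm{Diff}(\overline M)$ by the standard device of interpolating to $\mathrm{id}_{\overline M}$ across an annular region, keeping $\Phi=\Phi_0$ near $\partial\overline M$ and inheriting the regularity of $\Psi$ and $\kappa$. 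Reading $\tilde g_+:=\Phi^*g_+$ in the $\kappa$-chart then gives $\tilde g_+=x^{-2}(dx^2+g(x))$ near $\partial\overline M$ with $x$ the original defining function, and $x^2\tilde g_+|_{T\partial\overline M}=g(0)=g_0$ (the apparent mismatch with $\Phi|_{\partial\overline M}=\mathrm{id}$ being absorbed by the boundary value of the conformal factor $(x/(x\circ\Phi))^2$, which is $e^{2v}$ with $v$ as in part (a) of the Lemma).

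\emph{Expansion of $g(x)$.} A priori $g(x)$ has only the finite boundary regularity carried by $\Phi$, so to pin down its expansion I would invoke the field equations: $\tilde g_+$ is again a conformally compact Einstein metric (pullback preserves $Ric(g_+)+ng_+=0$) in geodesic normal form, hence $g(x)$ solves the second-order ODE in $x$ obtained by substituting $x^{-2}(dx^2+g(x))$ into the Einstein equation, a regular-singular system with indicial roots $0$ and $n$. The boundary regularity theorem for conformally compact Einstein metrics ([A1], [A4], [CDLS], [H]), applied with the smooth initial datum $g(0)=g_0$, upgrades $g$ to $C^\infty([0,\epsilon)\times\partial\overline M;S^2T^*\partial\overline M)$; solving the ODE recursively then determines $g_2,g_4,\dots,g_{n-1}$ locally from $g_0$ and forces $g_1=g_3=\dots=g_{n-2}=0$, so that with the freely prescribed term $g_n$ at order $x^n$ one obtains the stated even expansion, all $g_i$ smooth.

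\emph{Main obstacle.} The real difficulty is the boundary regularity, not the bookkeeping. The collar map $\Phi$ is built from data that is only $C^{1,\alpha}$ up to $\partial\overline M$, so one must justify the existence, uniqueness and exact $C^{1,\alpha}$-regularity of the normal collar with such low-regularity ingredients; and, since the pulled-back metric is then only of correspondingly low boundary regularity before the Einstein equation is used, producing the genuinely \emph{smooth} even expansion of $g(x)$ rests entirely on the cited boundary regularity theorem and cannot be obtained by formal manipulation alone.
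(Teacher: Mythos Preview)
Your proposal is correct and follows essentially the same route as the paper: build the geodesic collar $\Psi$ from the defining function $\rho$ of the preceding Lemma, interpolate to the identity to get a global $\Phi$, and invoke boundary regularity for the smoothness and even expansion of $g(x)$. The only packaging difference is that the paper cites [CDLS] as a single black box delivering both the $C^{1,\alpha}$ collar $\Psi$ (with $\Psi^*g_+=\rho^{-2}(d\rho^2+G(\rho))$) \emph{and} the smoothness $G\in C^\infty([0,\epsilon);Met^\infty(\partial\overline M))$, and then writes the interpolation explicitly via a cutoff $\mu$ applied to $\rho(x,y)-x$; you instead unpack [CDLS] into its constituent pieces (normal-geodesic flow for $\Psi$, then the boundary regularity theorem separately, then the formal Einstein ODE for the evenness), and your remark about the boundary conformal factor $(x/(x\circ\Phi))^2\to e^{2v}$ is a nice clarification the paper leaves implicit.
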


\begin{proof}
According to [CDLS], there exist a $C^{1,\alpha}$ collar
diffeomorphism $\Psi$ from $[0,\epsilon)\times \partial \overline{M}$ to a
neighborhood of $\partial\overline{M}$, denote as $\widetilde{W}$ such that
$\Psi^{-1}(x,y)=(\rho(x,y),y)$ and
$\Psi^{*}g_+=\rho^{-2}(d\rho^2+G(\rho))$, where $G\in
C^{\infty}([0,\epsilon),Met^{\infty}(\partial \overline{M}))$ and $G(0)=g_0$.
Then $\Psi$ is $C^{2,\alpha}$ away from the boundary since $\rho\in
C^{1,\alpha}(\overline{M})\cap C^{2,\alpha}(M)$. On the other hand,
using local coordinates $(x,y)$, we can identify
$[0,\epsilon)\times
\partial M$ with a neighborhood of $\partial\overline{M}$ denoted as $W$. Hence
$\Psi$ can be viewed as a $C^{1,\alpha}$ diffeomorphism from $W$
to $\widetilde{W}$, such that $\Psi|_{\partial \overline{M}}=Id$. Let $\mu$
be a $C^{\infty}$ function on $\mathbb{R}$ such that $\mu(x)=1$ is
$x\leq \tfrac{1}{4}$, $\mu(x)=0$ if $x\geq \frac{1}{2}$ and
$-8\leq\mu'\leq 0$. For $0<\epsilon '<\epsilon$ and $\epsilon$
small enough, $\widetilde{\Psi}_{\epsilon '}:
\widetilde{W}\supset [0,\epsilon ']\times\partial \overline{M}\longrightarrow
\subset W$  is well defined by
$$\widetilde{\Psi}_{\epsilon '}(x,y)=(\mu(\frac{x}{16\epsilon '
e^{v(y)}})(\rho(x,y)-x)+x,y)=(\rho_{\epsilon'}(x,y),y).$$
Then for $0\leq x\leq\epsilon '$ and $\epsilon '$ small, 
$\partial_x (\rho_{\epsilon'})>0$. Hence
$\widetilde{\Psi}_{\epsilon'}:[0,\epsilon']\times\partial
\overline{M}\longrightarrow [0,\epsilon']\times\partial \overline{M}$ is a
$C^{1,\alpha}$ diffeomorphism, which is $C^{2,\alpha}$ on
$(0,\epsilon']\times\partial \overline{M}$ and satisfies
$$\widetilde{\Psi}_{\epsilon'}|_{[0,\tfrac{\epsilon'}{4}]\times\partial \overline{M}}=\Psi^{-1},\quad
\widetilde{\Psi}_{\epsilon'}|_{[\tfrac{\epsilon'}{2},\epsilon']\times\partial
\overline{M}}=Id.$$
Therefor $\widetilde{\Psi}_{\epsilon'}$ can be extend to
a $C^{1,\alpha}$ diffeomorphism on $\overline{M}$. Let
$\overline{\Phi}=\widetilde{\Psi}_{\epsilon'}^{-1}$. It is easy to
check that such $\Phi$ has all the above properties.
\end{proof}

Now let's make clear the assumptions and notations which are fixed from Section \ref{sec-geom setting} to Section \ref{sec-proof}. Assume:
\begin{itemize}
\item[(A1)] For $s\in (-\theta,\theta)$, $\theta>0$, $g_+(s)$ is a $C^{1}$ family of $C^{2,\alpha}$ conformally compact Poincar\'{e}-Einstein metrics on $M$ with a $C^1$ family of conformal infinities, i.e, $\exists$ $g_0(s)\in C^{1}((-\theta,\theta), Met^{\infty}(\partial M))$ such that $g_0(s)\in [x^2g_+(s)|_{T\partial M}]$ for each s;
\item[(A2)] $g_+(0)$ is $C^{\infty}$ conformally compact.
\end{itemize}

Under this two assumptions, using Proposition \ref{prop-smoothness}, there exists a $C^{1}$ family of $C^{1,\alpha}$ diffeomorphisms $\Phi_s$ over $\overline{M}$, which are $C^{2,\alpha}$ on $M$, such that $\tilde{g}_+(s)=\Phi_s^{*}g_+(s)$ is also a $C^1$ family of $C^{2,\alpha}$ conformally compact Poincar\'{e}-Einstein metrics, which are smooth near the bondary, i.e. for $0<x<\epsilon$,
$$\tilde{g}_+(s)=x^{-2}(dx^2+g_0(s)+x^2g_2(x) +\cdots+x^{n-1}g_{n-1}(s)+x^ng_n(x)+O(x^{n+1})).$$
In particular,$\Phi_0$ is smooth. Hence $\tilde{g}_+(0)$ is smooth. For simplicity, let
$$g=\tilde{g}_+(0)=x^{-2}(dx^2+g_0+x^2g_2+\cdots+x^{n-1}g_{n-1}+x^ng_n+O(x^{n+1})),$$
$$\tilde{h}=\frac{d\tilde{g}_+(s)}{ds}\vline_{s=0}=x^{-2}(g'_0+x^2g'_2+ \cdots +x^{n-1}g'_{n-1} +x^ng'_n +O(x^{n+1})).$$ 
where $g'_i=\frac{dg_i(s)}{ds}|_{s=0}$ for $i=0,2,...,n$. Then the linearized Einstein equation
\begin{equation}\label{eq-linear einstein}
\frac{1}{2}\triangle_{g}\tilde{h}-\delta_{g}^{*}\beta_{g}\tilde{h}
+R\tilde{h}+n\tilde{h}=0
\end{equation}
is obtained by calculating $\frac{d}{ds}|_{s=0}(Ric(g(s))+ng(s))=0$. See [GL] for details.
Here $\triangle_{g}$, $\delta_{g}$ and $\delta^{*}_g$ are the standard connection Laplacian,
divergence operator and its adjoint with respect to the metric $g$ and
$$\beta_g\tilde{h}=\delta_g\tilde{h}+\frac{1}{2}d\mathrm{tr}_g\tilde{h}$$
is the Bianchi operator; $R$ denotes the the $0$ order operator induced by the curvature of $g$.
For $h\in C^{2}(M;S^2T^{*}M)$ and $\omega\in C^1(M;T^{*}M)$, in any local coordinate patch $(W\subset M;\{y^{\nu}\}_{0\leq \nu\leq n})$, these operators can be written as
$$[\triangle_gh]_{\mu\nu}=-h_{\mu\nu;\alpha}^{\ \ \ \ \alpha},$$
$$[\delta_gh]_{\nu}=-h_{\mu\nu;}^{\ \ \ \mu },$$
$$[\delta^{*}_g\omega]_{\mu\nu}=\tfrac{1}{2}(\omega_{\mu;\nu}+\omega_{\nu;\mu}),$$
$$[Rh]_{\mu\nu}=R_{\alpha\mu\nu\beta}h^{\alpha\beta}+\tfrac{1}{2}(R_{\ \mu}^{\alpha}h_{\alpha\nu}+R_{\ \nu}^{\alpha}h_{\mu\alpha}),$$
$$[\triangle_g\omega]_{\nu}=-\omega_{\nu;\mu}^{\ \ \ \mu},$$
$$[R\omega]_{\nu}=R_{\nu\mu}\omega^{\mu}.$$

The linearized Dirichlet-to-Neumann map, if well-defined at $g_0$,
is the map $g_0'\mapsto g_n'$. We want to obtain this map by
studying the properties of the solution to (\ref{eq-linear
einstein}). The first well-known difficulty is that (\ref{eq-linear einstein}) is not
elliptic, but only transverse elliptic, since Einstein equation is
invariant under diffeomorphisms. To use elliptic theory, we
first need to choose a gauge to make (\ref{eq-linear einstein})
elliptic. The condition we impose is $\beta_gh=0$. We wish to find a $C^1$ family of
diffeomorphisms $\widetilde{\Phi}_s\in
\mathrm{Diff}^2(\overline{M})$, such that
$h=\frac{d}{ds}|_{s=0}\widetilde{\Phi}_s^{*}\tilde{g}_+(s)$
satisfies $\beta_gh=0$. Then $h$ will satisfy the elliptic
equation
\begin{equation}\label{eq-elliptic linear einstein}
\triangle_{g}h+2Rh+2nh=0.
\end{equation}
Since we are dealing with the linearized gauge condition, instead of finding exact $\widetilde{\Phi}_s$ for $s\in (-\theta,\theta)$, we only need to look for a generating vector field $X$, with dual 1-form $\omega$, such that
$$h=L_Xg+\tilde{h}=\delta_g^{*}\omega+\tilde{h}$$
satisfies $\beta_gh=0$. Then
$\omega$ should satisfy the equation
\begin{equation*}
2\beta_g\delta_g^{*}\omega+2\beta_g\tilde{h}=0
\Longleftrightarrow
\triangle_g\omega-R\omega+2\beta_g\tilde{h}=0.
\end{equation*}
Since $g$ is a Poincar\'{e}-Einstein metric implies that $R\omega=-n\omega$, the linearized gauge fixing problem reduces to finding a solution to
\begin{equation}\label{eq-gauge fix}
\triangle_gw+nw+2\beta_g\tilde{h}=0.
\end{equation}

Set $J_g=\triangle_g+n\in \textrm{Diff}^2(M;T^{*}M)$ and $L_g=\triangle_{g}+2R+2n\in \textrm{Diff}^2(M;S^2T^{*}M)$. They are both uniformly degenerate elliptic operators. Next, we
will carefully study these two operators, as well as the solutions to (\ref{eq-gauge fix}) and (\ref{eq-elliptic linear einstein}). The main technique is to use the $0$-calculus due to Mazzeo and
Melrose, which will be introduced in the following section.

\section[]{0-Calculus}\label{sec-0calculus}
In this section, we review the construction of a parametrix for an elliptic differential operator on an asymptocially hyperbolic space due to Mazzeo and Melrose ([MM]). As in Section \ref{sec-geom setting}, $(M,g)$ is a Riemannian manifold of dimension $n+1$ with smooth boundary defining function $x$. Near the boundary,
$$g=x^{-2}(dx^2+g_0+x^2g_2+\cdots+x^{n-1}g_{n-1}+x^ng_n+O(x^{n+1})).$$
Hence all sectional curvatures have limit $-|dx|^2_{x^2g}|_{\partial\overline{M}}=-1$, i.e., $(M,g)$ is asymptotically hyperbolic. 
 
Suppose $V$ is a smooth vector bundle over $\overline{M}$ with fibre dimension $N$.

\subsection{Vector Fields and Differential Operators}
For each $q\in\partial\overline{M}$, let $(U;y)$ be a local coordinate patch in $\partial \overline{M}$ around $q$. Then $([0,\epsilon)\times U;x,y)$ is a local coordinate patch in $\overline{M}$ around $q$. 

Let $\mathscr{V}_0$ be the Lie algebra consisting of vector fields 
which are locally sums of $C^{\infty}$ function multiples of the
vector field $x\partial_x$ and $x\partial_{y_i}$. $\mathscr{V}_0$
can be viewed as the set of $C^{\infty}$ sections of a
natural vector bundle over $M$, denoted by $^0TM$. Obviously,
$\{x\partial_x,x\partial_{y_1},...,x\partial_{y_n}\}$ gives a
local basis of $^0TM$. The dual bundle, naturally denoted
by $^0T^{*}M$, has a local basis
$\{\frac{dx}{x},\frac{dy_1}{x},...,\frac{dy_n}{x}\}$. Moreover,
$^0TM$ and $^0T^{*}M$ can be viewed as vector bundles over
$\overline{M}$, which are isomorphic to $T\overline{M}$ and $T^{*}\overline{M}$ over $\overline{M}$. However, the isomorphisms are not natural at $\partial\overline{M}$.
Similarly, we can define $\mathscr{V}_b$ be the space of
$C^{\infty}$ vector fields generated by $\{x\partial_x,
\partial_{y^1},...,\partial_{y^n}\}$. Notice that the Lie algebras
$\mathscr{V}_0$ and $\mathscr{V}_b$ satisfy
$$[\mathscr{V}_0,\mathscr{V}_0]\subset \mathscr{V}_0,\quad
[\mathscr{V}_0,\mathscr{V}_b]\subset \mathscr{V}_0,$$ where
$[\cdot,\cdot]$ is the Lie bracket.

For any constant $m\in \mathbb{R}$, denote by $S^m(^0T^*M)$ the space of $u\in C^{\infty}(^0T^*M)$ such that in any local coordinates $(W\subset M;p)$ away from the boundary, i.e. $\overline{W}\cap \partial\overline{M} =\emptyset$, denoting by $^0T^*M|_W = W_{p}\times \mathbb{R}^{n+1}_{\xi}$ canonically, then for any $(n+1)$-multiple indices $\alpha, \beta$,
$$|\partial_p^{\alpha}\partial_{\xi}^{\beta}u(p,\xi)|\leq C_{\alpha\beta}(1+|\xi|)^{m-|\beta|}$$
for some constant $C_{\alpha\beta}$; and in any local coordinates $([0,\epsilon)\times U;x,y);x,y)$ near the boundary with $U\subset \partial\overline{M}$, denoting by $^0T^*M|_{[0,\epsilon)\times U} = [0,\epsilon)\times U \times \mathbb{R}^{n+1}_{\tau,\eta}$ canonically, then for any $n$-multiple indices $\alpha,\beta$ and $k,l\in\mathbb{N}_0$,
$$|\partial_x^k\partial_{y}^{\alpha}\partial_{\tau}^l\partial_{\eta}^{\beta}u(x,y,\tau,\eta)|\leq C_{\alpha\beta kl}(1+|\tau|+|\eta|)^{m-l-|\beta|}$$
for some constant $C_{\alpha\beta kl}$. Similarly, we can define $S^{m}(^0T^*M;End(V))$. 

For $m\in \mathbb{N}_0$, let $\textrm{Diff}_0^m(M;V)$ be the space of differential
operators of order $m$ which can be written as a finite sum of at
most $m$-fold products of vector fields in $\mathscr{V}_0$ with
values in $End(V)$. Near the boundary, in a
coordinate patch $([0,\epsilon)\times U;x,y)$, choose a
trivialization of $V$, such that $V_{[0,\epsilon)\times U}\cong
([0,\epsilon)\times U)\times \mathbb{C}^N$. Then any $P\in \textrm{Diff}_0^m(M;V)$ is of the form
\begin{equation}\label{diff op}
P_{ij}=\sum_{|\alpha|+m\leq k}C_{ij,m\alpha}(x,y)(x\partial
x)^m(x\partial_y)^{\alpha}.
\end{equation}
The
$0$-\textit{princinpal symbol} of $P$ is defined by
$$[^0\sigma_m(P)]_{ij}=\sum_{k+|\alpha|=m}C_{ij;k\alpha}(x,y)\tau^k\eta^{\alpha}.$$
The coordinate invariant property shows that $^0\sigma_m(P)$ is
globally well defined, i.e.
$$^0\sigma_m(P)\in S^m(^0T^{*}M;End(V))/S^{m-1}(^0T^{*}M;End(V)).$$

Using the symbol language, we can define the \textit{'small'
0-pseudo-differential operators} $\Psi_0^m(M;V)$ naturally as the
usual smooth pseudodifferential operators. The symbol map induces
a short exact sequence
\begin{equation}\label{sss-symbol}
0\longrightarrow \Psi_0^{m-1}(M;V) \longrightarrow\Psi_0^m(M;V) \mathrel{\mathop{\longrightarrow}^{^0\sigma_m}} S^m(^0T^{*}M;End(V))/S^{m-1}(^0T^{*}M;End(V))
\longrightarrow 0.
\end{equation}

\subsection{Stretched Product Space}
To investigate the properties of Schwartz kernel for operators in $\Psi_0^m(M)$, we recall the \textit{stretched product} $\overline{M}\times_0\overline{M}$ obtained by blowing up the
boundary diagonal $\textit{diag}(\partial \overline{M})$ in $\overline{M}\times\overline{M}$. 
Let $\beta$ be the blow down map, i.e.
$$\beta: \overline{M}\times_0\overline{M}\longrightarrow \overline{M}\times \overline{M}.$$
If we denote by $(x',y')$ the corresponding local coordinate on the second copy of
$M$, then in the neighborhood around $(q,q)$ in $diag(\partial\overline{M})$, we can introduce the polar coordinates
$$R=|x^2+x'^2+|y-y'|^2|^{\frac{1}{2}}, \ r=|y-y'|, \ s=\frac{x}{x'},\ t=\frac{x'}{x},$$
$$z=\frac{y-y'}{x'},\ z'=\frac{y-y'}{x},\ \varrho=\frac{x}{R},\ \varrho'=\frac{x'}{R},\ \omega=\frac{y-y'}{r},$$
which give the local coordinates in $\overline{M}\times_0\overline{M}$ around $\beta^{-1}(q,q)$. 
$\overline{M}\times_0\overline{M}$ is a compact manifold with corners up to dimension $3$. It has three boundary hypersurfaces, top face $T$, bottom face $B$ and the front face $F$. Then $R$ (resp. $\rho$, $\rho'$) is the defining function for $F$ (resp. $T$, $B$). The front face at $q$ is denoted by $F_q=\beta^{-1}(q,q)$, which is a smooth compact manifold with corners of codimension $2$ and has two boundary hypersurfaces $F_q\cap T$ and $F_q\cap B$ with corresponding defining functions $\rho$ and $\rho'$. Moreover, the interior of $F_q$ is isomorphism to hyperbolic space $\mathbb{H}^{n+1}$, or
the inward half tangent space $T_q^+M$, carrying on the nature semi-product group structure as
$\mathbb{R}_+\ltimes\mathbb{R}^n$. Denote by $\textit{diag}_0(M)$ the closure of $\beta^{-1} (\textit{diag}(M))$ in $\overline{M}\times_0\overline{M}$. Clearly,
$$\textit{diag}_0(M)\cap F=\{s=1,z=0,R=0\}.$$

Let $[\overline{M}\times_0\overline{M}]^2$ be the $\mathscr{V}_0$ stretched product doubled across the front face. It has the structure of a smooth manifold with corners up to codimension $2$.
Furthermore, the doubled diagonal $[\textit{diag}_0(M)]^2$ lies in the interior. Note that any vector bundle over $\overline{M}\times_0\overline{M}$ can be extended to $[\overline{M}\times_0\overline{M}]^2$.

Similarly, $\partial \overline{M}\times_0\overline{M}$ is the blow-up of $\partial \overline{M} \times \overline{M}$ along $\textit{diag}(\partial \overline{M})$. It can be embedded in $\overline{M} \times_0 \overline{M}$. Actually, the identification of $\partial \overline{M}\times_0\overline{M}$ with $\{\rho=0\}\subset \overline{M}\times_0\overline{M}$ provides the embedding map. Denote by $\tilde{\beta}$ the blown down map
$$\tilde{\beta}:\partial \overline{M}\times_0\overline{M}\longrightarrow
\partial \overline{M}\times\overline{M}.$$
The front face of $\partial \overline{M}\times_0\overline{M}$ at any $q\in \partial \overline{M}$ is just $F_q\cap T$.

Finally, denote by $\partial \overline{M}\times_0\partial\overline{M}$ the  blow up of $\partial \overline{M}\times\partial \overline{M}$ along $\textit{diag}(\partial \overline{M})$, which
is naturally embedded into $\partial
\overline{M}\times_0\overline{M}$ by identification with $\{\rho'=0\}\subset \partial \overline{M}\times_0\overline{M}$, with the blow down map
$$\hat{\beta}:\partial \overline{M}\times_0\partial\overline{M}\longrightarrow
\partial \overline{M}\times\partial\overline{M}.$$
Note that $\partial \overline{M}\times_0\partial\overline{M}$ is a
smooth manifold with boundary containing the lift of
$\textit{diag}(\partial \overline{M})$ via $\hat{\beta}$, which
can be identified with the spherical normal bundle of
$\textit{diag}(\partial \overline{M})$, i.e.
$SN(\textit{diag}(\partial \overline{M}))$.

\subsection{Half Densities}
Let $\Gamma^{\frac{1}{2}}_0(\overline{M})$ be the line bundle of
singular half densities on $\overline{M}$, trivialized by
$\nu:=|dvol_g|^{\frac{1}{2}}$, and $\Gamma^{\frac{1}{2}}(\partial\overline{M})$ be the bundle of half density on $\partial \overline{M}$, trivialized by
$\nu_0:=|dvol_{g_0}|^{\frac{1}{2}}$. Naturally we can define
$\Gamma_0^{\frac{1}{2}}(\overline{M}\times\overline{M})$ (resp.
$\Gamma_0^{\frac{1}{2}}(\partial \overline{M}\times\overline{M})$
and
$\Gamma^{\frac{1}{2}}(\partial\overline{M}\times\partial\overline{M})$)
the half density bundle on $\overline{M}\times\overline{M})$
(resp. $\partial \overline{M}\times\overline{M}$ and
$\partial\overline{M}\times\partial\overline{M}$) trivialized by
$\nu\times\nu$ (resp. $\nu_0\times\nu$ and $\nu_0\times\nu_0$).
Hence finally we can define the half density bundles over the
blown up spaces via the blow down map, i.e
$$\Gamma_0^{\frac{1}{2}}(\overline{M}\times_0\overline{M})
=\beta^*\Gamma_0^{\frac{1}{2}}(\overline{M}\times\overline{M}),$$
$$\Gamma_0^{\frac{1}{2}}(\partial \overline{M}\times_0\overline{M})
=\tilde{\beta}^*\Gamma_0^{\frac{1}{2}}(\partial\overline{M}\times_0\overline{M}),$$
$$\Gamma_0^{\frac{1}{2}}(\partial\overline{M}\times_0\partial\overline{M})
=\hat{\beta}^*\Gamma^{\frac{1}{2}}(\partial\overline{M}\times\partial\overline{M}).$$
Note that $\Gamma_0^{\frac{1}{2}}(\overline{M}\times_0\overline{M})$ is
trivial on $F_q$. For example, in coordinates $(x',y',s,z)$,
$\Gamma_0^{\frac{1}{2}}(\overline{M}\times_0\overline{M})|_{F_q}$
is spanned by
$|\frac{dx'dy'dsdz}{x'^{n+1}s^{n+1}}|^{\frac{1}{2}}$; in
coordinates $(x,y,t,z')$, it is spanned by
$|\frac{dxdydtdz'}{x^{n+1}t^{n+1}}|^{\frac{1}{2}}$; in coordinates
$(\rho,\rho',r,\omega,y)$, it is spanned by $|\frac{d\rho d\rho'
drd\omega dy}{\rho^{n+1}\rho'^{n+1}r^{n+1}}|^{\frac{1}{2}}$.
Similarly, $\Gamma_0^{\frac{1}{2}}(\partial
\overline{M}\times_0\overline{M})$ is trivial on $F_q\cap T$.
Hence, $\Gamma_0^{\frac{1}{2}}(\overline{M}\times_0\overline{M})$
extends to half density bundle
$\Gamma_0^{\frac{1}{2}}([\overline{M}\times_0\overline{M}]^2)$
over $[\overline{M}\times_0\overline{M}]^2$.

With such half density bundles, we can define the Sobolev spaces
by
$$L_0^2(M;\Gamma_0^{\frac{1}{2}}(\overline{M}))=\{f\in C^{-\infty}(M;\Gamma_0^{\frac{1}{2}}(\overline{M})):
\int_M|f|^2<\infty\}$$ which is a Hilbert space with inner
product $$\langle f_1,f_2\rangle_{L_0^2(M)} = \int_M
f_1\bar{f}_2$$ for any $f_1,f_2\in
L_0^2(M;\Gamma_0^{\frac{1}{2}}(\overline{M}))$. Moreover, for $m\in\mathbb{N}$,
$$H^m_0(M;\Gamma_0^{\frac{1}{2}}(\overline{M}))=\{f\in L_0^2(M;\Gamma_0^{\frac{1}{2}}(\overline{M})):
\mathscr{V}_0^kf\in L_0^2(M;\Gamma_0^{\frac{1}{2}}(\overline{M})),
 0\leq k\leq m\},$$
$$H^{-m}_0(M;\Gamma_0^{\frac{1}{2}}(\overline{M})=(H^m_0(M;\Gamma_0^{\frac{1}{2}}(\overline{M}))'.$$

\subsection{Boundary Cornormal Distributions}
Let $X$ be any compact manifold with $k$ boundary hypersurfaces $\partial_i X$ for $1\leq i\leq k$, with corresponding defining function $\rho_i$. Let $K\in \mathbb{N}_0$ be a fixed constant. Then for any $c_1,...,c_k\in \mathbb{C}$, we can define a functions space $\mathscr{A}^{c_1,...,c_k}(X)$ associated to $K$ with certain conormal sigularities at the boundary surfaces, i.e. 
\begin{equation}\label{defn-A}
\begin{split}
\mathscr{A}^{c_1,...,c_k}(X) = \{u\in
\mathscr{D}'(X):\mathscr{V}_b^l u\in
\rho_1^{c_1}\cdots\rho_k^{c_k} (\sum_{i=1}^N(\log\rho_1)^{d_{i1}}\cdots(\log\rho_k)^{d_{ik}}
L^{\infty}(X)),\\
\textrm{for some $N\in\mathbb{N}_0$ and for $1\leq i\leq N$, $1\leq j\leq k$, $d_{ij}\in\mathbb{N}_0\cap [0,K]$}
\}
\end{split}
\end{equation}
And if some $e_i=\infty$, define
$\mathscr{A}^{c_1,...\infty,...,c_k}(X)=\bigcap_{l=1}^{\infty}
\mathscr{A}^{c_1,...l,...,c_k}(X)$.

Moreover, we can define
$\mathscr{A}_{phg}^{c_1,...,c_k}(X)$ to be the natural subspace of
polyhomogeneous elements in $\mathscr{A}^{c_1,...,c_k}(X)$ by replacing $L^{\infty}(X)$ by $C^{\infty}(X)$ in the definition, i.e.
\begin{equation}\label{defn-Aphg}
\begin{split}
\mathscr{A}_{phg}^{c_1,...,c_k}(X) = \{u\in \mathscr{D}'(X): \mathscr{V}_b^l u \in
\rho_1^{c_1}\cdots\rho_k^{c_k} (\sum_{i=1}^N(\log\rho_1)^{d_{i1}}\cdots(\log\rho_k)^{d_{ik}}
C^{\infty}(X)),\\
\textrm{for some $N\in\mathbb{N}_0$ and for $1\leq i\leq N$, $1\leq j\leq k$, $d_{ij}\in\mathbb{N}_0\cap [0,K]$} \}
\end{split}
\end{equation}

These definitions can be easily extended to boundary conormal distributional sections of a vector bundle over $X$.

\subsection{0-Pseudodifferential Operators}
Let $\mathscr{K}_0^m(\overline{M}\times_0\overline{M};diag_0(M); End(V)\otimes\Gamma_0^{\frac{1}{2}})$ be the space of distributional sections of $End(V)\otimes\Gamma_0^{\frac{1}{2}} (\overline{M}\times_0\overline{M})$ on $\overline{M}\times_0\overline{M}$, which can be extended to $[\overline{M}\times_0\overline{M}]^2$ as distributional sections of $End(V)\otimes\Gamma_0^{\frac{1}{2}} ([\overline{M}\times_0\overline{M}]^2)$, conormal along the doubled diagonal $[diag_0(M)]^2$ of order $m$ and vanishing to infinite order on the doubled boundary faces $T$ and $B$. Then
$$P\in \Psi_0^m(M;V\otimes\Gamma_0^{\frac{1}{2}})\Longleftrightarrow \kappa(P)\in \mathscr{K}_0^m(\overline{M}\times_0\overline{M};
diag_0(M);End(V)\otimes\Gamma_0^{\frac{1}{2}}),$$
where $\kappa(P)$ denotes the Schwartz kernel of $P$.

Suppose $V|_{\partial\overline{M}}$ is decomposed into $V^1\oplus\cdots\oplus V^r$ where $V^i$ is a vector bundle over $\partial \overline{M}$ for $1\leq i\leq r\leq N$. Given any constant vector
$a=(a_1,...,a_r)^{t}\in \mathbb{C}^r$. Then for some constant $K\in \mathbb{N}_0$, we can define
\begin{equation}
\begin{split}
\mathscr{A}_{phg}^{a}(\overline{M};V\otimes\Gamma_0^{\frac{1}{2}})=\{v\in \mathscr{D}'(\overline{M};V\otimes\Gamma_0^{\frac{1}{2}})\cap C^{\infty}(M;V\otimes\Gamma_0^{\frac{1}{2}}): v=\sum_{i=1}^r\sum_{j=0}^K x^{a_i}(\log x)^ju_{ij},\\
u_{ij}\in C^{\infty}(\overline{M};V\otimes\Gamma_0^{\frac{1}{2}}),\ u_{ij}|_{\partial\overline{M}}\in C^{\infty}(\partial\overline{M};V^i\otimes\Gamma_0^{\frac{1}{2}}), \textrm{for $1\leq i\leq r$ and $0\leq j\leq K$}\}
\end{split}
\end{equation}

Given constant matrices $E_T,E_B\in
M(r,\mathbb{C})$. Let $[E_T]_{i\bullet}$ denote the the $i$-th row
of $E_T$ and $[E_B]_{\bullet j}$ denote the $j$-th column of
$E_B$. Then we can define $\mathscr{A}_{phg}^{E_T,E_B}([\overline{M}\times_0\overline{M}]^2;End(V)\otimes\Gamma_0^{\frac{1}{2}})$
to be a subspace of $\mathscr{D}'([\overline{M}\times_0\overline{M}]^2;End(V)\otimes\Gamma_0^{\frac{1}{2}})$ such that any element $u\in \mathscr{A}_{phg}^{E_T,E_B}([\overline{M}\times_0\overline{M}]^2;End(V)\otimes\Gamma_0^{\frac{1}{2}})$ has the form $[u]_{\bullet j}\sim u_B\rho'^{[E_B]_{\bullet j}}$ near boundary face $B$ and
$ [u]_{i\bullet}\sim\rho^{[E_T]_{i\bullet}}u_T$ near boundary face $T$, where with some cut off function $\chi$ satisfying $\chi(\rho)=1$ if
$|\rho|<\varepsilon$ and $\chi(\rho)=0$ if $|\rho|>2\varepsilon$ for some $\varepsilon>0$, $\chi(\rho')u_B,\chi(\rho)u_T\in
\mathscr{A}^{0,0}([\overline{M}\times_0\overline{M}]^2);End(V)\otimes\Gamma_0^{\frac{1}{2}})$.
Note, for any constant $c$, and any
defining function $x$, we always denote
$$[E_T+c]_{ij}=[E_T]_{ij}+c,\quad [x^{E_T}]_{ij}=x^{[E_T]_{ij}}.$$

Now we can define the space of
distributions cornormal to boundaries $T$ and $B$ and smooth up to the
front face $F$ and the corresponding space of operators:
\begin{equation}\begin{split}
\mathscr{K}_0^{-\infty,E_T,E_B}(\overline{M}\times_0\overline{M};End(V)\otimes\Gamma_0^{\frac{1}{2}})
=\mathscr{A}^{E_T,E_B}([\overline{M}\times_0\overline{M}]^2;End(V)\otimes\Gamma_0^{\frac{1}{2}})
|_{\overline{M}\times_0\overline{M}}
\end{split}\end{equation}
\begin{equation}\begin{split}
\Psi_0^{-\infty,E_T,E_B}(M;V\otimes\Gamma_0^{\frac{1}{2}})=\{A \in C^{-\infty} (M;End(V)\otimes\Gamma_0^{\frac{1}{2}}):\quad\quad\quad\quad\quad\quad\quad
\\ \kappa(A) \in \mathscr{K}^{-\infty,E_T,E_B} (\overline{M}\times_0\overline{M};End(V)\otimes\Gamma_0^{\frac{1}{2}}).
\end{split}\end{equation}
So we have the \textit{'large' class of 0-pseudo-differential
operators} defined by
\begin{equation}\begin{split}
\Psi_0^{m,E_T,E_B}(M;V\otimes\Gamma_0^{\frac{1}{2}})=\Psi_0^{m}(M;V
\otimes\Gamma_0^{\frac{1}{2}})+\Psi_0^{-\infty,E_T,E_B}(M;V\otimes
\Gamma_0^{\frac{1}{2}})
\end{split}\end{equation} with corresponding kernel space
\begin{equation}\begin{split}
\mathscr{K}_0^{m,E_T,E_B}(\overline{M}\times_0\overline{M};diag_0(M);End(V)\otimes\Gamma_0^{\frac{1}{2}})=
\mathscr{K}_0^m(\overline{M}\times_0\overline{M};diag_0(M);End(V)\otimes\Gamma_0^{\frac{1}{2}})\\
+\mathscr{K}_0^{\infty,E_T,E_B}(\overline{M}\times_0\overline{M};End(V)\otimes\Gamma_0^{\frac{1}{2}}).
\end{split}\end{equation}

Note that the decomposition of $V$, which has only been considered at the boundary so far, usually can be extended to a neighborhood of $\partial\overline{M}$ up to $O(x^{\Lambda})$ for some certain constant $\Lambda\in\mathbb{N}_0\cup \{\infty\}$ in practice. Suppose this is the case. Consider $u$ as a section of $V$.  Near the boundary, if writing
$$u=\sum_{i=1}^r\sum_{j=0}^K x^{a_i}(\log x)^ju_{ij},\textrm{with $u_{ij}\in C^{\infty}(\overline{M};V\otimes \Gamma_0^{\frac{1}{2}}))$, $u_{ij}|_{\partial\overline{M}}\in C^{\infty}(\partial \overline{M};V^{i}\otimes \Gamma_0^{\frac{1}{2}}$)},$$
then the expression makes sense if and if only
$$|\Re a_i-\Re a_j|\leq \Lambda,\quad \forall\ 1\leq i,j\leq r.$$
Usually, $\Lambda$ is determined by $V$, $\overline{M}$ and the metric.
Hence $\Psi_0^{m,E_T,E_B}(M;V\otimes \Gamma_0^{\frac{1}{2}})$
makes sense if and only if $E_T, E_B$ satisfy the following condition: 
\begin{condition}\label{cond-ET}
$$|min_{\ 1\leq j\leq
r}\{\Re [E_T]_{ij}\}-min_{\ 1\leq j\leq
r}\{\Re [E_T]_{kj}\}|< \Lambda, \quad \forall\ 1\leq i,k\leq r,$$
$$|min_{\ 1\leq j\leq
r}\{\Re [E_B]_{ji}\}-min_{\ 1\leq j\leq
n}\{\Re [E_B]_{jk}\}|< \Lambda, \quad \forall\ 1\leq i,k\leq r.$$
\end{condition}

According to [Ma], we have the mapping properties for such large
class of 0-pseudodifferential operators.
\begin{proposition}\label{prop-map property of 0pseudo} For each
$A\in \Psi_0^{m,E_T,E_B}(M;V\otimes \Gamma_0^{\frac{1}{2}})$ and constants $m,m'\in\mathbb{N}_0$ and $b,b'\in\mathbb{R}$,
$$A: x^bH_0^{m'+m'}(M;V\otimes \Gamma_0^{\frac{1}{2}})\longrightarrow
x^{b'}H_0^{m'}(M;V\otimes \Gamma_0^{\frac{1}{2}})$$
is a
bounded operator provided that 
$$\min_{1\leq i,j\leq r}\{[E_T]_{ij}\}-b'>\frac{n}{2}, \quad \min_{1\leq i,j\leq r}\{[E_B]_{ij}\}+b>\frac{n}{2},$$
$$\min_{1\leq i,j\leq r}\{[E_T]_{ij}\}+\min_{1\leq i,j\leq r}\{[E_B]_{ij}\}>n, \quad b\geq b'.$$
\end{proposition}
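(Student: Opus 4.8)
## Proof Proposal for Proposition 3.2 (Mapping Properties of the Large 0-Calculus)

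The plan is to reduce the boundedness statement to two independent facts: boundedness of the ``small'' part $\Psi_0^m(M;V\otimes\Gamma_0^{1/2})$ on the weighted $0$-Sobolev scale, and boundedness of the ``residual'' part $\Psi_0^{-\infty,E_T,E_B}(M;V\otimes\Gamma_0^{1/2})$, which is governed entirely by the decay of the Schwartz kernel at the top and bottom faces. Since $\Psi_0^{m,E_T,E_B} = \Psi_0^m + \Psi_0^{-\infty,E_T,E_B}$ by definition, it suffices to treat each summand. For the small part, the argument is the standard one: $\Psi_0^0(M;V\otimes\Gamma_0^{1/2})$ acts boundedly on $L_0^2$ by a Schur-test / square-function estimate on $\overline{M}\times_0\overline{M}$ using the conormal singularity along $\mathrm{diag}_0(M)$ and infinite-order vanishing on $T$ and $B$; composing with powers of elements of $\mathscr{V}_0$ and using the symbol calculus (the short exact sequence (\ref{sss-symbol})) upgrades this to $\Psi_0^m: H_0^{m'+m}\to H_0^{m'}$ for all $m,m'$. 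Because conjugation by $x^b$ preserves $\Psi_0^m$ modulo lower order (one checks $x^{-b}(x\partial_x)x^b = x\partial_x + b$, which is again a $0$-operator of the same order with the same $0$-symbol), the small part maps $x^bH_0^{m'+m}\to x^bH_0^{m'}$, and when $b\ge b'$ one has the continuous inclusion $x^bH_0^{m'}\hookrightarrow x^{b'}H_0^{m'}$ on the bounded manifold $\overline M$; this is where the hypothesis $b\ge b'$ enters for the small part.

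The substantive step is the residual part. Here I would write $A\in\Psi_0^{-\infty,E_T,E_B}$ with kernel $\kappa(A)$ supported (after the blow-down $\beta$) on $\overline{M}\times_0\overline{M}$, polyhomogeneous, smooth up to the front face $F$, and with leading behavior $\rho^{E_T}$ at $T$ and $\rho'^{E_B}$ at $B$. The action on a section $u$ is $Au(w) = \int \kappa(A)(w,w')\,u(w')$; pulling back to the stretched product and using the trivialization of $\Gamma_0^{1/2}(\overline M\times_0\overline M)$ from Section 3.3, the estimate becomes a weighted Schur test. One estimates $\|x^{-b'}Au\|_{L^2_0}$ by inserting $x^b x^{-b}$ on the $u$-side; the kernel of the conjugated operator $x^{-b'}\kappa(A)x^{-b}$ carries weight $\rho^{E_T-b'}$ at $T$ (since $x\sim\rho R$ and $R$ is bounded on $F$, the $T$-face weight shifts by $-b'$ up to harmless front-face factors) and $\rho'^{E_B+b}$ at $B$ (the $x'$-weight shifts by $-b$, raising the $B$-exponent by $b$ because $u$ is being measured in $x^{-b}L^2_0$ means we gain $x'^{b}$). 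The $L^2\to L^2$ Schur test on $\overline M\times_0\overline M$ then requires: integrability of the kernel in $w'$ uniformly in $w$, which near $T$ needs $\min_{i,j}\Re[E_T]_{ij}-b' > n/2$; integrability in $w$ uniformly in $w'$, which near $B$ needs $\min_{i,j}\Re[E_B]_{ij}+b > n/2$; and the ``off-diagonal'' decay at the corner $T\cap B$ and along the front face, where the product structure forces the combined condition $\min[E_T]+\min[E_B] > n$ so that the cross term in the Schur double integral converges. These three inequalities are exactly the first three hypotheses of the Proposition; the factor $n/2$ is the half-power of the hyperbolic volume form $x^{-n-1}dx\,dy$ that appears when one passes between $L^2_0$ with the half-density trivialization $\nu=|dvol_g|^{1/2}$ and flat Lebesgue measure on $F_q\cong\mathbb{H}^{n+1}$.

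I would organize the write-up as: (i) reduce to the two summands; (ii) dispatch the small part by the classical $0$-calculus $L^2$-boundedness of Mazzeo--Melrose plus the $x^b$-conjugation and the inclusion $x^bH_0^{m'}\subset x^{b'}H_0^{m'}$ for $b\ge b'$; (iii) for the residual part, perform the Schur test on $\overline{M}\times_0\overline M$ in the coordinates $(x',y',s,z)$ near $F\cap B$, in $(x,y,t,z')$ near $F\cap T$, and in $(\rho,\rho',r,\omega,y)$ near $T\cap B$, reading off in each chart the exponent bookkeeping that yields the stated inequalities; (iv) note that Condition \ref{cond-ET} guarantees the target space $x^{b'}H_0^{m'}(M;V\otimes\Gamma_0^{1/2})$ is well defined so the statement is not vacuous. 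The main obstacle is purely bookkeeping: correctly tracking how the weights $b,b'$ and the matrix exponents $E_T,E_B$ interact with the blow-up — in particular, that conjugation by $x^{\pm b}$, $x'^{\pm b'}$ transforms to $\rho^{\pm b}R^{\pm b}$, $\rho'^{\pm b'}R^{\pm b'}$ near the respective faces, with the front-face factor $R$ absorbed into the (smooth, nonvanishing) part of the polyhomogeneous kernel — and showing that the Schur double integral over $F_q\cong\mathbb H^{n+1}$ converges precisely under $\min[E_T]+\min[E_B]>n$. Everything else follows the template of [Ma] and [MM], which I would cite for the analytic estimates on $\overline M\times_0\overline M$.
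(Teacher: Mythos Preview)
The paper does not supply its own proof of this proposition: it is stated immediately after the phrase ``According to [Ma], we have the mapping properties for such large class of 0-pseudodifferential operators,'' and is treated as a citation of Mazzeo's result. Your outline is precisely the standard argument one finds in [Ma] and [MM]---split into the small and residual parts, handle the small part by symbol calculus and $x^b$-conjugation, and control the residual part by a Schur test on $\overline{M}\times_0\overline{M}$ with the exponent bookkeeping at $T$, $B$, and the front face---so there is nothing to compare; you have reconstructed the cited proof rather than diverged from it.
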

\begin{proposition}\label{prop-mapproperty}
For any $A\in \Psi_0^{m,E_T,E_B}(M;V\otimes
\Gamma_0^{\frac{1}{2}})$, let $e_j=[E_T]_{\bullet j}$ for $1\leq j\leq r$. Then
$$A:\dot{C}^{\infty}(M;V\otimes\Gamma_0^{\frac{1}{2}})\longrightarrow \sum_{j=1}^r
\mathscr{A}^{e_j}(M;V\otimes\Gamma_0^{\frac{1}{2}})$$ 
is continuous.
\end{proposition}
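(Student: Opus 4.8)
The plan is to split $A$ according to $\Psi_0^{m,E_T,E_B}(M;V\otimes\Gamma_0^{\frac{1}{2}})=\Psi_0^{m}(M;V\otimes\Gamma_0^{\frac{1}{2}})+\Psi_0^{-\infty,E_T,E_B}(M;V\otimes\Gamma_0^{\frac{1}{2}})$ and to handle the two pieces separately. For the small-calculus part $A_0\in\Psi_0^{m}(M;V\otimes\Gamma_0^{\frac{1}{2}})$, the Schwartz kernel $\kappa(A_0)$ is conormal of order $m$ along $diag_0(M)$, smooth up to the front face $F$, and vanishes to infinite order at the top and bottom faces $T$ and $B$ of $\overline{M}\times_0\overline{M}$; integrating such a kernel against a section vanishing to infinite order at $\partial\overline{M}$ is routine and produces a continuous map $A_0\colon\dot{C}^{\infty}(M;V\otimes\Gamma_0^{\frac{1}{2}})\to\dot{C}^{\infty}(M;V\otimes\Gamma_0^{\frac{1}{2}})$. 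Since $\Re[E_T]_{ij}<\infty$ for all $i,j$, one has $\dot{C}^{\infty}(M;V\otimes\Gamma_0^{\frac{1}{2}})\subset\mathscr{A}^{e_j}(M;V\otimes\Gamma_0^{\frac{1}{2}})$ for every $j$, so this part maps continuously into the target space, and it remains to treat $A\in\Psi_0^{-\infty,E_T,E_B}(M;V\otimes\Gamma_0^{\frac{1}{2}})$.

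For the residual part I realize the action of $A$ as a fibre integral. Let $\pi_L,\pi_R\colon\overline{M}\times_0\overline{M}\to\overline{M}$ be the two stretched projections (the compositions of the blow-down $\beta$ with the coordinate projections of $\overline{M}\times\overline{M}$). With the half-density conventions of Section~\ref{sec-0calculus}, for $f\in\dot{C}^{\infty}(M;V\otimes\Gamma_0^{\frac{1}{2}})$ one has
\[
Af=(\pi_L)_*\big(\kappa(A)\cdot\pi_R^*f\big),
\]
where $\kappa(A)\cdot\pi_R^*f$ is a genuine density along the fibres of $\pi_L$ with values in the appropriate half-density bundle on the base. The map $\pi_L$ is a b-fibration; the blow-up of $diag(\partial\overline{M})$ is exactly what makes it b-normal, which is the point where the stretched product, rather than $\overline{M}\times\overline{M}$, is needed. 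Under the blow-down, $T=\{\rho=0\}$ and $F=\{R=0\}$ both map onto $\partial\overline{M}$ while $B=\{\rho'=0\}$ is mapped submersively onto all of $\overline{M}$ and is integrated out. Because $f$ vanishes to infinite order at $\partial\overline{M}=\{x'=0\}$ and $x'=\rho'R$ near the front face, $\pi_R^*f$ vanishes to infinite order at both $B$ and $F$.

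Consequently $\kappa(A)\cdot\pi_R^*f$ is polyhomogeneous conormal on $\overline{M}\times_0\overline{M}$: smooth in the interior, vanishing to infinite order at $B$ and at $F$, and near $T$ carrying, in its $i$-th row, an expansion governed by the exponents $[E_T]_{i\bullet}$ up to a fixed integer shift produced by the trivialisations of $\Gamma_0^{\frac{1}{2}}(\overline{M}\times_0\overline{M})$ displayed in Section~\ref{sec-0calculus}. Now apply Melrose's pushforward theorem for b-fibrations (the version underlying [MM], [Ma]): $(\pi_L)_*$ sends polyhomogeneous conormal densities to polyhomogeneous conormal distributions, with index family at $\partial\overline{M}$ the extended union of the index families pushed forward from the faces lying over $\partial\overline{M}$, namely $T$ and $F$. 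The infinite-order vanishing of $\pi_R^*f$ at $B$ and $F$ guarantees convergence of the fibre integrals over the non-compact front-face fibres $F_q$, makes the $F$-contribution trivial, and renders Condition~\ref{cond-ET} irrelevant here; reading off the surviving index family from $T$, the $V^i$-component of $Af$ is a finite sum $\sum_{j=1}^r\sum_{l=0}^{K}x^{[E_T]_{ij}}(\log x)^l\,u_{ijl}$ with $u_{ijl}$ smooth up to $\partial\overline{M}$ and $u_{ijl}|_{\partial\overline{M}}\in C^{\infty}(\partial\overline{M};V^i\otimes\Gamma_0^{\frac{1}{2}})$. Regrouping the index-$j$ terms over all $i$ exhibits $Af\in\sum_{j=1}^r\mathscr{A}^{e_j}(M;V\otimes\Gamma_0^{\frac{1}{2}})$, and the pushforward theorem — together with the continuity of $f\mapsto\kappa(A)\cdot\pi_R^*f$ from $\dot{C}^{\infty}$ into the relevant polyhomogeneous space — yields the asserted continuity.

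The main obstacle is the bookkeeping in the pushforward step: one must verify in detail that $\pi_L$ is a b-fibration, keep exact track of the Jacobian and density weights introduced by $\beta$ so that the output index set is literally $e_j=[E_T]_{\bullet j}$ and not a shifted copy of it, and check convergence of the fibre integrals near the boundaries at infinity of the front-face fibres. The explicit spanning sections of $\Gamma_0^{\frac{1}{2}}(\overline{M}\times_0\overline{M})$ on $F_q$ recorded in Section~\ref{sec-0calculus} are precisely what arrange for the shift to cancel, reducing everything to the clean index computation above.
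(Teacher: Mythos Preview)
The paper does not actually supply a proof of this proposition: it is stated immediately after Proposition~\ref{prop-map property of 0pseudo} as a mapping property taken from [Ma], with no argument given. So there is no ``paper's own proof'' to compare against; your write-up is filling in what the paper leaves to the literature.

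Your approach is the standard one from Mazzeo--Melrose and is correct in outline. The decomposition into a small-calculus piece plus a residual piece, the observation that $x'=\rho'R$ forces $\pi_R^*f$ to vanish to infinite order at both $B$ and $F$ when $f\in\dot C^\infty$, and the appeal to the pushforward theorem for the b-fibration $\pi_L$ are exactly the right ingredients. Two minor remarks: first, your argument in fact lands in the polyhomogeneous subspace $\sum_j\mathscr A_{phg}^{e_j}$, which is strictly smaller than the conormal space $\sum_j\mathscr A^{e_j}$ appearing in the statement, so you are proving something slightly stronger; second, the half-density bookkeeping you flag as ``the main obstacle'' is genuinely the only place one can slip, and the explicit trivialisations of $\Gamma_0^{1/2}(\overline M\times_0\overline M)|_{F_q}$ recorded in Section~\ref{sec-0calculus} are what make the exponent shifts cancel cleanly, as you note.
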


\subsection{Normal Operator}
Suppose $A\in \Psi_0^{m,E_T,E_B}(M;V\otimes\Gamma_0^{\frac{1}{2}})$. The
\textit{normal operator} of $A$ at $q$ is defined by
$$N(A)f=\int \kappa(A)|_{F_q}f(\frac{x}{s},y-\frac{x}{s}z)\frac{ds}{s}dz\cdot d\mu,\quad
\forall f\in C_c^{\infty}(\mathbb{H}^{n+1},\mathbb{C}^N),$$ 
where $d\mu=|\frac{dxdy}{x^{n+1}}|^{\frac{1}{2}}$. Note that $V|_{F_q}=\pi^*(V|_q)$ is trivial, where $\pi$ is the projection map $T_q^+M\rightarrow \{q\}$. In particular, if $P\in \textrm{Diff}_0^m(M;V \otimes \Gamma_0^{\frac{1}{2}})$ is of the form (\ref{diff op}), then its normal operator is
$$[N(P)]_{ij}=\sum_{|\alpha|+m\leq k}C_{ij,m\alpha}(0,y)(x\partial x)^m(x\partial_y)^{\alpha}.$$ 
The normal operator characterizes the behavior of the original operator near the boundary, which can
be easily understood via an equivalent definition
$$N_q(P)u=\lim_{\tau\rightarrow 0}R^*_{\tau}\Phi^*P(\Phi^{-1})^*R_{\frac{1}{\tau}}^*u,$$
where $\Phi$ is a local diffeomorphism carrying a neighborhood of $0\in\mathbb{H}^{n+1}$ to a neighborhood of $q\in \partial \overline{M}$ and $R_{\tau}$ denotes the scaling. According to [MM], we have a short
exact sequence
\begin{equation}\label{sss-normal}
\begin{split}
0\longrightarrow
R\Psi_0^{m,E_T,E_B}(M;End(V)\otimes\Gamma_0^{\frac{1}{2}})\longrightarrow
\Psi_0^{m,E_T,E_B}(M;End(V)\otimes\Gamma_0^{\frac{1}{2}})\quad\quad\quad\quad \\
\mathrel{\mathop{\longrightarrow}^{N_q}} \mathscr{K}_0^{m,E_{F_q\cap
T},E_{F_q\cap B}}(F_q;F_q\cap diag_0(M);M(N,\mathbb{C})\otimes
\Gamma_0^{\frac{1}{2}})\longrightarrow 0.
\end{split}
\end{equation}
Here $R$ is the boundary defining function of $F_{q}$ in $\overline{M}\times_0\overline{M}$ and $R\Psi_0^{m,E_T,E_B}(M;End(V)\otimes\Gamma_0^{\frac{1}{2}})$ consists of operators with kernels in $R\mathscr{K}_0^{m,E_T,E_B} (\overline{M}\times_0\overline{M}; diag_0(M);End(V)\otimes\Gamma_0^{\frac{1}{2}})$. 

\subsection{Indicial Operator}
Also for $A\in \Psi_0^{m,E_T,E_B}(M;V\otimes\Gamma_0^{\frac{1}{2}})$ we can
define the \textit{indicial operator} by
$$I(A)(s)f=x^{-s}(A(x^sf))|_{\partial \overline{M}}, \quad \forall f\in C^{\infty}(\partial \overline{M};V\otimes\Gamma_0^{\frac{1}{2}}).$$
Similarly, if $P\in \textrm{Diff}_0^m(M;V\otimes\Gamma_0^{\frac{1}{2}})$ with components as in
(\ref{diff op}), then we can write
$$[I(P)(s)]_{ij}=\sum_{m\leq k}C_{ij,m0}(0,y)s^m,\ \forall s\in\mathbb{C}.$$
The indicial operator characterizes the leading order of the operator near the boundary.
According to [MM], there is a short exact sequence
\begin{equation}\label{sss-indicial}
\begin{split}
0\longrightarrow
R^{\infty}\Psi_0^{-\infty,E_T+1,E_B}(M;End(V)\otimes\Gamma_0^{\frac{1}{2}})\longrightarrow
R^{\infty}\Psi_0^{-\infty,E_T,E_B}(M;End(V)\otimes\Gamma_0^{\frac{1}{2}})\quad\quad\quad\quad
\\ \mathrel{\mathop{\longrightarrow}^{I}}
R^{\infty}\mathscr{K}_0^{-\infty,E_T,E_B}(\overline{M}\times_0\overline{M};End(V)\otimes\Gamma_0^{\frac{1}{2}})
/R^{\infty}\mathscr{K}_0^{-\infty,E_T+1,E_B}(\overline{M}\times_0\overline{M};End(V)\otimes\Gamma_0^{\frac{1}{2}})
\longrightarrow 0,
\end{split}
\end{equation}
where $R$ is the boundary defining function of the front face $F$ in $\overline{M}\times_0\overline{M}$. 

\subsection{Parametrix}
The construction of a parametrix for an elliptic differential operator of 0-type is based on the three short exact sequences (\ref{sss-symbol}), (\ref{sss-normal}) and (\ref{sss-indicial}) generated by symbol map, normal operator and indicial operator respectively. Here we only do this for a small class of elliptic differential operators, which includes $L_g$ and $J_g$ introduced in Section \ref{sec-geom setting}. First, notice that
\begin{lemma} 
If $A\in \Psi_0^{m,E_T,E_B}(M;End(V)\otimes\Gamma_0^{\frac{1}{2}})$, $B\in R^{\infty} \Psi_0^{-\infty,E_T,E_B} (M;End(V)\otimes\Gamma_0^{\frac{1}{2}})$ and $P\in \mathrm{Diff}_0^k (M;End(V)\otimes\Gamma_0^{\frac{1}{2}})$, then
$$N(PA)=N(P)N(A),$$
$$I(PB)=I(P)I(B).$$
\end{lemma}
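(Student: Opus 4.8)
The plan is to verify the two identities directly from the definitions of the normal operator and indicial operator given above, exploiting the compatibility of these maps with composition by differential operators of $0$-type. The key structural fact is that $P\in\mathrm{Diff}_0^k$ is built from vector fields in $\mathscr{V}_0$, so composing with $P$ acts fiberwise over the front face $F_q$ (resp. on the leading boundary term), and this localization is precisely what makes $N$ and $I$ multiplicative.

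For the first identity, I would argue as follows. Write $P$ locally in the form (\ref{diff op}), $P=\sum_{|\alpha|+m\leq k}C_{m\alpha}(x,y)(x\partial_x)^m(x\partial_y)^\alpha$. Using the equivalent definition of the normal operator, $N_q(PA)u=\lim_{\tau\to 0}R^*_\tau\Phi^*(PA)(\Phi^{-1})^*R^*_{1/\tau}u$, insert the identity $(\Phi^{-1})^*R^*_{1/\tau}R^*_\tau\Phi^*=\mathrm{Id}$ between $P$ and $A$. One then needs two facts: that conjugating $P$ by the scaling $R_\tau$ and taking $\tau\to 0$ freezes the coefficients $C_{m\alpha}(x,y)$ at $x=0$ (since $R^*_\tau$ rescales $x\mapsto \tau x$ while $(x\partial_x)$, $(x\partial_y)$ are scale-invariant), which gives exactly $N_q(P)=\sum C_{m\alpha}(0,y)(x\partial_x)^m(x\partial_y)^\alpha$; and that $R^*_\tau\Phi^*A(\Phi^{-1})^*R^*_{1/\tau}\to N_q(A)$ by definition. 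Since the limit of $P$ under conjugation exists as a genuine differential operator (the coefficients converge in $C^\infty$ on compact subsets of $\mathbb{H}^{n+1}$), and $A$ applied to the test function lands in a space on which this limiting operator acts continuously, the limit of the product is the product of the limits. The mapping property (Proposition \ref{prop-mapproperty}) and the exact sequence (\ref{sss-normal}) guarantee that $PA\in\Psi_0^{m+k,E_T,E_B}$ so that $N(PA)$ is defined in the first place and the manipulation is legitimate.

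For the second identity, since $B\in R^\infty\Psi_0^{-\infty,E_T,E_B}$, its kernel vanishes to infinite order at the front face, so $PB$ again lies in $R^\infty\Psi_0^{-\infty,E_T,E_B}$ and has a well-defined indicial operator. Here I would work directly from $I(A)(s)f=x^{-s}(A(x^sf))|_{\partial\overline M}$. Computing $I(PB)(s)f=x^{-s}(PB(x^sf))|_{\partial\overline M}$, write $v=B(x^sf)$; by Proposition \ref{prop-mapproperty} (or rather its $R^\infty$-refinement corresponding to (\ref{sss-indicial})) $v$ has a polyhomogeneous expansion whose leading term is $x^s I(B)(s)f$ modulo $O(x^{s+1}(\log x)^\bullet)$. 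Applying $P$ in the form (\ref{diff op}): each monomial $C_{m\alpha}(x,y)(x\partial_x)^m(x\partial_y)^\alpha$ sends $x^s u(x,y)$ to $C_{m\alpha}(x,y)\,x^s\big((s+x\partial_x)^m(x\partial_y)^\alpha u\big)$, whose restriction-after-dividing-by-$x^s$ at $x=0$ picks out only $C_{m\alpha}(0,y)s^m$ applied to $u|_{x=0}$; the $x\partial_y$ factors kill the leading term unless $\alpha=0$ because they produce an extra factor of $x$ — wait, more carefully, the $x\partial_x$ gives $s$ on the leading term while $x\partial_y$ annihilates an $x$-independent leading coefficient, so only the $\alpha=0$, pure-$(x\partial_x)$ part survives, reproducing exactly $I(P)(s)=\sum_{m\leq k}C_{m0}(0,y)s^m$ acting on $I(B)(s)f$. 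The error terms are $O(x^{s+1}(\log x)^\bullet)$ and vanish after dividing by $x^s$ and restricting, giving $I(PB)(s)=I(P)(s)I(B)(s)$.

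The main obstacle is the justification of passing the limit (resp. the restriction) through the composition: one must confirm that the intermediate distribution — $A$ applied to a test function, resp. $B$ applied to $x^sf$ — lies in a function space on which the freezing procedure for $P$ is continuous and commutes with taking the limit/restriction. For the normal operator this is where the hypothesis $A\in\Psi_0^{m,E_T,E_B}$ (not merely bounded on Sobolev spaces) is used, via the fact that $N_q(A)$ is itself a $0$-pseudodifferential operator on $F_q$ and the convergence $R^*_\tau\Phi^*A(\Phi^{-1})^*R^*_{1/\tau}\to N_q(A)$ holds in the appropriate operator topology from the exact sequence (\ref{sss-normal}); for the indicial operator the corresponding point is the control on the polyhomogeneous expansion of $B(x^sf)$ furnished by (\ref{sss-indicial}). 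Everything else is the routine coordinate computation sketched above, and I would relegate it to a brief remark rather than spell it out, since it is identical to the scalar case treated in [MM].
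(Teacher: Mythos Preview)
The paper states this lemma without proof, treating it as a standard fact inherited from the Mazzeo--Melrose $0$-calculus \textrm{[MM]}. Your proposal supplies a correct direct verification from the definitions: for the normal operator you use the scaling characterization and the fact that conjugating a $0$-differential operator by the dilation freezes its coefficients at the boundary, while for the indicial operator you read off the leading term of $P$ applied to a polyhomogeneous expansion. Both computations are right, and your remark that only the $\alpha=0$ terms survive in $I(P)$ because each factor of $x\partial_y$ contributes an extra power of $x$ is the correct mechanism. The one point worth tightening is the justification that the limit of the product is the product of the limits in the normal-operator argument; you gesture at this, and indeed the coefficient-wise $C^\infty$ convergence of the conjugated $P$ on compact sets, together with the convergence of $R^*_\tau\Phi^*A(\Phi^{-1})^*R^*_{1/\tau}u$ in the same topology, suffices. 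Since the paper omits the proof entirely, there is no alternative route to compare against; your argument is the expected one.
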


\begin{theorem}\label{thm-construct parametrix}
Let $V$ be a vector bundle over $\overline{M}$ with fibre dimension $N$ and a decomposition $V|_{\partial\overline{M}}=V^1\otimes\cdots\oplus V^r$ at the boundary, for some $1\leq r\leq N$. Suppose that the decomposition can be extended to a neiborhood of $\partial\overline{M}$ up to $O(x^{\Lambda})$ for some $\Lambda\in\mathbb{N}\cup\{\infty\}$ and suppose that $P\in \mathrm{Diff}_0^2 (M;V\otimes\Gamma_0^{\frac{1}{2}})$ is elliptic, i.e., $^0\sigma_2(P)\in S^{2}(^0T^{*}M;End(V)\otimes\Gamma_0^{\frac{1}{2}})$ is invertible. Assume $P$ also has the following properties:
\begin{itemize}
\item[(1)] The indicial operator $I(P)$ is diagonal. For $1\leq k\leq r$, $I(P)(s)|_k=f_k(s)$, where
$f_k(s)=-s^2+ns+c_k$ for some $c_k\geq 0$. Let $s_k\leq s^k$ be
the two indicial roots of $f_k(s)=0$, then $|s_i-s_j|\leq \Lambda$ for $1\leq i,j\leq r$. Moreover,
$$\frac{n}{2}\in(\underline{s},\overline{s})
=\bigcap_{k=1}^r(s_k,s^k)\neq \emptyset.$$

\item[(2)] $\forall
q\in\partial\overline{M}$, the normal operator $N_q(P)$ is invertible, i.e.
 $$N_q(P)^{-1}:\dot{C}^{\infty}(F_q;M(N,\mathbb{C})\otimes\Gamma_0^{\frac{1}{2}})\longrightarrow
\mathscr{A}_{phg}^{E_T,E_B}(F_q;M(N,\mathbb{C})\otimes\Gamma_0^{\frac{1}{2}})$$
 is bounded, where
 $E_T$ and $E_B$ are $r\times r$ matrices satisfying $\mathrm{Condition\ \ref{cond-ET}}$ and
$$[E_T]_{ij}=s^i+1-\delta_{ij}, \quad E_B=E_T^t.$$
\end{itemize}
Then there exists a right parametrix $Q\in \Psi_0^{-m,E_T,E_B}(M;End(V)\otimes\Gamma_0^{\frac{1}{2}})$ such that
$$PQ-Id=R_1\in R^{\infty}\Psi_0^{-\infty,\infty,E_B}(M;End(V)\otimes \Gamma_0^{\frac{1}{2}}).$$
\end{theorem}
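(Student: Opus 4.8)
The plan is to construct $Q$ by the standard three-step iterative symbol/normal/indicial procedure of Mazzeo--Melrose, peeling off one obstruction at a time using the three short exact sequences (\ref{sss-symbol}), (\ref{sss-normal}) and (\ref{sss-indicial}). \emph{Step 1 (symbol inversion).} Since $^0\sigma_2(P)$ is invertible, by the symbol exact sequence (\ref{sss-symbol}) there is $Q_0\in \Psi_0^{-2}(M;End(V)\otimes\Gamma_0^{\frac{1}{2}})$ with $^0\sigma_{-2}(Q_0)=(^0\sigma_2(P))^{-1}$, so that $PQ_0-Id=E_0\in\Psi_0^{-1}(M;End(V)\otimes\Gamma_0^{\frac{1}{2}})$; iterating and asymptotically summing (using the asymptotic completeness of the small calculus) we improve this to $PQ_1-Id=E_1\in\Psi_0^{-\infty}(M;End(V)\otimes\Gamma_0^{\frac{1}{2}})$, i.e.\ the error is smoothing along the diagonal. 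At this stage $\kappa(E_1)$ is a smooth kernel on $\overline{M}\times_0\overline{M}$ vanishing to infinite order at $T$ and $B$, hence $E_1\in R^{\infty}\Psi_0^{-\infty,\infty,\infty}$ is \emph{not} yet small: it is smooth but only order-$0$ at the front face $F$.

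\emph{Step 2 (normal operator inversion).} Restrict $E_1$ to the front faces. By hypothesis (2), $N_q(P)$ is invertible with inverse mapping $\dot C^\infty(F_q)$ into $\mathscr{A}_{phg}^{E_T,E_B}(F_q)$, and by the lemma just stated $N(PA)=N(P)N(A)$ for $A$ in the large calculus. So I set $Q_2=Q_1-Q_1'$ where $Q_1'\in\Psi_0^{-\infty,E_T,E_B}$ is chosen fibrewise by $N_q(Q_1')=N_q(P)^{-1}N_q(E_1)$; this is legitimate precisely because $E_T,E_B$ satisfy Condition \ref{cond-ET} and $[E_T]_{ij}=s^i+1-\delta_{ij}$, $E_B=E_T^t$ come from the indicial roots via hypothesis (1) (so that $\tfrac n2\in(\underline s,\overline s)$ guarantees the relevant $L^2$-type and polyhomogeneity bounds hold). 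Then $N_q(PQ_2-Id)=N_q(E_1)-N_q(P)N_q(Q_1')=0$ for every $q$, so by the normal-operator exact sequence (\ref{sss-normal}) the new error $E_2=PQ_2-Id$ gains a factor of $R$: $E_2\in R\,\Psi_0^{-\infty,E_T,E_B}$.

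\emph{Step 3 (indicial operator inversion and iteration).} Now I improve the order at $T$ by solving away the indicial operator. Because $I(P)$ is diagonal with $I(P)(s)|_k=f_k(s)=-s^2+ns+c_k$ and the relevant boundary exponents $s^i+1$ appearing in $E_T$ are \emph{not} indicial roots (they sit above $\overline s$, away from the $s^i$), $I(P)(s)$ is invertible at those exponents; using (\ref{sss-indicial}) and $I(PB)=I(P)I(B)$ I subtract a correction in $R^\infty\Psi_0^{-\infty,E_T,E_B}$ that kills the leading indicial term, raising the $T$-order by one, from $E_T$ to $E_T+1$. Iterating this over all integer shifts and asymptotically summing, the error at $T$ is pushed to infinite order while the $B$ behavior is unchanged, yielding $Q\in\Psi_0^{-m,E_T,E_B}(M;End(V)\otimes\Gamma_0^{\frac{1}{2}})$ (here $m=2$) with $PQ-Id=R_1\in R^{\infty}\Psi_0^{-\infty,\infty,E_B}(M;End(V)\otimes\Gamma_0^{\frac{1}{2}})$, as claimed.

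The main obstacle is Step 2: one must verify that the fibrewise inverses $N_q(P)^{-1}$ assemble into an honest element of the large calculus $\Psi_0^{-\infty,E_T,E_B}$ with the \emph{correct} index sets at $T$ and $B$ — that is, that the polyhomogeneous expansion of $N_q(P)^{-1}f$ is governed exactly by the indicial roots $s^i$ (at $T$) and $s_i$ (at $B$) with at most the logarithmic terms allowed by the constant $K$, uniformly in $q$, and compatibly with Condition \ref{cond-ET}. This is where the explicit structure of $N_q(P)$ as a constant-coefficient operator on $\mathbb H^{n+1}\cong\mathbb R_+\ltimes\mathbb R^n$ is used: one Mellin-transforms in the fibre variable $s=x/x'$, inverts the resulting $I(P)$-type symbol on vertical lines $\Re=\tfrac n2$, and reads off the expansions by shifting contours past the poles at the indicial roots, the hypotheses (1) and (2) being exactly what make this contour-shift legitimate and the output land in $\mathscr{A}_{phg}^{E_T,E_B}$. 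The remaining steps are then routine bookkeeping with the three exact sequences and asymptotic summation.
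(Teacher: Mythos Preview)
Your three-step strategy is exactly the paper's, but two points need tightening.

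First, in Step~2 you only gain a single power of $R$ at the front face: $E_2\in R\,\Psi_0^{-\infty,E_T,E_B}$. The indicial short exact sequence (\ref{sss-indicial}) is stated for $R^\infty$-residual operators, so Step~3 as written cannot be applied to $E_2$. The paper resolves this by expanding $\kappa(E_1)\sim\sum_jR^j\kappa_j$ with $\kappa_j\in\dot C^\infty(F_q)$ and inverting $N_q(P)$ term-by-term in $j$, then asymptotically summing; this produces $Q_2\in\Psi_0^{-\infty,E_T,E_B}$ with $PQ_2-E_1\in R^\infty\Psi_0^{-\infty,E_T,E_B}$ in one stroke. You should insert this iteration before moving to the indicial step.

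Second, your justification in Step~3 that ``the relevant boundary exponents $s^i+1$ appearing in $E_T$ are not indicial roots'' is not quite right: the \emph{diagonal} entries of $E_T$ are $[E_T]_{ii}=s^i$, which \emph{are} indicial roots, so $I(P)(s^i)|_i=f_i(s^i)=0$ and one cannot invert the indicial operator there. The paper handles this by observing that precisely because $f_i(s^i)=0$, the operator $P$ annihilates the leading $\rho^{s^i}$-term of $Q_2$ at $T$; hence one may replace $Q_2$ by a modified $\widetilde Q_2\in\Psi_0^{-\infty,E_T+Id,E_B}$ with the same $PQ_2$, and the resulting error $\widetilde E_2$ lies in $R^\infty\Psi_0^{-\infty,E_T+Id,E_B}$. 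Now the indicial iteration works, since $I(P)(s^i+1+c)$ is invertible for all $c\in\mathbb N_0$. This shift from $E_T$ to $E_T+Id$ is the missing ingredient in your Step~3.
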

\begin{proof}
Use the symbol calculus and (\ref{sss-symbol}), it is easy to find a first approximation to the parametrix, $Q_1\in \Psi_0^{-m}(M;End(V)\otimes\Gamma_0^{\frac{1}{2}})$ such that
$$PQ_1-Id=E_1\in \Psi_0^{-\infty}(M;End(V)\otimes\Gamma_0^{\frac{1}{2}}).$$
Notice that the Schwartz kernel $\kappa(E_1)$ vanishes to infinite order at the boundary faces $T$ and $B$. Hence
$$\kappa(E_1)\sim\sum_jR^j\kappa_j, \quad \kappa_j\in \dot{C}^{\infty} (B^{n+1};M(N,\mathbb{C}) \otimes\Gamma_0^{\frac{1}{2}}) \in L^2_0(B^{n+1};M(N,\mathbb{C})\otimes\Gamma_0^{\frac{1}{2}}).$$
Since $N(P)$ is invertible, by (\ref{sss-normal}) there exists $Q_2\in\Psi_0^{-\infty,E_T,E_B}$ such that
$$PQ_2-E_1=E_2\in R^{\infty}\Psi_0^{-\infty,E_T,E_B} (M;End(V)\otimes\Gamma_0^{\frac{1}{2}}).$$ 
Notice that $P$ actually kills the leading term of $Q_2$ at boundary $T$, by a modification as in [MM], we can choose $\widetilde{Q}_2\in \Psi_0^{-\infty,E_T+Id,E_B}$. Hence
$$P\widetilde{Q}_2-E_1=\widetilde{E}_2\in R^{\infty}\Psi_0^{-\infty,E_T+Id,E_B} (M;End(V)\otimes\Gamma_0^{\frac{1}{2}}).$$ 
So $\kappa(\widetilde{E}_2)\in \mathscr{A}^{E_T+Id,E_B}$. At last, since $I(P)(E_T+Id+c)$ is invertible for all $c\in \mathbb{N}_0$, we can find $Q_3\in R^{\infty}\Psi_0^{-\infty,E_T+Id,E_B}
(M;End(V)\otimes\Gamma_0^{\frac{1}{2}})$, such that
$$PQ_3-\widetilde{E}_2=E_3\in R^{\infty}\Psi_0^{-\infty,\infty,E_B} (M;End(V)\otimes\Gamma_0^{\frac{1}{2}}).$$
Then
$Q=Q_1+\widetilde{Q}_2+Q_3$.
\end{proof}

\section[]{Gauge Fixing} \label{sec-gauge fix}
In this section, let's study the operator $J_g$ introduced in Section \ref{sec-geom setting} carefully. Recall near the boundary
$$g=x^{-2}(dx^2+g_0+x^2g_2+\cdots+x^{n-1}g_{n-1}+x^ng_n+O(x^{n+1})),$$
and $J_g=\triangle_g+n$ acting on sections of $T^*M$. More naturally, we can view $J_g$ as an operator acting on sections of ${}^0T^*M$.

\subsection{Indicial Operator}
Clearly, ${}^0T^*M|_{\partial \overline{M}}$ has a decomposition with repect to metric $g$, i.e.
$${}^0T^*M|_{\partial \overline{M}}=\mathscr{U}^1\oplus\mathscr{U}^2=span\{\frac{dx}{x}\}\oplus {}^0T^*\partial \overline{M}.$$
Such decomposition can be extended to a neighborhood of the boundary by identifying it with $[0,\epsilon)\times \partial \overline{M}$ in the geodesic normal way, i.e.
$${}^0T^*M|_{[0,\epsilon)\times \partial \overline{M}}=({}^0T^*\partial \overline{M})^{\perp} \oplus{}^0T^*\partial \overline{M},$$
where the $'\perp '$ is defined by the induced metric on ${}^0T^*M$ from $g$. Hence in this case, we can choose $\Lambda=\infty$ in Condition \ref{cond-ET}. The indicial operator corresponding to this
decomposition is
$$I(P)(s)=-s^2+ns+\left[\begin{array}{cc}2n&0\\0&(n+1)\end{array}\right],\quad\forall\ s\in\mathbb{C}.$$
So the two indicial functions corresponding to this decomposition are
$$f_1(s)=-x^2+ns+2n\quad\mathrm{and} \quad f_2(s)=-x^2+ns+n+1$$
with corresponding
indicial roots
$$s_1=\frac{n-\sqrt{n^2+8n}}{2},\ s^1=\frac{n+\sqrt{n^2+8n}}{2}\quad\mathrm{and}\quad \ s_2=-1,\
s^2=n+1.$$ Hence $(\underline{s},\overline{s})=(-1,n+1)$.

\subsection{Normal Operator}
To compute the normal operator, consider the half plane model of hyperbolic space
$$\mathbb{H}^{n+1}=\mathbb{R}^+_x\times\mathbb{R}^n_y\quad \text{with\ metric}\quad r=\frac{dx^2+dy^2}{x^2},\quad dvol_r=\frac{dxdy}{x^{n+1}}.$$
For any $q\in \partial \overline{M}$, $^0T_q^*M$ has a basis $\{\frac{dx}{x},\frac{dy^i}{x},..., \frac{dy^n}{x}\}$, which corresponds to the basis of $^0T^*\mathbb{H}^{n+1}$. Set $e^0=\frac{dx}{x}$ and $e^i=\frac{dy^i}{x}$ for $1\leq i\leq n$. Then
$^0T^*\mathbb{H}^{n+1}=\mathbb{H}^{n+1}\times (\mathbb{C}\oplus\mathbb{C}^n)$ corresponds to the decomposition of $^0T^*M_{\partial \overline{M}}$. The normal operator of $J_g$ can be expressed as
$$N_q(J_g)=-(x\partial_x)^2+nx\partial_x-x^2\sum_{i=1}^{n}\partial_{y^i}^2+
\left[\begin{array}{cc}2n&0\\0&(n+1)Id_n\end{array}\right]
+2x\left[\begin{array}{cccc}0&-\partial_{y^1}&\cdots&
-\partial_{y^n}\\
\partial_{y^1}&0&\cdots
&0\\ \vdots
&\vdots&\cdots&\vdots\\\partial_{y^n}&0&\cdots&0\end{array}\right].$$
On the other hand $N_q(J_g)=J_r=\triangle_r+n$ acting on $^0T^*\mathbb{H}^{n+1}$, so it is self-adjoint.

\begin{lemma}
For any $a\in
(\tfrac{n-\sqrt{n^2+4n}}{2},\tfrac{n+\sqrt{n^2+4n}}{2})$,
$$N_q(J_g):x^{a}H^2_0(\mathbb{H}^{n+1};\mathbb{C}^{n+1})\longrightarrow
x^{a}L^2_0(\mathbb{H}^{n+1};\mathbb{C}^{n+1})$$ is an isomorphism.
\end{lemma}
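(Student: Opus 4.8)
The plan is to deduce the statement from three ingredients: the self-adjointness and strict positivity of $N_q(J_g)$ on the unweighted space, the Fredholm theory for $0$-elliptic operators of [MM] and [Ma], and an elementary weighted integration-by-parts identity. First I would record the unweighted case. As noted above, $N_q(J_g)=J_r=\triangle_r+n$ is the rough (connection) Laplacian plus $n$ on $^0T^*\mathbb{H}^{n+1}\cong\mathbb{H}^{n+1}\times\mathbb{C}^{n+1}$, and it is formally self-adjoint; since $\mathbb{H}^{n+1}$ is complete it is essentially self-adjoint on compactly supported sections, and for $\omega\in H^2_0$ one has $\langle J_r\omega,\omega\rangle_{L^2_0}=\|\nabla\omega\|^2_{L^2_0}+n\|\omega\|^2_{L^2_0}\ge n\|\omega\|^2_{L^2_0}$. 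Hence $J_r\ge n>0$, so $J_r\colon H^2_0(\mathbb{H}^{n+1};\mathbb{C}^{n+1})\to L^2_0(\mathbb{H}^{n+1};\mathbb{C}^{n+1})$ is an isomorphism (with $\|J_r^{-1}\|_{L^2_0\to L^2_0}\le 1/n$, and bounded into $H^2_0$ by interior elliptic regularity). This is the weight $a=0$.

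Next I would invoke the $0$-elliptic Fredholm theory. The operator $J_r\in\mathrm{Diff}^2_0$ is elliptic, with $0$-symbol $|\cdot|^2_g\,\mathrm{Id}$, and by the computation above its indicial roots are $-1,\ n+1$ on the tangential block and $\tfrac{n-\sqrt{n^2+8n}}{2},\ \tfrac{n+\sqrt{n^2+8n}}{2}$ on the normal block. Using $n\ge 3$ one checks $\tfrac{n-\sqrt{n^2+8n}}{2}<-1<\tfrac{n-\sqrt{n^2+4n}}{2}$ and $\tfrac{n+\sqrt{n^2+4n}}{2}<n+1<\tfrac{n+\sqrt{n^2+8n}}{2}$, so the open interval $I=\bigl(\tfrac{n-\sqrt{n^2+4n}}{2},\tfrac{n+\sqrt{n^2+4n}}{2}\bigr)$ contains $0$ and meets no indicial root. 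By [MM], [Ma] the map $J_r\colon x^aH^2_0\to x^aL^2_0$ is then Fredholm for each $a\in I$, and its index is constant on $I$ since no indicial root is crossed; as $J_r$ is an isomorphism at $a=0\in I$, this index is $0$ throughout $I$.

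It remains to prove injectivity of $J_r$ on $x^aH^2_0$ for each $a\in I$. For $a\ge 0$ this is immediate: multiplication by $x^a$ only improves boundary decay, so any $u\in x^aH^2_0$ with $J_ru=0$ already lies in $L^2_0$, whence $0=\langle J_ru,u\rangle=\|\nabla u\|^2_{L^2_0}+n\|u\|^2_{L^2_0}$ gives $u=0$. For $a\in I\cap(-\infty,0)\subset(-1,0)$, I would write $u=x^a\psi$ with $\psi\in H^2_0$ and pair $J_ru=0$ against $x^{-2a}u=x^{-a}\psi$, a legitimate pairing of $x^aL^2_0$ with $x^{-a}L^2_0$; integrating $\langle\triangle_r u,x^{-a}\psi\rangle$ by parts yields $\langle\nabla(x^a\psi),\nabla(x^{-a}\psi)\rangle_{L^2_0}+n\|\psi\|^2_{L^2_0}=0$, and since $|dx|^2_g=x^2$ the cross terms in the first pairing are purely imaginary, so taking real parts gives $\|\nabla\psi\|^2_{L^2_0}+(n-a^2)\|\psi\|^2_{L^2_0}=0$; as $a^2<1\le n$ this forces $\psi=0$, i.e. $u=0$. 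Thus $J_r$ is injective on $x^aH^2_0$ for all $a\in I$, and being Fredholm of index $0$ it is an isomorphism there, which contains the stated interval. (The same argument in fact gives an isomorphism for every $a\in(-1,n+1)$, but this is more than is needed.)

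I expect the only genuinely delicate point to be justifying the integration by parts on the non-compact $\mathbb{H}^{n+1}$ — approximating $\psi$ by compactly supported sections and checking that the boundary contributions vanish, which is exactly where $|a|<1$ together with $x^a\psi\in x^aH^2_0$ and $x^{-a}\psi\in x^{-a}H^2_0$ are used — and correctly citing the index/Fredholm package of the $0$-calculus. By contrast the off-diagonal coupling term in $J_r$ needs no special handling: it is part of the self-adjoint positive operator $\triangle_r+n$ and enters only through the inequality $J_r\ge n$.
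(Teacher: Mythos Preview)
Your proof is correct and in fact more complete than the paper's, but the route is different. The paper gives a one-line positivity estimate: writing an element of $x^aH^2_0$ as $x^au$ with $u\in H^2_0$, it computes (in effect) $\Re\langle x^{-a}J_r(x^au),u\rangle_{L^2_0}\ge \|\nabla u\|^2_{L^2_0}+c(a)\|u\|^2_{L^2_0}$ with $c(a)>0$ on the stated interval, and stops there; surjectivity is not addressed in that proof body but is supplied a few lines later by the explicit Green's function construction. You instead isolate the $a=0$ case, invoke the Mazzeo--Melrose Fredholm package on the ball (legitimately, since the normal operator of $J_r$ at every boundary point is $J_r$ itself, whose $L^2_0$-invertibility is exactly your step~1), deduce index~$0$ across the indicial gap, and then prove injectivity separately for $a\ge0$ (by inclusion) and $a<0$ (by your weighted IBP, which yields $\|\nabla\psi\|^2+(n-a^2)\|\psi\|^2=0$). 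The paper's argument is shorter and avoids citing the Fredholm machinery, at the cost of leaving surjectivity implicit; yours is more structured and self-contained about why the map is onto. One small point worth tidying: your inclusion $x^aH^2_0\subset L^2_0$ for $a\ge0$ needs a bounded boundary defining function, so you are implicitly on the ball compactification, while your IBP computation uses the half-space coordinate $x$ with $|dx|^2_g=x^2$; both are fine, but you should say once which model you are in (or note that the two choices of $x$ differ by a smooth positive factor, so the weighted spaces coincide).
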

\begin{proof}
For each $u\in H^2_0(\mathbb{H}^{n+1};\mathbb{C}^{n+1})$,
$N_q(J_g)(x^{a}u)=a(n-a)x^{a}u+x^{a}N_q(J_g)u$. Hence,
$$\langle N_q(J_g)(x^{a}u),x^{a}u\rangle_{x^{a}L_0^2}
=\langle a(n-a)u+N_q(J_g)u,u\rangle_{L_0^2}\geq \|\nabla
u\|^2_{L_0^2}+(-a^2+na+n)|u|^2_{L_0^2}\geq 0,$$ and equality holds if
and only if $u=0$.
\end{proof}

For any $u\in L_0^2(\mathbb{H}^{n+1},\mathbb{C}^{n+1})$, let $\hat{u}(x,\xi)=\mathscr{F}_{y\rightarrow \xi}(u)(x,\xi)$ be the Fourier Transform with respect to the variable $y$. Then $N_q(J_g)u(x,y)=0$ is reduced to the ordinary differential equation $\widehat{N}_q(J_g)\hat{u}(x,\xi)=0$, where
$$\widehat{N}_q(J_g)=-(x\partial_x)^2+nx\partial_x+x^2|\xi|^2+
\left[\begin{array}{cc}2n&0\\0&(n+1)Id_n\end{array}\right]
+2ix\left[\begin{array}{cccc}0&-\xi^1&\cdots&
-\xi^n\\
\xi^1&0&\cdots &0\\ \vdots &\vdots&\cdots&\vdots\\
\xi^n&0&\cdots&0\end{array}\right].$$
Letting $\bar{x}=x|\xi|$ for $\xi\neq 0$ and $\hat{\xi}=\xi/|\xi|\in S^{n-1}$, then the
equation $\widehat{N}_q(J_g)v(x|\xi|,\hat{\xi})=0$ is equivalent to $\widetilde{N}_q(J_g)v(\bar{x},\xi)=0$, where
$$\widetilde{N}_q(J_g)=-(\bar{x}\partial_{\bar{x}})^2+n\bar{x}\partial_{\bar{x}}
+\bar{x}^2+
\left[\begin{array}{cc}2n&0\\0&(n+1)Id_n\end{array}\right]
+2i\bar{x}\left[\begin{array}{cccc}0&-\hat{\xi}^1&\cdots&
-\hat{\xi}^n\\
\hat{\xi}^1&0&\cdots &0\\ \vdots &\vdots&\cdots&\vdots\\
\hat{\xi}^n&0&\cdots&0\end{array}\right].$$

Let $\Gamma_0^{\frac{1}{2}}(\mathbb{R}^+)$ be the half density bundle over $\mathbb{R}_x^+$ trivialized by $|\frac{dx}{x^{n+1}}|^{\frac{1}{2}}$.
\begin{lemma}\label{lem-no L2 ef for tilde Ng}
For any $\hat{\xi}\in S^{n-1}$, the map
$$\widetilde{N}_q(J_g): H^2_0(\mathbb{R}_{\bar{x}}^+;\mathbb{C}^{n+1}\otimes \Gamma_0^{\frac{1}{2}}(\mathbb{R}^+))
\longrightarrow L^2_0(\mathbb{R}_{\bar{x}}^+;\mathbb{C}^{n+1}\otimes \Gamma_0^{\frac{1}{2}}(\mathbb{R}^+))$$ 
is continuous and positive.
\end{lemma}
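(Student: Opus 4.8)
The plan is to reduce the positivity of $\widetilde N_q(J_g)$ to the positivity already established (in the previous lemma) for the full operator $N_q(J_g)$ on $x^a L^2_0$, together with a direct energy estimate. Since $\widetilde N_q(J_g)$ is just $\widehat N_q(J_g)$ after the rescaling $\bar x = x|\xi|$, which is a unitary change of variable on $L^2_0(\mathbb R^+;\mathbb C^{n+1}\otimes\Gamma_0^{1/2}(\mathbb R^+))$ (the bundle $\Gamma_0^{1/2}(\mathbb R^+)$ is precisely chosen so that scaling acts unitarily), it suffices to prove the claim for $\widetilde N_q(J_g)$ at $\bar x$, i.e. for $|\xi|=1$. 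Continuity is immediate from the fact that $\widetilde N_q(J_g)\in \mathrm{Diff}^2_0(\mathbb R^+;\mathbb C^{n+1}\otimes\Gamma_0^{1/2})$ with coefficients that are bounded together with all their $\bar x\partial_{\bar x}$-derivatives: the term $\bar x^2 + 2i\bar x(\cdots)$ is unbounded as a multiplication operator on $L^2_0$, so one should phrase continuity as a bound $H^2_0 \to L^2_0$ using that on $H^2_0$ the function $\bar x^2 u$ is controlled (this needs a short Hardy-type remark, or simply the observation that $(\bar x\partial_{\bar x})^2$ and $\bar x^2$ both appear and the domain is chosen to make the sum defined — I would state it as: the quadratic form is defined on $H^1_0$ and the operator maps $H^2_0\to L^2_0$).

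For positivity, I would integrate by parts in the variable $\bar x$ against $\bar u$ with the $\Gamma_0^{1/2}(\mathbb R^+)$ pairing, $\langle f_1,f_2\rangle = \int_0^\infty f_1\bar f_2\,\tfrac{d\bar x}{\bar x}$ after trivializing. The key structural point is that the first-order antisymmetric term
\[
2i\bar x\begin{pmatrix}0 & -\hat\xi^j\\ \hat\xi^i & 0\end{pmatrix}
\]
is a Hermitian matrix (the $i$ makes the off-diagonal block $-i\hat\xi^j$ / $i\hat\xi^i$, which are conjugate transposes of each other), so it contributes a real but not necessarily signed term; one bounds it by Cauchy–Schwarz against $\bar x^2|u|^2$ and $\|\partial_{\bar x}u\|^2$. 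Concretely, writing $u=(u^0,u')$ with $u'=(u^1,\dots,u^n)$, the cross term is $4\bar x\,\mathrm{Im}(\bar\xi\cdot u' \,\overline{u^0})\le \varepsilon\bar x^2|u^0|^2 + \varepsilon^{-1}|u'|^2$ or, better, absorb it into the completed square $\bar x^2|u'|^2$ coming from the diagonal $\bar x^2$-term applied to $u'$. Collecting everything, I expect an estimate of the shape
\[
\langle \widetilde N_q(J_g)u,u\rangle \ \ge\ \|\bar x\partial_{\bar x}u\|^2_{L^2_0} + \big(|\bar x u^0|^2 \text{ part}\big) + \big(\text{positive multiple of }|u|^2\big)\ \ge\ 0,
\]
where the constant on $|u|^2$ comes from the matrix $\mathrm{diag}(2n,(n+1)I_n)$ minus whatever is lost in absorbing the cross term and the $n\bar x\partial_{\bar x}$ first-order term (the latter integrates by parts to a multiple of $\|u\|^2$ plus a boundary term that vanishes for $u\in H^2_0$). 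Since $2n,n+1>0$ and the off-diagonal coupling only involves $\bar x$-weighted quantities that pair against the already-present $\bar x^2$ and $\bar x\partial_{\bar x}$ terms, the sum stays nonnegative, with equality forcing $\bar x\partial_{\bar x}u\equiv 0$ and $u\equiv 0$.

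The main obstacle is bookkeeping the cross term: one must check that after completing the square using the genuine positive term $\bar x^2|u'|^2$ available from the diagonal $\bar x^2 Id$ acting on the $u'$-components, the residual coefficient on $|u^0|^2$ (which only gets $\bar x^2$ from nothing — the $(0,0)$ entry of the $\bar x^2 Id$ block) is still covered by the constant $2n$ from the indicial matrix. In other words I need the algebraic inequality, for the symbol at $\bar x$ fixed,
\[
\bar x^2 + \begin{pmatrix}2n&0\\0&(n+1)I_n\end{pmatrix} + 2\bar x\begin{pmatrix}0&-\hat\xi^j\\ \hat\xi^i&0\end{pmatrix}\ \succeq\ 0 \quad \text{as a Hermitian matrix for all }\bar x\ge 0,\ |\hat\xi|=1,
\]
which by a Schur-complement computation reduces to $2n\big((n+1)+\bar x^2\big) \ge (2\bar x)^2$, i.e. $2n(n+1) + (2n-4)\bar x^2\ge 0$; this holds for all $\bar x\ge 0$ precisely because $n\ge 3$ (so $2n-4>0$) and $n\ge 2$ anyway makes the constant term positive. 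That pointwise-in-$\bar x$ Hermitian positivity of the ``potential plus mass'' part, combined with the manifestly nonnegative $\|\bar x\partial_{\bar x}u\|^2$ from the Laplacian and the integration-by-parts identity for the $n\bar x\partial_{\bar x}$ term, gives the positivity; I would present it in that order — unitary rescaling to $|\xi|=1$, integration by parts, the Schur-complement matrix inequality, then assembling the nonnegative sum — and remark that strict positivity (injectivity) follows because the only way to saturate is $u=0$.
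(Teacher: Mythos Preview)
Your approach is correct and reaches the same conclusion as the paper, but the paper's route is considerably simpler. The paper does exactly what you propose up to the cross term, obtaining
\[
\langle \widetilde N_q(J_g)w,w\rangle_{L^2_0}=\|\bar x\partial_{\bar x}w\|^2+\|\bar x w\|^2+2n\|w_0\|^2+(n+1)\sum_{i\geq 1}\|w_i\|^2+\int 2i\bar x\sum_i\hat\xi^i(w_0\bar w_i-w_i\bar w_0)\,\bar x^{-n-1}d\bar x,
\]
and then simply bounds the cross term by $\|w\|^2+\|\bar x w\|^2$ via the elementary inequality $4\bar x|w_0||w_i|\leq |w|^2+\bar x^2|w|^2$ summed over $i$ (using $|\hat\xi|=1$). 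This absorbs cleanly into the existing terms, leaving $\|\bar x\partial_{\bar x}w\|^2+n\|w\|^2$. No Schur complement is needed.

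Your matrix-positivity argument also works, but two remarks: first, your displayed matrix drops the factor $i$ in the off-diagonal block (you had it right in words earlier; it matters only in that the matrix is Hermitian rather than real symmetric, and the Schur complement sees only the squared modulus $4\bar x^2$ either way). Second, the Schur complement of the $(0,0)$ entry actually gives $(\bar x^2+2n)(\bar x^2+n+1)\geq 4\bar x^2$, not $2n((n+1)+\bar x^2)\geq 4\bar x^2$; you silently dropped the $\bar x^2$ from the top-left entry. Fortunately your weaker inequality still holds for $n\geq 2$, so the conclusion survives, but the computation as written is not the correct Schur reduction.

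Your remark that continuity of $\widetilde N_q(J_g):H^2_0\to L^2_0$ is delicate because of the $\bar x^2$ multiplication term is well taken; the paper does not address this and simply asserts continuity, focusing only on positivity in the proof.
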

\begin{proof}
For any $w(\bar{x})=\sum_{i=0}^nw_ie^i \in H^2_0(\mathbb{R}_{\bar{x}}^+;\mathbb{C}^{n+1}\otimes \Gamma_0^{\frac{1}{2}}(\mathbb{R}^+))$,
\begin{eqnarray*}
\langle \widetilde{N}_q(J_g)w,w\rangle_{L^2_0} &=&
\|\bar{x}\partial_{\bar{x}}w\|^2_{L^2_0}+\|\bar{x}w\|_{L^2_0}
+2n\|w_0\|_{L^2_0}+(n+1)\sum_{i=1}^n\|w_i\|_{L^2_0}\\&&
+\int_{\mathbb{R}^+}2ix\sum_{i=1}^n
\hat{\xi}^i(w_0\bar{w}_i-w_i\bar{w}_0)\bar{x}^{-n-1}d\bar{x}\\
&&\geq
\|\bar{x}\partial_{\bar{x}}w\|^2_{L^2_0}+\|\bar{x}w\|_{L^2_0}
+2n\|w_0\|_{L^2_0}+(n+1)\sum_{i=1}^n\|w_i\|_{L^2_0}-\|w\|_{L^2_0}-\|\bar{x}w\|_{L^2_0}\\&&
\geq \|\bar{x}\partial_{\bar{x}}w\|^2_{L^2_0}+n\|w\|_{L^2_0}\geq 0
\end{eqnarray*}
And equality holds if and only if $w=0$.
\end{proof}

\begin{lemma}\label{lem-no L2 ef for hat Ng} 
The map
$$\widehat{N}_q(J_g): H^2_0(\mathbb{R}_x^+;\langle\xi\rangle^{-2}
L^2(\mathbb{R}_{\xi}^n;\mathbb{C}^{n+1})\otimes \Gamma_0^{\frac{1}{2}}(\mathbb{R}^+))
\longrightarrow 
L^2_0(\mathbb{R}_x^+;L^2(\mathbb{R}_{\xi}^n;\mathbb{C}^{n+1})\otimes \Gamma_0^{\frac{1}{2}}(\mathbb{R}^+))$$ is continuous and positive.
\end{lemma}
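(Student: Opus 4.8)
The plan is to reduce the statement to Lemma \ref{lem-no L2 ef for tilde Ng} by decomposing in the dual variable $\xi$ and rescaling, i.e.\ by running in reverse the reduction from $\widetilde{N}_q(J_g)$ to $\widehat{N}_q(J_g)$ carried out above. First note that $\widehat{N}_q(J_g)=\mathscr{F}_{y}\,N_q(J_g)\,\mathscr{F}_y^{-1}$ acts fibrewise in $\xi$: for each fixed $\xi\in\mathbb{R}^n$ it is a second order ordinary differential operator $\widehat{N}_q(J_g)_\xi$ in $x$, with coefficients polynomial in $x$, $x\partial_x$ and $\xi$. By Plancherel in $y\leftrightarrow\xi$ --- which identifies $L^2_0(\mathbb{H}^{n+1};\mathbb{C}^{n+1})$ with $L^2_0(\mathbb{R}^+_x;L^2(\mathbb{R}^n_\xi;\mathbb{C}^{n+1})\otimes\Gamma_0^{\frac{1}{2}}(\mathbb{R}^+))$ and $H^2_0(\mathbb{H}^{n+1};\mathbb{C}^{n+1})$ with the domain in the statement --- continuity of $\widehat{N}_q(J_g)$ on that domain is equivalent to boundedness of $N_q(J_g)\colon H^2_0(\mathbb{H}^{n+1};\mathbb{C}^{n+1})\to L^2_0(\mathbb{H}^{n+1};\mathbb{C}^{n+1})$, which is immediate since $N_q(J_g)\in\mathrm{Diff}_0^2$. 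Moreover, for $u$ in the domain,
\begin{equation*}
\langle\widehat{N}_q(J_g)\hat u,\hat u\rangle_{L^2_0}=\int_{\mathbb{R}^n}\big\langle\widehat{N}_q(J_g)_\xi\,\hat u(\cdot,\xi),\,\hat u(\cdot,\xi)\big\rangle_{L^2_0(\mathbb{R}^+_x)}\,d\xi ,
\end{equation*}
so it suffices to estimate each fibre from below.

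Fix $\xi\neq0$, put $\hat\xi=\xi/|\xi|\in S^{n-1}$ and rescale $\bar x=x|\xi|$. The trivialisation of $\Gamma_0^{\frac{1}{2}}(\mathbb{R}^+)$ by $|dx/x^{n+1}|^{\frac{1}{2}}$ is arranged precisely so that the dilation
\begin{equation*}
D_{|\xi|}\colon\ f(x)\,|dx/x^{n+1}|^{\frac{1}{2}}\ \longmapsto\ |\xi|^{n/2}f(\bar x/|\xi|)\,|d\bar x/\bar x^{n+1}|^{\frac{1}{2}}
\end{equation*}
is a \emph{unitary} isomorphism $L^2_0(\mathbb{R}^+_x;\mathbb{C}^{n+1}\otimes\Gamma_0^{\frac{1}{2}})\to L^2_0(\mathbb{R}^+_{\bar x};\mathbb{C}^{n+1}\otimes\Gamma_0^{\frac{1}{2}})$, which moreover intertwines $x\partial_x$ with $\bar x\partial_{\bar x}$ and hence restricts to a topological isomorphism of the corresponding $H^2_0$ spaces. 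Since conjugation by $D_{|\xi|}$ carries $x\partial_x$ to $\bar x\partial_{\bar x}$, multiplication by $x^2|\xi|^2$ to multiplication by $\bar x^2$, and multiplication by $x\xi^i$ to multiplication by $\bar x\hat\xi^i$, one gets
\begin{equation*}
\widehat{N}_q(J_g)_\xi = D_{|\xi|}^{-1}\,\widetilde{N}_q(J_g)_{\hat\xi}\,D_{|\xi|},
\end{equation*}
where $\widetilde{N}_q(J_g)_{\hat\xi}$ denotes the operator of Lemma \ref{lem-no L2 ef for tilde Ng} with parameter $\hat\xi$.

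Combining this with Lemma \ref{lem-no L2 ef for tilde Ng}, for every $\xi\neq0$ and every $v$ in the fibre domain,
\begin{multline*}
\big\langle\widehat{N}_q(J_g)_\xi v,\,v\big\rangle_{L^2_0(\mathbb{R}^+_x)}
= \big\langle\widetilde{N}_q(J_g)_{\hat\xi}(D_{|\xi|}v),\,D_{|\xi|}v\big\rangle_{L^2_0(\mathbb{R}^+_{\bar x})}\\
\geq \|\bar x\partial_{\bar x}(D_{|\xi|}v)\|^2_{L^2_0}+n\|D_{|\xi|}v\|^2_{L^2_0}
= \|x\partial_x v\|^2_{L^2_0(\mathbb{R}^+_x)}+n\|v\|^2_{L^2_0(\mathbb{R}^+_x)}\ \geq\ 0,
\end{multline*}
the penultimate equality using unitarity of $D_{|\xi|}$ and the intertwining property. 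Since $\{\xi=0\}$ is a null set, integrating this inequality over $\xi$ as above yields $\langle\widehat{N}_q(J_g)\hat u,\hat u\rangle_{L^2_0}\geq\|x\partial_x\hat u\|^2_{L^2_0}+n\|\hat u\|^2_{L^2_0}\geq0$, and equality forces $\hat u(\cdot,\xi)=0$ for almost every $\xi$, hence $u=0$. Together with the continuity noted in the first paragraph, this proves the lemma.

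The argument is essentially bookkeeping, and the only points needing care are: (i) checking that the half density trivialisation really makes $D_{|\xi|}$ an isometry, so that the \emph{scale invariant} lower bound $\|x\partial_x\cdot\|^2_{L^2_0}+n\|\cdot\|^2_{L^2_0}$ provided by Lemma \ref{lem-no L2 ef for tilde Ng} survives the rescaling; and (ii) identifying the weighted space in the statement with $\mathscr{F}_y H^2_0(\mathbb{H}^{n+1};\mathbb{C}^{n+1})$, which is what reduces the continuity claim to the mapping property of the $0$-differential operator $N_q(J_g)$. Uniformity in $\hat\xi$, needed for the continuity bound because the coefficients of $\widetilde{N}_q(J_g)_{\hat\xi}$ depend on $\hat\xi$, is automatic since $\hat\xi$ ranges over the compact sphere $S^{n-1}$.
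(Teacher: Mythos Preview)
Your proof is correct and follows exactly the same approach as the paper's: reduce to Lemma~\ref{lem-no L2 ef for tilde Ng} via the unitary rescaling $\bar x=x|\xi|$ and integrate the resulting fibrewise lower bound over $\xi$. The paper's proof is a terse two-line version of what you have written out in detail.
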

\begin{proof}
For any $w(x,\xi)\in H^2_0(\mathbb{R}^+;\langle\xi\rangle^{-2} L^2(\mathbb{R}^n;\mathbb{C}^{n+1})\otimes \Gamma_0^{\frac{1}{2}}(\mathbb{R}^+))$, by changing variables and last Lemma, we have
$$\langle \widehat{N}_q(J_g)w,w\rangle_{L^2_0L^2}\geq
\|x\partial_{x}w\|^2_{L^2_0L^2}+n\|w\|_{L^2_0L^2}\geq 0$$
and $\langle \widehat{N}_q(J_g)w,w\rangle_{L^2_0L^2}=0$ holds if and only if $w=0$.
\end{proof}
 
According to [CL], the linear ODE $\widetilde{N}_q(J_g)\hat{u}=0$ has a regular singular point at $\bar{x}=0$ and irregular singular point at $\bar{x}=\infty$ and no other singular points. So a formal solution at $\bar{x}=0$ is a true solution and for each formal solution at $\bar{x}=\infty$, there is an actual solution which has asymptotic expansions at $\bar{x}=\infty$ corresponding to it. Hence, to find
all $2N$ linear independent solutions of $\widetilde{N}_q(J_g)\hat{u}=0$, we only need to find out all the
linear independent formal solutions at $\bar{x}=0$. Since such solutions can not blow up at any point between $0$ and $\infty$, we can extend them to the whole half real line $(0,\infty)$. By using the indicial functions $f_i$ and indicial roots $s_i$, $s^i$ defined in Subsection 4.1 for $i=1,2$, the construction of formal solutions at $\bar{x}=0$ is simple as follows.
\begin{itemize}
\item[(1)] Start with $\bar{x}^{s^1}e^0$. Since $f_i(s^1+k)\neq 0$
for all $k\in \mathbb{N}$ and $i=1,2$, so assume
$v^0=\bar{x}^{s^1}e^0+\sum_{k=1}^{\infty}\bar{x}^{s^1+k}w_k$ is a
solution to $\widetilde{N}_q(J_g)\hat{u}=0$ we can solve each function
$w_k$ by induction.
\item[(2)] Start with $\bar{x}^{s_1}e^0$.
Since $f_i(s_1+k)\neq 0$ for all $k\in \mathbb{N}$ and $i=1,2$, as
in $(1)$, there is a solution $v_0=\bar{x}^{s_1}e^0+\sum_{k=1}^{\infty}\bar{x}^{s_1+k}w_k$.
\item[(3)] For each $1\leq i\leq n$, start with
$\bar{x}^{s^2}e^i$. Since $f_i(s^2+k)\neq 0$ for all $k\in
\mathbb{N}$ and $i=1,2$, as in $(1)$, we have a solution $v^i$ of
the form $\bar{x}^{s^2}e^i+\sum_{k=1}^{\infty}\bar{x}^{s^2+k}w_k$.
\item[(4)] For each $1\leq i\leq n$, start with
$\bar{x}^{s_2}e^i$. Note in this case, $f_1(s_2+k)\neq 0$ for all
$k\in \mathbb{N}$ and $f_2(s_2+k)\neq 0$ only if $k\neq n+2$.
Write
$v_i=\bar{x}^{s_2}e^i+\sum_{k=1}^{\infty}\bar{x}^{s_2+k}w_k+\bar{x}^{n+1}\log\bar{x}w$,
we still can solve all $w_k$ and $w$ by induction to makt $v_i$ a solution.
\end{itemize}
On the other hand, the formal solution at $\bar{x}=\infty$ has leading
term either $\bar{x}^{\frac{n}{2}}e^{-\bar{x}}$ or
$\bar{x}^{\frac{n}{2}}e^{\bar{x}}$. By Lemma \ref{lem-no L2 ef for
tilde Ng}, as $\bar{x}\rightarrow \infty$, for $0\leq i\leq n$,
the behavior of solutions constructed above is
$$v_i\sim \bar{x}^{\frac{n}{2}}e^{-\bar{x}},\quad v^i\sim \bar{x}^{\frac{n}{2}}e^{\bar{x}}.$$

Since $\widetilde{N}_q(J_g)$ is self djoint with respect to
$\bar{x}^{-(n+1)}d\bar{x}$, the standard method in [CL] yields the
Green function for $\widetilde{N}_q(J_g)$, i.e.
$$G_1(\bar{x},\bar{x}',\hat{\xi})=U(\bar{x},\hat{\xi})V^*(\bar{x}',\hat{\xi})H(\bar{x}'-\bar{x})
+V(\bar{x},\hat{\xi})U^*(\bar{x}',\hat{\xi})H(\bar{x}-\bar{x}'),$$
where $U(\bar{x},\hat{\xi})$ and $V(\bar{x},\hat{\xi})$ are
$(n+1)\times (n+1)$ matrices whose columns are linear combination
of $v^i$ and $v_i$.
$$\widetilde{N}_q(J_g)G_1(\bar{x},\bar{x}',\hat{\xi}) = \bar{x}^{n+1}\delta(\bar{x}-\bar{x}').$$ 
Moreover,
$$U'(\bar{x},\hat{\xi})V^*(\bar{x}',\hat{\xi})-V'(\bar{x},\hat{\xi})U^*(\bar{x}',\hat{\xi})
=-\frac{1}{2}\bar{x}^{n-1}Id.$$ 
This implies that
$$U(\bar{x},\hat{\xi})=(a_0v^0,a_1v^1,...,a_nv^n),\quad
V(\bar{x},\hat{\xi})=(b_0v_0,b_1v_1,...,b_nv_n),$$ for some
$a_i\neq 0$, $b_i\neq 0$ for $0\leq i\leq n$.

\begin{lemma}
The Green function $G_1(\bar{x},\bar{x}',\hat{\xi})$ induces a bounded integral operator on $L^2(\mathbb{R}_{\bar{x}}^+;\Gamma_0^{\frac{1}{2}}(\mathbb{R}^+))$.
\end{lemma}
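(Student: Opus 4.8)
The plan is to estimate the kernel $G_1$ directly from the asymptotic behavior of its constituents $U$ and $V$ established above, and then invoke a Schur-type test for boundedness on the weighted space $L^2(\mathbb{R}^+_{\bar x};\Gamma_0^{1/2}(\mathbb{R}^+))$. First I would rewrite the half-density formulation explicitly: identifying a section $f$ of $\Gamma_0^{1/2}(\mathbb{R}^+)$ with a function via $f = \tilde f\,|\tfrac{d\bar x}{\bar x^{n+1}}|^{1/2}$, the operator with Schwartz kernel $G_1$ acts by $\tilde f \mapsto \int_0^\infty G_1(\bar x,\bar x',\hat\xi)\,\tilde f(\bar x')\,\bar x'^{-(n+1)}\,d\bar x'$, so that the relevant $L^2$ norm is $\|\tilde f\|^2 = \int_0^\infty |\tilde f|^2\,\bar x^{-(n+1)}\,d\bar x$. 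The substitution $u = \bar x^{-(n+1)/2}\tilde f$ conjugates this to the flat $L^2(\mathbb{R}^+,d\bar x)$, and the kernel becomes $\widetilde G_1(\bar x,\bar x') = \bar x^{-(n+1)/2}\,G_1(\bar x,\bar x')\,\bar x'^{-(n+1)/2}$; it suffices to show this conjugated kernel defines a bounded operator on $L^2(\mathbb{R}^+,d\bar x)$.

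Next I would collect the pointwise bounds. From the Frobenius construction in items (1)--(4), each $v_i$ behaves like $\bar x^{s_2} = \bar x^{-1}$ (or $\bar x^{s_1}$ for the $e^0$ component) as $\bar x\to 0$ and like $\bar x^{n/2}e^{-\bar x}$ as $\bar x\to\infty$; each $v^i$ behaves like $\bar x^{s^2}=\bar x^{n+1}$ (or $\bar x^{s^1}$) as $\bar x\to 0$ and like $\bar x^{n/2}e^{\bar x}$ as $\bar x\to\infty$. Using these, together with the Wronskian-type normalization $U'V^* - V'U^* = -\tfrac12\bar x^{n-1}\,\mathrm{Id}$, I would derive on the region $\bar x' > \bar x$ (where $G_1 = UV^*$ up to the constant matrix factors) the estimate $|\widetilde G_1(\bar x,\bar x')| \lesssim (\bar x/\bar x')^{\delta}$ for small arguments with some $\delta > 0$ coming from the gap between indicial roots, and exponential decay $e^{-(\bar x' - \bar x)}$ for large arguments, with the symmetric estimate on $\bar x < \bar x'$; the powers of $\bar x,\bar x'$ are arranged precisely so that the weight $\bar x^{-(n+1)/2}\bar x'^{-(n+1)/2}$ is absorbed. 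The key qualitative output is that $\widetilde G_1$ is dominated by a kernel of the form $k(\bar x,\bar x') = \min(\bar x/\bar x', \bar x'/\bar x)^{\delta}$ near the diagonal corner at the origin and by an $L^1$-in-difference convolution kernel near infinity.

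Then I would apply Schur's test: it is enough to verify $\sup_{\bar x}\int_0^\infty |\widetilde G_1(\bar x,\bar x')|\,d\bar x' < \infty$ and $\sup_{\bar x'}\int_0^\infty |\widetilde G_1(\bar x,\bar x')|\,d\bar x < \infty$. Splitting each integral at $\bar x' = 1$ and at $\bar x' = \bar x$, the small-argument piece contributes $\int (\bar x/\bar x')^{\delta}\,\tfrac{d\bar x'}{\bar x'} \cdot(\text{bounded})$-type integrals that converge because $\delta > 0$ — here the positivity statements of Lemmas on $\widetilde N_q(J_g)$, which guarantee the operator has no $L^2$ null solution and hence that $v_i$ and $v^i$ genuinely split into decaying and growing solutions at each end, are what rule out a logarithmically divergent borderline case — and the large-argument piece converges by the exponential factor. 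I expect the main obstacle to be bookkeeping the matrix structure: $U$ and $V$ are built from linear combinations of the $v^i$ and $v_i$ with the column normalization $U = (a_0v^0,\dots,a_nv^n)$, $V = (b_0v_0,\dots,b_nv_n)$, so one must check that no off-diagonal coupling in $\widetilde N_q(J_g)$ (the terms with $\hat\xi^i$) produces a cross term with a worse power of $\bar x$ than the diagonal analysis predicts; in particular one must confirm that the logarithmic term $\bar x^{n+1}\log\bar x\,w$ appearing in $v_i$ sits at an order high enough ($\bar x^{n+1}$, i.e. at $s^2$, which is above $n/2$) that it causes no divergence after weighting. Once the uniformity of the bounds in $\hat\xi\in S^{n-1}$ is noted — which follows since all constants depend continuously on $\hat\xi$ over the compact sphere — Schur's test closes the argument.
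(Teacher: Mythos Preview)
The paper states this lemma without proof, so there is no argument to compare against; your strategy---extract pointwise bounds on $G_1$ from the asymptotics of $U$ and $V$ at $0$ and $\infty$, then run a Schur test---is the natural one and is essentially correct.

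There is, however, a slip in the specific bound you write down. After conjugating by $\bar x^{-(n+1)/2}$ to flat $L^2(\mathbb{R}^+, d\bar x)$, the kernel $\widetilde G_1$ is \emph{not} dominated by $(\bar x/\bar x')^\delta$ near the origin: for $\bar x<\bar x'$ both small, the worst matrix block gives
\[
\widetilde G_1 \;\sim\; \bar x^{\,s^2-(n+1)/2}\,\bar x'^{\,s_2-(n+1)/2} \;=\; \bar x^{(n+1)/2}\,\bar x'^{-(n+3)/2},
\]
and along the ray $\bar x=c\bar x'$ this is $c^{(n+1)/2}\bar x'^{-1}\to\infty$, so no pure ratio bound can hold. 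The remedy is either to check directly that these exponents still make $\sup_{\bar x}\int|\widetilde G_1|\,d\bar x'$ finite (they do: the singular piece $\int_{\bar x}^{1}\bar x^{(n+1)/2}\bar x'^{-(n+3)/2}\,d\bar x' = O(1)$), or---more cleanly---to conjugate instead by $\bar x^{-n/2}$ to $L^2(\mathbb{R}^+, d\bar x/\bar x)$. In that picture the kernel against $d\bar x'/\bar x'$ really is $\sim(\bar x/\bar x')^{\,s^i-n/2}$ near the origin, because $s^i+s_i=n$; the substitution $t=\log\bar x$ then turns it into an honest $L^1$ convolution kernel on $\mathbb{R}$, to which Young's inequality applies immediately. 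The large-$\bar x$ regime is handled by the exponential decay $e^{-|\bar x-\bar x'|}$ exactly as you describe. The bookkeeping concerns you raise about off-diagonal matrix entries and the $\bar x^{n+1}\log\bar x$ term in $v_i$ are legitimate but harmless: the off-diagonal entries of $U,V$ enter at one order higher than the diagonal ones, and the logarithmic correction sits at order $\bar x^{s^2}\log\bar x$, far above the leading $\bar x^{s_2}$.
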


\begin{lemma} \label{lem-L2 boundness of G2}
There exists a unique kernel $G_2(x,x',\xi)$ which is bounded on
$L^2_0(\mathbb{R}_x^+;L^2(\mathbb{R}_{\xi}^n;\mathbb{C}^{n+1})\otimes \Gamma_0^{\frac{1}{2}}(\mathbb{R}^+))$ satisfying
$$\widehat{N}_q(J_g)G_2(x,x',\xi)=x^{n+1}\delta(x-x')\otimes Id.$$
\end{lemma}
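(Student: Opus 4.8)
The plan is to obtain $G_2$ from $G_1$ by the rescaling $\bar x=x|\xi|$, exploiting that $\widehat N_q(J_g)$ is, fibrewise in $\xi\ne 0$, unitarily conjugate to $\widetilde N_q(J_g)$. On $\mathbb R^+_x$ with the half‑density trivialisation $|dx/x^{n+1}|^{\frac12}$ the dilations act unitarily on $L^2_0(\mathbb R^+_x;\Gamma_0^{\frac12}(\mathbb R^+))$; let $U_{|\xi|}$ denote the unitarisation of the pullback $v(\bar x)\mapsto v(x|\xi|)$. Since every term of $\widetilde N_q(J_g)$ is built from the scale‑homogeneous quantities $(\bar x\partial_{\bar x})^2,\ \bar x\partial_{\bar x},\ \bar x^2,\ \bar x$ and constants, one has
$$\widehat N_q(J_g)\big|_{\xi}=U_{|\xi|}\,\widetilde N_q(J_g)\big|_{\hat\xi}\,U_{|\xi|}^{-1},\qquad \xi\ne 0 .$$
I therefore define $G_2$ to be the Schwartz kernel of the direct integral over $\xi\ne 0$ of the bounded operators $U_{|\xi|}\,\mathcal G_1(\hat\xi)\,U_{|\xi|}^{-1}$, where $\mathcal G_1(\hat\xi)$ is the integral operator with kernel $G_1(\cdot,\cdot,\hat\xi)$; tracking the change of variables this is $G_2(x,x',\xi)=|\xi|^{-n}G_1(x|\xi|,x'|\xi|,\hat\xi)$ after the natural identification of half‑density bundles. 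Applying $\widehat N_q(J_g)$ and using $\widetilde N_q(J_g)G_1=\bar x^{n+1}\delta(\bar x-\bar x')$ together with $\delta(\bar x-\bar x')=|\xi|^{-1}\delta(x-x')$ gives $\widehat N_q(J_g)G_2=x^{n+1}\delta(x-x')\otimes Id$; equivalently, conjugating the identity operator by $U_{|\xi|}$ returns the identity.

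For boundedness, the key point is that each fibre operator $U_{|\xi|}\,\mathcal G_1(\hat\xi)\,U_{|\xi|}^{-1}$ has operator norm on $L^2_0(\mathbb R^+_x;\Gamma_0^{\frac12}(\mathbb R^+))$ equal to $\|\mathcal G_1(\hat\xi)\|$, which is finite for each $\hat\xi$ by the preceding lemma and, from the explicit construction of $G_1$ out of the matrices $U,V$ and the nonzero constants $a_i,b_i$ (all depending continuously on $\hat\xi$), depends continuously on $\hat\xi\in S^{n-1}$. Hence $\sup_{\hat\xi\in S^{n-1}}\|\mathcal G_1(\hat\xi)\|<\infty$ by compactness of $S^{n-1}$. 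Since $\widehat N_q(J_g)$ and $G_2$ act fibrewise in $\xi$ — they are direct integrals $\int^{\oplus}_{\xi}(\,\cdot\,)|_\xi\,d\xi$ over $L^2_0(\mathbb R^+_x;\Gamma_0^{\frac12}(\mathbb R^+))$ — the operator $G_2$ is bounded on $L^2_0(\mathbb R^+_x;L^2(\mathbb R^n_\xi;\mathbb C^{n+1})\otimes\Gamma_0^{\frac12}(\mathbb R^+))$ with norm $\le\sup_{\hat\xi}\|\mathcal G_1(\hat\xi)\|$. The null set $\xi=0$ is irrelevant for the $L^2$ statement; if one wishes, $\widehat N_q(J_g)|_0$ is the indicial operator $-(x\partial_x)^2+nx\partial_x+\mathrm{diag}(2n,(n+1)Id_n)$, which is invertible on $L^2_0$ since $c_1=2n,\ c_2=n+1>0$.

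For uniqueness, if $\widetilde G_2$ is another bounded kernel with $\widehat N_q(J_g)\widetilde G_2=x^{n+1}\delta(x-x')\otimes Id$, then $K:=G_2-\widetilde G_2$ is bounded and $\widehat N_q(J_g)K=0$. For $f$ in the domain, $Kf\in L^2_0$ solves $\widehat N_q(J_g)(Kf)=0$; because $\widehat N_q(J_g)$ is, fibrewise in $\xi$, a second order ODE in $x$ with a regular singular point at $x=0$ and an irregular singular point at $x=\infty$, a Coddington–Levinson type regularity argument places $Kf$ in $H^2_0(\mathbb R^+_x;\langle\xi\rangle^{-2}L^2(\mathbb R^n_\xi;\mathbb C^{n+1})\otimes\Gamma_0^{\frac12}(\mathbb R^+))$, and then the positivity, hence injectivity, in Lemma \ref{lem-no L2 ef for hat Ng} forces $Kf=0$. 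Hence $G_2=\widetilde G_2$.

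The main obstacle I anticipate is not a single step but the bookkeeping: making the unitary equivalence $\widehat N_q(J_g)|_\xi\cong\widetilde N_q(J_g)|_{\hat\xi}$ precise in the half‑density framework so that the scaling is genuinely an isometry — ensuring the $|\xi|$–dependence drops out of every norm estimate — and checking that the construction of $G_1$ is continuous in $\hat\xi\in S^{n-1}$ uniformly down to $\bar x=0$ and out to $\bar x=\infty$; this is where the asymptotics $v_i\sim\bar x^{\frac n2}e^{-\bar x}$, $v^i\sim\bar x^{\frac n2}e^{\bar x}$ and the nonvanishing of the coefficients $a_i,b_i$ enter. Once this is in place, the direct‑integral packaging and the uniqueness argument are routine.
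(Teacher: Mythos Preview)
Your proposal is correct and follows essentially the same route as the paper: define $G_2(x,x',\xi)=|\xi|^{-n}G_1(x|\xi|,x'|\xi|,\hat\xi)$, use that the dilation $x\mapsto x|\xi|$ is an isometry on $L^2_0(\mathbb R^+;\Gamma_0^{1/2}(\mathbb R^+))$ to transfer the boundedness of $G_1$ fibrewise, and invoke Lemma~\ref{lem-no L2 ef for hat Ng} for uniqueness. Your direct-integral/unitary-conjugation framing and the explicit compactness argument for $\sup_{\hat\xi}\|\mathcal G_1(\hat\xi)\|<\infty$ make the uniformity in $\hat\xi$ more transparent than the paper's terse use of a single constant $c$, but the underlying argument is identical.
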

\begin{proof}
Let $$G_2(x,x',\xi)=|\xi|^{-n}G_1(x|\xi|,x'|\xi|,\hat{\xi})$$
Since the 1-dimensional $\delta$ function is of homogeneity $-1$,
it is easy to check that
$$\widehat{N}_q(J_g)G_2(x,x',\hat{\xi})
=x^{n+1}\delta(x-x')\otimes Id.$$ Note for any $a>0$, the map
$$f(x)\longrightarrow f_a(x)=a^{\frac{n}{2}}f(a^{-1}x)$$
is an isometry on $L^2_0(\mathbb{R}^+;\Gamma_0^{\frac{1}{2}}(\mathbb{R}^+))$. Therefore, for
any $u(x,\xi)\in L^2_0(\mathbb{R}^+;L^2(\mathbb{R}^n;\mathbb{C}^{n+1})\otimes \Gamma_0^{\frac{1}{2}}(\mathbb{R}^+))$,
\begin{eqnarray*}
&&\int_{\mathbb{R}^+}\left|\int_{\mathbb{R}^+}G_2(x,x',\xi)
u(x',\xi))\frac{dx'}{x'^{n+1}}\right|^2 \frac{dx}{x^{n+1}} \\&=&
\int_{\mathbb{R}^+}\left|\int_{\mathbb{R}^+}G_1(\bar{x},\bar{x}',\xi)
|\xi|^{\frac{n}{2}}u(|\xi|^{-1}\bar{x}',\xi))\frac{d\bar{x}'}{\bar{x}'^{n+1}}\right|^2
\frac{d\bar{x}}{\bar{x}^{n+1}}\\ & \leq & c\|
|\xi|^{\frac{n}{2}}u(|\xi|^{-1}x,\xi)\|^2_{L_0^2}=c\|u\|^2_{L_0^2}.
\end{eqnarray*}
Integrating the above inequality over $\mathbb{R}^n_{\xi}$, gives the $L_0^2(\mathbb{H}^{n+1};\mathbb{C}^{n+1}\otimes \Gamma_0^{\frac{1}{2}})$-boundness of $G_2(x,x',\xi)$. The uniqueness follows directly from Lemma \ref{lem-no L2 ef for
hat Ng}, i.e. there is no $L_0^2$ null space.
\end{proof}

\begin{proposition}
The operator $N_q(J_g)=J_r:H_0^2(\mathbb{H}^{n+1};\mathbb{C}^{n+1}\otimes \Gamma_0^{\frac{1}{2}})\longrightarrow
L_0^2(\mathbb{H}^{n+1};\mathbb{C}^{n+1}\otimes \Gamma_0^{\frac{1}{2}})$ is invertible with Green
function 
\begin{equation}\label{eq-green fuction for NJg}
G(x,x',y,y')=\frac{1}{(2\pi)^n}\int_{\mathbb{R}^n}
|\xi|^{-n}G_1(x|\xi|,x'|\xi|,\hat{\xi})e^{i\xi(y-y')}d\xi.
\end{equation}
Moreover, $G\in
\Psi_0^{-2,E_T,E_B}(\mathbb{H}^{n+1};M(n+1,\mathbb{C})\otimes
\Gamma_0^{\frac{1}{2}})$, where
$$E_T=\left[\begin{array}{cc}\frac{n+\sqrt{n^2+8n}}{2}& n+2\\
\frac{n+\sqrt{n^2+8n}}{2}+1&n+1\end{array}\right],\quad
E_B=E_T^{\ast}=\left[\begin{array}{cc}\frac{n+\sqrt{n^2+8n}}{2}&
\frac{n+\sqrt{n^2+8n}}{2}+1\\n+2&n+1\end{array}\right].$$
\end{proposition}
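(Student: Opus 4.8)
The plan is to establish the three assertions of the proposition in turn: that $N_q(J_g)=J_r$ is invertible on $L^2_0$, that its inverse is the integral operator whose Schwartz kernel is $G$ given by (\ref{eq-green fuction for NJg}), and that this kernel lies in $\Psi_0^{-2,E_T,E_B}$ with the stated index matrices. The first two steps are short and rely only on the lemmas already proved; all of the real work is in the third.

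\emph{Invertibility and identification of the inverse.} The operator $N_q(J_g)=J_r=\triangle_r+n$ on $^0T^*\mathbb{H}^{n+1}$ is formally self-adjoint with respect to $dvol_r$, and Lemma \ref{lem-no L2 ef for hat Ng} together with Plancherel in $\xi$ gives the uniform lower bound $\langle J_r u,u\rangle_{L^2_0}\geq n\|u\|_{L^2_0}^2$ for $u\in H^2_0$. Hence $J_r\colon H^2_0(\mathbb{H}^{n+1};\mathbb{C}^{n+1}\otimes\Gamma_0^{\frac12})\to L^2_0(\mathbb{H}^{n+1};\mathbb{C}^{n+1}\otimes\Gamma_0^{\frac12})$ is injective with closed range, and self-adjointness forces the range to be everything; so $J_r^{-1}$ exists as a bounded map $L^2_0\to H^2_0$. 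To identify it, conjugate by the partial Fourier transform $\mathscr{F}_{y\to\xi}$, which intertwines $J_r$ with the fibre family $\widehat N_q(J_g)$ in the $x$-variable. By Lemma \ref{lem-L2 boundness of G2} the kernel $G_2(x,x',\xi)=|\xi|^{-n}G_1(x|\xi|,x'|\xi|,\hat\xi)$ defines a bounded operator on $L^2_0(\mathbb{R}^+_x;L^2(\mathbb{R}^n_\xi;\mathbb{C}^{n+1})\otimes\Gamma_0^{\frac12}(\mathbb{R}^+))$ solving $\widehat N_q(J_g)G_2=x^{n+1}\delta(x-x')\otimes Id$; therefore $\mathscr{F}^{-1}\circ(G_2\,\cdot\,)\circ\mathscr{F}$ is a bounded right inverse of $J_r$ on $L^2_0(\mathbb{H}^{n+1})$, hence equals $J_r^{-1}$ by injectivity, and unwinding the Fourier transform shows its Schwartz kernel is exactly $G(x,x',y,y')$ as in (\ref{eq-green fuction for NJg}). (Theorem \ref{thm-construct parametrix} only yields a parametrix with residual error, so the explicit inverse really is needed here.)

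\emph{Membership in the large $0$-calculus.} It remains to locate $\kappa(G)=G$ on $\overline{\mathbb{H}^{n+1}}\times_0\overline{\mathbb{H}^{n+1}}$, region by region. Near the lifted diagonal, ellipticity of $J_r$ --- invertibility of $^0\sigma_2(J_r)$ --- with the exact sequence (\ref{sss-symbol}), equivalently the jump of $G_1$ across $\bar x=\bar x'$, shows $G$ has a classical conormal singularity of order $-2$ along $diag_0(\mathbb{H}^{n+1})$ and is otherwise smooth there. At the front face $F$ the substitution $\bar x=x|\xi|$, $\bar x'=x'|\xi|$ and the $|\xi|^{-n}$ homogeneity of $G_2$ show $G$ extends smoothly up to $F$; indeed $J_r$ is its own normal operator, so $\kappa(G)|_{F_q}$ is again the kernel of $J_r^{-1}$, smooth in the interior of $F_q$ and rapidly vanishing at $F_q\cap T$, $F_q\cap B$ off the diagonal. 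Finally, at the side faces $T$ and $B$ one uses the factorization $G_1=U(\bar x)V^*(\bar x')H(\bar x'-\bar x)+V(\bar x)U^*(\bar x')H(\bar x-\bar x')$: near $T$, where $x\to 0$ with $x'$ bounded below, $G$ is governed by $U(\bar x)V^*(\bar x')$, and the asymptotics of the columns of $U$ built in steps (1)--(4) of Subsection 4.2 --- $v^0\sim\bar x^{s^1}e^0$ on the $\mathscr{U}^1$-block, $v^i\sim\bar x^{s^2}e^i$ on the $\mathscr{U}^2$-block with $s^2=n+1$, and the first-order skew coupling $2\bar x[\,\cdots\,]$ in $\widetilde N_q(J_g)$ forcing a one-power-higher companion term in the complementary block --- produce exactly the polyhomogeneous expansion encoded by $E_T$ (the $\mathscr{U}^1$-row carrying the exponents $s^1$ and $n+2=s^2+1$, the $\mathscr{U}^2$-row carrying $s^1+1$ and $n+1$); the term $\bar x^{n+1}\log\bar x$ occurring in $v_i$ is absorbed by the $(\log)^j$ freedom in the definition of $\mathscr{A}_{phg}$. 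The analysis at $B$ is identical with $\bar x,\bar x'$ interchanged, and self-adjointness of $J_r$ gives $E_B=E_T^*$; Condition \ref{cond-ET} holds automatically because the decomposition of $^0T^*M$ extends to all orders on $\mathbb{H}^{n+1}$, i.e. $\Lambda=\infty$.

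\emph{Main obstacle.} The delicate step is the last one: pinning down the exact index matrix $E_T$, in particular verifying that the first-order coupling produces precisely the one-power off-diagonal shift and no spurious lower-order terms, and keeping track of the logarithmic contributions. If instead one prefers the softer route of applying Theorem \ref{thm-construct parametrix} to $P=J_g$ --- its hypotheses being exactly Subsections 4.1--4.2 and the positivity lemmas above --- to obtain $Q\in\Psi_0^{-2,E_T,E_B}$ with $J_rQ-Id=R_1\in R^\infty\Psi_0^{-\infty,\infty,E_B}$ and then writing $G=Q-GR_1$, the same difficulty resurfaces as the task of showing, via the composition and mapping properties of Propositions \ref{prop-map property of 0pseudo}--\ref{prop-mapproperty}, that $GR_1\in\Psi_0^{-\infty,E_T,E_B}$ --- i.e. that the infinite-order vanishing of $\kappa(R_1)$ at $T$ and $F$ prevents the correction from degrading the top-face behaviour.
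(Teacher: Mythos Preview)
Your proof is correct and follows essentially the same route as the paper's: invertibility from self-adjointness and the positivity bound (which the paper had already recorded in the lemma immediately preceding Lemma~\ref{lem-no L2 ef for tilde Ng}), identification of the inverse via the partial Fourier transform and Lemma~\ref{lem-L2 boundness of G2}, and the face asymptotics by reading off the small-$\bar x$ behaviour of the columns of $U(\bar x,\hat\xi)$ in the ODE factorisation of $G_1$. The paper's proof simply defers this last step to Mazzeo~[Ma], while you spell out the mechanism (the diagonal indicial roots plus the one-power shift from the first-order off-diagonal coupling), which is exactly what happens in~[Ma].

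One small inaccuracy worth flagging: the logarithmic term $\bar x^{n+1}\log\bar x$ you invoke lives in $v_i$ (lower index, $i\geq 1$), i.e.\ in the columns of $V$, not of $U$. In the factorisation $G_1=U(\bar x)V^*(\bar x')H(\bar x'-\bar x)+V(\bar x)U^*(\bar x')H(\bar x-\bar x')$ it is only $U$ that is evaluated at the vanishing variable as one approaches either $T$ or $B$, so the logarithms never actually enter the side-face expansions. The paper notes this explicitly just after the two corollaries following the proposition, concluding that one may take $K=0$ in the polyhomogeneous spaces here. Your argument is unaffected, but the appeal to the $(\log)^j$ freedom in $\mathscr{A}_{phg}$ is not needed.
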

\begin{proof}
The green function
constructed as (\ref{eq-green fuction for NJg}) satisfies
$$N_q(J_g)G(x,x',y,y')=x^{n+1}\delta(x-x')\delta(y-y').$$
The $L^2_0(\mathbb{H}^{n+1};{}^0T^*\mathbb{H}^{n+1}\otimes \Gamma_0^{\frac{1}{2}})$-boundness of
$G$ follows directly from Lemma \ref{lem-L2 boundness of G2}. The
behavior of $G$ at the boundary $T$ and $B$ is obtained by computing
the Fourier Transform (\ref{eq-green fuction for NJg}) based on
asymptotic analysis of $G_2(x,x',\xi)$. We can follow the proof
given by Mazzeo in [Ma] step by step.
\end{proof}

Notice that the front face can be thought of as the ball $B^{n+1}$
blown up a point on the boundary. We can change all the discussion
above for $\mathbb{H}^{n+1}$ to the ball model $B^{n+1}$. Let
$e_1=(s^1,s^1+1)^t$ and $e_2=(s^2+1,s^2)^t$ corresponding to the
$i$-th column of $E_T$ for $i=1,2$.

\begin{corallary}
The map $J_r^{-1}:\dot{C}^{\infty}(\overline{B}^{n+1};\mathbb{C}^{n+1}\otimes
\Gamma_0^{\frac{1}{2}})\longrightarrow \sum_{i=1}^2
\mathscr{A}_{phg}^{e_i}(\overline{B}^{n+1};\mathbb{C}^{n+1}\otimes
\Gamma_0^{\frac{1}{2}})$ is continuous.
\end{corallary}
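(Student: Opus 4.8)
The plan is to read the statement off from the mapping property of Proposition \ref{prop-mapproperty}, once $J_r^{-1}$ is recognized --- on the ball model --- as an operator in the large $0$-calculus with the index matrices $E_T,E_B$ computed above. First I would pass from the half-space picture $\mathbb{H}^{n+1}$ used in the preceding subsections to the ball $\overline{B}^{n+1}$: the two carry isometric hyperbolic metrics, and, as remarked just before the statement, this identification presents the front face $F_q$ of $\overline{M}\times_0\overline{M}$ as $\overline{B}^{n+1}$ with a single boundary point blown up and transports the Green function $G$ of $N_q(J_g)=J_r$, together with its boundary behaviour at $T$, $B$ and $F$, to the ball. Since the decomposition $\mathscr{U}^1\oplus\mathscr{U}^2$ of ${}^0T^*M|_{\partial\overline{M}}$ extends smoothly along the geodesic collar, one may take $\Lambda=\infty$, so Condition \ref{cond-ET} is vacuous, and by the preceding Proposition $J_r$ is invertible with $J_r^{-1}$ the operator whose Schwartz kernel is $G\in\Psi_0^{-2,E_T,E_B}(\overline{B}^{n+1};M(n+1,\mathbb{C})\otimes\Gamma_0^{\frac{1}{2}})$.

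Next I would apply Proposition \ref{prop-mapproperty} with $A=J_r^{-1}$. Here $r=2$, and from the explicit form of $E_T$ above, using $s^1=\tfrac{n+\sqrt{n^2+8n}}{2}$ and $s^2=n+1$, one reads off $[E_T]_{\bullet 1}=(s^1,s^1+1)^t=e_1$ and $[E_T]_{\bullet 2}=(s^2+1,s^2)^t=e_2$, which are exactly the index vectors in the statement. Hence the proposition already gives continuity of $J_r^{-1}:\dot{C}^{\infty}(\overline{B}^{n+1};\mathbb{C}^{n+1}\otimes\Gamma_0^{\frac{1}{2}})\longrightarrow\sum_{i=1}^2\mathscr{A}^{e_i}(\overline{B}^{n+1};\mathbb{C}^{n+1}\otimes\Gamma_0^{\frac{1}{2}})$.

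It then remains only to sharpen $\mathscr{A}^{e_i}$ to $\mathscr{A}_{phg}^{e_i}$. For this I would split $G$ along the definition $\Psi_0^{-2,E_T,E_B}=\Psi_0^{-2}+\Psi_0^{-\infty,E_T,E_B}$. The part in $\Psi_0^{-2}$ has kernel vanishing to infinite order at $T$ and $B$ and conormal only along the diagonal, hence maps $\dot{C}^{\infty}$ into $\dot{C}^{\infty}\subset\sum_i\mathscr{A}_{phg}^{e_i}$; the residual part has, by definition of $\Psi_0^{-\infty,E_T,E_B}$, a kernel in $\mathscr{A}_{phg}^{E_T,E_B}$, so applying it to $f\in\dot{C}^{\infty}$ is the pushforward of a polyhomogeneous half-density along the left projection $\overline{B}^{n+1}\times_0\overline{B}^{n+1}\to\overline{B}^{n+1}$ (a b-fibration), in which the lift of $f$ vanishes to infinite order at $B$ and $F$; by the pushforward theorem for polyhomogeneous conormal distributions the result is polyhomogeneous at $\partial\overline{B}^{n+1}$ with index set governed solely by the exponents of $G$ at $T$, namely the columns $e_1,e_2$ of $E_T$. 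Adding the two contributions gives the asserted continuity into $\sum_{i=1}^2\mathscr{A}_{phg}^{e_i}$. The only step with genuine analytic content is the polyhomogeneity of the kernel $G$ --- of $G_2(x,x',\xi)$ and of the Fourier integral (\ref{eq-green fuction for NJg}) in the manner of [Ma] --- which is already established in the proof of the preceding Proposition; so I expect no serious obstacle here beyond careful bookkeeping of the index sets.
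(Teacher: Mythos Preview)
Your approach is correct and matches the paper's: the corollary is stated without proof, as an immediate consequence of the preceding Proposition (showing $G\in\Psi_0^{-2,E_T,E_B}$) together with the passage from $\mathbb{H}^{n+1}$ to the ball model noted just before the statement. Your explicit justification of the upgrade from $\mathscr{A}^{e_i}$ to $\mathscr{A}_{phg}^{e_i}$ via the pushforward theorem is more detailed than what the paper provides; the paper instead remarks (just after the next corollary) that the fundamental solution $U(\bar{x},\hat{\xi})$ involves no logarithmic terms, so $K=0$ in (\ref{defn-Aphg}) and the conormal spaces here are already smooth---this is the concrete reason behind the polyhomogeneity you obtain abstractly.
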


Identifying $F_q$ with $B^{n+1}$ blown up $p\in
\partial B^{n+1}$, where $\rho$ corresponds to the original
defining function of $B^{n+1}$ and $\rho'$ is corresponds to the
new defining function generated by the blow-up. A similar analysis
as in the proof of Proposition $6.19$ in [MM] shows that

\begin{corallary}
The map $N_q(J_g)^{-1}:\dot{C}^{\infty}(\overline{F_q};M(n+1,\mathbb{C})\otimes
\Gamma_0^{\frac{1}{2}})\longrightarrow
\mathscr{A}_{phg}^{E_T,E_B}(\overline{F_q};M(n+1,\mathbb{C})\otimes
\Gamma_0^{\frac{1}{2}})$ is continuous.
\end{corallary}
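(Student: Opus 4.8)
The plan is to transfer the analysis already carried out for $N_q(J_g)$ acting on the half-space model $\mathbb{H}^{n+1}$ to the front face $F_q$, which is $B^{n+1}$ blown up at a boundary point $p$. First I would recall that the interior of $F_q$ is isometric to hyperbolic space, so the mapping property of $N_q(J_g)^{-1}$ on $\dot{C}^\infty$ has already been essentially established in the previous corollary for the ball model; what remains is to understand the asymptotic behavior of the Green function $G$ near the two boundary hypersurfaces $F_q\cap T$ (defining function $\rho$) and $F_q\cap B$ (defining function $\rho'$), and to verify that this behavior is polyhomogeneous with the exponents recorded in $E_T, E_B$. Concretely, I would use the explicit formula (\ref{eq-green fuction for NJg}) together with the asymptotic analysis of $G_2(x,x',\xi)=|\xi|^{-n}G_1(x|\xi|,x'|\xi|,\hat\xi)$ as $\bar{x}\to 0$ and $\bar{x}\to\infty$: the formal solutions $v_i,v^i$ constructed in Subsection 4.2 (together with the $\bar{x}^{n+1}\log\bar{x}$ term appearing in the $v_i$ for $1\le i\le n$) dictate that the columns of $G$ carry exactly the leading powers $\rho^{s^1}, \rho^{s^1+1}$ and $\rho^{s^2+1}, \rho^{s^2}$, i.e. the columns $e_1=(s^1,s^1+1)^t$ and $e_2=(s^2+1,s^2)^t$, and symmetrically in $\rho'$ since $\widetilde N_q(J_g)$ is self-adjoint, giving $E_B=E_T^\ast$.

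The key steps, in order, are: (i) identify $F_q\cong B^{n+1}$ blown up at $p\in\partial B^{n+1}$, with $\rho$ the lift of the original defining function of $B^{n+1}$ and $\rho'$ the new defining function created by the blow-up; (ii) observe that away from $F_q\cap T$ the operator $N_q(J_g)$ is (a conjugate of) $J_r$ on hyperbolic space and the previous corollary already gives continuity of $J_r^{-1}$ into $\sum_i\mathscr A_{phg}^{e_i}$, so the only new point is the behavior at the new face $F_q\cap B$; (iii) use the Fourier-integral representation (\ref{eq-green fuction for NJg}) and carry out the stationary-phase / Mellin-type asymptotic expansion of the $\xi$-integral exactly as in the proof of Proposition 6.19 of [MM] (and the analogous argument of Mazzeo in [Ma]), tracking how the $|\xi|\to 0$ and $|\xi|\to\infty$ regimes of $G_2$ produce the polyhomogeneous expansions at $\rho=0$ and $\rho'=0$ respectively, including the logarithmic term coming from the resonance $f_2(s_2+n+2)=0$; (iv) check that the resulting exponent matrices satisfy Condition \ref{cond-ET} — which holds here since $\Lambda=\infty$ as noted in Subsection 4.1 — and hence that $N_q(J_g)^{-1}\in\Psi_0^{-2,E_T,E_B}$ restricted to $F_q$ maps $\dot{C}^\infty(\overline{F_q})$ continuously into $\mathscr A_{phg}^{E_T,E_B}(\overline{F_q})$.

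I expect the main obstacle to be step (iii): the uniform-in-$\xi$ control of the one-dimensional Green function $G_1(\bar{x},\bar{x}',\hat\xi)$ and of its dependence on $\hat\xi\in S^{n-1}$, so that the Fourier transform in $\xi$ can be performed and shown to yield a genuinely polyhomogeneous (rather than merely conormal) kernel on the blown-up space, with no loss in the logarithmic orders. This is exactly the technical heart of the Mazzeo–Melrose construction, and here it is only mildly complicated by the fact that $N_q(J_g)$ is a system (the $(n+1)\times(n+1)$ coupling via the first-order term $2i\bar x[\cdots]$) rather than a scalar operator; the positivity established in Lemmas \ref{lem-no L2 ef for tilde Ng} and \ref{lem-no L2 ef for hat Ng} guarantees there is no $L^2_0$ nullspace, so the Green function is unique and the asymptotic exponents are forced. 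Once this is in place, the corollary follows by the same formal bookkeeping as Proposition 6.19 in [MM], and I would simply cite that argument rather than reproduce it.
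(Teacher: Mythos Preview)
Your proposal is correct and takes essentially the same approach as the paper: identify $F_q$ with $B^{n+1}$ blown up at a boundary point, invoke the previous corollary for the original boundary face, and then cite the analysis of Proposition~6.19 in [MM] to handle the new face created by the blow-up. The paper's own argument is in fact just the one-sentence reference to [MM] that you give in your final step.

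One small correction: you flag the $\bar{x}^{n+1}\log\bar{x}$ term in the $v_i$ (for $1\le i\le n$) as something to track in the polyhomogeneous expansion. In fact the boundary asymptotics of the Green function at both $F_q\cap T$ and $F_q\cap B$ are governed by the matrix $U$ (built from the $v^i$, the solutions with the \emph{larger} indicial roots $s^i$), not by $V$; since none of the $v^i$ carry logarithmic terms, the paper observes immediately after this corollary that one may take $K=0$ in the definition of $\mathscr{A}_{phg}$. So the log term you mention is present in the fundamental system but does not appear in the $E_T,E_B$ asymptotics, and there is no ``loss in the logarithmic orders'' to worry about in step~(iii).
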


Notice that the foundamental solution $U(\bar{x},\hat{\xi})$ does not involve any logrithm terms. Hence in this case, we can take the constant $K=0$ in the definition of (\ref{defn-Aphg}), i.e. all the spaces $\mathscr{A}_{phg}^{0}(\overline{M})$ used in this section can actually be replaced by $C^{\infty}(\overline{M})$.

\subsection{Parametrix}
\begin{proposition}
The operator $J_g: H^2_0(M;{}^0T^*M\otimes \Gamma_0^{\frac{1}{2}})\longrightarrow
L^2_0(M;{}^0T^*M\otimes \Gamma_0^{\frac{1}{2}})$ is self adjoint, positive and thus invertible.
\end{proposition}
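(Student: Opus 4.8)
The plan is to establish the three claimed properties of $J_g = \triangle_g + n$ acting on $H^2_0(M;{}^0T^*M\otimes\Gamma_0^{\frac{1}{2}})$ in the order: self-adjointness, positivity, invertibility, with the last being deduced from the first two together with the parametrix construction of Section~\ref{sec-0calculus}.

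For self-adjointness, I would first note that $\triangle_g$ is the connection (Bochner) Laplacian of the metric $g$ on sections of ${}^0T^*M$, which is formally self-adjoint on $C_c^\infty(M;{}^0T^*M\otimes\Gamma_0^{\frac{1}{2}})$ with respect to the $L^2_0$ inner product; adding the constant $n$ preserves this. The content is then that this formally self-adjoint operator is in fact essentially self-adjoint on $L^2_0$, i.e. its closure from $C_c^\infty$ is self-adjoint and has domain $H^2_0$. This is a standard fact for geometric Laplacians on complete manifolds (here $(M,g)$ is complete asymptotically hyperbolic) combined with the $0$-elliptic regularity estimate: $J_g \in \mathrm{Diff}_0^2$ is elliptic in the $0$-sense (its $0$-principal symbol is $|\xi|^2_g\cdot\mathrm{Id}$, invertible), so $J_g u \in L^2_0$ with $u\in L^2_0$ forces $u\in H^2_0$ by the interior and uniform-degenerate elliptic estimates coming from the $\Psi_0$ calculus. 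Thus $J_g$ with domain $H^2_0$ is closed, symmetric, and by completeness/ellipticity self-adjoint.

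For positivity, I would integrate by parts: for $u\in C_c^\infty$ (then extend by density in $H^2_0$),
$$\langle J_g u, u\rangle_{L^2_0} = \langle \triangle_g u, u\rangle_{L^2_0} + n\|u\|_{L^2_0}^2 = \|\nabla u\|_{L^2_0}^2 + n\|u\|_{L^2_0}^2 \geq n\|u\|_{L^2_0}^2,$$
using that the connection Laplacian satisfies $\langle \triangle_g u,u\rangle = \|\nabla u\|^2$ with no zeroth-order curvature term (this is why one works with $\triangle_g$ rather than a Hodge Laplacian here). Hence $J_g \geq n\,\mathrm{Id} > 0$, and in particular $0$ is not in the spectrum.

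For invertibility, a self-adjoint operator that is bounded below by $n>0$ is automatically invertible with bounded inverse by the spectral theorem, so strictly speaking positivity plus self-adjointness already gives the result. The role of the earlier sections is to identify the inverse concretely: by Theorem~\ref{thm-construct parametrix}, applied with $V = {}^0T^*M$, $\Lambda = \infty$, the indicial data $f_1,f_2$ and roots computed in Subsection~4.1 (so that $\tfrac{n}{2}\in(\underline s,\overline s)=(-1,n+1)$), and the normal operator $N_q(J_g)=J_r$ shown invertible in the previous corollaries, there is a right parametrix $Q\in\Psi_0^{-2,E_T,E_B}(M;\mathrm{End}({}^0T^*M)\otimes\Gamma_0^{\frac{1}{2}})$ with $J_g Q - \mathrm{Id} = R_1\in R^\infty\Psi_0^{-\infty,\infty,E_B}$. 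By the mapping properties in Proposition~\ref{prop-map property of 0pseudo}, $R_1$ is compact on $L^2_0$ (its kernel decays to infinite order at $T$ and to order $>\tfrac n2$ at $B$, and it is smoothing along the diagonal), and similarly one gets a left parametrix so that $J_g$ is Fredholm of index $0$ on $H^2_0\to L^2_0$; combined with the already-established triviality of the $L^2_0$ kernel (from positivity), this gives surjectivity, hence the true inverse $J_g^{-1} = Q - QR_1 + \cdots$ lies in $\Psi_0^{-2,E_T,E_B}$. The main obstacle is the functional-analytic bookkeeping for self-adjointness — pinning down that the graph-closure domain is exactly $H^2_0$ and that $J_g$ is essentially self-adjoint on $C_c^\infty(M;{}^0T^*M\otimes\Gamma_0^{\frac{1}{2}})$ — which rests on completeness of $g$ together with the $0$-elliptic a priori estimate; the positivity and invertibility steps are then formal.
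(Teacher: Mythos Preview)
Your proof is correct and follows essentially the same route as the paper: self-adjointness via $J_g = \nabla^*\nabla + n$, positivity from $\langle J_g\omega,\omega\rangle = \|\nabla\omega\|^2 + n\|\omega\|^2$, and invertibility via the parametrix of Theorem~\ref{thm-construct parametrix} (giving Fredholmness) combined with the trivial kernel. The only minor difference is that you observe the spectral-theorem shortcut (strict lower bound $J_g\geq n$ already forces invertibility) before invoking the parametrix, whereas the paper proceeds directly through the Fredholm argument, using the adjoint $Q^*$ of the right parametrix as a left parametrix.
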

\begin{proof}
Clearly, $J_g=\nabla^*\nabla+n$ is self-adjoint. For any $\omega\in H^2_0(M;{}^0T^*M\otimes \Gamma_0^{\frac{1}{2}})$,
$$\langle J_g\omega, \omega\rangle_{x^rL^2_0} =\|\nabla\omega\|^2_{L^2_0}+n\|\omega\|_{L^2_0}\geq 0,$$
with equality holds if and only if $\omega=0$. So $J_g$ is positive and thus injective. Note $J_g$ satisfies all the assumption in Theorem \ref{thm-construct parametrix}, so there exists $Q\in
\Psi_0^{-2,E_T,E_B}(M;{}^0T^*M\otimes\Gamma_0^{\frac{1}{2}})$, such that 
$$J_gQ-Id=R_1\in R^{\infty}\Psi_0^{-\infty,\infty,E_B}(M;{}^0T^*M\otimes\Gamma_0^{\frac{1}{2}}),$$
$$ Q^*J_g-Id=R_1^*\in R^{\infty}\Psi_0^{-\infty,E_T,\infty}(M;{}^0T^*M\otimes\Gamma_0^{\frac{1}{2}}),$$
where
$$E_T=\left[\begin{array}{cc}\frac{n+\sqrt{n^2+8n}}{2}& n+2\\
\frac{n+\sqrt{n^2+8n}}{2}+1&n+1\end{array}\right],\quad
E_B=E_T^*=\left[\begin{array}{cc}\frac{n+\sqrt{n^2+8n}}{2}&
\frac{n+\sqrt{n^2+8n}}{2}+1\\n+2&n+1\end{array}\right].$$ 
By Proposition \ref{prop-map property of 0pseudo}, the map
$$Q: H^2_0(M;{}^0T^*M\otimes \Gamma_0^{\frac{1}{2}})\longrightarrow
L^2_0(M;{}^0T^*M\otimes \Gamma_0^{\frac{1}{2}})$$ 
is bounded. It is also clear that the two maps
$$R_1:L_0^2(M;{}^0T^*M\otimes\Gamma_0^{\frac{1}{2}})\longrightarrow
\dot{C}^{\infty}(M;{}^0T^*M\otimes\Gamma_0^{\frac{1}{2}})$$
$$R_1^*:L_0^2(M;{}^0T^*M\otimes\Gamma_0^{\frac{1}{2}})\longrightarrow
\sum_{i=1}^2\mathscr{A}_{phg}^{e_i}(M;{}^0T^*M\otimes\Gamma_0^{\frac{1}{2}})$$
are bounded. Therefore, as a map from
$L^2_0(M;{}^0T^*M\otimes\Gamma_0^{\frac{1}{2}})$ to itself, $R_1$
and $R_1^*$ are both compact, which implies that 
$$Id+R_1: L^2_0(M;{}^0T^*M\otimes\Gamma_0^{\frac{1}{2}})\longrightarrow 
L^2_0(M;{}^0T^*M\otimes\Gamma_0^{\frac{1}{2}})$$ 
is Fredholm, so $Ran(J_g)$ is closed in $L^2_0(M;{}^0T^*M)$ with $Coker(J_g)$ of finite dimension. Since $J_g$ is self-adjoint, $Coker(J_g)=Ker(J_g)=\{0\}$. Hence, $J_g$ is surjective and thus invertible.
\end{proof}

\begin{proposition}\label{prop-map property of Jg}
For $N>2n$,
$J_g^{-1}:x^N\mathscr{A}^0(\overline{M};{}^0T^*M\otimes
\Gamma_0^{\frac{1}{2}})\longrightarrow
\sum_{i=1}^2\mathscr{A}^{e_i}(\overline{M};{}^0T^*M\otimes
\Gamma_0^{\frac{1}{2}})$ is continuous.
\end{proposition}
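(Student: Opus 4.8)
The plan is to reduce the statement to the left parametrix of the preceding proposition together with mapping properties already in hand. Since $J_{g}\colon H^{2}_{0}(M;{}^{0}T^{*}M\otimes\Gamma_{0}^{\frac{1}{2}})\to L^{2}_{0}(M;{}^{0}T^{*}M\otimes\Gamma_{0}^{\frac{1}{2}})$ is invertible and $Q^{*}J_{g}=\mathrm{Id}+R_{1}^{*}$ with $Q^{*}\in\Psi_{0}^{-2,E_{T},E_{B}}(M;{}^{0}T^{*}M\otimes\Gamma_{0}^{\frac{1}{2}})$ and $R_{1}^{*}\in R^{\infty}\Psi_{0}^{-\infty,E_{T},\infty}(M;{}^{0}T^{*}M\otimes\Gamma_{0}^{\frac{1}{2}})$, composing on the right with $J_{g}^{-1}$ gives the operator identity
$$J_{g}^{-1}=Q^{*}-R_{1}^{*}J_{g}^{-1}$$
on $L^{2}_{0}(M;{}^{0}T^{*}M\otimes\Gamma_{0}^{\frac{1}{2}})$. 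For $N>2n$ the inclusion $x^{N}\mathscr{A}^{0}(\overline{M};{}^{0}T^{*}M\otimes\Gamma_{0}^{\frac{1}{2}})\hookrightarrow L^{2}_{0}(M;{}^{0}T^{*}M\otimes\Gamma_{0}^{\frac{1}{2}})$ is continuous, so it suffices to show that each of the two operators on the right maps $x^{N}\mathscr{A}^{0}$ continuously into $\sum_{i=1}^{2}\mathscr{A}^{e_{i}}(\overline{M};{}^{0}T^{*}M\otimes\Gamma_{0}^{\frac{1}{2}})$.

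I would handle the remainder term $R_{1}^{*}J_{g}^{-1}$ purely by composition. The map $J_{g}^{-1}\colon L^{2}_{0}\to H^{2}_{0}\subset L^{2}_{0}$ is bounded, and the proof of the preceding proposition shows that $R_{1}^{*}\colon L^{2}_{0}(M;{}^{0}T^{*}M\otimes\Gamma_{0}^{\frac{1}{2}})\to\sum_{i=1}^{2}\mathscr{A}_{phg}^{e_{i}}(M;{}^{0}T^{*}M\otimes\Gamma_{0}^{\frac{1}{2}})\subset\sum_{i=1}^{2}\mathscr{A}^{e_{i}}$ is bounded; composing, $R_{1}^{*}J_{g}^{-1}$ maps $x^{N}\mathscr{A}^{0}$ (indeed all of $L^{2}_{0}$) continuously into $\sum_{i=1}^{2}\mathscr{A}^{e_{i}}$.

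The substance is the term $Q^{*}$, that is, the claim that a large-calculus operator $A\in\Psi_{0}^{-2,E_{T},E_{B}}(M;{}^{0}T^{*}M\otimes\Gamma_{0}^{\frac{1}{2}})$ sends $x^{N}\mathscr{A}^{0}$ continuously into $\sum_{j}\mathscr{A}^{[E_{T}]_{\bullet j}}=\sum_{i=1}^{2}\mathscr{A}^{e_{i}}$ once $N>2n$; this is the natural strengthening of Proposition \ref{prop-mapproperty} from $\dot{C}^{\infty}$ data to weighted conormal data, and it is proved the same way. I would split the Schwartz kernel as $\kappa(A)=\kappa_{1}+\kappa_{2}$, where $\kappa_{1}$ is conormal of order $-2$ along $\mathrm{diag}_{0}(M)$ and vanishes to infinite order at the faces $T$ and $B$ (the small part), and $\kappa_{2}\in\mathscr{A}_{phg}^{E_{T},E_{B}}([\overline{M}\times_{0}\overline{M}]^{2};\cdot)$ is smooth up to the front face $F$. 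For the small part, expressing the action near $\partial\overline{M}$ in the projective coordinates $(x,y,t,z')$, commuting with $\mathscr{V}_{b}$, and using $L^{2}_{0}$-boundedness of the small calculus after conjugating by $x^{N}$, shows that it preserves $x^{N}\mathscr{A}^{0}$; and since $N>2n$ exceeds every entry of $e_{1}$ and $e_{2}$ (note $s^{1}<n+2$, and $s^{1}+1$ is the largest of these entries with $s^{1}+1<n+3\le 2n$ for odd $n\ge3$), one has the continuous inclusion $x^{N}\mathscr{A}^{0}\subset\mathscr{A}^{e_{i}}$ for $i=1,2$. For the polyhomogeneous part, $A$ acts on data $u$ by pulling $u$ back to $\overline{M}\times_{0}\overline{M}$ along the right projection, multiplying by $\kappa_{2}$, and pushing forward along the left projection; the bound $N>2n$, together with $\min_{i,j}[E_{B}]_{ij}=n+1$, makes the integrations over the face $B$ and over the interior directions of $F$ absolutely convergent, and Melrose's pushforward theorem for polyhomogeneous conormal distributions on manifolds with corners then yields a conormal section on $\overline{M}$ whose index set at $\partial\overline{M}$ is generated by the columns of $E_{T}$, i.e. an element of $\sum_{i=1}^{2}\mathscr{A}^{e_{i}}$, depending continuously on the data. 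Adding the two contributions proves the claim for $Q^{*}$, hence for $J_{g}^{-1}$.

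I expect the main obstacle to be this last step: one must track the index sets of $\kappa_{2}$ at all three boundary hypersurfaces $T$, $B$, $F$ of $\overline{M}\times_{0}\overline{M}$, verify the hypotheses of the pushforward theorem for the left projection (which is exactly where the lower bound on $N$ is used, both for convergence of the fibre integral and so that $x^{N}\mathscr{A}^{0}$ embeds into the target), and confirm that the exponents produced at $\partial\overline{M}$ are precisely those of $e_{1}$ and $e_{2}$ with no loss. Here the remark at the end of Subsection 4.2 — that for $J_{g}$ one may take $K=0$, so that $\mathscr{A}^{0}_{phg}$ reduces to $C^{\infty}$ — keeps the bookkeeping clean. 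The small-calculus assertion and the composition estimate for $R_{1}^{*}J_{g}^{-1}$ are routine.
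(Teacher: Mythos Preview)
Your approach is essentially the same as the paper's: both use the identity $J_{g}^{-1}=Q^{*}\pm R_{1}^{*}J_{g}^{-1}$ (the paper has a sign typo here) and reduce to the three mapping properties $Q^{*}:x^{N}\mathscr{A}^{0}\to\sum_{i}\mathscr{A}^{e_{i}}$, $J_{g}^{-1}:x^{N}\mathscr{A}^{0}\to L_{0}^{2}$, and $R_{1}^{*}:L_{0}^{2}\to\sum_{i}\mathscr{A}^{e_{i}}$. The paper simply asserts these without further comment, while you supply the pushforward argument for the large-calculus part of $Q^{*}$; this extra detail is correct and fills in exactly what the paper leaves implicit.
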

\begin{proof}
Since $J_g^{-1}=Q^*+R_1^*J_g^{-1}$, the continuity of $J_g^{-1}$
is directly from the continuity of the following map:
$$Q^*:x^N\mathscr{A}^0(\overline{M};{}^0T^*M\otimes
\Gamma_0^{\frac{1}{2}})\longrightarrow
\sum_{i=1}^2\mathscr{A}^{e_i}(\overline{M};{}^0T^*M\otimes\Gamma_0^{\frac{1}{2}}),$$
$$J_g^{-1}:x^N\mathscr{A}^0(\overline{M};{}^0T^*M\otimes
\Gamma_0^{\frac{1}{2}})\longrightarrow L_0^2(M;{}^0T^*M\otimes
\Gamma_0^{\frac{1}{2}}),$$
$$R_1^*:L^{2}_0(M,{}^0T^*M\otimes
\Gamma_0^{\frac{1}{2}})\longrightarrow
\sum_{i=1}^2\mathscr{A}^{e_i}(M;{}^0T^*M\otimes\Gamma_0^{\frac{1}{2}}).$$
\end{proof}

\begin{proposition}\label{prop-property of inverse}
$J_g^{-1}\in
\Psi_0^{-2,E_T,E_B}(M;{}^0T^*M\otimes\Gamma_0^{\frac{1}{2}})$.
\end{proposition}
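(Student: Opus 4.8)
The plan is to read off the structure of $J_g^{-1}$ from the parametrix constructed above, following the Mazzeo--Melrose scheme for invertible $0$-elliptic operators. Recall from the proof that $J_g$ is invertible that we produced $Q\in\Psi_0^{-2,E_T,E_B}(M;{}^0T^*M\otimes\Gamma_0^{\frac{1}{2}})$ with
$$J_gQ-\mathrm{Id}=R_1\in R^{\infty}\Psi_0^{-\infty,\infty,E_B}(M;{}^0T^*M\otimes\Gamma_0^{\frac{1}{2}}).$$
Composing on the left with the $L_0^2$-inverse $J_g^{-1}$ gives the identity $J_g^{-1}=Q-J_g^{-1}R_1$ (this is the identity underlying the proof of Proposition \ref{prop-map property of Jg}). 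Since $Q$ already lies in $\Psi_0^{-2,E_T,E_B}$ and $\Psi_0^{-\infty,E_T,E_B}\subset\Psi_0^{-2,E_T,E_B}$, the proposition follows once we show that the inverse differs from the parametrix only by a residual term, i.e. $J_g^{-1}R_1\in\Psi_0^{-\infty,E_T,E_B}(M;{}^0T^*M\otimes\Gamma_0^{\frac{1}{2}})$.

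To see this I would run the ``composition with a residual operator'' argument. Write $\kappa(R_1)$ as a family $f_{p'}:=\kappa(R_1)(\cdot,p')$ of incoming data, so that $\kappa(J_g^{-1}R_1)(\cdot,p')=J_g^{-1}f_{p'}$. Because the top index of $R_1$ is $\infty$, for $p'$ in the interior $f_{p'}\in\dot{C}^{\infty}(M;{}^0T^*M\otimes\Gamma_0^{\frac{1}{2}})$, and by the mapping property $J_g^{-1}:\dot{C}^{\infty}\to\sum_{i=1}^2\mathscr{A}^{e_i}$ (Proposition \ref{prop-map property of Jg} and its proof, with $e_i=[E_T]_{\bullet i}$) the kernel $\kappa(J_g^{-1}R_1)$ acquires the polyhomogeneous expansion with index set $E_T$ at the top face; the bottom-face index set $E_B$ is inherited directly from $R_1$; in the interior and along the diagonal $J_g^{-1}R_1$ is smoothing since $R_1$ is; and near the front face one works on $F_q$ and uses $N_q(J_g)^{-1}:\dot{C}^{\infty}(\overline{F_q})\to\mathscr{A}_{phg}^{E_T,E_B}(\overline{F_q})$, bootstrapping in the order of vanishing at $F$ (using $N_q(R_1)=0$) to obtain vanishing there to infinite order. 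Since $\Lambda=\infty$ here (Section 4.1), Condition \ref{cond-ET} is automatic throughout, so by the composition calculus of [MM], [Ma] these pieces assemble into $\kappa(J_g^{-1}R_1)\in\mathscr{K}_0^{-\infty,E_T,E_B}(\overline{M}\times_0\overline{M};End({}^0T^*M)\otimes\Gamma_0^{\frac{1}{2}})$; moreover no logarithmic terms occur, as noted at the end of Section 4.2. Hence $J_g^{-1}R_1\in\Psi_0^{-\infty,E_T,E_B}$, and therefore $J_g^{-1}=Q-J_g^{-1}R_1\in\Psi_0^{-2,E_T,E_B}$.

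The main obstacle is exactly this composition step. A priori $J_g^{-1}$ is known only as a bounded operator on $L_0^2$ with the weighted mapping properties established above, so passing from ``$J_g^{-1}R_1$ maps nicely'' to ``$\kappa(J_g^{-1}R_1)$ lies in the large $0$-calculus with the prescribed index sets $E_T,E_B$'' requires the full composition machinery of Mazzeo--Melrose (the pushforward of index families from the triple stretched product) together with the front-face bootstrap. The cruder-looking iteration $J_g^{-1}R_1=QR_1-J_g^{-1}R_1^{2}=\cdots$ does not by itself close, since $R_1^{k}$ gains no order at the front face, so one genuinely needs the calculus-theoretic input; everything else — the algebraic identity $J_g^{-1}=Q-J_g^{-1}R_1$, the membership $Q\in\Psi_0^{-2,E_T,E_B}$, and the verification of Condition \ref{cond-ET} — is immediate from what has been established.
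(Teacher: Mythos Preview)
Your overall strategy is correct, but the paper takes a slicker route that avoids precisely the front-face obstacle you identify. Since $J_g$ is self-adjoint, the adjoint $Q^*$ of the right parametrix is a \emph{left} parametrix: $Q^*J_g-\mathrm{Id}=R_1^*\in R^{\infty}\Psi_0^{-\infty,E_T,\infty}$. Substituting one parametrix identity into the other yields
\[
J_g^{-1}\;=\;Q+Q^*R_1+R_1^*J_g^{-1}R_1.
\]
Here $Q\in\Psi_0^{-2,E_T,E_B}$ by construction, and $Q^*R_1$ is a composition of two operators already in the $0$-calculus, hence lies in $R^{\infty}\Psi_0^{-\infty,E_T,E_B}$ by the composition results of [MM]. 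The key gain is in the sandwich term: $\kappa(R_1)$ vanishes to infinite order at $T$ and at $F$, while $\kappa(R_1^*)$ vanishes to infinite order at $B$ and at $F$. Thus $\kappa(R_1)(\cdot,p')\in\dot{C}^{\infty}(\overline{M})$ for every $p'$ and $\kappa(R_1^*)(p,\cdot)\in\dot{C}^{\infty}(\overline{M})$ for every $p$, so $R_1^*J_g^{-1}R_1$ requires only the $L_0^2$-boundedness of $J_g^{-1}$ --- no circularity --- and its kernel inherits $E_T$ at $T$ from $R_1^*$, $E_B$ at $B$ from $R_1$, and infinite-order vanishing at $F$ from both factors.

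By contrast, your direct attack on $J_g^{-1}R_1$ runs into exactly the difficulty you flag: the front-face bootstrap you sketch is not quite well-posed, since invoking $N_q$ on $J_g^{-1}R_1$ presupposes that this operator is already in the calculus, and the iteration $J_g^{-1}R_1=QR_1-J_g^{-1}R_1^2$ gains nothing at $F$ (as you note). One can rescue your route by observing that $\kappa(R_1)$, vanishing to infinite order at both $T$ and $F$, actually descends to a kernel on $\overline{M}\times\overline{M}$ in $\dot{C}^{\infty}$ in the left variable, and then applying the mapping property of $J_g^{-1}$ slot-wise; but this still requires care at the corner, whereas the paper's left--right sandwich dispatches everything with one line.
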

\begin{proof}
Use the notation as above, we have
$$J_g^{-1}=Q+J_g^{-1}R_1=Q^*+R_1^*J_g^{-1}=Q^*+R_1^*Q+R_1^*J_g^{-1}R_1
=Q+Q^*R_1+R_1^*J_g^{-1}R_1.$$ 
Notice that $\kappa(R_1)\in R^{\infty} \mathscr{K}^{-\infty,\infty,E_B} (\overline{M}\times_0\overline{M};End(v)\otimes\Gamma_0^{\frac{1}{2}})$
and $\kappa(R_1)\in R^{\infty} \mathscr{K}^{-\infty,E_T,\infty} 
(\overline{M}\times_0\overline{M};End(v)\otimes\Gamma_0^{\frac{1}{2}})$
and both vanish to infinite order at the front face. A theory of
composition of $0$-pseudodifferential operators shows that
$$\kappa(Q^*R_1), \kappa(R_1^*J_g^{-1}R_1)\in R^{\infty} \mathscr{K}^{-\infty,E_T,E_B} (\overline{M}\times_0\overline{M}; End(V)\otimes \Gamma_0^{\frac{1}{2}}).$$ 
The conclusion follows directly.
\end{proof}

\subsection{Preliminary Computation}

Let $\overline{g}=x^2g$. Then $\overline{g}|_{T\partial \overline{M}}=g_0$.
Let $\Gamma_{\ ij}^k$ (resp. $\overline{\Gamma}_{\ ij}^k$ and
$\hat{\Gamma}_{\ ij}^k$ ) be the Christoffel symbols for $g$,
(resp. $\overline{g}$ and $g_0$), and let $\nabla_i$, (resp.
$\overline{\nabla}_i$ and $\hat{\nabla}_i$) be the Levi-Civita connection with respect to $g$ (resp. $\overline{g}$ and $g_0$). Throughout this section, let $E(z;c_1,..c_l)$ be a polynomial
in $z$ without constant term and with coefficients being functions of $(c_1,...,c_l)$. Let $T_l$ be a differential operator of order
$\leq l$ on $\partial\overline{M}$ with coefficients being functions of the
components of $g_0$ and their tangential derivatives; Let $\widetilde{T}_l$ be a differential operator of order
$\leq l$ on $\partial\overline{M}$ with coefficients being functions of the
components of $g_0, g_n$ and their tangential derivatives.
Let $T_l=0$ and $\widetilde{T}_l=0$ if $l<0$.
Recall that
$$\tilde{h}=x^{-2}(g_0'+x^2g_2'+
\cdots+x^{n-1}g'_{n-1}+x^ng_n'+O(x^{n+1})).$$
\begin{lemma} \label{lemma-transevertracefree}
There exist differential operators $\widetilde{T}_1, \widetilde{T}_0$ such that
$tr_{g_0}g_n'=\widetilde{T}_0g_0'$, $\delta_{g_0}g_n'=\widetilde{T}_1g_0'$.
\end{lemma}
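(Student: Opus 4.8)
### Proof proposal

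The plan is to exploit the well-known fact, recalled in the introduction, that for a Poincar\'{e}-Einstein metric in geodesic normal form the Taylor coefficients $g_2,g_4,\dots,g_{n-1}$ are \emph{locally} determined by $g_0$ (they are universal polynomial expressions in $g_0$, its inverse, and its tangential derivatives), and that $g_n$ is constrained by the two conditions $\mathrm{tr}_{g_0}g_n=0$, $\delta_{g_0}g_n=0$ of (\ref{transverse traceless cond}). These identities hold for every member $g_+(s)$ of the family from (A1)--(A2) after the normalization of Proposition \ref{prop-smoothness}, so I would differentiate them in $s$ at $s=0$.

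First I would write, for each $s$, the trace identity $\mathrm{tr}_{g_0(s)}g_n(s)=0$, i.e. $[g_0(s)]^{kl}[g_n(s)]_{kl}=0$ in local coordinates. Differentiating at $s=0$ and using $\frac{d}{ds}\big|_{0}[g_0(s)]^{kl}=-[g_0]^{ka}[g_0]^{lb}g'_{0,ab}$ gives
\begin{equation*}
\mathrm{tr}_{g_0}g_n' = \langle g_n, g_0'\rangle_{g_0},
\end{equation*}
where the right side is the pointwise inner product $[g_0]^{ka}[g_0]^{lb}[g_n]_{kl}g'_{0,ab}$; this is a zeroth-order operator applied to $g_0'$ with coefficients built from $g_0$ and $g_n$, hence of the form $\widetilde{T}_0 g_0'$. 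Next I would differentiate the divergence identity $\delta_{g_0(s)}g_n(s)=0$. Writing $[\delta_{g_0}g_n]_l=[g_0]^{jk}([g_n]_{lj,k}-\hat\Gamma^m_{\ kj}[g_n]_{lm}-\hat\Gamma^m_{\ kl}[g_n]_{mj})$ and differentiating in $s$, the $s$-derivative lands either on $g_n$ — producing $[g_0]^{jk}\hat\nabla_k g'_{n,lj}$ plus lower-order terms, a first-order operator in $g_n'$ — or on the metric coefficients $[g_0]^{jk}$ and $\hat\Gamma$, which by the standard formula $\frac{d}{ds}\hat\Gamma^m_{\ kj}=\frac12 [g_0]^{mp}(\hat\nabla_k g'_{0,pj}+\hat\nabla_j g'_{0,pk}-\hat\nabla_p g'_{0,kj})$ contribute a first-order operator in $g_0'$ with coefficients functions of $g_0,g_n$ and their tangential derivatives. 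Rearranging, $\delta_{g_0}g_n'$ equals such a first-order expression in $g_0'$, i.e. $\widetilde{T}_1 g_0'$. This proves both claims.

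The only genuinely substantive point — as opposed to the routine tensor bookkeeping above — is the justification that $g_n(s)$ depends differentiably on $s$ as a section of $S^2T^*\partial\overline{M}$, and that $\frac{d}{ds}\big|_0 g_n(s) = g_n'$ is exactly the coefficient appearing in the expansion of $\tilde h$ stated before the lemma. This is where I would lean on (A1), (A2) and Proposition \ref{prop-smoothness}: the $C^1$ family $\tilde g_+(s)$ is in geodesic normal form with the $x^n$-coefficient depending in $C^1$ fashion on $s$, so term-by-term differentiation of the (finite, to order $n$) Taylor expansion is legitimate and identifies $g_n' = \frac{d}{ds}\big|_0 g_n(s)$. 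I expect the main obstacle to be purely notational: keeping careful track of which derivatives act on $g_n$ versus on the metric-dependent coefficients so that the resulting operators are manifestly of order $\le 1$ (for $\delta$) and $\le 0$ (for $\mathrm{tr}$), with coefficients in the allowed class; once organized, no estimate beyond the product rule is needed.
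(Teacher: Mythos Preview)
Your proposal is correct and follows essentially the same approach as the paper: differentiate the transverse-traceless constraints $\mathrm{tr}_{g_0(s)}g_n(s)=0$ and $\delta_{g_0(s)}g_n(s)=0$ (which hold for each $s$ by [FG]) at $s=0$, and observe that all terms where the derivative hits the metric coefficients are of the required order in $g_0'$ with coefficients depending on $g_0,g_n$. Your write-up is simply more detailed than the paper's one-line proof, and your careful remark about the $C^1$ dependence justifying term-by-term differentiation is a point the paper leaves implicit.
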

\begin{proof}
According to [FG], $tr_{g_0(s)}g_n(s)=0$ and $\delta_{g_0(s)}g_n(s)=0$ for all $s\in(-\theta,\theta)$, which obviously implies that $tr_{g_0}g_n'=-tr_{g_n}g_0'=\widetilde{T}_0g_0'$ and $\delta_{g_0}g_n'=\widetilde{T}_1g_0'$.
\end{proof}
According to [FG], the one to one correspondence of normal forms for Poincar\'{e}-Einstein metrics and normal forms for ambient metrics implies that in local coordinates $(U;y^1,...,y^n)$ on the boundary, 
$$[g_2(s)]_{ij}=-P_{ij}(s)=-\tfrac{1}{(n-2)}(\hat{R}_{ij}(s)
-\tfrac{\hat{S}(s)}{2(n-1)}[g_0(s)]_{ij}),$$
and for $2\leq l\leq \frac{n-1}{2}$,
$$[g_{2l}(s)]_{ij}=\tfrac{-2^{1-l}}{(n-4)(n-6)\cdots(n-2l)}(\triangle_{g_0(s)}^{m-1}P_{ij}
-\triangle_{g_0(s)}^{m-2}{}^s\hat{\nabla}_i{}^s\hat{\nabla}_jtr_{g_0(s)}P(s))+lots,$$
where $\hat{R}_{ij}(s)$ and $\hat{S}(s)$ are the components of the Ricci curvature and scalar curvature of $g_0(s)$ and ${}^s\hat{\nabla}$ is the covariant derivative with respect to $g_0(s)$. Here $lots$ denotes quadratic and higher terms involving fewer derivatives of curvature.

\begin{lemma}By direct computation, we have
\begin{itemize}
\item[(a)] For $2\leq l\leq \tfrac{n-1}{2}$,
\begin{eqnarray*}
g_2' &=&-\tfrac{1}{n-2}(\tfrac{1}{2}\triangle_{g_0}g_0'-\delta_{g_0}\beta_{g_0}g_0')+
\tfrac{1}{2(n-1)(n-2)}(\triangle_{g_0}tr_{g_0}g_0'-tr_{g_0}\delta_{g_0}\delta^*_{g_0}g_0')g_0
+T_1g_0'
\\
g_{2l}' &=& -\tfrac{1}{2^{l-1}(n-4)\cdots (n-2l)}
\{\tfrac{1}{n-2}(\tfrac{1}{2}\triangle^l_{g_0}g_0'-\triangle^{l-1}_{g_0}\delta^*_{g_0}\beta_{g_0}g_0')
-\tfrac{1}{2(n-2)(n-1)}(\triangle_{g_0}^l tr_{g_0}g_0' \\&&
-\triangle_{g_0}^{l-1}tr_{g_0}\delta^*_{g_0}\delta_{g_0}g_0')g
-\tfrac{1}{2(n-1)}\hat{\nabla}\hat{\nabla}(\triangle^{l-1}_{g_0}tr_{g_0}g_0'
-\triangle^{l-2}_{g_0}tr_{g_0}\delta^*_{g_0}\delta_{g_0}g_0')\}
+T_{2l-1}g_0'
\end{eqnarray*}

\item[(b)] If $tr_{g_0}g_0'=0$, for $2\leq l\leq \tfrac{n-1}{2}$
\begin{eqnarray*}
g_2' &=&
-\tfrac{1}{n-2}(\tfrac{1}{2}\triangle_{g_0}g_0'-\delta^*_{g_0}\delta_{g_0}g_0')-
\tfrac{1}{2(n-1)(n-2)}(tr_{g_0}\delta_{g_0}\delta^*_{g_0}g_0')g_0
+T_1g_0'
\\
g_{2l}' &=& -\tfrac{1}{2^{l-1}(n-4)\cdots (n-2l)}
\{\tfrac{1}{n-2}(\tfrac{1}{2}\triangle^l_{g_0}g_0'-\triangle^{l-1}_{g_0}\delta^*_{g_0}\delta_{g_0}g_0')
+\tfrac{1}{2(n-2)(n-1)}(\triangle_{g_0}^{l-1}tr_{g_0}\delta^*_{g_0}\delta_{g_0}g_0')g\\&&
+\tfrac{1}{2(n-1)}\hat{\nabla}\hat{\nabla}(\triangle^{l-2}_{g_0}tr_{g_0}\delta^*_{g_0}\delta_{g_0}g_0')\}
+T_{2l-1}g_0'
\end{eqnarray*}

\item[(c)]If $tr_{g_0}g_0'=\delta_{g_0}g_0'=0$, for $1\leq l\leq
\tfrac{n-1}{2}$.
$$g_{2l}'=d_{2l}\triangle_{g_0}^lg_0'+T_{2l-1}g_0'$$
where $d_{2l}=-\frac{1}{2^l(n-2)(n-4)\cdots (n-2l)}$.
\end{itemize}
Moreover, for $1\leq l\leq \frac{n-1}{2}$, each differential operator $T_{2l-1}$ appearing above has coefficients depending on the components of $g_0$ and their derivatives up to order $2l$.
\end{lemma}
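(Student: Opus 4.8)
The approach is to differentiate the Fefferman--Graham recursion formulas recalled just above with respect to the deformation parameter $s$ at $s=0$ (denoting, as in the paper, the $s$-derivative at $0$ by a prime), isolating at each stage the terms that carry the maximal number of derivatives of $g_0'$ and sweeping the remainder into the error operators $T_\bullet$. The inputs are the classical linearization identities: $\hat R'_{ij}=[\mathrm{Ric}'_{g_0}g_0']_{ij}=\tfrac12[\triangle_{g_0}g_0']_{ij}-[\delta^*_{g_0}\beta_{g_0}g_0']_{ij}+[Rg_0']_{ij}$, of the form (\ref{eq-linear einstein}) but for the boundary metric $g_0$; the linearization $\hat S'$ of the scalar curvature, whose second-order part is (up to the sign conventions in force) $-\triangle_{g_0}\mathrm{tr}_{g_0}g_0'+\delta_{g_0}\delta_{g_0}g_0'$ and whose remainder is a zeroth-order term with coefficients built from $\mathrm{Ric}_{g_0}$; and the fact that the $s$-derivatives $\triangle'$, $(\hat\nabla)'$, $(\mathrm{tr})'$ of $\triangle_{g_0(s)}$, ${}^s\hat\nabla$, $\mathrm{tr}_{g_0(s)}$ involve $g_0'$ together with at most one of its derivatives, multiplied by coefficients that are universal polynomial expressions in $g_0$, $g_0^{-1}$ and finitely many derivatives of $g_0$.

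For part (a), I would first differentiate $[g_2(s)]_{ij}=-P_{ij}(s)$: writing $P'=\tfrac{1}{n-2}\big(\hat R'-\tfrac{\hat S'}{2(n-1)}g_0-\tfrac{\hat S}{2(n-1)}g_0'\big)$ and substituting the identities above reproduces the stated formula for $g_2'$, with $Rg_0'$, the $\hat S\,g_0'$ term and the $\langle g_0',\mathrm{Ric}_{g_0}\rangle g_0$ term collected into $T_1g_0'$ (order $\le 1$, coefficients with at most two derivatives of $g_0$). For $g_{2l}'$ with $2\le l\le\frac{n-1}{2}$, I would apply the Leibniz rule to $[g_{2l}(s)]_{ij}=c_l\big(\triangle_{g_0(s)}^{l-1}P_{ij}(s)-\triangle_{g_0(s)}^{l-2}{}^s\hat\nabla_i{}^s\hat\nabla_j\mathrm{tr}_{g_0(s)}P(s)\big)+lots$, where $c_l=\tfrac{-2^{1-l}}{(n-4)\cdots(n-2l)}$. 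The one term in each summand where the $s$-derivative lands on the innermost $P$ gives $c_l\triangle_{g_0}^{l-1}P'_{ij}$ and $-c_l\triangle_{g_0}^{l-2}\hat\nabla_i\hat\nabla_j(\mathrm{tr}_{g_0}P)'$; using $g_2'=-P'$ and $(\mathrm{tr}_{g_0}P)'=\tfrac{1}{2(n-1)}\hat S'-\tfrac12\langle g_0',P\rangle$, expanding these out yields exactly the displayed leading terms, the coefficient of $\triangle_{g_0}^l g_0'$ being $c_l\cdot\tfrac{1}{n-2}\cdot\tfrac12=-\tfrac{1}{2^l(n-2)(n-4)\cdots(n-2l)}$. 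Every other term has the $s$-derivative landing on a $\triangle_{g_0(s)}$, a ${}^s\hat\nabla$, a $\mathrm{tr}_{g_0(s)}$, or a factor inside $lots$; a short induction on $l$ then shows each such term is a differential operator applied to $g_0'$ of order $\le 2l-1$ with coefficients involving $g_0$ to order $\le 2l$, and these assemble into $T_{2l-1}g_0'$.

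Parts (b) and (c) are purely algebraic consequences of (a). If $\mathrm{tr}_{g_0}g_0'=0$, then $\beta_{g_0}g_0'=\delta_{g_0}g_0'$ and, because the rough Laplacian on symmetric $2$-tensors commutes with $\mathrm{tr}_{g_0}$, also $\mathrm{tr}_{g_0}\triangle_{g_0}^k g_0'=0$ and $\triangle_{g_0}^k\mathrm{tr}_{g_0}g_0'=0$; this removes the $\triangle_{g_0}^l\mathrm{tr}_{g_0}g_0'$ terms and turns $\delta^*_{g_0}\beta_{g_0}g_0'$ into $\delta^*_{g_0}\delta_{g_0}g_0'$, which is the content of (b). Imposing in addition $\delta_{g_0}g_0'=0$ forces $\delta^*_{g_0}\delta_{g_0}g_0'=0$ and, via $\mathrm{tr}_{g_0}\delta^*_{g_0}\omega=-\delta_{g_0}\omega$, also annihilates the residual $\mathrm{tr}_{g_0}\delta^*_{g_0}\delta_{g_0}g_0'$ and all the $\hat\nabla\hat\nabla$ terms, leaving $g_{2l}'=d_{2l}\triangle_{g_0}^l g_0'+T_{2l-1}g_0'$ with $d_{2l}=-\tfrac{1}{2^l(n-2)(n-4)\cdots(n-2l)}$ (and $d_2=-\tfrac{1}{2(n-2)}$ from the $l=1$ case). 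The assertion about the coefficients of all the $T_{2l-1}$ is the same derivative count as in (a).

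I expect the main work to be the bookkeeping in part (a): one must verify that every term produced by differentiating an interior $\triangle_{g_0(s)}$, ${}^s\hat\nabla$, $\mathrm{tr}_{g_0(s)}$, or $lots$ strictly drops the $g_0'$-order below $2l$ — which is not automatic, since the $s$-derivative of a Laplacian is again second order — and that the number of background $g_0$-derivatives entering the coefficients never exceeds $2l$. Handling the Fefferman--Graham ``$lots$'' terms, which the source describes only qualitatively (``quadratic and higher terms involving fewer derivatives of curvature''), requires confirming that their linearizations cannot raise the differential order. A secondary point of care is that $\triangle_{g_0}$ does not commute with $\delta_{g_0}$, so the simplifications in (b) and (c) must be carried out term by term rather than by a single global commutation, and the repeated replacement of $\triangle^a\hat\nabla\hat\nabla\triangle^b$ by $\hat\nabla\hat\nabla\triangle^{a+b}$ is valid only modulo lower-order corrections that must be checked to land in $T_{2l-1}$.
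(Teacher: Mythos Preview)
Your proposal is correct and follows the same approach the paper intends: the lemma is stated as ``by direct computation'' from the Fefferman--Graham formulas for $g_2(s)$ and $g_{2l}(s)$ given immediately before it, and you have spelled out that computation---differentiating in $s$, inserting the standard linearizations of $\mathrm{Ric}$ and $\hat S$, and tracking orders so the remainder lands in $T_{2l-1}$. Your cautionary remarks about commutators and the ``lots'' terms are appropriate but do not indicate a genuine obstacle; the order count (e.g.\ $\triangle^a(\triangle')\triangle^b P$ contributes at most $2a+1\le 2l-3$ derivatives of $g_0'$) goes through as you suggest.
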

For any $H\in C^{\infty}((0,\epsilon)\times
U;S^2(T^*M))$,
denote $\Gamma_{\alpha\beta\mu}(H)=\frac{1}{2}(\partial_{\alpha}H_{\beta\mu}+\partial_{\beta}H_{\mu\beta}-
\partial_{\mu}H_{\alpha\beta})$ for simplicity.
\begin{lemma}
Suppose that $((0,\epsilon)\times U;x,y^1,...,y^n)$ is a local coordinate system near the boundary. Set $x=y^0$. Then the Christofel symbols of $g$ are
\begin{eqnarray*}
\Gamma_{\ 00}^0&=&-x^{-1},\quad \Gamma_{\ 0i}^0=\Gamma_{\
i0}^0=\Gamma_{\ 00}^i=0 \quad\textrm{for}\ i\geq 1,\\
\Gamma_{\
0j}^k&=&x^{-1}(-\delta_j^k+E(x^2;g_0,...,g_{n-1})+\tfrac{n}{2}x^n[g_0]^{kt}[g_n]_{tj}+O(x^{n+1}))\\
&&\quad \textrm{for}\ j,k\geq 1,\\
\Gamma_{\ ij}^0&=&x^{-1}([g_0]_{ij}+E(x^2;g_0,...,g_{n-1})
-\tfrac{n-2}{2}x^n[g_n]_{ij}+O(x^{n+1}))\\
&&\quad \textrm{for}\ i,j\geq 1\\
\Gamma_{\ ij}^k&=& \hat{\Gamma}_{\
ij}^k+E(x^2;g_0,...,g_{n-1})+x^n\Lambda^{k}_{\ ij}(g_0,g_n)+O(x^{n+1})\\&& \ \textrm{for}\ i,j,k\geq 1.
\end{eqnarray*}
where $\Lambda^{k}_{\ ij}(g_0,g_n)=[g_0]^{kt}\Gamma_{ijt}(g_n)-[g_n]^{kt}\Gamma_{ijt}(g_0)
=\tfrac{1}{2}[g_0]^{kt}(\hat{\nabla}_i[g_n]_{tj}+
\hat{\nabla}_j[g_n]_{it}-\hat{\nabla}_t[g_n]_{ij})$.
\end{lemma}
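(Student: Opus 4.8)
The plan is a direct computation from the standard formula $\Gamma^{\lambda}_{\mu\nu}=\tfrac12 g^{\lambda\sigma}(\partial_{\mu}g_{\nu\sigma}+\partial_{\nu}g_{\mu\sigma}-\partial_{\sigma}g_{\mu\nu})$, exploiting the warped-product form of $g$. In the coordinates $(x,y^1,\dots,y^n)$ with $y^0=x$ the only nonzero metric components are $g_{00}=x^{-2}$ and $g_{ij}=x^{-2}[g(x)]_{ij}$ for $i,j\geq1$, with inverse $g^{00}=x^2$, $g^{ij}=x^2[g(x)]^{ij}$, where $[g(x)]^{ij}$ denotes the inverse of the boundary-slice metric $[g(x)]_{ij}$; crucially $g_{0i}=0$ and $g_{00}$ is independent of $y$. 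From these two facts the statements $\Gamma^0_{00}=-x^{-1}$ and $\Gamma^0_{0i}=\Gamma^i_{00}=0$ are immediate, while the three remaining families collapse, after the $x^{\pm2}$ factors cancel, to
$$\Gamma^k_{0j}=-x^{-1}\delta^k_j+\tfrac12[g(x)]^{kl}\partial_x[g(x)]_{jl},\qquad \Gamma^0_{ij}=x^{-1}[g(x)]_{ij}-\tfrac12\partial_x[g(x)]_{ij},\qquad \Gamma^k_{ij}=[g(x)]^{kl}\Gamma_{ijl}(g(x)),$$
the last expression being exactly the Christoffel symbol of the $x$-dependent metric $g(x)$ on $\partial\overline{M}$.

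The remaining step is to substitute the even-to-order-$n$ expansion $g(x)=g_0+x^2g_2+\cdots+x^{n-1}g_{n-1}+x^ng_n+O(x^{n+1})$, together with the induced expansion of the inverse $[g(x)]^{ij}=[g_0]^{ij}+x^2(\cdots)+\cdots+x^{n-1}(\cdots)-x^n[g_0]^{ia}[g_n]_{ab}[g_0]^{bj}+O(x^{n+1})$ (the $x^n$-coefficient coming from $\delta(g^{-1})=-g^{-1}(\delta g)g^{-1}$ evaluated at $x=0$), and the analogous expansion of $\Gamma_{ijl}(g(x))$, which by linearity in the argument has the same parity structure with $x^n$-coefficient $\Gamma_{ijl}(g_n)$. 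The constant terms reproduce $-x^{-1}\delta^k_j$, $x^{-1}[g_0]_{ij}$ and $\hat{\Gamma}^{k}_{ij}$ respectively. The only point requiring care is the coefficient of $x^n$: since $n$ is odd and $g(\rho)$ is even to order $n$, the powers $x^2,x^4,\dots,x^{n-1}$ are the only ones of order $\leq n$ carrying nonzero coefficients besides $x^0$, and $x^ng_n$ is the unique ``odd'' term of order $\leq n$; consequently, in any product of two such series the coefficient of $x^n$ receives contributions only from (even series)$\times$($x^ng_n$-term) and from the $x^n$-term of one factor times the order-zero term of the other, while all purely-even products land in the powers $x^2,\dots,x^{n-1}$ — which assemble into a polynomial in $x^2$ with coefficients built algebraically from $g_0,g_2,\dots,g_{n-1}$ and their tangential derivatives, i.e. a term $E(x^2;g_0,\dots,g_{n-1})$ — or into $O(x^{n+1})$. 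This parity bookkeeping is the main thing to set up; the rest is mechanical.

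Carrying it out gives: for $\Gamma^k_{0j}$, the function $\tfrac{x}{2}[g(x)]^{kl}\partial_x[g(x)]_{jl}$ has $x^n$-coefficient $\tfrac{n}{2}[g_0]^{kl}[g_n]_{lj}$ (only $[g_0]^{kl}$ times the term $nx^{n-1}[g_n]_{jl}$ of $\partial_x(x^ng_n)$ survives), which yields the stated expression after factoring out $x^{-1}$; for $\Gamma^0_{ij}$, $[g(x)]_{ij}-\tfrac{x}{2}\partial_x[g(x)]_{ij}$ has $x^{2l}$-coefficient $(1-l)[g_{2l}]_{ij}$ (so the $x^2$ term drops out) and $x^n$-coefficient $(1-\tfrac{n}{2})[g_n]_{ij}=-\tfrac{n-2}{2}[g_n]_{ij}$, again after factoring out $x^{-1}$; and for $\Gamma^k_{ij}$, the $x^n$-coefficient of $[g(x)]^{kl}\Gamma_{ijl}(g(x))$ is $[g_0]^{kl}\Gamma_{ijl}(g_n)-[g_0]^{ka}[g_n]_{ab}[g_0]^{bl}\Gamma_{ijl}(g_0)=[g_0]^{kl}\Gamma_{ijl}(g_n)-[g_n]^{kl}\Gamma_{ijl}(g_0)=\Lambda^{k}_{ij}(g_0,g_n)$, while the identification of this last quantity with $\tfrac12[g_0]^{kt}(\hat{\nabla}_i[g_n]_{tj}+\hat{\nabla}_j[g_n]_{it}-\hat{\nabla}_t[g_n]_{ij})$ is the standard linearization-of-Christoffel identity (the ordinary derivatives in $\Gamma_{ijl}(g_n)$ combine with the $-[g_n]^{kl}\Gamma_{ijl}(g_0)$ term to produce the $\hat{\nabla}$'s). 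Since there is no analytic content here, the only thing to be careful about is the parity argument isolating the $x^n$ coefficients.
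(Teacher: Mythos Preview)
Your proposal is correct and is precisely the direct computation the paper has in mind; the paper states this lemma without proof, treating it as a routine calculation from the warped-product form of $g$ and the even-to-order-$n$ structure of $g(x)$. Your parity bookkeeping isolating the $x^n$ coefficient is the only nontrivial point, and you have it right.
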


\begin{lemma}\label{lem-beta h} In local coordinates
$((0,\epsilon)\times U;x,y)$, suppose
$\beta_g\tilde{h}=[\beta_g\tilde{h}]_0dx+\sum_{i=1}^n[\beta_g\tilde{h}]_idy^i$.
Then
$$[\beta_g\tilde{h}]_0 = \sum_{l=0}^{\tfrac{n-1}{2}}x^{2l-1}((l-1)tr_{g_0}g_{2l}'
+T_{2l-1}g_0')+x^{n-1}\widetilde{T}_0g_0'+O(x^{n}),$$
$$[\beta_g\tilde{h}]_i = \sum_{l=0}^{\tfrac{n-1}{2}}x^{2l}([\delta_{g_0}g_{2l}']_i
+\hat{\nabla}_i(tr_{g_0}g_{2l}')+T_{2l}g_0')+x^{n}\widetilde{T}_1g_0'+O(x^{n+1}),$$
for $1\leq i\leq n$. In particular, if $\mathrm{tr}_{g_0}g_0'=\mathrm{\delta}_{g_0}g_0'=0$, then the two formulae reduce to
$$ [\beta_g\tilde{h}]_0 = \sum_{l=0}^{\tfrac{n-1}{2}}x^{2l-1}(T_{2l-1}g_0')+x^{n-1}\widetilde{T}_0g_0'+O(x^{n}),$$
$$[\beta_g\tilde{h}]_{i} =\sum_{l=0}^{\tfrac{n-1}{2}}x^{2l}(T_{2l}g_0')+x^{n}\widetilde{T}_1g_0'+O(x^{n+1}).$$
Note for $0\leq k\leq n-1$, the coefficients of each differential operator $T_k$ appearing above only depend on the components of $g_0$ and their derivatives up to order $k+1$; and for $k=0,1$, the coefficients of each differential operator $\widetilde{T}_k$ only depend on the compeoents of $g_0, g_n$ and their derivatives up to order $k$.
\end{lemma}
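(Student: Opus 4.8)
The plan is to compute $\beta_g\tilde h$ directly from the coordinate formulas for the Christoffel symbols of $g$ in Lemma 4.5, together with the expansions $g(x)=g_0+x^2g_2+\cdots+x^ng_n+O(x^{n+1})$ and $\tilde h=x^{-2}(g_0'+x^2g_2'+\cdots+x^ng_n'+O(x^{n+1}))$. Writing $\beta_g\tilde h=\delta_g\tilde h+\tfrac12 d\,\mathrm{tr}_g\tilde h$ and recalling $[\delta_g\tilde h]_\nu=-g^{\mu\alpha}(\partial_\alpha \tilde h_{\mu\nu}-\Gamma^{\lambda}_{\ \alpha\mu}\tilde h_{\lambda\nu}-\Gamma^{\lambda}_{\ \alpha\nu}\tilde h_{\mu\lambda})$ and $\mathrm{tr}_g\tilde h=g^{\mu\nu}\tilde h_{\mu\nu}$, one substitutes the coordinate expressions. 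Since $g^{\mu\nu}=x^2\bar g^{\mu\nu}$ with $\bar g^{00}=1$, $\bar g^{0i}=0$ and $\bar g^{ij}=[g(x)]^{ij}=[g_0]^{ij}+O(x^2)$, and $\tilde h_{\mu\nu}=x^{-2}\bar h_{\mu\nu}$ with $\bar h_{00}=\bar h_{0i}=0$, $\bar h_{ij}=[g_0']_{ij}+x^2[g_2']_{ij}+\cdots$, all the $x$-powers can be tracked explicitly.

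First I would isolate the leading contributions. The term $-g^{00}\partial_0\tilde h_{0\nu}$ vanishes for $\nu=0$ and for $\nu=i$ reduces, after using $\Gamma^0_{\ 0i}=0$ and $\Gamma^0_{\ 00}=-x^{-1}$, to the connection terms; the dominant piece for the $dx$-component comes from $-g^{ij}\Gamma^0_{\ ij}\tilde h_{00}$-type cancellations and from $\tfrac12\partial_0\mathrm{tr}_g\tilde h$. Because $\mathrm{tr}_g\tilde h=g^{ij}\tilde h_{ij}=[g_0]^{ij}[g_0']_{ij}+x^2(\cdots)+\cdots=\mathrm{tr}_{g_0}g_0'+\sum_{l\ge1}x^{2l}(\mathrm{tr}_{g_0}g_{2l}'+T_{2l}g_0')+x^n\widetilde T_0 g_0'+O(x^{n+1})$ — where the lower-order corrections $T_{2l}g_0'$ absorb the $x$-derivatives of $[g(x)]^{ij}$ hitting $[g_{2k}']_{ij}$ for $k<l$, and $\widetilde T_0 g_0'$ at order $x^n$ uses that $g_n$ enters $g^{ij}$ — taking $\tfrac12\partial_x$ produces the claimed $x^{2l-1}$ series with $\tfrac12\cdot 2l=l$ times $\mathrm{tr}_{g_0}g_{2l}'$ from the first Bianchi term; the extra $-\mathrm{tr}_{g_0}g_{2l}'$ (giving the net $(l-1)$) comes from the $\delta_g$ part, specifically from $g^{ij}\Gamma^\lambda_{\ 0j}\tilde h_{i\lambda}$ via $\Gamma^k_{\ 0j}=x^{-1}(-\delta^k_j+\cdots+\tfrac n2 x^n[g_0]^{kt}[g_n]_{tj}+\cdots)$, which contributes $-x^{-1}g^{ij}\tilde h_{ij}=-x^{-1}\mathrm{tr}_g\tilde h$ at leading order. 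Matching powers of $x$ in the sum of these two contributions gives $[\beta_g\tilde h]_0=\sum_l x^{2l-1}((l-1)\mathrm{tr}_{g_0}g_{2l}'+T_{2l-1}g_0')+x^{n-1}\widetilde T_0 g_0'+O(x^n)$, where the $O(x^n)$ absorbs the $g_n$-dependent Christoffel pieces and the error term in the metric expansion, and where the order count on $T_{2l-1}$ follows because its coefficients come from $[g_{2k}(x)]_{ij}$, $k<l$, which (Lemma 4.3) depend on $g_0$ to order $2k\le 2l-2$, with one more derivative from $\partial_x$ or $\partial_y$ acting, hence order $\le 2l$ on the components of $g_0$; to be careful I would note the coefficient of $T_{2l-1}$ as an operator on $g_0'$ acting to order $2l-1$ follows from the structure of $\delta_g$ (one tangential derivative) composed with the explicit polynomials for $g_{2k}$.

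For the tangential components $[\beta_g\tilde h]_i$, the same substitution gives the leading term $[g(x)]^{jk}(\partial_k\tilde h_{ij}-\hat\Gamma^{\lambda}_{\ kj}\tilde h_{i\lambda}-\hat\Gamma^\lambda_{\ ki}\tilde h_{j\lambda})$-type expressions plus $\tfrac12\partial_i\mathrm{tr}_g\tilde h$, yielding $x^{2l}([\delta_{g_0}g_{2l}']_i+\hat\nabla_i\mathrm{tr}_{g_0}g_{2l}'+T_{2l}g_0')$ at each order, with the $x^n\widetilde T_1 g_0'$ term produced by the $g_n$-dependent Christoffel symbols $\Gamma^k_{\ ij}=\hat\Gamma^k_{\ ij}+\cdots+x^n\Lambda^k_{\ ij}(g_0,g_n)+\cdots$ and the $x^n[g_0]^{kt}[g_n]_{tj}$ piece of $\Gamma^k_{\ 0j}$, which also explains why at order $x^n$ one only gets $\widetilde T_1 g_0'$ rather than an explicit $\delta_{g_0}g_n'$: the $\delta_{g_0}g_n'$ contribution is reabsorbed via Lemma 4.2, $\delta_{g_0}g_n'=\widetilde T_1 g_0'$. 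The specialization when $\mathrm{tr}_{g_0}g_0'=\delta_{g_0}g_0'=0$ then follows from part (b)/(c) of Lemma 4.3: under these hypotheses $\mathrm{tr}_{g_0}g_{2l}'=T_{2l}g_0'$ for all $l$ (since $g_{2l}'$ is, up to $T_{2l-1}g_0'$, a constant times $\triangle_{g_0}^l g_0'$, whose $g_0$-trace is $\mathrm{tr}_{g_0}\triangle^l_{g_0}g_0'=\triangle^l_{g_0}\mathrm{tr}_{g_0}g_0'+T_{2l}g_0'=T_{2l}g_0'$), and similarly $\delta_{g_0}g_{2l}'=T_{2l}g_0'$ since $\delta_{g_0}\triangle^l_{g_0}g_0'$ differs from $\triangle^l_{g_0}\delta_{g_0}g_0'$ by curvature commutator terms of order $2l$ in $g_0$; the same bookkeeping kills $\widetilde T_0 g_0'$ and reduces $\widetilde T_1 g_0'$ to the claimed form.

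The main obstacle is not any single hard estimate but the careful bookkeeping: one must verify that every term generated by the substitution either lands in the explicitly named pieces ($\mathrm{tr}_{g_0}g_{2l}'$, $\delta_{g_0}g_{2l}'$, $\hat\nabla_i \mathrm{tr}_{g_0}g_{2l}'$) or is genuinely lower order in the $g_0'$-derivative count and of the asserted order in $g_0$-derivatives, and that nothing of order $x^{k}$ with $k<n-1$ (resp. $k<n$) escapes the pattern — in particular that the would-be odd-power terms for $[\beta_g\tilde h]_i$ and even-power terms for $[\beta_g\tilde h]_0$ all vanish, which is where the even-to-order-$n$ structure of $g(x)$ (equation (3)) is essential, since it forces the relevant Christoffel components to have only even or only odd powers of $x$ up to order $n$. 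I would organize this by first recording, schematically, $g^{\mu\nu}$ and $\tilde h_{\mu\nu}$ and all $\Gamma$'s as power series in $x$ with the parity built in, then computing $\delta_g\tilde h$ and $\mathrm{tr}_g\tilde h$ term by term, and finally reading off the coefficient of each $x^k$ for $0\le k\le n$, invoking Lemma 4.3 for the derivative-order claims on the $T$'s and Lemma 4.2 for the $\widetilde T$'s.
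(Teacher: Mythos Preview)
Your approach is essentially the same as the paper's: compute $\mathrm{tr}_g\tilde h$ and $\delta_g\tilde h$ separately using the Christoffel symbols from Lemma~4.5, combine them via $\beta_g\tilde h=\delta_g\tilde h+\tfrac12 d\,\mathrm{tr}_g\tilde h$, and invoke Lemma~4.2 (the linearized transverse--traceless conditions) to absorb $\mathrm{tr}_{g_0}g_n'$ and $\delta_{g_0}g_n'$ into $\widetilde T_0g_0'$ and $\widetilde T_1g_0'$. Your identification of the mechanism producing the coefficient $(l-1)$ --- namely $l$ from $\tfrac12\partial_x\mathrm{tr}_g\tilde h$ and $-1$ from the $\Gamma^k_{\ 0j}\approx -x^{-1}\delta^k_j$ piece of $\delta_g\tilde h$ --- matches the paper's computation exactly.

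One small slip: in your final paragraph you say the trace-free/divergence-free hypotheses ``kill $\widetilde T_0 g_0'$.'' They do not; the term $x^{n-1}\widetilde T_0 g_0'$ survives in the reduced formula for $[\beta_g\tilde h]_0$ (it comes from $\mathrm{tr}_{g_n}g_0'$, which need not vanish under $\mathrm{tr}_{g_0}g_0'=\delta_{g_0}g_0'=0$). What the specialization actually does is absorb each $(l-1)\mathrm{tr}_{g_0}g_{2l}'$ and each $[\delta_{g_0}g_{2l}']_i+\hat\nabla_i\mathrm{tr}_{g_0}g_{2l}'$ into the generic $T_{2l-1}g_0'$ and $T_{2l}g_0'$, via the commutator arguments you sketched; the $\widetilde T$ terms at orders $x^{n-1}$ and $x^n$ remain as stated.
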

\begin{proof} Note that $\tilde{h}_{0\mu}=\tilde{h}_{\mu 0}=0$ for $0\leq \mu\leq n$. By direct computation,
$$tr_{g_0}\tilde{h} =
\sum_{l=0}^{\frac{n-1}{2}}x^{2l}(\mathrm{tr}_{g_0}g_{2l}'+T_{2l-1}g_0')+
x^n(\mathrm{tr}_{g_0}g_n'-\mathrm{tr}_{g_n}g_0')+O(x^{n+1}),$$
$$[\delta_g \tilde{h}]_0=
\sum_{l=0}^{\frac{n-1}{2}}x^{2l-1}(-\mathrm{tr}_{g_0}g_{2l}'+T_{2l-1}g_0')+x^{n-1}(\tfrac{n+4}{2}
\mathrm{tr}_{g_n}g_0')+O(x^n),$$
andfor $1\leq i\leq n$,
\begin{eqnarray*}
[\delta_g \tilde{h}]_i &=& \sum_{l=0}^{\frac{n-1}{2}} x^{2l}(\delta_{g_0}g_{2l}'+T_{2l}g_0')+x^n
([\delta_{g_0}g_n']_i-[\delta_{g_n}g_0']_i \\&&
+[g_0]^{jk}(\Lambda^s_{\ ij}(g_0,g_n)[g'_0]_{sk}+\Lambda^s_{\ jk}(g_0,g_n)[g'_0]_{is}))+O(x^{n+1}).\\
\end{eqnarray*}
Using $\beta_g\tilde{h}=\delta_g\tilde{h}+\tfrac{1}{2}d\mathrm{tr}_g\tilde{h}$ and by Lemma \ref{lemma-transevertracefree} we can get the formulae easily.
\end{proof}

\subsection{Choice of Gauge}
\begin{proposition}\label{prop-gauge choice}
There exist a $C^1$ family of $C^{n-1}$ diffeomorphisms $\overline{\Psi}_s: \overline{M} \longrightarrow \overline{M}$, such that $h=\frac{d}{ds}|_{s=0}(\Psi^{*}_sG_+(s))$ satisfies $\beta_gh=0$, where $\Psi_s$ is the restriction of $\overline{\Psi}_s$ to $M$. Moreover, $h\in C^{n,\alpha} (\overline{M};S^2(^0T^{*}M))$ for any $0<\alpha<1$, with asymptotic expansion
$$h=x^{-2}(h_0+xh_1+...+x^{n}h_n+x^{n+1}\log xH+O(x^{n+2}\log x)$$
 at the boundary, where $h_0=g_0'-\frac{1}{n}tr_{g_0}g_0'$, $h_i=T_ig_0'$ for $1\leq i\leq n-1$ and $h_n=g_n'-T_ng_0'$. Moreover, if $tr_{g_0}g_0'=\delta_{g_0}g_0'=0$, then $h_0=g_0'$ and $h_n=g_n'-T_{n-1}g_0'$.
\end{proposition}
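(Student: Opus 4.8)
The plan is to carry out the linearized Bianchi-gauge reduction prepared above. By the computation leading to (\ref{eq-gauge fix}) it suffices to find a section $\omega$ of ${}^0T^*M$ with
\[
J_g\,\omega=-2\beta_g\tilde h,\qquad J_g=\triangle_g+n ,
\]
for then $h:=\tilde h+\delta^*_g\omega$ satisfies $\beta_gh=0$; the diffeomorphisms $\overline{\Psi}_s$ are taken to be the time-$s$ flow of the vector field $X$ with $L_Xg=\delta^*_g\omega$, i.e.\ $X=\tfrac12\,\omega^\sharp$, which vanishes on $\partial\overline{M}$. The difficulty is that, by Lemma \ref{lem-beta h}, $\beta_g\tilde h$ is \emph{not} small at $\partial\overline{M}$: in the splitting ${}^0T^*M=\mathscr{U}^1\oplus\mathscr{U}^2=\mathrm{span}\{dx/x\}\oplus{}^0T^*\partial\overline{M}$ its $\mathscr{U}^1$-part is $O(1)$ and its $\mathscr{U}^2$-part is $O(x)$, so $\beta_g\tilde h\notin L^2_0$ and Proposition \ref{prop-map property of Jg} cannot be invoked directly. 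One therefore first strips off a boundary layer.

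\emph{Formal solution.} Since the indicial operator $I(J_g)(s)=-s^2+ns+\mathrm{diag}(2n,(n+1)\mathrm{Id})$ is diagonal in this splitting, one builds $\omega_1$ with a polyhomogeneous expansion at $\partial\overline{M}$ (its $\mathscr{U}^1$-coefficients starting at order $x^0$, its $\mathscr{U}^2$-coefficients at order $x^1$) such that $J_g\omega_1+2\beta_g\tilde h=O(x^N)$ for a fixed $N>2n$. The $k$-th coefficient is solved for using $f_1(k)=-k^2+nk+2n$ on $\mathscr{U}^1$ and $f_2(k)=-k^2+nk+n+1$ on $\mathscr{U}^2$; here $f_1(k)\neq0$ for every integer $k\ge0$ (its positive root $s^1=\tfrac12(n+\sqrt{n^2+8n})$ lies strictly between $n+1$ and $n+2$), while $f_2(k)\neq0$ for $1\le k\le n$ and $f_2(n+1)=0$. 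Thus the recursion is always solvable, the \emph{only} resonance being at $\mathscr{U}^2$-order $x^{n+1}$; it is activated by the order-$x^{n+1}$ term $x^n\widetilde T_1 g_0'$ of $\beta_g\tilde h$ from Lemma \ref{lem-beta h} and forces an $x^{n+1}\log x$ term in $\omega_1$, hence the term $x^{n+1}\log x\,H$ in the expansion of $h$. All coefficients of $\omega_1$ below order $x^n$ are universal differential operators applied to $g_0'$; the order-$x^n$ coefficient and $H$ additionally involve $g_n'$, through $\widetilde T_0,\widetilde T_1$ and Lemma \ref{lemma-transevertracefree}.

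\emph{Correction and coefficients.} Because $J_g\omega_1+2\beta_g\tilde h\in x^N\mathscr{A}^0(\overline{M};{}^0T^*M\otimes\Gamma_0^{1/2})$ with $N>2n$, Proposition \ref{prop-map property of Jg} gives $\omega_2\in\sum_{i=1}^2\mathscr{A}^{e_i}$ with $J_g\omega_2=-(J_g\omega_1+2\beta_g\tilde h)$; since $e_1=(s^1,s^1+1)^t$ with $s^1>n+1$ and $e_2=(n+2,n+1)^t$, the tensor $\delta^*_g\omega_2$ enters the expansion of $h$ only beyond order $x^n$, so it does not affect $h_0,\dots,h_n$. Put $\omega=\omega_1+\omega_2$ and $h=\tilde h+\delta^*_g\omega$. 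Reading off coefficients: by $f_1(0)=2n$ and Lemma \ref{lem-beta h} (whose leading $\mathscr{U}^1$-coefficient is $-\mathrm{tr}_{g_0}g_0'$) the leading $dx/x$-coefficient of $\omega$ is $\tfrac1n\mathrm{tr}_{g_0}g_0'$, so the corresponding term of $X=\tfrac12\omega^\sharp$ is $\tfrac1{2n}(\mathrm{tr}_{g_0}g_0')\,x\partial_x$; since $L_{a\,x\partial_x}g$ has leading term $-2a\,g_0$ in the $x^{-2}(\,\cdot\,)$ expansion, this contributes $-\tfrac1n(\mathrm{tr}_{g_0}g_0')g_0$ to $h_0$, giving $h_0=g_0'-\tfrac1n(\mathrm{tr}_{g_0}g_0')g_0$. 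The intermediate coefficients combine the locally determined $g_{2l}'$ (themselves differential operators in $g_0'$, by the lemma above) with the $\delta^*_g\omega_1$-corrections, producing $h_i=T_ig_0'$ for $1\le i\le n-1$; and $h_n=g_n'-T_ng_0'$, where $g_n'$ comes from $\tilde h$ while $-T_ng_0'$ collects all local corrections. When $\mathrm{tr}_{g_0}g_0'=\delta_{g_0}g_0'=0$, the special cases of the lemma above and of Lemma \ref{lem-beta h} apply, the relevant operators drop one derivative, and one gets $h_0=g_0'$, $h_n=g_n'-T_{n-1}g_0'$.

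\emph{Regularity and the main obstacle.} The expansion of $\omega$ is smooth through order $x^n$ with leading non-smooth term $x^{n+1}\log x$ (in the ${}^0T^*M$-normalization) coming from the resonance, and $\omega_2$ only adds lower-order terms; hence $X=\tfrac12\omega^\sharp$ is a vector field on $\overline{M}$ of the asserted regularity, vanishing on $\partial\overline{M}$, whose time-$s$ flow is the $C^1$ family of $C^{n-1}$ diffeomorphisms $\overline{\Psi}_s$ with $\overline{\Psi}_0=\mathrm{Id}$, and $h=\tfrac{d}{ds}\big|_{s=0}\Psi_s^*\tilde g_+(s)=\tilde h+L_Xg=\tilde h+\delta^*_g\omega\in C^{n,\alpha}(\overline{M};S^2({}^0T^*M))$ with the stated expansion. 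I expect the real work to lie in the formal step and the read-off: verifying that $f_1$ has no non-negative integer zero (so the $\mathscr{U}^1$-component stays log-free), that the sole resonance $f_2(n+1)=0$ sits exactly at order $x^{n+1}$ and is genuinely triggered by Lemma \ref{lem-beta h}, and then running the recursion carefully enough to pin down $h_0$, the operators $T_i$, and $h_n=g_n'-T_ng_0'$, together with its improvement under the trace/divergence-free hypothesis. One also needs, via Proposition \ref{prop-smoothness} and boundary regularity, that $\tilde h$ itself has a boundary expansion smooth through order $x^n$, so that the logarithm in $h$ is created entirely by the gauge term.
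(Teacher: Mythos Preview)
Your proposal is correct and follows essentially the same approach as the paper: reduce to $J_g\omega=-2\beta_g\tilde h$, build a formal polyhomogeneous solution using the indicial operator (identifying the unique resonance $f_2(n+1)=0$ that creates the $x^{n+1}\log x$ term), correct the $O(x^N)$ remainder by Proposition~\ref{prop-map property of Jg}, and then read off the coefficients of $h=\tilde h+\delta^*_g\omega$. The paper carries this out in slightly more explicit detail---writing the recursion $\omega^{(k)}=\omega^{(k-1)}+x^k\omega^k$ with $\omega^k=-I(J_g)(k)^{-1}v^k$ step by step, and computing $[\delta^*_g\tilde\omega]_{00},[\delta^*_g\tilde\omega]_{0i},[\delta^*_g\tilde\omega]_{ij}$ component-wise---but the logic, the identification of $\omega^0_0=\tfrac1n\mathrm{tr}_{g_0}g_0'$, the location of the log term, and the conclusions about $h_0,\dots,h_n$ and the trace/divergence-free improvement are the same.
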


\begin{proof} 
As the discussion in Section \ref{sec-geom setting}, we can assume that $X$ is the infinitesimal generator of the curve of diffeomorphisims $\overline{\Psi}_s$. Then
$$h=\frac{d}{ds}|_{s=0}(\overline{\Phi}^*_sG_+(s)) =L_{X}g+\tilde{h}.$$ 
So we only need to show that there exists a vector field $X$ on $\overline{M}$ such that
$$\beta_g(L_Xg+\tilde{h})=0.$$
Denoting by $\omega$ the dual form of $X$, then $L_X g=\delta_g^{*}\omega$, so $\omega$ should satisfy the equaiton
$$\beta_g(\delta_g^{*}\omega)+\beta_g\tilde{h}=0.$$
This is equivalent to the equation
\begin{equation}\label{eq-gauge}
\triangle_g\omega-Ric_g(\omega)+2\beta_g\tilde{h}=0.
\end{equation}
where $J_g=\triangle_g-Ric_g=\triangle_g+n\in
\textrm{Diff}_0^2(M;{}^0T^*M)$ studied in the previous subsections. To simplify the following computation, for any $q\in \partial \overline{M}$, choose normal coordinates $(U;y)$ around $p$ with respect to $g_0$. So at $p$,
$$ J_g=-(x\partial_x)^2+nx\partial_x-x^2\sum_{i=1}^{n}\partial_{y^i}^2+
\left[\begin{array}{cc}2n&0\\0&(n+1)Id_n\end{array}\right]
+2x\left[\begin{array}{cccc}0&-\partial_{y^1}&\cdots&
-\partial_{y^n}\\
\partial_{y^1}&0&\cdots
&0\\ \vdots
&\vdots&\cdots&\vdots\\\partial_{y^n}&0&\cdots&0\end{array}\right]+xD,$$
for some $D\in \textrm{Diff}_0^2(M;{}^0T^*M)$. For $\omega\in C^{\infty}(\partial\overline{M};{}^0T^*M)$, write $\omega=w_0\frac{dx}{x}+\sum_{i=1}^nw_i\frac{dy^i}{x}$. Then
\begin{eqnarray*}
J_g(x^k\omega) &=&
x^{k}(I(J_g)(k)w)+x^{k+1}(-\sum_{i=1}^n2\partial_iw_i\frac{dx}{x}
+2\partial_iw_0\frac{dy^i}{x}+T_0w)\\&&
+x^{k+2}(\triangle_{g_0}w+T_1w)+\sum_{l=3}^{\infty}x^{k+l}T_2w
\\&=& x^{k}(I(J_g)(k)w) +O(x^{k+1}),
\end{eqnarray*}
where $T_i\in\mathrm{Diff}^i(\partial\overline{M};{}^0T^*M)$. Given $\tilde{h}=x^{-2}(g'_0+x^2g'_2+\cdots+x^ng'_n+O(x^{n+1}))$ as above, according to Lemma \ref{lem-beta h},
\begin{eqnarray*}
\beta_g\tilde{h} &=&
[\sum_{l=0}^{\tfrac{n-1}{2}}x^{2l}((l-1)tr_{g_0}g_{2l}'
+T_{2l-1}g_0')+x^{n}\widetilde{T}_0g_0'+O(x^{n+1})]\frac{dx}{x}
\\&&+
[\sum_{l=0}^{\tfrac{n-1}{2}}x^{2l+1}([\delta_{g_0}g_{2l}']_i
+\hat{\nabla}_i(tr_{g_0}g_{2l}')+T_{2l}g_0')+x^{n+1}\widetilde{T}_1g_0'+O(x^{n+2})]\frac{dy^i}{x}.
\end{eqnarray*}
Hence $\beta_g\tilde{h}\in C^{\infty}(\overline{M};{}^0T^*\overline{M})$. Moreover, setting
$v^{0}=(-2tr_{g_0}g_0')\frac{dx}{x}$, 
$$2\beta_g\tilde{h}-v^{0}\in x^{1}C^{\infty}(\overline{M};{}^0T^*M)).$$
Let $\omega^{(0)}=\omega^0=-(I(J_g)(0))^{-1}v^{0}=(\frac{1}{n}tr_{g_0}g_0')\frac{dx}{x}$, we have  
$$J_g\omega^{(0)}+2\beta_g\tilde{h}-xv^{1}\in x^2C^{\infty}(\overline{M};{}^0T^*\overline{M}).$$
where $v^{1}=\sum_{i=1}^n([\delta_{g_0}g_0]_i+\frac{n+1}{n}\partial_itr_{g_0}g_0')\frac{dy^i}{dx}$.
Choose $\omega^1=-(I(J_g)(1))^{-1}v^{1}$, then $w^{(1)}=w^{(0)}+xw^1$ satisfies
$$J_gw^{(1)}+2\beta_g\tilde{h}\in x^2C^{\infty}(\overline{M};{}^0T^*\overline{M}).$$
Since for $1\leq k\leq n$, $$I(J_g)(k)=\left[\begin{array}{cc}-k^2+nk+2n&0\\0&-k^2+nk+n+1\end{array}\right]$$
is invertible, we can repeate this process up to $k=n$. In other words we can construct a sequence of approximate solutions
$\omega^{(k)}=\omega^0+x\omega^1+...+x^{k}\omega^{k}$ to equation
(\ref{eq-gauge}) in the sense that
$$J_g\omega^{(k)}+2\beta_g\tilde{h}=x^{k+1}v^{k+1}+O(x^{k+2}),$$
where $\omega^k=\omega^k_0\frac{dx}{x}+\omega^k_i\frac{dy^i}{x}$ and $v^{k}=v^k_0\frac{dx}{x}+v^k_i\frac{dy^i}{x}$.
Note that if $k\leq n-1$, for $0\leq \mu\leq n$
$\omega^k_{\mu}=T_kg_0'\in C^{\infty}(\partial
\overline{M})$ and $v^k_{\mu}=T_kg_0'\in C^{\infty}(\partial
\overline{M})$ for some differential operators $T_k$ with coefficients depending only on the components of $g_0$ and their derivatives up to order $k$. However for $k=n$ then
$$v_0^n=T_ng_0'+\widetilde{T}_0g_0',\quad v_i^n=T_ng_0'.$$
Hence $w^n=-(I(J_g)(n))^{-1}v^n$ satisfies the same property, i.e.
$$\omega_0^n=T_ng_0'+\widetilde{T}_0g_0',\quad \omega_i^n=T_ng_0'.$$
Now $\omega^{(n)}=\omega^0+x\omega^1+...+x^{n}\omega^{n}$ satisfies
$J_g\omega^{(n)}+2\beta_g\tilde{h}=x^{n+1}v^{n+1}+O(x^{n+2})$.
However,
$$I(n+1)=\left[\begin{array}{cc}2n&0\\0&0\end{array}\right]$$ is degenerate. Notice that
$(-(x\partial_x)^2+nx\partial_x+n+1)(x^{n+1}\log x)=-x^{n+1}$, so we
can choose $$\omega^{n+1}=-\tfrac{1}{2n}v^{n+1}_0\frac{dx}{x},\quad
\overline{\omega}^{n+1}=\sum_{i=1}^nv^{n+1}_i\frac{dy^i}{x}.$$
Then $\omega^{(n+1)}=\omega^{(n)}+x^{n+1}\omega^{n+1}+(x^{n+1}\log
x)\overline{\omega}$ satisfies
$$J_g(\omega^{(n+1)})+2\beta_g\tilde{h}=r
\in (x^{n+2}\log x) C^{\infty}(\overline{M};{}^0T^*\overline{M})
+x^{n+2}C^{\infty}(\overline{M};{}^0T^*\overline{M}).$$
Note that for all $k>n+1$, $I(J_g)(k)$ is invertible. Repeating
this process again, we can find a refined sequence of approximate solutions
$$\omega^{(N)}=\sum_{k=0}^{N}x^k\omega^k+\log
x\sum_{k=n+1}^{N}x^k\overline{\omega}^k$$
for $N$ arbitrarily large with
$w^k,\overline{\omega^k}\in C^{\infty}(\partial\overline{M};{}^0T^*\overline{M})$ and satisfying $$J_g\omega^{(N)}+2\beta_g\tilde{h}=\gamma^{N+1}
\in x^{N+1}\mathscr{A}^0(\overline{M};{}^0T^*\overline{M}).$$
By Proposition
\ref{prop-map property of Jg}, choosing $N$ large enough, we have
$$J_g^{-1}\gamma^{N+1}\in
x^{n+1}C^{\infty}(\overline{M};{}^0T^*\overline{M})
+x^{\frac{n+\sqrt{n^2+8n}}{2}}C^{\infty}(\overline{M};{}^0T^*\overline{M}).$$
Then $\widetilde{\omega}=\omega^{(N)}-J_g^{-1}r^{N+1}$ is a true solution to equation
(\ref{eq-gauge}). Moreover, letting $\widetilde{\omega}=\widetilde{\omega}_0dx+\sum_{i=1}^n\widetilde{\omega}_idy^i$, then near the boundary, 
$$\widetilde{\omega}_0=x^{-1}(\omega_0^0+x\omega_0^1+\cdots +x^{n+1}\omega^{n+1}_0+O(x^{n+\epsilon})),$$
$$\widetilde{\omega}_i=x^{-1}(x\omega_i^1+\cdots +x^{n}\omega^{n}_i+x^{n+1}\log x \bar{\omega}^{n+1}_i+O(x^{n+\epsilon})),$$
where $\omega_0^0=\tfrac{1}{n}\mathrm{tr}_{g_0}g_0'$.
Hence, the dual vector filed $X\in
C^{n-1}(\overline{M};{}^0TM)=xC^{n-1}(\overline{M};T\overline{M})$, which implies that $X$ vinishes at the boundary. Hence the 1-parameter family of diffeomorphisms
generated by $X$ is $C^{n-1}$ over $\overline{M}$ which restricts
to the identity on the boundary.
By definition,
$$h=\delta^{*}_g\widetilde{\omega}+\tilde{h}\in C^{n-1}(\overline{M};S^2(^0T^{*}M))=
x^{-2}C^{n-1}(\overline{M};S^2T^{*}\overline{M})$$
which satisfies the gauge condition $\beta_gh=0$. More explicitly, using the formula
$$[\delta^{*}_g\widetilde{\omega}]_{\alpha\beta}=\frac{1}{2}(\nabla_{\alpha}\widetilde{\omega}_{\beta}
+\nabla_{\beta}\widetilde{\omega}_{\alpha})=\frac{1}{2}(\partial_{\alpha}\widetilde{\omega}_{\beta}
+\partial_{\beta}\widetilde{\omega}_{\alpha})-\Gamma_{\alpha\beta}^{\mu}\widetilde{\omega}_{\mu}$$
and lemmas in the previous subsection, we can compute the components of $\delta^{*}_g\widetilde{\omega}$ as
$$[\delta^{*}_g\widetilde{\omega}]_{00} =  x^{-2}(\sum_{k=1}^{n-1}kx^k\omega_0^k +nx^n\omega_0^n+(n+1)x^{n+1}\omega_0^{n+1}+O(x^{n+1+\epsilon})),$$
$$[\delta^{*}_g \widetilde{\omega}]_{0i} = x^{-2}(\sum_{k=1}^{n-1}x^kF_k
+x^n(\tfrac{n+1}{2}\omega_i^n+\tfrac{1}{2}\partial_i\omega_0^0)+x^{n+1}\log x (\tfrac{n+2}{2}\bar{\omega}_i^{n+1})+O(x^{n+1})),$$
$$[\delta^{*}_g\widetilde{\omega}]_{ij} = x^{-2}(-\omega_0^0[g_0]_{ij}+\sum_{k=1}^{n-1}x^kF_k
+x^{n}(F_{n-1}-\omega_0^n[g_0]_{ij}+\tfrac{n-2}{2}\omega_0^0[g_n]_{ij})+O(x^{n+1}))$$
where $F_k$ is a function of $\omega_{\mu}^0,\omega_{\mu}^1,...,\omega_{\mu}^{k}$ for $0\leq\mu\leq n$ and the components of $g_0$ and their derivatives. Clearly, $F_k$ is linear in $\omega_{\mu}^i$ for $1\leq i\leq k$ and $0\leq \mu\leq n$. Therefore, writing
$$h=x^{-2}(h_0+xh_1+...+x^{n}h_n+x^{n+1}\log xH+O(x^{n+1})),$$
we established:
\begin{itemize}
\item[(a)]
$h_0=g_0'-\tfrac{1}{n}(tr_{g_0}g_0')g_0$, which implies that $tr_{g_0}h_0=0$;
\item[(b)]
For $1\leq k\leq n-1$, $h_k=g'_k+T_kg_0'$ for some differential opeator $T_k$ of order $k$ with coefficients only depending on $g_0$ and their derivatives ($g'_k=0$ if $k$ is odd.);
\item[(c)]
$h_n=g_n'+T_ng_0'+\widetilde{T}_0g_0'$. $\tilde{T}_0$ is of order $0$ with coefficients depending on the components of $g_0$ and $g_n$;
\item[(d)]
$H=T_{n+1}g_0'+\widetilde{T}_1g'_0$. The coefficients of $\widetilde{T}_1$ depends on the components of $g_0, g_n$ and their first order derivatives.
\end{itemize}
Furthermore, if $tr_{g_0}g_0'=\delta_{g_0}g_0'=0$, then in the construction of $\widetilde{\omega}$, we have $w_{\mu}^{k}=T_{k-1}g_0'$ for $0\leq k\leq n-1$ and $w_{\mu}^{n}=T_{n-1}g_0'+\widetilde{T}_0g'_0$. Hence in this case, (a),(c) and (d) are replaced by
\begin{itemize}
\item[(a')] $h_0=g_0'$;
\item[(c')] $h_n=g_n'+T_{n-1}g_0'+\widetilde{T}_0g_0'$;
\item[(d')] $H=T_{n}g_0'+\widetilde{T}_1g'_0$.
\end{itemize}
\end{proof}

\section[]{Linearized Einstein Equation}\label{sec-linear einstein op}
After fixing the gauge, the linearized Einstein equation becomes
elliptic, i.e.
$$L_gh=0,\ \mathrm{where}\ L_g=\triangle_{g}h+2R+2n.$$
In this section, we want to study
$L_g$  as was done for $J_g$ and with
further analysis of the Poisson operator and scattering matrix
to characterize the solution to the linearized Einstein equation.
For this goal, we can study a family of operators instead. Denote
$$L_g(\lambda)=\triangle_{g}h+2R+2n-\lambda(n-\lambda),\quad\mathrm{for}\ \lambda\in
D_{\varepsilon},$$ where $D_{\varepsilon}$ is a simply connected neighborhood of $[n,\infty)$ in $\mathbb{C}$ defined by
\begin{equation}\label{holoregion}
D_{\varepsilon}=\{\lambda\in \mathbb{C}:
\Re\lambda-n>-\varepsilon,\ |\Im\lambda|< \varepsilon\}
\end{equation}
for some $\varepsilon>0$ and small. Throughout this section, the parameter $\lambda$ will be in such a domain $D_{\varepsilon}$ for some $\varepsilon >0$ and small enough. 

\subsection[]{Indicial Operator}
For any $q\in \partial \overline{M}$, let $([0,\epsilon)\times U; x,y)$ be the
local coordinates. At the boundary, $S^2({}^0T^*M)|_{\partial\overline{M}}$ has a decomposition w.r.t. the metrics $g$ and $g_0$, i.e.
$S^2({}^0T^*M)|_{\partial\overline{M}}=\mathscr{V}^0\oplus\mathscr{V}^1\oplus\mathscr{V}^2\oplus\mathscr{V}^3$,
where
\begin{eqnarray*}
\mathscr{V}^0 &=& span\{\tfrac{dx}{x}\otimes \tfrac{dx}{x}\},\\
\mathscr{V}^1 &=& \{h\in S^2({}^0T^*\partial \overline{M});tr_{g_0}h=0\}, \\
\mathscr{V}^2 &=& span\{n\tfrac{dx}{x}\otimes
\tfrac{dx}{x}-x^{-2}g_0\},\\
\mathscr{V}^3 &=& span \{\tfrac{dx}{x}\otimes
\tfrac{dy^i}{x}+\tfrac{dy^i}{x}\otimes \tfrac{dx}{x}:1\leq i\leq n\}.
\end{eqnarray*}
Note that $\mathscr{V}^3$ doesn't depend on the choice of local
boundary coordinates $(U;y)$. Hence such decomposition is well
defined on $\partial \overline{M}$. Moreover, it can be extended to a neiborhood of the boundary up to $O(x^2)$, i.e. we can choose $\Lambda=2$ in Condition \ref{cond-ET}. That is because
$$g=x^{-2}(dx^{2}+g(x))=x^{-2}(dx^2+g_0+O(x^{2}))$$
in a neighborhood of $\partial \overline{M}$. If we replace $g_0$ by
$g(x)=g_0+O(x^2)$ in the above definition of $\mathscr{V}^i$, the
decomposition can be extended to whole neighborhood of
$\partial \overline{M}$. The indicial operator corresponding to such a
decomposition is
$$I(L_g(\lambda))(s)=-s^2+ns+\left [\begin{array}{cccc}
2n & 0&0 &0\\
0&0 & 0&0\\
0&0 & 2n&0 \\
0&0 &0 & n+1
\end{array}\right]-\lambda(n-\lambda), \quad \forall\ s\in\mathbb{C}.$$
Hence for $\lambda\in D_{\varepsilon}$, the corresponding indicial functions and indicial roots are:
\begin{eqnarray*}
f_0(s,\lambda)&=&-s^2+ns+2n-\lambda(n-\lambda)=0
\\&\Rightarrow& \
s_0(\lambda)=\tfrac{n-\sqrt{(2\lambda-n)^2+8n}}{2},
\ s^0(\lambda)=\tfrac{n+\sqrt{(2\lambda-n)^2+8n}}{2},\\
f_1(s,\lambda)&=& -s^2+ns+2n-\lambda(n-\lambda)=0 \\
&\Rightarrow& \ s_1(\lambda)=n-\lambda,\ s^1(\lambda)=\lambda,\\
f_2(s,\lambda)&=& -s^2+ns+2n-\lambda(n-\lambda)=0 \\
&\Rightarrow& \ s_2(\lambda)=s_0(\lambda), s^2(\lambda)=s^0(\lambda),\\
f_3(s,\lambda)&=& -s^2+ns+n+1-\lambda(n-\lambda)=0 \\
&\Rightarrow& \
s_3(\lambda)=\tfrac{n-\sqrt{(2\lambda-n)^2+4n+4}}{2},
s^3(\lambda)=\tfrac{n+\sqrt{(2\lambda-n)^2+4n+4}}{2}.
\end{eqnarray*}
Then if $\varepsilon>0$ is small enough, we can fix a choice of
square roots such that $s_i(\lambda)$ and $s^i(\lambda)$ are
holomorphic in $D_{\varepsilon}$. Note that $\Re\lambda=\Re s^1(\lambda)<
\Re s^3(\lambda)<\Re s^0(\lambda)=\Re s^2(\lambda)<\Re\lambda+2$ for all
$\lambda\in D_{\varepsilon}$.

\subsection[]{Normal Operator}
Consider the half-plane model of hyperbolic space
$(\mathbb{H}^{n+1},r)$, where
$$\mathbb{H}^{n+1}=\mathbb{R}^+_x\times \mathbb{R}^n_y,\quad
r=x^{-2}(dx^2+dy^2).$$
At each $q\in \partial\overline{M}$, let
$([0,\epsilon)\times U;x=y^0,y)$ be a coordinate system.
Then $S^2({}^0T_q^*M)$ lifts to a trivial bundle over $T^+_qM\cong
\mathbb{H}^{n+1}$, i.e. we can use $\{
\tfrac{dy^i}{x}\otimes\tfrac{dy^j}{x}+\tfrac{dy^j}{x}\otimes\tfrac{dy^i}{x}:0\leq
i,j\leq n \}$ as a basis of $S^2({}^0T^*\mathbb{H}^{n+1})$. And the
decomposition corresponding to 
$\mathscr{V}^0\oplus\mathscr{V}^1\oplus\mathscr{V}^2\oplus\mathscr{V}^3$
is
\begin{eqnarray*}
\mathscr{W}^0&=&\{h:h_{00}=h_{ii}, h_{0j}=h_{ij}=0,\ \mbox{for}\ i,j\geq 1\},\\
\mathscr{W}^1&=&\{h:\sum_{i\geq1}h_{ii}=0, h_{00}=h_{0j}=0,\ \mbox{for}\ j\geq 1\},\\
\mathscr{W}^2&=&\{h:h_{00}=-nh_{ii},h_{0j}=h_{ij}=0,\ \mbox{for}\ i,j\geq 1\},\\
\mathscr{W}^3&=&\{h:h_{00}=h_{ij}=0,\ \mbox{for}\ i,j\geq 1\}.
\end{eqnarray*}
The fibre dimensions $d_i$ of $\mathscr{W}^{i}$ are
$$d_0=d_2=1,\ d_1=\tfrac{n(n-1)}{2}+n-1,\ d_3=n, \ N=d_0+d_1+d_2+d_3=\tfrac{(n+1)(n+2)}{2}.$$
Then the normal operator can be expressed as
\begin{eqnarray*}
N_q(L_g(\lambda))&=&-(x\partial_x)^2+nx\partial_x-x^2\sum_{i=1}^n\partial_{y^i}^2
+2x\left [\begin{array}{cccc}
0 & 0&0 &0\\
0&0 & 0& E_{13}\\
0&0 & 0& E_{23} \\
0&E_{31} & E_{32}& 0
\end{array}\right] \\ &&
+\left [\begin{array}{cccc}
2n & 0&0 &0\\
0&0 & 0&0\\
0&0 & 2n&0 \\
0&0 &0 & n+1
\end{array}\right]-\lambda(n-\lambda),
\end{eqnarray*}
where $E_{ij} \in \textrm{Diff}^1(\mathbb{R}^n) \otimes
Hom(\mathscr{W}^i;\mathscr{W}^j)$. More explicitly, for
$h=\sum_{i,j=0}^nh_{ij}\tfrac{dy^i}{x}\otimes\tfrac{dy^j}{x}$,
$h_{ij}=h_{ji}$,
\begin{eqnarray*}
E(h) &=&
(-2\sum_{k=1}^n\partial_kh_{k0})\tfrac{dx}{x}\otimes\tfrac{dx}{x}
+\sum_{i,j=1}^n(\partial_ih_{0j}+\partial_jh_{i0})\tfrac{dy^i}{x}\otimes\tfrac{dy^j}{x}\\&&
+\sum_{j=1}^n(-\sum_{k=1}^n\partial_kh_{kj}+\partial_jh_{00})
(\tfrac{dx}{x}\otimes\tfrac{dy^j}{x}+\tfrac{dy^j}{x}\otimes\tfrac{dx}{x}).
\end{eqnarray*}
In particular, $E(h)=0$ if $h\in
C^{\infty}(\mathbb{H}^{n+1};\mathscr{W}^1)$ satisfies
$\delta_rh=0$. In this case, the normal operator becomes diagonal
and so effectively reduces to $\triangle_r$ acting on functions.

Notice that $N_q(L_g)=L_r=\triangle_r+2R_r+2n$. We deal with the
normal operator $N_q(L_g(\lambda))$ almost in the same way as was done for $J_g$. So we omit the proofs of some statements if they are essentially the same as in the last section.

For any $u\in x^{a}H_0^2(\mathbb{H}^{n+1};\mathbb{C}^N)$ for some
$a\in \mathbb{R}$, let $\hat{u}(x,\xi)=\mathscr{F}_{y\rightarrow
\xi}(u)(x,\xi)$ be the Fourier Transform w.r.t the variable $y$. Then $u(x,t)$ is a solution of
$(L_r-\lambda(n-\lambda))u=0$ if and only if
$\hat{u}(x,\xi)$ is a solution of the ordinary equation
$(\widehat{L}_r-\lambda(n-\lambda))\hat{u}=0$, where
$$\widehat{L}_r=-(x\partial_x)^2+nx\partial_x+x^2|\xi|^2+\left[\begin{array}{cccc}
2n&0&0&0\\0&0&0&0\\0&0&2n&0\\0&0&0&n+1\end{array}\right]+2ix\tilde{E}(\xi)$$
where $\tilde{E}$ is homogeneous of order 1 in $\xi$. Moreover,for
any $\xi\neq 0$, by changing variable to
$\bar{x}=x|\xi|$ and $\hat{\xi}=\xi/|\xi|$,
$(\widehat{L}_r-\lambda(n-\lambda))v(x|\xi|,\hat{\xi})=0$ is
equivalent to
$(\widetilde{L}_r-\lambda(n-\lambda))v(\bar{x},\hat{\xi})=0$,
where
$$\widetilde{L}_r=-(\bar{x}\partial_{\bar{x}})^2+n\bar{x}\partial_{\bar{x}}
+\bar{x}^2+\left[\begin{array}{cccc}
2n&0&0&0\\0&0&0&0\\0&0&2n&0\\0&0&0&n+1\end{array}\right]+2i\bar{x}\tilde{E}(\hat{\xi}).$$

\begin{lemma} For $a\in (\tfrac{n-\sqrt{n^2-8}}{2},\tfrac{n+\sqrt{n^2-8}}{2})$, the map
$$N_q(L_g)=L_r: x^aH^2_0(\mathbb{H}^{n+1};\mathbb{C}^N \otimes\Gamma_0^{\frac{1}{2}})\longrightarrow
x^aL^2_0(\mathbb{H}^{n+1};\mathbb{C}^N \otimes\Gamma_0^{\frac{1}{2}})$$ is continuous and positive.
\end{lemma}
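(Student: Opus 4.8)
The plan is to mimic, step for step, the proof of the analogous positivity statement for $N_q(J_g)$: conjugate by $x^a$ to reduce everything to the unweighted space, and then read off the lower bound from the explicit zeroth-order part of $L_r$ on hyperbolic space.

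Continuity is immediate and carries no new content. Since $x^{-a}(x\partial_x)x^a=x\partial_x+a$ and $x\partial_{y^i}$ commutes with $x^a$, the conjugate $x^{-a}L_rx^a$ again lies in $\mathrm{Diff}_0^2(\mathbb{H}^{n+1};\mathbb{C}^N\otimes\Gamma_0^{\frac{1}{2}})$, hence $L_r=x^a\circ(x^{-a}L_rx^a)\circ x^{-a}$ is bounded from $x^aH_0^2$ into $x^aL_0^2$.

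For positivity, the one computation to make is the curvature endomorphism of $L_r$ on symmetric $2$-tensors over $(\mathbb{H}^{n+1},r)$. Because this space has constant sectional curvature $-1$, $R_{\alpha\mu\nu\beta}=-(r_{\alpha\nu}r_{\mu\beta}-r_{\alpha\beta}r_{\mu\nu})$ and $R_{\mu\nu}=-nr_{\mu\nu}$; substituting into the expression for $Rh$ given in Section~\ref{sec-geom setting} yields
\begin{equation*}
[R_rh]_{\mu\nu}=-(n+1)h_{\mu\nu}+(\mathrm{tr}_rh)\,r_{\mu\nu},\qquad\text{hence}\qquad L_rh=\triangle_rh-2h+2(\mathrm{tr}_rh)\,r.
\end{equation*}
After the usual density reduction to $\dot{C}^{\infty}$ and integration by parts, $\langle\triangle_rh,h\rangle_{L_0^2}=\|\nabla h\|_{L_0^2}^2$ and $\langle(\mathrm{tr}_rh)r,h\rangle_{L_0^2}=\|\mathrm{tr}_rh\|_{L_0^2}^2\ge0$, so
\begin{equation*}
\langle L_rh,h\rangle_{L_0^2}=\|\nabla h\|_{L_0^2}^2-2\|h\|_{L_0^2}^2+2\|\mathrm{tr}_rh\|_{L_0^2}^2\ \ge\ \|\nabla h\|_{L_0^2}^2-2\|h\|_{L_0^2}^2.
\end{equation*}
Writing $h=x^au$ with $u\in H_0^2(\mathbb{H}^{n+1};\mathbb{C}^N\otimes\Gamma_0^{\frac{1}{2}})$ and running the $x^a$-conjugation exactly as for $N_q(J_g)$ — which shifts the zeroth-order part by $a(n-a)$ — one gets
\begin{equation*}
\langle L_r(x^au),x^au\rangle_{x^aL_0^2}=\big\langle\big(L_r+a(n-a)\big)u,u\big\rangle_{L_0^2}\ \ge\ \|\nabla u\|_{L_0^2}^2+\big(a(n-a)-2\big)\|u\|_{L_0^2}^2+2\|\mathrm{tr}_ru\|_{L_0^2}^2.
\end{equation*}
The interval $\big(\tfrac{n-\sqrt{n^2-8}}{2},\tfrac{n+\sqrt{n^2-8}}{2}\big)$ is exactly the range of $a$ for which $a(n-a)-2>0$ (nonempty as $n\ge3$), so the right side is $\ge(a(n-a)-2)\|u\|_{L_0^2}^2$, strictly positive unless $u=0$. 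That is the claimed positivity.

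The only subtle ingredient is the one already dealt with in the $N_q(J_g)$ lemma: the bookkeeping of the $x^a$-conjugation together with the half-density trivialization, which makes the identity $\langle L_r(x^au),x^au\rangle_{x^aL_0^2}=\langle(L_r+a(n-a))u,u\rangle_{L_0^2}$ come out cleanly; I would simply invoke that computation. The curvature identity is elementary linear algebra, and as a sanity check the two endpoints $\tfrac{n\pm\sqrt{n^2-8}}{2}$ are precisely the indicial roots of $L_r$ restricted to transverse-traceless tensors (on which $L_r=\triangle_r-2$), which is why the admissible weight interval here is narrower than the one for $J_g$.
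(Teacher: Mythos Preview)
Your proof is correct and follows essentially the same line as the paper's. The paper carries out the identical $x^a$-conjugation and curvature computation, but phrases the endpoint differently: it splits $h=h^1+h^2$ into pure-trace ($h^1=ur$) and trace-free parts, on which $L_r$ acts as $\triangle_r+2n$ and $\triangle_r-2$ respectively, obtaining $\|\nabla h^1\|^2+\|\nabla h^2\|^2+(a(n-a)+2n)\|h^1\|^2+(a(n-a)-2)\|h^2\|^2$. Your version keeps the trace contribution as the nonnegative $2\|\mathrm{tr}_ru\|^2$ instead of separating it out, which is the same inequality in undecomposed form.
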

\begin{proof}
For any $h\in H^2_0(\mathbb{H}^{n+1}$,
$N_q(L_g)(x^ah)=a(n-a)x^ah+x^aN_q(L_g)(h)$. Let $h=h^1+h^2$, where
$h^1=ur$ for some $u\in C^{\infty}(\mathbb{H}^{n+1})$ and
$tr_{r}h^2=0$, then
$L_r(h)=(\triangle_r+2n)h^1+(\triangle_r-2)h^2$. Hence
\begin{eqnarray*}
\langle N_q(L_g)(x^ah),x^ah\rangle_{x^aL^2_0}&=& \langle
a(n-a)h+L_rh,h\rangle_{L_0^2} \\
&=& \|\nabla h^1\|^2_{L_0^2}+\|\nabla
h^2\|^2_{L_0^2}+(-a^2+na+2n)\|h^1\|^2_{L_0^2}\\
&&+(-a^2+na-2)\|h^2\|_{L^2_0}\\
&\geq & 0
\end{eqnarray*}
with equality if and only if $h=0$. The continuity is obvious.
\end{proof}
Note that since $n\geq 3$, $[1,n-1]
\subset (\tfrac{n-\sqrt{n^2-8}}{2},\tfrac{n+\sqrt{n^2-8}}{2})
\subset[\tfrac{n-\sqrt{n^2-8}}{2},\tfrac{n+\sqrt{n^2-8}}{2}] \subset (0,n)$

\begin{lemma}\label{lem-no L2 ef for Ng}
There exist  $\delta>0$ and $\varepsilon>0$, such that for $a\in
(\tfrac{n}{2}-\delta,\tfrac{n}{2}+\delta)$ and
$\lambda\in D_{\varepsilon}$, where $D_{\varepsilon}$ is defined by (\ref{holoregion}), the map
$$N_q(L_g(\lambda))=L_r(\lambda): x^{a}H^2_0(\mathbb{H}^{n+1};\mathbb{C}^N  \otimes\Gamma_0^{\frac{1}{2}}) \longrightarrow x^{a}L^2_0(\mathbb{H}^{n+1};\mathbb{C}^N \otimes\Gamma_0^{\frac{1}{2}})$$ is continuous and injective.
\end{lemma}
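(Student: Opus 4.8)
The plan is to prove the lemma by a quadratic-form estimate that is the parametrized analogue of the estimate in the preceding lemma (the case $\lambda(n-\lambda)=0$). First I would write $h=x^a\tilde h$ with $\tilde h\in H^2_0(\mathbb{H}^{n+1};\mathbb{C}^N\otimes\Gamma_0^{\frac{1}{2}})$; continuity of $L_r(\lambda)=N_q(L_g(\lambda))$ is then immediate, since it is a $0$-differential operator of order $2$ plus the bounded zeroth order term $-\lambda(n-\lambda)$. It remains to show $L_r(\lambda)(x^a\tilde h)=0$ forces $\tilde h=0$. Using $L_r(x^a\tilde h)=a(n-a)x^a\tilde h+x^aL_r\tilde h$ one gets $L_r(\lambda)(x^a\tilde h)=x^a\big((a(n-a)-\lambda(n-\lambda))\tilde h+L_r\tilde h\big)$, so pairing with $x^a\tilde h$ in $x^aL^2_0$ yields
$$\langle L_r(\lambda)(x^a\tilde h),x^a\tilde h\rangle_{x^aL^2_0}=\big(a(n-a)-\lambda(n-\lambda)\big)\|\tilde h\|^2_{L^2_0}+\langle L_r\tilde h,\tilde h\rangle_{L^2_0}.$$

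Next I would use the trace decomposition $\tilde h=\tilde h^1+\tilde h^2$, with $\tilde h^1$ a function multiple of $r$ and $\mathrm{tr}_r\tilde h^2=0$, which $L_r$ respects: $L_r\tilde h^1=(\triangle_r+2n)\tilde h^1$ and $L_r\tilde h^2=(\triangle_r-2)\tilde h^2$, as recorded in the proof of the preceding lemma. Integrating by parts (justified by density of compactly supported sections, exactly as there), and using that $R$ is a self-adjoint bundle endomorphism so $\langle L_r\tilde h,\tilde h\rangle$ is real,
$$\langle L_r\tilde h,\tilde h\rangle_{L^2_0}=\|\nabla\tilde h^1\|^2_{L^2_0}+\|\nabla\tilde h^2\|^2_{L^2_0}+2n\|\tilde h^1\|^2_{L^2_0}-2\|\tilde h^2\|^2_{L^2_0}.$$
Writing $\lambda=p+iq$, so that $\Re\big(\lambda(n-\lambda)\big)=p(n-p)+q^2$, and setting $c(a,\lambda)=a(n-a)-p(n-p)-q^2$, taking real parts gives
$$\Re\langle L_r(\lambda)(x^a\tilde h),x^a\tilde h\rangle_{x^aL^2_0}=\|\nabla\tilde h^1\|^2_{L^2_0}+\|\nabla\tilde h^2\|^2_{L^2_0}+\big(2n+c(a,\lambda)\big)\|\tilde h^1\|^2_{L^2_0}+\big(-2+c(a,\lambda)\big)\|\tilde h^2\|^2_{L^2_0}.$$

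Finally I would choose $\delta,\varepsilon>0$ small enough that $c(a,\lambda)>2$ uniformly for $|a-\tfrac{n}{2}|<\delta$ and $\lambda\in D_\varepsilon$; then both bracketed coefficients above are bounded below by a fixed positive constant, so $\Re\langle L_r(\lambda)(x^a\tilde h),x^a\tilde h\rangle\ge c_0\|\tilde h\|^2_{L^2_0}$ with $c_0>0$, and $L_r(\lambda)(x^a\tilde h)=0$ forces $\tilde h=0$, which gives injectivity. For the uniform lower bound on $c(a,\lambda)$: for $\varepsilon<\tfrac{n}{2}$ one has $p=\Re\lambda>n-\varepsilon>\tfrac{n}{2}$, so $p\mapsto p(n-p)$ is decreasing on $[n-\varepsilon,\infty)$ and hence $p(n-p)\le(n-\varepsilon)\varepsilon$; combined with $a(n-a)\ge\tfrac{n^2}{4}-\delta^2$ and $q^2<\varepsilon^2$ this yields $c(a,\lambda)>\tfrac{n^2}{4}-\delta^2-(n-\varepsilon)\varepsilon-\varepsilon^2$, which exceeds $2$ for $\delta,\varepsilon$ small precisely because $\tfrac{n^2}{4}>2$ when $n\ge3$.

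The computation is routine; the only points needing care are the uniformity of the estimate over the unbounded domain $D_\varepsilon$ (handled by the monotonicity of $p(n-p)$ for $p>\tfrac{n}{2}$, so that the estimate is in fact tightest near $\lambda=n$, $a=\tfrac{n}{2}$) and the role of the hypothesis $n\ge3$, which is exactly what keeps the coefficient of $\|\tilde h^2\|^2$ positive after the perturbation. Note that for non-real $\lambda$ the operator is no longer self-adjoint, so only injectivity — not invertibility — is obtained here; the resolvent and Green's function on $\mathbb{H}^{n+1}$ will be constructed subsequently from the ODE reductions $\widehat L_r(\lambda)$, $\widetilde L_r(\lambda)$ introduced above, following the pattern used for $J_g$.
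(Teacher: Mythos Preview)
Your proof is correct and follows essentially the same quadratic-form approach as the paper: write $L_r(\lambda)(x^a\tilde h)=x^a\big((a(n-a)-\lambda(n-\lambda))\tilde h+L_r\tilde h\big)$, use the trace/trace-free decomposition $\tilde h=\tilde h^1+\tilde h^2$ with $L_r\tilde h^1=(\triangle_r+2n)\tilde h^1$ and $L_r\tilde h^2=(\triangle_r-2)\tilde h^2$, and then bound the pairing from below. The only cosmetic difference is that the paper argues via the dichotomy ``either $\Re(\tfrac{n^2}{4}-\delta^2-2-\lambda(n-\lambda))>0$ or $\Im(\tfrac{n^2}{4}-\delta^2-2-\lambda(n-\lambda))\ne 0$'' (using the imaginary part for non-real $\lambda$), whereas you show directly that the real part alone is already uniformly positive on all of $D_\varepsilon$ by the monotonicity of $p\mapsto p(n-p)$ for $p>\tfrac{n}{2}$; your version is slightly cleaner and yields the same conclusion from the same key inequality $\tfrac{n^2}{4}>2$ for $n\ge 3$.
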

\begin{proof}
According to the proof of last Lemma,
$$\langle N_q(L_g(\lambda))(x^ah),x^ah\rangle_{x^aL^2_0}=\|\nabla h\|^2_{L^2_0}+(-a^2+na-2-\lambda(n-\lambda))\|h\|^2_{L^2_0}
+2(n+1)\|h^1\|^2_{L^2_0}.$$
If $\delta>0$ and $\varepsilon>0$ are small enough, then for $\lambda\in D_{\varepsilon}$ and $a\in
(\tfrac{n}{2}-\delta,\tfrac{n}{2}+\delta)$, either
$\Re(\tfrac{n^2}{4}-\delta^2-2-\lambda(n-\lambda))>0$ or $\Im(\tfrac{n^2}{4}-\delta^2-2-\lambda(n-\lambda))\neq 0$ holds. Therefore the map is injective.
\end{proof}
Let $\Gamma_0^{\frac{1}{2}}(\mathbb{R}^+)$ be the half density over $\mathbb{R}_x^+$ trivialized by $|\frac{dx}{x^{n+1}}|^{\frac{1}{2}}$. 
\begin{corallary} \label{cor-no ef for hat and tilde}
Let $\varepsilon$ and $\delta$ be the same as in $\mathrm{Lemma\ \ref{lem-no L2
ef for Ng}}$, then for $a\in
(\tfrac{n}{2}-\delta,\tfrac{n}{2}+\delta)$ and
$\lambda\in D_{\varepsilon}$, the two maps
$$\widehat{L}_r(\lambda):x^{a}H^2_0(\mathbb{R}^+;\langle\xi\rangle^{-2}
L^2(\mathbb{R}^n;\mathbb{C}^N)\otimes \Gamma_0^{\frac{1}{2}}(\mathbb{R}^+))\longrightarrow
x^{a}L^2_0(\mathbb{R}^+;
L^2(\mathbb{R}^n;\mathbb{C}^N)\otimes \Gamma_0^{\frac{1}{2}}(\mathbb{R}^+)),$$
$$\widetilde{L}_r(\lambda):\bar{x}^{a} H^2_0(\mathbb{R}^+;\mathbb{C}^N \otimes\Gamma_0^{\frac{1}{2}}(\mathbb{R}^+))
\longrightarrow
\bar{x}^{a}L^2_0(\mathbb{R}^+;\mathbb{C}^N \otimes \Gamma_0^{\frac{1}{2}}(\mathbb{R}^+))$$ 
are continuous and
injective.
\end{corallary}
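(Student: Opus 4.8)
The plan is to deduce both assertions from Lemma~\ref{lem-no L2 ef for Ng} by undoing, one step at a time, the two reductions that produced $\widehat{L}_r(\lambda)$ and $\widetilde{L}_r(\lambda)$ out of $N_q(L_g(\lambda))=L_r(\lambda)$, namely the partial Fourier transform $\mathscr{F}_{y\rightarrow\xi}$ in the tangential variables and the fibrewise dilation $\bar{x}=x|\xi|$. Continuity in each case is obtained exactly as for the corresponding operators in Section~\ref{sec-gauge fix} (Lemma~\ref{lem-no L2 ef for tilde Ng} and Lemma~\ref{lem-no L2 ef for hat Ng}), so the real content is injectivity, which I would get by transporting the coercivity estimate from the proof of Lemma~\ref{lem-no L2 ef for Ng}.

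For $\widehat{L}_r(\lambda)$: by Plancherel, $\mathscr{F}_{y\rightarrow\xi}$ is an isometry of $x^{a}L^2_0(\mathbb{H}^{n+1};\mathbb{C}^N\otimes\Gamma_0^{\frac{1}{2}})$ onto $x^{a}L^2_0(\mathbb{R}^+_x;L^2(\mathbb{R}^n_\xi;\mathbb{C}^N)\otimes\Gamma_0^{\frac{1}{2}}(\mathbb{R}^+))$ that intertwines $L_r(\lambda)$ with $\widehat{L}_r(\lambda)$ and carries $x\partial_x$ to $x\partial_x$ and each $x\partial_{y^i}$ to multiplication by $ix\xi_i$. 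Consequently the identity displayed in the proof of Lemma~\ref{lem-no L2 ef for Ng},
$$\big\langle N_q(L_g(\lambda))(x^{a}h),x^{a}h\big\rangle_{x^{a}L^2_0}=\|\nabla h\|^2_{L^2_0}+\big(-a^2+na-2-\lambda(n-\lambda)\big)\|h\|^2_{L^2_0}+2(n+1)\|h^1\|^2_{L^2_0},$$
carries over verbatim to $\widehat{L}_r(\lambda)$, with $\|\nabla h\|^2_{L^2_0}$ replaced by its Fourier transform (a nonnegative real quantity) and the decisive coefficient $-a^2+na-2-\lambda(n-\lambda)$ unchanged since it does not involve $\xi$. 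The dichotomy of Lemma~\ref{lem-no L2 ef for Ng} — for $a\in(\tfrac{n}{2}-\delta,\tfrac{n}{2}+\delta)$ and $\lambda\in D_\varepsilon$ either $\Re(-a^2+na-2-\lambda(n-\lambda))>0$ or $\Im(-a^2+na-2-\lambda(n-\lambda))\neq0$ — then forces $\widehat{L}_r(\lambda)\hat{u}=0$ to imply $\hat{u}=0$.

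For $\widetilde{L}_r(\lambda)$ one fixes $\hat{\xi}\in S^{n-1}$, so the operator becomes a single ordinary differential operator on $\mathbb{R}^+_{\bar{x}}$, and one repeats the same computation fibrewise, exactly in the manner of Lemma~\ref{lem-no L2 ef for tilde Ng}: writing $w=w^1+w^2$ with $w^1$ the $r$-trace part and integrating by parts in the $\Gamma_0^{\frac{1}{2}}(\mathbb{R}^+)$-trivialisation yields
$$\big\langle \widetilde{L}_r(\lambda)(\bar{x}^{a}w),\bar{x}^{a}w\big\rangle_{\bar{x}^{a}L^2_0}=\widetilde{Q}(w)+\big(-a^2+na-2-\lambda(n-\lambda)\big)\|w\|^2_{L^2_0}+2(n+1)\|w^1\|^2_{L^2_0},$$
where $\widetilde{Q}(w)\geq0$ is a real quadratic form (the one-dimensional reduction of $\|\nabla h\|^2$, into which the first-order terms from $\tilde{E}(\hat{\xi})$ are absorbed) which vanishes only at $w=0$. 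With $\delta,\varepsilon$ as in Lemma~\ref{lem-no L2 ef for Ng}, the same dichotomy applies: if $\Re(-a^2+na-2-\lambda(n-\lambda))>0$, the vanishing of the left-hand side forces $\widetilde{Q}(w)=0$, hence $w=0$; if instead $\Im(-a^2+na-2-\lambda(n-\lambda))\neq0$, the imaginary part of the identity forces $\|w\|_{L^2_0}=0$. Hence $\widetilde{L}_r(\lambda)$ is injective. Alternatively, since $\widehat{L}_r(\lambda)$ evaluated at $\xi=t\hat{\xi}$ becomes $\widetilde{L}_r(\lambda)$ after the change of variable $\bar{x}=tx$, which induces an isometry of $L^2_0(\mathbb{R}^+;\Gamma_0^{\frac{1}{2}}(\mathbb{R}^+))$ (cf.\ the proof of Lemma~\ref{lem-L2 boundness of G2}), the estimate for $\widetilde{L}_r(\lambda)$ follows from the one for $\widehat{L}_r(\lambda)$.

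I do not expect a genuine obstacle. The only points needing attention are the usual bookkeeping — checking that the weighted Sobolev spaces transform as claimed under $\mathscr{F}_{y\rightarrow\xi}$ and under the dilation, and that the constants $\delta,\varepsilon$ of Lemma~\ref{lem-no L2 ef for Ng} may be taken uniform in $\xi$ (resp.\ $\hat{\xi}$). Both are harmless for exactly the reason highlighted above: all of the $\lambda$- and $a$-dependence in the quadratic form passes through the single scalar $-a^2+na-2-\lambda(n-\lambda)$, which is invariant under Fourier transform and under dilation, while everything else is manifestly nonnegative. This robustness is why the corollary may be obtained simply by invoking Lemma~\ref{lem-no L2 ef for Ng} and the parallel constructions of Section~\ref{sec-gauge fix}.
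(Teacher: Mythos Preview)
Your proposal is correct and is exactly the argument the paper has in mind: the corollary is stated without proof because it follows from Lemma~\ref{lem-no L2 ef for Ng} by transporting the quadratic-form identity through the partial Fourier transform and the dilation $\bar{x}=x|\xi|$, both of which leave the scalar $-a^2+na-2-\lambda(n-\lambda)$ untouched while preserving the nonnegativity of the remaining terms. Your two alternative routes to $\widetilde{L}_r(\lambda)$ are each valid, and the uniformity in $\hat{\xi}$ that you flag is indeed automatic for the reason you give.
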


The linear ODE $\widetilde{L}_r(\lambda)v=0$ has a regular
singular point at $\bar{x}=0$ and an irregular singular point at
$\bar{x}=\infty$ and no other singular points. So a formal
solution at $\bar{x}=0$ is an actual solution and for each formal
solution at $\bar{x}=\infty$, there is an actual solution which
has asymptotic expansions at $\bar{x}=\infty$ corresponding to it.
So to find out all the $2N$ linear independent solutions for
$\widetilde{L}_r(\lambda)=0$, we only need to find out all
linear independent formal solutions at $\bar{x}=0$. Since such
solutions can't blow up at any point in $(0,\infty)$, we can
extend them to the half real line $(0,\infty)$.

Notice that for $\lambda=n$, the difference between two of the
indicial roots may be an integer. Then for $\lambda\in D_{\varepsilon}$, where $\varepsilon>0$ is small
enough, the $2N$ linear independent formal solutions, which are true
solutions, are constructed as follows:
\begin{itemize}
\item [(1)] For the basis $e^0\in \mathscr{U}^0$,
$v^0=\bar{x}^{\frac{n}{2}}I_{\nu(\lambda)}(\bar{x})e^0$ and
$v_0=\bar{x}^{\frac{n}{2}}K_{\nu(\lambda)}(\bar{x})e^0$ are two
independent solutions with leading term $\bar{x}^{s^0(\lambda)}e$
and $\bar{x}^{s_0(\lambda)}e$ respectively, where
$I_{\nu(\lambda)}(\bar{x})$ and $K_{\nu(\lambda)}(\bar{x})$ are
the modified Bessel function of the first kind and Macdonald's
function, respectively, of order
$\nu(\lambda)=\tfrac{\sqrt{(2\lambda-n)^2+8n}}{2}$.

\item [(2)] For any basis $e^1\in \mathscr{U}^1$, for $\lambda\in D_{\varepsilon}$, note that $f_i(s^1(\lambda)+k)=0$ if and only if $i=3$,
$\lambda=n$ and $k=1$. So for $\lambda\neq n$, we can construct a
solution
$u^1=\bar{x}^{s^1(\lambda)}e^1+O(\bar{x}^{s^1(\lambda)+1})$.
However, $u^1$ blows up at $\lambda=n$, i.e, $u^1$ is meromorphic
in $D_{\varepsilon}$, with a pole of first order at
$\lambda=n$. Define $v^1=\partial_{\lambda}((\lambda-n)u^1)$. Then
$v^1$ is a solution with leading term $\bar{x}^{s^1(\lambda)}e^1$.

Similarly for $\lambda\in D_{\varepsilon}$, $f_i(s_1(\lambda)+k)=0$ if and
only if $\lambda=n$, $i=1,k=n$ or $\lambda=n$, $i=3, k=n+1$. For
$\lambda\neq n$, we want to construct a formal solution
$u_1=\bar{x}^{s_1(\lambda)}e^1+O(\bar{x}^{s_1(\lambda)+1})$. $u_1$
is meromorphic in $D_{\varepsilon}$ with a pole of second
order at $\lambda=n$. Define
$v_1=\partial_{\lambda}^2((\lambda-n)^2u_1)$. Then $v_1$ is a
solution with leading term $\bar{x}^{s_1(\lambda)}e^1$.

\item [(3)] For the basis $e^2\in \mathscr{U}^2$, since
$f_i(s_2(\lambda)+k)\neq 0$ for all positive integer $k$ and all
$i=0,1,2,3$, we can construct two solutions with leading term
$v_2=\bar{x}^{s_2(\lambda)}e^2+O(\bar{x}^{s_2(\lambda)+1})$ and
$v^2=\bar{x}^{s^2(\lambda)}e^2+O(\bar{x}^{s^2(\lambda)+1})$
respectively.

\item [(4)] For any basis $e^3\in \mathscr{U}^3$, first note that
$f_i(s^3(\lambda)+k)\neq 0$ for all $k\in \mathbb{N}$ and
$i=0,1,2,3$. Hence we can construct a solution
$v^3=\bar{x}^{s^3(\lambda)}+O(\bar{x})$. \\
For $\lambda\in D_{\varepsilon}$,
$f_i(s_3(\lambda)+k)=0$ if and only if $\lambda=n, i=1, k=1$ or
$\lambda=n, i=1, k=n+1$ or $\lambda=n, i=3, k=n+2$. For
$\lambda\neq n$, we can construct a solution
$u_3=\bar{x}^{s_3(\lambda)}e^3+O(\bar{x}^{s_3(\lambda)+1})$. $u_3$
is meromorphic in $D_{\varepsilon}$ with a pole of third
order at $\lambda=n$. Define $v_3=\partial_{\lambda}^3
((\lambda-n)^3u_3)$. Then $v_3$ is a solution with leading term
$\bar{x}^{s_3(\lambda)}e^3$.
\end{itemize}
All the solutions $v_i$ and $v^i$ are holomorphic in
$D_{\varepsilon}$ for $i=0,1,2,3$. On the other hand, at $\bar{x}=\infty$, the
solutions constructed above have leading term either
$\bar{x}^{\frac{n}{2}}e^{-\bar{x}}$ or
$\bar{x}^{\frac{n}{2}}e^{+\bar{x}}$. By Corallary \ref{cor-no ef
for hat and tilde},  the behavior of solutions constructed above is
$$v_i\sim \bar{x}^{\frac{n}{2}}e^{-\bar{x}},\quad v^i\sim \bar{x}^{\frac{n}{2}}e^{\bar{x}}$$
as $\bar{x}\rightarrow \infty$ for $i=0,1,2,3$.

For $\lambda\in \mathbb{R}\cap D_{\varepsilon}$, since
$\widetilde{L}_r(\lambda)$ is self-adjoint w.r.t
$\bar{x}^{-n-1}d\bar{x}$, using the standard method again in [CL]
we get the Green function for $\widetilde{L}_r(\lambda)$, which is
$$G_1(\lambda,\bar{x},\bar{x}',\hat{\xi})=U(\lambda,\bar{x},\hat{\xi})V^*(\lambda,\bar{x}',\hat{\xi})H(\bar{x}'-\bar{x})
+V(\lambda,\bar{x},\hat{\xi})U^*(\lambda,
\bar{x}',\hat{\xi})H(\bar{x}-\bar{x}'),$$ where $U(\lambda,
\bar{x},\hat{\xi})$ and $V(\lambda, \bar{x},\hat{\xi})$ are
$N\times N$ matrices whose columns are linear combination of $v^i$
and $v_i$, namely,
\begin{equation}\label{eq-green id}
\widetilde{L}_r(\lambda)G_1(\lambda, \bar{x},\bar{x}',\hat{\xi})
=\bar{x}^{n+1}\delta(\bar{x}-\bar{x}').
\end{equation} Moreover,
\begin{equation}\label{eq-wronsik id}
U'(\lambda, \bar{x},\hat{\xi})V^*(\lambda,
\bar{x}',\hat{\xi})-V'(\lambda, \bar{x},\hat{\xi})U^*(\lambda,
\bar{x}',\hat{\xi}) =-\frac{1}{2}x^{n-1}Id. \end{equation} This
implies that
$$U(\lambda, \bar{x},\hat{\xi})=(a_0v^0,a_1v^1,a_2v^2,,a_3v^3),\quad
V(\lambda, \bar{x},\hat{\xi})=(b_0v_0,b_1v_1,b_2v_2,b_3v_3),$$ for
some $a_i\neq 0$, $b_i\neq 0$ for $0\leq i\leq 3$. Note $U$, $V$,
$a_i$, $b_i$ can be extended holomorphicly to $D_{\varepsilon}$
for $\varepsilon>0$ small and so are $U^*$, $V^*$. Denote
$\widetilde{U},\widetilde{V}$ be the extension of $U^*$, $V^*$.
Remark that the conjugate relation doesn't holds any more, but
only $\widetilde{U}(\lambda)=(U(\bar{\lambda}))^*$,$\widetilde{V}(\lambda)=(V(\bar{\lambda}))^*$.
Hence $G_1$ can be extended to whole $D_{\varepsilon}$ such that
(\ref{eq-green id}) and (\ref{eq-wronsik id}) still hold in such
domain by analytic theorem.

\begin{lemma} If $\varepsilon>0$ and $\delta>0$ are small
enough, then for all $\lambda\in D_{\varepsilon}$ and $a\in
(\tfrac{n}{2}-\delta,\tfrac{n}{2}-\frac{\delta}{2})$,
$G_1(\lambda, \bar{x},\bar{x}',\hat{\xi})$ induces a bounded
integral transformation on
$x^aL^2(\mathbb{R}^+;\mathbb{C}^N \otimes\Gamma_0^{\frac{1}{2}}(\mathbb{R}^+))$.
\end{lemma}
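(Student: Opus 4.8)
The plan is to realise the map as an integral operator on the half line and estimate it by Schur's test, using the behaviour of the fundamental matrices $U(\lambda,\cdot,\hat{\xi})$ and $V(\lambda,\cdot,\hat{\xi})$ at $\bar{x}=0$, at $\bar{x}=\infty$, and across the diagonal. First I would reduce to an unweighted statement: $G_{1}$ induces a bounded map on $\bar{x}^{a}L^{2}(\mathbb{R}^+;\mathbb{C}^N\otimes\Gamma_0^{\frac{1}{2}}(\mathbb{R}^+))$ (the variable being $\bar{x}$) if and only if the kernel $\widetilde{G}_{1}:=\bar{x}^{-a}\bar{x}'^{a}G_{1}$ induces a bounded map on $L^{2}(\mathbb{R}^+;\mathbb{C}^N\otimes\Gamma_0^{\frac{1}{2}}(\mathbb{R}^+))$, i.e.\ the operator $g\mapsto\int_{0}^{\infty}\widetilde{G}_{1}(\lambda,\bar{x},\bar{x}',\hat{\xi})\,g(\bar{x}')\,\bar{x}'^{-n-1}\,d\bar{x}'$ is bounded on $L^{2}(\bar{x}^{-n-1}\,d\bar{x})$. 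By Schur's test it then suffices to produce a finite $A=A(\lambda,\hat{\xi})$ and an exponent $c\in\mathbb{R}$ (fixed below) so that, with weight $p(\bar{x})=\bar{x}^{c}$,
\[
\int_{0}^{\infty}|\widetilde{G}_{1}(\lambda,\bar{x},\bar{x}',\hat{\xi})|\,\bar{x}'^{c-n-1}\,d\bar{x}'\le A\,\bar{x}^{c},\qquad \int_{0}^{\infty}|\widetilde{G}_{1}(\lambda,\bar{x},\bar{x}',\hat{\xi})|\,\bar{x}^{c-n-1}\,d\bar{x}\le A\,\bar{x}'^{c};
\]
this estimate also shows the defining integral converges on $L^{2}$.

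Second, I would assemble global pointwise bounds on $|G_{1}|$ from the formula $G_{1}=U(\bar{x})V^{*}(\bar{x}')H(\bar{x}'-\bar{x})+V(\bar{x})U^{*}(\bar{x}')H(\bar{x}-\bar{x}')$, the fact that the columns of $U$ (resp.\ $V$) are scalar multiples of the solutions $v^{m}$ (resp.\ $v_{m}$), the leading behaviour $v^{m}\sim\bar{x}^{s^{m}(\lambda)}$, $v_{m}\sim\bar{x}^{s_{m}(\lambda)}$ as $\bar{x}\to0$ and $v^{m}\sim\bar{x}^{(n-1)/2}e^{\bar{x}}$, $v_{m}\sim\bar{x}^{(n-1)/2}e^{-\bar{x}}$ as $\bar{x}\to\infty$ (Corollary \ref{cor-no ef for hat and tilde}), the relation $s_{m}(\lambda)+s^{m}(\lambda)=n$, and the ordering $\Re s^{1}(\lambda)\le\Re s^{3}(\lambda)\le\Re s^{0}(\lambda)$ recorded above. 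On the corner $0<\bar{x}<\bar{x}'\le1$, summing the leading block–diagonal terms $\bar{x}^{s^{m}}\bar{x}'^{\overline{s_{m}}}$ and comparing them through the ratio $\bar{x}/\bar{x}'<1$ (so that the $m=1$ summand, having the largest $\Re s_{m}$, dominates, and the off–diagonal blocks carry strictly higher integer powers and are absorbed) yields $|G_{1}(\lambda,\bar{x},\bar{x}',\hat{\xi})|\le C\,\bar{x}^{\Re s^{1}(\lambda)}\bar{x}'^{n-\Re s^{1}(\lambda)}$, and symmetrically $|G_{1}|\le C\,\bar{x}^{n-\Re s^{1}(\lambda)}\bar{x}'^{\Re s^{1}(\lambda)}$ for $0<\bar{x}'<\bar{x}\le1$; on $\bar{x},\bar{x}'\ge1$, using in addition that the Green function of a second order ODE is continuous across the diagonal, $|G_{1}|\le C(\bar{x}\bar{x}')^{(n-1)/2}e^{-|\bar{x}-\bar{x}'|}$; on the mixed ranges $\bar{x}\le1\le\bar{x}'$ and $\bar{x}'\le1\le\bar{x}$ the two one–sided estimates combine; and on each box $\eta\le\bar{x},\bar{x}'\le\eta^{-1}$, $G_{1}$ is simply bounded. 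Since $U,V$ and the scalars $a_{m},b_{m}$ are holomorphic in $\lambda\in D_{\varepsilon}$ and continuous in $\hat{\xi}\in S^{n-1}$, all constants here are locally bounded in $\lambda$ and bounded in $\hat{\xi}$.

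Third, I would feed these bounds into Schur's test with $p(\bar{x})=\bar{x}^{c}$. On $\bar{x},\bar{x}'\ge1$ and on the mixed ranges the exponential decay coming from $e^{-|\bar{x}-\bar{x}'|}$ (respectively from $V^{*}$) makes both integrals finite, of size $O(\bar{x}^{c-1})$ as $\bar{x}\to\infty$ and $O(\bar{x}^{c})$ as $\bar{x}\to0$, so these regions impose no constraint on $c$; on the bounded boxes the integrals are trivially controlled. Using $s_{m}+s^{m}=n$, the corner bounds reduce both Schur inequalities near $\bar{x}=0$ and near $\bar{x}'=0$ to the single requirement
\[
\max_{m}\Re s_{m}(\lambda)+a<c<\min_{m}\Re s^{m}(\lambda)-a,\qquad\text{i.e.}\qquad n-\Re\lambda+a<c<\Re\lambda-a,
\]
since $\min_{m}\Re s^{m}(\lambda)=\Re s^{1}(\lambda)=\Re\lambda$ and $\max_{m}\Re s_{m}(\lambda)=\Re s_{1}(\lambda)=n-\Re\lambda$. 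This interval is nonempty exactly when $a<\Re\lambda-\tfrac{n}{2}$; as $\Re\lambda>n-\varepsilon$ on $D_{\varepsilon}$ it suffices that $a<\tfrac{n}{2}-\varepsilon$, which holds for every $a\in(\tfrac{n}{2}-\delta,\tfrac{n}{2}-\tfrac{\delta}{2})$ once $\varepsilon<\tfrac{\delta}{2}$ — this is the precise sense in which $\varepsilon$ must be small relative to $\delta$. Fixing any such $c$ and invoking Schur's test then gives the asserted boundedness, with norm $A(\lambda,\hat{\xi})$ finite, locally bounded in $\lambda\in D_{\varepsilon}$ and uniformly bounded in $\hat{\xi}\in S^{n-1}$.

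The step I expect to be the main obstacle is the second one: upgrading the purely local asymptotic information about $U$ and $V$ (at $\bar{x}=0$, at $\bar{x}=\infty$, and along the diagonal) to honest global pointwise bounds on $|G_{1}(\lambda,\bar{x},\bar{x}',\hat{\xi})|$ over all of $(0,\infty)\times(0,\infty)$, uniformly in $\hat{\xi}$ and locally uniformly in $\lambda$, with the correct bookkeeping of which indicial root — $s^{m}(\lambda)$ or $s_{m}(\lambda)$ — governs $U$, resp.\ $V$, on each of the regions $\{\bar{x}<\bar{x}'\}$ and $\{\bar{x}>\bar{x}'\}$. Once those bounds are available the Schur test is routine, and the stated range of $a$ appears exactly as the condition that the admissible interval for the Schur weight $\bar{x}^{c}$ be nonempty.
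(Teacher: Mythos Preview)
The paper states this lemma without proof (as it does the analogous lemma for $J_{g}$ in Section~\ref{sec-gauge fix}), so there is no argument to compare against. Your Schur--test approach is correct and is the standard method here, essentially the one Mazzeo uses in [Ma] for the scalar case; the conjugation to an unweighted problem, the weight $p(\bar{x})=\bar{x}^{c}$, the constraint $n-\Re\lambda+a<c<\Re\lambda-a$ arising from the two Schur inequalities on the corners, and the relation $\varepsilon<\tfrac{\delta}{2}$ that makes this interval nonempty uniformly over $D_{\varepsilon}$ are all on the mark.

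One remark on the step you correctly identify as the obstacle. The paper's assertion that the columns of $V$ are literally scalar multiples of the $v_{m}$ (solutions with prescribed leading behaviour $\bar{x}^{s_{m}}$ at $0$) is not what the standard Green--function construction gives directly: $V$ should be a fundamental matrix of decaying--at--$\infty$ solutions, and a priori each column is $\alpha_{mm}v_{m}$ plus a combination of the $v^{l}$'s. What the Wronskian identity $U'\widetilde{V}-V'\widetilde{U}=-\tfrac{1}{2}\bar{x}^{\,n-1}\mathrm{Id}$ \emph{does} force, by matching leading orders at $\bar{x}=0$, is that the $v_{m'}$--component of the $m$-th column of $V$ vanishes for $m'\ne m$; any $v^{l}$ admixture contributes to $(U\widetilde{V})_{ij}$ only at order $\bar{x}^{\,\Re s^{m(i)}}\bar{x}'^{\,\Re s^{m(j)}}\le\bar{x}^{\,\Re\lambda}\bar{x}'^{\,\Re\lambda}$, which on $0<\bar{x}<\bar{x}'\le1$ is dominated by your bound $\bar{x}^{\,\Re\lambda}\bar{x}'^{\,n-\Re\lambda}$ since $2\Re\lambda-n>0$ on $D_{\varepsilon}$. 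So your corner estimate is valid, though the bookkeeping is a little more delicate than simply invoking $V=(b_{0}v_{0},\ldots,b_{3}v_{3})$. (Incidentally, the power at infinity is $(n-1)/2$ as you write, not $n/2$ as in the paper's text; this is immaterial for the estimate.)
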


\begin{lemma}If $\varepsilon>0$ and $\delta>0$ are small
enough, then for all $\lambda\in D_{\varepsilon}$ and $a\in
(\tfrac{n}{2}-\delta,\tfrac{n}{2}-\frac{\delta}{2})$, there exists
a unique kernel $G_2(\lambda, x,x',\xi)$ which is bounded on
$x^aL^2_0(\mathbb{R}_x^+;L^2(\mathbb{R}_{\xi}^n;\mathbb{C}^{N})\otimes\Gamma_0^{\frac{1}{2}}(\mathbb{R}^+))$
satisfying
$$\widehat{L}_r(\lambda)G_2(\lambda, x,x',\xi)=x^{n+1}\delta(x-x')\otimes Id.$$
\end{lemma}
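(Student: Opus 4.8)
The plan is to obtain $G_2$ from the one–dimensional Green function $G_1$ by exactly the rescaling device used in the proof of Lemma~\ref{lem-L2 boundness of G2}, now carrying the extra weight $x^a$ through the computation. For $\xi\neq 0$ (a single value of $\xi$ being negligible for a kernel acting on $L^2(\mathbb{R}^n_\xi;\cdot)$) I would set
$$G_2(\lambda,x,x',\xi):=|\xi|^{-n}\,G_1\bigl(\lambda,\,x|\xi|,\,x'|\xi|,\,\hat\xi\bigr),\qquad \hat\xi=\tfrac{\xi}{|\xi|}.$$
The substitution $\bar x=x|\xi|$ conjugates $\widehat{L}_r(\lambda)$ (acting in $x$ at fixed frequency $\xi$) to $\widetilde{L}_r(\lambda)$ (acting in $\bar x$ in the direction $\hat\xi$), and the one–dimensional delta is homogeneous of degree $-1$, so $\delta(\bar x-\bar x')=|\xi|^{-1}\delta(x-x')$. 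Together with the Green identity (\ref{eq-green id}) this gives at once
$$\widehat{L}_r(\lambda)G_2(\lambda,x,x',\xi)=|\xi|^{-n}\,\widetilde{L}_r(\lambda)G_1(\lambda,\bar x,\bar x',\hat\xi)=|\xi|^{-n}\,\bar x^{\,n+1}\delta(\bar x-\bar x')=x^{n+1}\delta(x-x')\otimes Id,$$
using $\bar x^{\,n+1}=x^{n+1}|\xi|^{n+1}$ and $\delta(\bar x-\bar x')=|\xi|^{-1}\delta(x-x')$ in the last step.

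For the $x^aL^2_0$–boundedness the point is that every power of $|\xi|$ produced by the rescaling cancels. Given $u(x,\xi)$ in $x^aL^2_0(\mathbb{R}^+_x;L^2(\mathbb{R}^n_\xi;\mathbb{C}^N)\otimes\Gamma_0^{\frac{1}{2}}(\mathbb{R}^+))$, I would change variables $\bar x'=x'|\xi|$ in the inner integral defining $(G_2u)(x,\xi)$: the factor $|\xi|^{-n}$ in $G_2$ cancels the Jacobian $|\xi|^{n}$ coming from $\tfrac{dx'}{x'^{n+1}}$, so that $(G_2u)(x,\xi)=\bigl(G_1(\lambda,\cdot,\cdot,\hat\xi)\,u(|\xi|^{-1}\cdot,\xi)\bigr)(x|\xi|)$. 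Then changing variables $\bar x=x|\xi|$ in the outer norm, the Jacobian factor coming from $x^{-2a}\tfrac{dx}{x^{n+1}}$ cancels the reciprocal factor picked up by $u(|\xi|^{-1}\cdot,\xi)$ in the $\bar x^aL^2_0(\mathbb{R}^+)$–norm (equivalently, $f\mapsto|\xi|^{\,a+\frac{n}{2}}f(|\xi|^{-1}\cdot)$ is an isometry of $x^aL^2_0(\mathbb{R}^+)$). Since the preceding lemma bounds $G_1$ on $\bar x^aL^2_0(\mathbb{R}^+)$ uniformly in $\hat\xi\in S^{n-1}$ and $\lambda\in D_{\varepsilon}$, one gets, for every fixed $\xi$,
$$\Bigl\|\int_{\mathbb{R}^+}G_2(\lambda,\cdot,x',\xi)\,u(x',\xi)\,\tfrac{dx'}{x'^{n+1}}\Bigr\|_{x^aL^2_0(\mathbb{R}^+_x)}^{2}\ \le\ c\,\bigl\|u(\cdot,\xi)\bigr\|_{x^aL^2_0(\mathbb{R}^+_x)}^{2},$$
with $c$ independent of $\xi$ and $\lambda$; integrating in $\xi$ over $\mathbb{R}^n$ gives the asserted boundedness on $x^aL^2_0(\mathbb{R}^+_x;L^2(\mathbb{R}^n_\xi;\mathbb{C}^N)\otimes\Gamma_0^{\frac{1}{2}}(\mathbb{R}^+))$.

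Uniqueness follows from the absence of an $L^2$ null space. If $\widetilde{G}_2$ were another bounded kernel with $\widehat{L}_r(\lambda)\widetilde{G}_2=x^{n+1}\delta(x-x')\otimes Id$, then $K:=G_2-\widetilde{G}_2$ would define a bounded operator on $x^aL^2_0(\mathbb{R}^+_x;L^2(\mathbb{R}^n_\xi;\mathbb{C}^N)\otimes\Gamma_0^{\frac{1}{2}})$ with $\widehat{L}_r(\lambda)K=0$; applied to any section, $Kf$ lies in the kernel of $\widehat{L}_r(\lambda)$ on this weighted space, which is trivial by Corollary~\ref{cor-no ef for hat and tilde}. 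Hence $K=0$.

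Since $G_1$, its Wronskian identity (\ref{eq-wronsik id}) and its $\bar x^aL^2_0$–boundedness have already been established, the only delicate point in this lemma is the Jacobian bookkeeping of the second step: unlike the $J_g$ computation in Lemma~\ref{lem-L2 boundness of G2}, there is now a weight $x^a$, and one must check that it combines with the $|\xi|$–rescaling so that all powers of $|\xi|$ disappear — this is precisely what fixes the normalization $|\xi|^{-n}$ in the definition of $G_2$ and what, together with Corollary~\ref{cor-no ef for hat and tilde} and the preceding lemma, forces $a$ into the narrow interval around $\tfrac{n}{2}$. Everything else is a line-by-line repetition of the $J_g$ argument.
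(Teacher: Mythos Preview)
Your proof is correct and follows exactly the same approach as the paper: define $G_2(\lambda,x,x',\xi)=|\xi|^{-n}G_1(\lambda,x|\xi|,x'|\xi|,\hat\xi)$ and then repeat the argument of Lemma~\ref{lem-L2 boundness of G2}, now carrying the weight $x^a$ through the rescaling. The paper's proof is in fact just one line referring back to that lemma, so you have simply written out the details (including the adjusted isometry $f\mapsto|\xi|^{a+n/2}f(|\xi|^{-1}\cdot)$ and the appeal to Corollary~\ref{cor-no ef for hat and tilde} for uniqueness) that the paper leaves implicit.
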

\begin{proof}
Let $G_2(\lambda,x,x',\xi)=|\xi|^{-n}G_1(\lambda,
x|\xi|,x'|\xi|,\hat{\xi})$. The remainder of the proof exactly follows that of Lemma \ref{lem-L2 boundness of G2}.
\end{proof}

\begin{proposition}If $\varepsilon>0$ and $\delta>0$ are small
enough, then for all $\lambda\in D_{\varepsilon}$ and $a\in
(\tfrac{n}{2}-\delta,\tfrac{n}{2}-\frac{\delta}{2})$,
$$L_r(\lambda):x^aH_0^2(\mathbb{H}^{n+1};\mathbb{C}^{N}\otimes \Gamma_0^{\frac{1}{2}}(\mathbb{R}^+))\longrightarrow
x^aL_0^2(\mathbb{H}^{n+1};\mathbb{C}^{N}\otimes\Gamma_0^{\frac{1}{2}}(\mathbb{R}^+))$$ is invertible with
Green function
\begin{equation}\label{eq-green fuction for NLg}
G(\lambda, x,x',y,y')=\frac{1}{(2\pi)^n}\int_{\mathbb{R}^n}
|\xi|^{-n}G_1(\lambda,
x|\xi|,x'|\xi|,\hat{\xi})e^{i\xi(y-y')}d\xi.
\end{equation}
Moreover, $G(\lambda)\in
\Psi_0^{-2,E_T(\lambda),E_B(\lambda)}(\mathbb{H}^{n+1};M(N,\mathbb{C})\otimes
\Gamma_0^{\frac{1}{2}})$, where
$$E_T(\lambda)=\left[\begin{array}{cccc}
 s^0(\lambda)&  s^1(\lambda)+2& s^2(\lambda)+2& s^3(\lambda)+2\\
 s^0(\lambda)+2& s^1(\lambda)& s^2(\lambda)+1& s^3(\lambda)+1\\
 s^0(\lambda)+2& s^1(\lambda)+1& s^2(\lambda)& s^3(\lambda)+1\\
 s^0(\lambda)+2& s^1(\lambda)+1& s^2(\lambda)+1&
s^3(\lambda)\end{array}\right]$$ and
$E_B(\lambda)=E_T(\lambda)^t$.
\end{proposition}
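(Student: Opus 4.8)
The statement is the exact analogue, now carrying a spectral parameter and a four-block bundle decomposition, of the Proposition already proved for $N_q(J_g)$; the plan is to repeat that argument, paying attention to (i) the coupling blocks $E_{ij}$ of the normal operator and (ii) the loss of self-adjointness once $\lambda$ leaves the real axis.

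First I would check that $G(\lambda)$, defined by the oscillatory integral (\ref{eq-green fuction for NLg}), is a genuine two-sided inverse of $L_r(\lambda)$. Since $G_1(\lambda,\cdot)$ solves (\ref{eq-green id}) and the one-dimensional $\delta$ is homogeneous of degree $-1$, the rescaled kernel $G_2(\lambda,x,x',\xi)=|\xi|^{-n}G_1(\lambda,x|\xi|,x'|\xi|,\hat\xi)$ satisfies $\widehat L_r(\lambda)G_2(\lambda)=x^{n+1}\delta(x-x')\otimes\mathrm{Id}$, and Fourier inversion in $y$ then gives $L_r(\lambda)G(\lambda)=\mathrm{Id}$ on $\dot C^\infty(\mathbb H^{n+1};\mathbb C^N\otimes\Gamma_0^{\frac{1}{2}})$; for complex $\lambda\in D_\varepsilon$ this persists by analytic continuation, using the holomorphic extensions $\widetilde U(\lambda)=(U(\bar\lambda))^*$, $\widetilde V(\lambda)=(V(\bar\lambda))^*$ of the Green-function data. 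The two lemmas immediately preceding the Proposition give the $x^aL_0^2$-boundedness of $G(\lambda)$ for $a$ in the stated window (exactly as Lemma \ref{lem-L2 boundness of G2} does for $J_g$), and the uniformly degenerate elliptic estimate for the second-order operator $L_r(\lambda)$ then upgrades this to $G(\lambda)\colon x^aL_0^2\to x^aH_0^2$. Finally, for $u\in x^aH_0^2$ one has $L_r(\lambda)\bigl(G(\lambda)L_r(\lambda)u-u\bigr)=0$, so the injectivity of $L_r(\lambda)$ on $x^aH_0^2$ from Lemma \ref{lem-no L2 ef for Ng} forces $G(\lambda)L_r(\lambda)u=u$; hence $G(\lambda)=L_r(\lambda)^{-1}$.

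Second I would locate $G(\lambda)$ inside the large $0$-calculus. Interior ellipticity of $L_r(\lambda)$ together with the symbol sequence (\ref{sss-symbol}) produces the conormal singularity of order $-2$ along $\mathrm{diag}_0(\mathbb H^{n+1})$ and smoothness up to the front face $F$, exactly as for $J_g$. For the behaviour at the side faces $T$ and $B$ I would feed the fundamental matrices $U(\lambda),V(\lambda)$ of (\ref{eq-wronsik id})---whose block-columns are the solutions $v^0,v^1,v^2,v^3$ (resp. $v_0,v_1,v_2,v_3$) constructed in the four cases above, with leading terms $\bar x^{s^i(\lambda)}e^i$ (resp. $\bar x^{s_i(\lambda)}e^i$) at $\bar x=0$ and the exponential behaviour $\bar x^{n/2}e^{\mp\bar x}$ at $\bar x=\infty$ provided by Corollary \ref{cor-no ef for hat and tilde}---into the Fourier integral (\ref{eq-green fuction for NLg}) and carry out Mazzeo's computation from [Ma] verbatim. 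This converts the $\bar x$-asymptotics of the $v^i$ (which carry the $U(\lambda)\widetilde V(\lambda)$ part of $G_1$, hence the behaviour at $T$) and of the $v_i$ (which carry the $V(\lambda)\widetilde U(\lambda)$ part, hence the behaviour at $B$) into powers of the defining functions $\rho$ of $T$ and $\rho'$ of $B$, with diagonal entries $s^i(\lambda)$ and off-diagonal shifts $+1$ and $+2$ dictated by the block-coupling structure $E=\{E_{ij}\}$ of the normal operator; the transposition in the Green-function formula then gives $E_B(\lambda)=E_T(\lambda)^t$. Holomorphy of all the data in $D_\varepsilon$ extends the conclusion to complex $\lambda\in D_\varepsilon$, and the strict chain $\Re s^1(\lambda)<\Re s^3(\lambda)<\Re s^0(\lambda)=\Re s^2(\lambda)<\Re s^1(\lambda)+2$ guarantees that $E_T(\lambda),E_B(\lambda)$ satisfy Condition \ref{cond-ET} with $\Lambda=2$, so that $\Psi_0^{-2,E_T(\lambda),E_B(\lambda)}(\mathbb H^{n+1};M(N,\mathbb C)\otimes\Gamma_0^{\frac{1}{2}})$ is well defined.

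I expect the main obstacle to be this last step: extracting the precise matrices $E_T(\lambda),E_B(\lambda)$ requires tracing the block structure $\{E_{ij}\}$ through the recursive construction of the $v^i$---in particular through the couplings $\mathscr W^1\!\to\!\mathscr W^3$, $\mathscr W^2\!\to\!\mathscr W^3$ and $\mathscr W^3\!\to\!\mathscr W^0\oplus\mathscr W^1\oplus\mathscr W^2$ visible in $E$---and then through the Fourier transform (\ref{eq-green fuction for NLg}), following [Ma]. Everything else is a transcription of the Mazzeo--Melrose parametrix construction and of the proof already given for $J_g$.
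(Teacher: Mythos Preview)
Your proposal is correct and follows essentially the same approach as the paper. The paper in fact gives no explicit proof of this Proposition, stating at the start of the subsection that ``we deal with the normal operator $N_q(L_g(\lambda))$ almost in the same way as was done for $J_g$'' and omitting proofs that parallel the earlier case; your outline---Green function via Fourier inversion, $x^aL_0^2$-boundedness from the two preceding lemmas, surjectivity plus injectivity from Lemma~\ref{lem-no L2 ef for Ng}, and the boundary asymptotics of $G(\lambda)$ read off from the fundamental matrices $U(\lambda),V(\lambda)$ through Mazzeo's computation in [Ma]---is exactly that transcription, with the added care about holomorphic continuation in $\lambda$ that the paper handles just before stating the Proposition.
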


As in the last section, let $x$ be the boundary defining function of the
hyperbolic ball $B^{n+1}$ and $\rho$ and $\rho'$ be boundary
defining functions for $F_q$. Let $e_i(\lambda)=(s^i(\lambda)+1,...,s^{i}(\lambda),...,s^i(\lambda)+1)$
be the constant vector corresponding to the $i$-th column of $E_T(\lambda)$.

\begin{corallary} \label{ei-lambda}
For $\lambda\in D_{\varepsilon}$, where $\varepsilon>0$ is small enough, the map $$G(\lambda):\dot{C}^{\infty}(\overline{B}^{n+1};\mathbb{C}^{N}\otimes
\Gamma_0^{\frac{1}{2}})\longrightarrow
\sum_{i=0}^{3}\mathscr{A}_{phg}^{e_i(\lambda)}(\overline{B}^{n+1};\mathbb{C}^{N}\otimes
\Gamma_0^{\frac{1}{2}})$$
is continuous.
\end{corallary}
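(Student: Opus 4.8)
The plan is to read this off from the preceding proposition, $G(\lambda)\in\Psi_0^{-2,E_T(\lambda),E_B(\lambda)}(\mathbb{H}^{n+1};M(N,\mathbb{C})\otimes\Gamma_0^{\frac12})$, combined with the general mapping property of the large $0$-calculus, Proposition~\ref{prop-mapproperty}; the argument runs parallel to the corresponding corollary for $J_r^{-1}$. First I would pass to the ball model. As recorded in the text, the front face $F_q$ is $\overline{B}^{n+1}$ blown up at a boundary point, with $x$ the original defining function of $B^{n+1}$ and $\rho,\rho'$ the defining functions produced by the blow-up; transporting $G(\lambda)$ through the standard identification of the half-space and ball models of hyperbolic space gives an operator, still denoted $G(\lambda)$, lying in $\Psi_0^{-2,E_T(\lambda),E_B(\lambda)}(\overline{B}^{n+1};\mathbb{C}^N\otimes\Gamma_0^{\frac12})$, where $\mathbb{C}^N$ carries the boundary splitting $\mathscr{W}^0\oplus\mathscr{W}^1\oplus\mathscr{W}^2\oplus\mathscr{W}^3$ with $r=4$. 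The pair $(E_T(\lambda),E_B(\lambda))$ satisfies Condition~\ref{cond-ET} with $\Lambda=2$ once $\varepsilon$ is small, which is exactly the reason the string of inequalities $\Re\lambda=\Re s^1(\lambda)<\Re s^3(\lambda)<\Re s^0(\lambda)=\Re s^2(\lambda)<\Re\lambda+2$ was recorded above.

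Applying Proposition~\ref{prop-mapproperty} to $A=G(\lambda)$ then gives continuity of $G(\lambda):\dot C^{\infty}(\overline{B}^{n+1};\mathbb{C}^N\otimes\Gamma_0^{\frac12})\to\sum_{i=0}^3\mathscr{A}^{[E_T(\lambda)]_{\bullet i}}(\overline{B}^{n+1};\mathbb{C}^N\otimes\Gamma_0^{\frac12})$. Since each $e_i(\lambda)$ is dominated, entrywise in real part, by the $i$-th column $[E_T(\lambda)]_{\bullet i}$, one has $\mathscr{A}^{[E_T(\lambda)]_{\bullet i}}\subseteq\mathscr{A}^{e_i(\lambda)}$, so the target may be replaced by $\sum_{i=0}^3\mathscr{A}^{e_i(\lambda)}$. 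It remains to upgrade $\mathscr{A}$ to the polyhomogeneous class $\mathscr{A}_{phg}$. For this I would return to the explicit construction of the Green kernel: $G_1(\lambda,\bar x,\bar x',\hat\xi)$ is built from the columns of $U(\lambda,\cdot)$ and $V(\lambda,\cdot)$, which are linear combinations of the solutions $v^i,v_i$ ($0\le i\le 3$), and each of these is $\bar x^{n/2}$ times $I_{\nu(\lambda)}(\bar x)$ or $K_{\nu(\lambda)}(\bar x)$, or a convergent Frobenius series $\bar x^{s}e+O(\bar x^{s+1})$, possibly with finitely many $\lambda$-derivatives applied. All such functions are polyhomogeneous at $\bar x=0$, so $\kappa(G(\lambda))$ lies in the polyhomogeneous kernel space $\mathscr{K}_0^{-2,E_T(\lambda),E_B(\lambda)}$, and one may run the proof of Proposition~\ref{prop-mapproperty} with the $C^\infty$ (rather than $L^\infty$) versions of (\ref{defn-A})--(\ref{defn-Aphg}), landing in $\mathscr{A}_{phg}$. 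As $s^i(\lambda)$, $E_T(\lambda)$ and the data $U,V,a_i,b_i$ are holomorphic in $D_\varepsilon$, the statement holds for every $\lambda\in D_\varepsilon$.

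The only point requiring real care is the logarithmic order, and this is where I expect the genuine work to lie. For generic $\lambda$ none of the $v^i,v_i$ contains $\log\bar x$, so $\mathscr{A}^{e_i(\lambda)}_{phg}$ could be taken with $K=0$ as in the $J_r^{-1}$ case; but at $\lambda=n$ several pairs of indicial roots differ by integers and the solutions are defined by $v^1=\partial_\lambda((\lambda-n)u^1)$, $v_1=\partial_\lambda^2((\lambda-n)^2u_1)$ and $v_3=\partial_\lambda^3((\lambda-n)^3u_3)$, which introduce powers of $\log\bar x$ up to the third. One therefore fixes the constant $K$ in (\ref{defn-A})--(\ref{defn-Aphg}) to be $\ge 3$, uniformly over $D_\varepsilon$, and checks that the pushforward step in Proposition~\ref{prop-mapproperty} --- integration of such a kernel against a smooth density followed by restriction to the blow-down of the diagonal --- does not raise the logarithmic order beyond $K$. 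With this bookkeeping in place the corollary follows from Proposition~\ref{prop-mapproperty}.
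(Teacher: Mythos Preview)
Your approach is correct and is exactly what the paper intends: the corollary is stated without proof because it is the direct analogue of the $J_r^{-1}$ corollary, following from the preceding proposition ($G(\lambda)\in\Psi_0^{-2,E_T(\lambda),E_B(\lambda)}$) together with Proposition~\ref{prop-mapproperty}, with polyhomogeneity read off from the explicit Frobenius/Bessel form of the kernel. One small refinement on your logarithm bookkeeping: for input in $\dot C^\infty$ the boundary expansion of the output is governed only by the top-face asymptotics of the kernel, hence by the columns of $U$, i.e.\ the solutions $v^i$; among these only $v^1=\partial_\lambda((\lambda-n)u^1)$ can acquire a $\log\bar x$ at $\lambda=n$, while the higher $\partial_\lambda$-derivatives you cite ($v_1$, $v_3$) sit in $V$ and affect the bottom face, so $K=1$ already suffices here rather than $K\ge 3$.
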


\begin{corallary}
For $\lambda\in D_{\varepsilon}$, where $\varepsilon>0$ is small enough, the map  $$G(\lambda):\dot{C}^{\infty}(\overline{F_q};M(N,\mathbb{C})\otimes
\Gamma_0^{\frac{1}{2}})\longrightarrow
\mathscr{A}_{phg}^{E_T(\lambda),E_B(\lambda)}(\overline{F_q};M(N,\mathbb{C})\otimes
\Gamma_0^{\frac{1}{2}})$$
is continuous.
\end{corallary}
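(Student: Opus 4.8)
The plan is to read the statement off from two facts already in place: Corollary~\ref{ei-lambda}, which controls the behaviour of $G(\lambda)$ at the lift of $\partial\overline{B}^{n+1}$, and the description of $G(\lambda)$ as a large $0$-pseudodifferential operator with kernel in $\Psi_0^{-2,E_T(\lambda),E_B(\lambda)}$, which controls its behaviour near the blown-up point. One then transfers everything to $\overline{F_q}$ by the blow-down map, exactly as in the proof of Proposition~$6.19$ of [MM] for the scalar case. The only additional work is to carry the $4\times4$ matrix index sets $E_T(\lambda),E_B(\lambda)$ through the argument and to observe that all the structures (indicial roots, fundamental solutions, Green function) depend holomorphically on $\lambda\in D_{\varepsilon}$ for $\varepsilon>0$ small, so that the bounds are locally uniform in $\lambda$.

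First I would recall the identification of $\overline{F_q}$ with $\overline{B}^{n+1}$ blown up at a boundary point $p$, so that its two boundary hypersurfaces are $\overline{F_q}\cap T$ — the lift of $\partial\overline{B}^{n+1}$, with defining function $\rho$ — and $\overline{F_q}\cap B$ — the face created by the blow-up, with defining function $\rho'$ — while its interior is $\mathbb{H}^{n+1}$ and $N_q(L_g(\lambda))$ corresponds to $L_r(\lambda)$. Pushing forward under the blow-down, a section $f\in\dot{C}^{\infty}(\overline{F_q};M(N,\mathbb{C})\otimes\Gamma_0^{\frac{1}{2}})$ becomes a section on $\overline{B}^{n+1}$ which is smooth away from $p$, vanishes to infinite order at $\partial\overline{B}^{n+1}$, and vanishes to infinite order at $p$ in the blown-up sense; in particular it lies in $\dot{C}^{\infty}(\overline{B}^{n+1};M(N,\mathbb{C})\otimes\Gamma_0^{\frac{1}{2}})$. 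Applying Corollary~\ref{ei-lambda} gives that $G(\lambda)f\in\sum_{i=0}^{3}\mathscr{A}_{phg}^{e_i(\lambda)}(\overline{B}^{n+1};M(N,\mathbb{C})\otimes\Gamma_0^{\frac{1}{2}})$, and since $e_i(\lambda)$ is the $i$-th column of $E_T(\lambda)$, this identifies the behaviour of $G(\lambda)f$ along $\overline{F_q}\cap T$ with index family $E_T(\lambda)$.

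It remains to control $G(\lambda)f$ near $\overline{F_q}\cap B$, i.e.\ near $p$. Here I would use that $\kappa(G(\lambda))\in\mathscr{K}_0^{-2,E_T(\lambda),E_B(\lambda)}$: write $\kappa(G(\lambda))=\kappa_1+\kappa_2$ with $\kappa_1$ conormal of order $-2$ along the lifted diagonal and vanishing to infinite order at $T$ and $B$, and $\kappa_2$ polyhomogeneous with index family $E_B(\lambda)$ at $B$, all depending holomorphically on $\lambda$. Since $f$ vanishes to infinite order at $p$, the contribution of $\kappa_1$ is smooth up to $\overline{F_q}\cap B$ (indeed vanishes to infinite order there) by pseudolocality of the small $0$-calculus, while the contribution of $\kappa_2$ is a fibre integral of a polyhomogeneous conormal section, hence polyhomogeneous by the pushforward theorem, with the $\rho'^{E_B(\lambda)}$-expansion of $\kappa_2$ dictating its index family at $\overline{F_q}\cap B$; this step parallels the treatment of $N_q(J_g)^{-1}$ in Section~\ref{sec-gauge fix} and rests on [MM, Prop.~6.19]. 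Combining the two faces gives $G(\lambda)f\in\mathscr{A}_{phg}^{E_T(\lambda),E_B(\lambda)}(\overline{F_q};M(N,\mathbb{C})\otimes\Gamma_0^{\frac{1}{2}})$, and continuity is clear from the construction.

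The main obstacle is verifying that the fibre integration genuinely preserves polyhomogeneity with index families \emph{exactly} $E_T(\lambda)$ and $E_B(\lambda)$ — in particular that no extra exponents appear and that the number of logarithmic powers stays bounded by a single constant $K$ as in (\ref{defn-Aphg}), uniformly for $\lambda\in D_{\varepsilon}$. This is a non-resonance issue: for $\varepsilon>0$ small the indicial roots $s^i(\lambda),s_i(\lambda)$ are holomorphic on $D_{\varepsilon}$ with bounded mutual differences, and the fundamental solutions of $\widetilde{L}_r(\lambda)$ are the explicit $\bar{x}^{n/2}I_{\nu(\lambda)}(\bar{x})$, $\bar{x}^{n/2}K_{\nu(\lambda)}(\bar{x})$ and power-type series whose resonance pattern — hence the finite, $\lambda$-independent bound on the logarithms produced — was already read off in the construction of those solutions above. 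This explicit finiteness is exactly what keeps the pushforward inside $\mathscr{A}_{phg}$ with the stated matrices and makes the estimate locally uniform, indeed holomorphic, in $\lambda$; once it is in hand the rest is a transcription of [MM, Prop.~6.19].
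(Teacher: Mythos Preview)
Your proposal is correct and follows the same route as the paper, which gives no explicit proof here but relies on the sentence preceding the two corollaries (``we omit the proofs of some statements if they are essentially the same as in the last section'') together with the remark before the analogous corollary for $J_g$ in Section~\ref{sec-gauge fix} invoking Proposition~6.19 of [MM]. Your version simply makes that implicit argument explicit, spelling out the identification of $\overline{F_q}$ with the blown-up ball, the role of Corollary~\ref{ei-lambda} at the face $T$, the pushforward argument at the face $B$, and the holomorphic $\lambda$-dependence.
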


\subsection[]{Parametrix and Resolvent}

\begin{proposition}
If $\varepsilon>0$ and $\delta>0$  are small
enough, then for all $\lambda\in D_{\varepsilon}$ and $a\in
(\tfrac{n}{2}-\delta,\tfrac{n}{2}-\frac{\delta}{2})$,
\begin{eqnarray*}
L_g(\lambda)=L_g-\lambda(n-\lambda):x^aH_0^2(M;S^2({}^0T^*M)\otimes\Gamma_0^{\frac{1}{2}})
\longrightarrow
x^aL_0^2(M;S^2({}^0T^*M)\otimes\Gamma_0^{\frac{1}{2}})
\end{eqnarray*} is Fredholm. There exist right and left
parametrices
$$Q_r(\lambda), Q_l(\lambda)\in
\Psi_0^{-2,E_T(\lambda),E_B(\lambda)}(M;S^2({}^0T^*M)\otimes\Gamma_0^{\frac{1}{2}}),
$$ such that
$$L_g(\lambda)Q_r(\lambda)-Id=R_r\in
\Psi_0^{-\infty,\infty,E_B(\lambda)}(M;S^2({}^0T^*M)\otimes\Gamma_0^{\frac{1}{2}}),$$
$$Q_l(\lambda)L_g(\lambda)-Id=R_l\in
\Psi_0^{-\infty,E_T(\lambda),\infty}(M;S^2({}^0T^*M)\otimes\Gamma_0^{\frac{1}{2}}).$$
$Q_l(\lambda), Q_r(\lambda), R_l(\lambda), R_r(\lambda)$ are
holomorphic in $D_{\varepsilon}$.
\end{proposition}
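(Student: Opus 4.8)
The plan is to reproduce, for the one–parameter family $L_g(\lambda)$, the three–step $0$–parametrix construction used for $J_g$ and codified in Theorem~\ref{thm-construct parametrix}, while (i) allowing the indicial constant to be the complex number $c_i-\lambda(n-\lambda)$ rather than a nonnegative real, and (ii) carrying holomorphic dependence on $\lambda$ through every step. Two inputs are already in hand from the preceding subsections: $L_g(\lambda)$ is elliptic in $\mathrm{Diff}_0^2$ with $^0$–principal symbol $|\xi|_g^2\,\mathrm{Id}$, which does not depend on $\lambda$; and, for $\varepsilon,\delta$ small, the normal operator $N_q(L_g(\lambda))$ is invertible for every $q\in\partial\overline M$ and every $\lambda\in D_{\varepsilon}$, its inverse $G(\lambda)$ lying in $\Psi_0^{-2,E_T(\lambda),E_B(\lambda)}$ and depending holomorphically on $\lambda$. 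One also records that $E_T(\lambda)$ and $E_B(\lambda)=E_T(\lambda)^t$ satisfy Condition~\ref{cond-ET} with $\Lambda=2$; this is exactly the content of the inequalities for the real parts of the indicial roots noted above, and it is here that the restriction to the thin neighbourhood $D_{\varepsilon}$ of $[n,\infty)$ is used.

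First I would invert the $^0$–principal symbol and use the short exact sequence (\ref{sss-symbol}) to produce $Q_1(\lambda)\in\Psi_0^{-2}(M;S^2({}^0T^*M)\otimes\Gamma_0^{\frac12})$ with $L_g(\lambda)Q_1(\lambda)-\mathrm{Id}=E_1(\lambda)\in\Psi_0^{-\infty}$; since $L_g(\lambda)$ is polynomial in $\lambda$, $Q_1(\lambda)$ and $E_1(\lambda)$ may be chosen holomorphic in $\lambda$. Next, $\kappa(E_1(\lambda))$ vanishes to infinite order at $T$ and $B$, so using the invertibility of $N_q(L_g(\lambda))$ and the sequence (\ref{sss-normal}) I would solve away the front–face behaviour, obtaining $Q_2(\lambda)\in\Psi_0^{-\infty,E_T(\lambda),E_B(\lambda)}$ with error in $R^\infty\Psi_0^{-\infty,E_T(\lambda),E_B(\lambda)}$; as in [MM], $L_g(\lambda)$ annihilates the leading term of $Q_2(\lambda)$ at $T$, so one may in fact arrange $\widetilde Q_2(\lambda)\in\Psi_0^{-\infty,E_T(\lambda)+\mathrm{Id},E_B(\lambda)}$ with $L_g(\lambda)\widetilde Q_2(\lambda)-E_1(\lambda)=\widetilde E_2(\lambda)\in R^\infty\Psi_0^{-\infty,E_T(\lambda)+\mathrm{Id},E_B(\lambda)}$. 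Holomorphy of $G(\lambda)$ passes to $\widetilde Q_2(\lambda)$ and $\widetilde E_2(\lambda)$.

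For the third step I would iterate the solution of the indicial equation through (\ref{sss-indicial}): the indicial operator $I(L_g(\lambda))$ is diagonal with entries $f_i(s,\lambda)=-s^2+ns+c_i-\lambda(n-\lambda)$, and for $\lambda\in D_{\varepsilon}$ it is invertible at all the exponents produced by the iteration — the off–diagonal shifts built into $E_T(\lambda)$ are precisely those needed to avoid collisions among the four modes — except at the residual resonance $\lambda=n$, where indicial roots of distinct modes differ by integers and one must insert $\log$ terms, proceeding exactly as in the gauge–fixing construction, Proposition~\ref{prop-gauge choice}. Asymptotically summing the resulting series yields $Q_3(\lambda)\in R^\infty\Psi_0^{-\infty,E_T(\lambda)+\mathrm{Id},E_B(\lambda)}$, holomorphic in $\lambda$, with $L_g(\lambda)Q_3(\lambda)-\widetilde E_2(\lambda)\in R^\infty\Psi_0^{-\infty,\infty,E_B(\lambda)}$. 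Then $Q_r(\lambda)=Q_1(\lambda)+\widetilde Q_2(\lambda)+Q_3(\lambda)\in\Psi_0^{-2,E_T(\lambda),E_B(\lambda)}$ is the right parametrix, with remainder $R_r(\lambda)=L_g(\lambda)Q_r(\lambda)-\mathrm{Id}\in\Psi_0^{-\infty,\infty,E_B(\lambda)}$. For the left parametrix I would either run the same construction from the other side or, more cheaply, use that $L_g$ is formally self–adjoint, so $L_g(\lambda)^*=L_g(\bar\lambda)$ and $Q_l(\lambda)=(Q_r(\bar\lambda))^*$ works, giving $R_l(\lambda)=Q_l(\lambda)L_g(\lambda)-\mathrm{Id}\in\Psi_0^{-\infty,E_T(\lambda),\infty}$; all four operators inherit holomorphy on $D_{\varepsilon}$.

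Finally, for Fredholmness on $x^aL^2_0$ with $a\in(\tfrac n2-\delta,\tfrac n2-\tfrac\delta2)$ and $\lambda\in D_{\varepsilon}$: boundedness of $Q_r(\lambda)$ and $Q_l(\lambda)$ between the stated spaces is Proposition~\ref{prop-map property of 0pseudo}, while $R_r(\lambda)$ and $R_l(\lambda)$ map $x^aL^2_0$ into spaces with strictly better weight and arbitrarily high $0$–regularity — here one uses $\min\Re E_B(\lambda)=\min\Re E_T(\lambda)=\Re\lambda>n-\varepsilon>a$ — hence, by the same Rellich–type argument used above for $J_g$, they are compact on $x^aL^2_0$. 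Then $L_g(\lambda)Q_r(\lambda)=\mathrm{Id}+R_r(\lambda)$ gives closed range and finite–dimensional cokernel, and $Q_l(\lambda)L_g(\lambda)=\mathrm{Id}+R_l(\lambda)$ gives finite–dimensional kernel, so $L_g(\lambda)$ is Fredholm. I expect the main obstacle to be not any single estimate but the bookkeeping needed to propagate holomorphy in $\lambda$ through all three steps simultaneously with the uniform validity of Condition~\ref{cond-ET} and of the indicial inversion on $D_{\varepsilon}$, the delicate point being the resonance at $\lambda=n$, which must be absorbed into $\log$ terms exactly as in Proposition~\ref{prop-gauge choice}.
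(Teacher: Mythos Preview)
Your proposal is correct and follows the same approach as the paper: construct $Q_r(\lambda)$ via the three-step symbol/normal/indicial procedure (the paper simply cites Theorem~\ref{thm-construct parametrix} rather than unpacking it), obtain $Q_l(\lambda)=Q_r(\bar\lambda)^*$ from $L_g(\bar\lambda)^*=L_g(\lambda)$, and deduce Fredholmness from compactness of $R_r,R_l$ on $x^aL_0^2$. Your additional bookkeeping on holomorphy and the $\lambda=n$ resonance is more careful than the paper, which handles these points implicitly through the log-terms already built into the polyhomogeneous spaces $\mathscr{A}_{phg}^{E_T,E_B}$.
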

\begin{proof}
Based on the analysis for the normal operator in the last subsection the
existence of the right parametrix $Q_r(\lambda)$ follows directly by Theorem
\ref{thm-construct parametrix}. By Proposition \ref{prop-map
property of 0pseudo},
$$Q_r(\lambda):x^aL_0^2(M;S^2({}^0T^*M)\otimes\Gamma_0^{\frac{1}{2}})
\longrightarrow
x^aH_0^2(M;S^2({}^0T^*M)\otimes\Gamma_0^{\frac{1}{2}})$$ is
bounded and
$$R_r(\lambda)=L_g(\lambda)Q_r(\lambda)-Id\in \Psi_0^{-\infty,\infty,E_B(\lambda),}(M;S^2({}^0T^*M)\otimes\Gamma_0^{\frac{1}{2}}).$$
Moreover, since $L_g(\bar{\lambda})^*=L_g(\lambda)$, we can define $Q_l(\lambda)=Q_r(\bar{\lambda})^*$. Then
$$R_l(\lambda)=Q_l(\lambda)L_g(\lambda)-Id=(L_g(\bar{\lambda})Q_r(\bar{\lambda}))^*-Id\in
\Psi_0^{-\infty,E_T(\lambda),\infty}(M;S^2({}^0T^*M)\otimes\Gamma_0^{\frac{1}{2}}).$$
Note that $R_r(\lambda)$ and $R_l(\lambda)$ are two compact operators on $x^aL_0^2(M;S^2({}^0T^*M)\otimes\Gamma_0^{\frac{1}{2}})$. That is because
$$\mathrm{Ran}(R_r(\lambda)|_{x^aL_0^2})\subset
\mathscr{A}^{\infty}(\overline{M};S^2({}^0T^*M)\otimes\Gamma_0^{\frac{1}{2}}),$$
$$\mathrm{Ran}(R_l(\lambda)|_{x^aL_0^2})\subset \sum_{i=0}^{i=3}\mathscr{A}^{e_i(\lambda)}(\overline{M};S^2({}^0T^*M)\otimes\Gamma_0^{\frac{1}{2}}),$$
where $e_i(\lambda)$ is the $i$-th column of $E_T(\lambda)$, and the inclusion maps
$$\mathscr{A}^{\infty}(\overline{M};S^2({}^0T^*M)\otimes\Gamma_0^{\frac{1}{2}})\hookrightarrow x^aL_0^2(M;S^2({}^0T^*M)\otimes\Gamma_0^{\frac{1}{2}}),$$
$$\mathscr{A}^{e_i(\lambda)}(\overline{M};S^2({}^0T^*M)\otimes\Gamma_0^{\frac{1}{2}})\hookrightarrow x^aL_0^2(M;S^2({}^0T^*M)\otimes\Gamma_0^{\frac{1}{2}})$$
are compact for $\lambda\in D_{\varepsilon}$ and $a\in
(\tfrac{n}{2}-\delta,\tfrac{n}{2}-\frac{\delta}{2})$, where $\varepsilon>0$ and $\delta>0$ are small enough.
Hence $Id+R_r$ and $Id+R_l$ are Fredhom operators on $x^aL_0^{2}(M;S^2({}^0T^*M)\otimes\Gamma_0^{\frac{1}{2}})$, which implies that $L_g(\lambda)$ is Fredholm.
\end{proof}

\begin{lemma}
If $\varepsilon>0$ and $\delta>0$ are small enough, then there exists
$\lambda_0\in\mathbb{R}\cap D_{\varepsilon}$ such that for all
$\lambda\in\mathbb{R}$, $\lambda\geq\lambda_0$ or $\lambda\in
D_{\varepsilon}$, $\Im\lambda\neq 0$, and for all $a\in
(\tfrac{n}{2}-\delta,\tfrac{n}{2}-\frac{\delta}{2})$,
$$L_g(\lambda): x^aH_0^2(M;S^2({}^0T^*M)\otimes\Gamma_0^{\frac{1}{2}})
\longrightarrow
x^aL_0^{2}(M;S^2({}^0T^*M)\otimes\Gamma_0^{\frac{1}{2}})$$ is
invertible. Moreover, $L_g(\lambda)^{-1}\in
\Psi_0^{-2,E_T(\lambda),E_B(\lambda)}(M;S^2({}^0T^*M)\otimes\Gamma_0^{\frac{1}{2}})$.
\end{lemma}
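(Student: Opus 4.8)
The plan is to derive invertibility from the Fredholm property and holomorphy already proved, by killing the kernel and the cokernel with a Green's identity. The only inputs the construction really needs are: that $\varepsilon$ is small enough that $\Re\lambda\geq n-\varepsilon>\tfrac n2$ for all $\lambda\in D_\varepsilon$ (so that every indicial root satisfies $\Re s^i(\lambda)>\tfrac n2$, using $\Re s^1(\lambda)=\Re\lambda\leq\Re s^3(\lambda)<\Re s^0(\lambda)=\Re s^2(\lambda)$); that $\delta\in(0,\varepsilon)$ is small enough that the weight window $(\tfrac n2-\delta,\tfrac n2-\tfrac\delta2)$ stays inside the range of the previous proposition; and that $\lambda_0\in\mathbb R\cap D_\varepsilon$ is real, $\lambda_0>n$, and chosen with $\lambda_0(n-\lambda_0)<-C_0$, where $C_0\geq0$ satisfies $2R+2n\geq -C_0$ on $L^2_0$ — possible since $R$ is a bounded zeroth-order operator. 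Note throughout that $\lambda(n-\lambda)$ is real only when $\lambda$ is real or $\Re\lambda=\tfrac n2$, so on the set in the statement it is non-real precisely when $\Im\lambda\neq0$.

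Step 1 (no kernel). Since $a>0$ we have $x^aH^2_0\subset L^2_0$, and $\nabla$ maps $x^aH^2_0$ into $x^aH^1_0\subset L^2_0$; hence any $h\in x^aH^2_0$ with $L_g(\lambda)h=0$ lies in the form domain of $L_g$, which is essentially self-adjoint since $M$ is complete, and there is no boundary contribution in
\[ \lambda(n-\lambda)\|h\|_{L^2_0}^2=\langle L_gh,h\rangle_{L^2_0}=\|\nabla h\|_{L^2_0}^2+\langle(2R+2n)h,h\rangle_{L^2_0}. \]
The right side is real and $\geq-C_0\|h\|_{L^2_0}^2$. For real $\lambda\geq\lambda_0$ the left side is $\leq\lambda_0(n-\lambda_0)\|h\|^2<-C_0\|h\|^2$, forcing $h=0$; for $\Im\lambda\neq0$, taking imaginary parts gives $(n-2\Re\lambda)\,\Im\lambda\,\|h\|^2=0$, and since $\Re\lambda>\tfrac n2$ again $h=0$.

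Step 2 (no cokernel) and conclusion. Because the range of $L_g(\lambda):x^aH^2_0\to x^aL^2_0$ is closed, $\mathrm{Coker}\,L_g(\lambda)$ is identified with the space of $\phi\in x^{-a}L^2_0$ solving $L_g(\bar\lambda)\phi=0$ distributionally (the formal $L^2_0$-adjoint of $L_g(\lambda)$ being $L_g(\bar\lambda)$, as $L_g$ has real coefficients). Here $-a<0$, so $\phi$ need not a priori be in $L^2_0$; but by $0$-elliptic regularity $\phi\in C^\infty(M)$, and the left parametrix gives $\phi=-R_l(\bar\lambda)\phi$ with $R_l(\bar\lambda)\in\Psi_0^{-\infty,E_T(\bar\lambda),\infty}$, whose kernel vanishes to infinite order at $B$. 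By the mapping properties of the $0$-calculus ([Ma], cf.\ Proposition \ref{prop-mapproperty}), $\phi\in\sum_{i=0}^3\mathscr A^{e_i(\bar\lambda)}(\overline M;S^2({}^0T^*M)\otimes\Gamma_0^{\frac{1}{2}})$, so its leading exponents are among the $s^i(\bar\lambda)$, all with $\Re s^i(\bar\lambda)\geq\Re\bar\lambda=\Re\lambda>\tfrac n2$; hence $\phi\in L^2_0$ and decays. The identity of Step 1 applied to $\phi$ with $\bar\lambda$ in place of $\lambda$ then forces $\phi=0$. Together with the Fredholm property this makes $L_g(\lambda)$ bijective, hence invertible with bounded inverse by the open mapping theorem (equivalently, invertibility at $\lambda_0$ makes the index vanish on the connected set $D_\varepsilon$, so Step 1 alone suffices). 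Finally, exactly as in Proposition \ref{prop-property of inverse}, writing
\[ L_g(\lambda)^{-1}=Q_l(\lambda)+R_l(\lambda)Q_r(\lambda)+R_l(\lambda)L_g(\lambda)^{-1}R_r(\lambda), \]
the composition calculus of [MM], [Ma] puts the last two terms in $R^\infty\mathscr K^{-\infty,E_T(\lambda),E_B(\lambda)}$ since $\kappa(R_l(\lambda))$ and $\kappa(R_r(\lambda))$ vanish to infinite order at $(B,F)$ and $(T,F)$ respectively; holomorphy on $D_\varepsilon$ is inherited from $Q_r,Q_l,R_r,R_l$. This gives $L_g(\lambda)^{-1}\in\Psi_0^{-2,E_T(\lambda),E_B(\lambda)}(M;S^2({}^0T^*M)\otimes\Gamma_0^{\frac{1}{2}})$.

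The step I expect to be the main obstacle is Step 2: making precise that the smallness of $\varepsilon$ traps every distributional solution of $L_g(\bar\lambda)\phi=0$ lying in $x^{-a}L^2_0$ inside the space of decaying conormal sections — so that the Green's identity carries no boundary term and the positivity / self-adjointness argument applies — together with the weight bookkeeping needed to invoke the $0$-calculus mapping theorems uniformly in $\lambda\in D_\varepsilon$. Once this is settled, the rest is routine.
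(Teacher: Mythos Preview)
Your proof is correct and follows essentially the same strategy as the paper: bound the zeroth-order term $2R+2n$ by a constant, use positivity of the quadratic form $\langle L_g h,h\rangle_{L^2_0}$ to kill the kernel for large real $\lambda$, use the imaginary part for $\Im\lambda\neq0$, and then recover the structure of $L_g(\lambda)^{-1}$ from the parametrix identity exactly as in Proposition~\ref{prop-property of inverse}. The paper is terser on surjectivity (``positive hence invertible'' for real $\lambda$, ``adjoint injective hence surjective'' for non-real $\lambda$) and does the positivity computation directly in $H^2_0$ rather than $x^aH^2_0$; your Step~2, bootstrapping cokernel elements from $x^{-a}L^2_0$ into $\sum_i\mathscr A^{e_i(\bar\lambda)}\subset L^2_0$ via $\phi=-R_l(\bar\lambda)\phi$, makes explicit the weight bookkeeping the paper glosses over, and your parenthetical index-constancy remark is an equally valid shortcut. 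One inconsequential slip: the parametrix identity should read $L_g(\lambda)^{-1}=Q_l(\lambda)-R_l(\lambda)Q_r(\lambda)+R_l(\lambda)L_g(\lambda)^{-1}R_r(\lambda)$ (sign on the middle term), but this does not affect the conclusion.
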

\begin{proof}
Since $\partial\overline{M}$ is compact, we can choose a finite coordinate covering $\{U_i:i=1,...,p\}$ for $\partial \overline{M}$. Hence we get
a finite number of coordinate patches $\{[0,\epsilon)\times
U_i:i=1,...,p\}$ which cover $[0,\epsilon)\times\partial\overline{M}$ for some $\epsilon>0$ and small.
Since $M-[0,\epsilon/2)\times \partial\overline{M}$ is compact, it also can be
covered by finite coordinate patches. Thus we have fixed a finite
coordinate covering for $\overline{M}$. By directly computation,
there exist some constant $C$ such that any sectional curvature
$|R_{ijkl}|<Cx^4$ in any coordinate patch. So for any $h\in
H_0^2(M;S^2({}^0T^*M)\otimes\Gamma_0^{\frac{1}{2}})$, $$\|Rh\|\leq
C_1 \|h\|_{L^2_0}$$ for some constant $C_1$. Choose
$\lambda_0=\frac{n+\sqrt{n^2+8C_1}}{2}$. Then for
any $h\in H_0^2(M;S^2({}^0T^*M)\otimes\Gamma_0^{\frac{1}{2}})$ and
$\lambda \in [\lambda_0,\infty)$
\begin{eqnarray*}
\langle L_g(\lambda_0)h,h\rangle_{L_0^2} &=& \|\nabla
h\|^2_{L_0^2} +\langle
2Rh,h\rangle_{L_0^2}+(2n-\lambda(n-\lambda))\|h\|_{L_0^2}\\
&\geq& \|\nabla
h\|^2_{L_0^2}+(2n+\lambda(\lambda-n)-2C_1)\|h\|_{L_0^2} \geq 0
\end{eqnarray*}
and equality holds if and only if $h=0$. Hence $L_g(\lambda)$ is
positive and thus invertible. If $\lambda \in D_{\varepsilon}$ and $\Im\lambda\neq 0$, then $\Im(2n-\lambda(n-\lambda))\neq 0$. Hence $L_g(\lambda)$ is injective and surjective since its adjoint $L_g(\bar{\lambda})$ is injective. The second
statement is shown by a proof similar to that of Proposition
\ref{prop-property of inverse}.
\end{proof}

\begin{proposition}
If $\varepsilon>0$ and $\delta>0$ are small enough, then for all
$\lambda\in D_{\varepsilon}$ and $a\in
(\tfrac{n}{2}-\delta,\tfrac{n}{2}+\delta)$, the resolvent
$R_g(\lambda)=(L_g(\lambda))^{-1}\in \Psi_0^{-2,E_T(\lambda),E_B(\lambda)}(M;S^2({}^0T^*M)\otimes\Gamma_0^{\frac{1}{2}})$ and as a map,
$$R_g(\lambda): x^aH_0^2(M;S^2({}^0T^*M)\otimes\Gamma_0^{\frac{1}{2}})
\longrightarrow
x^aL_0^{2}(M;S^2({}^0T^*M)\otimes\Gamma_0^{\frac{1}{2}})$$
is meromorphic in $D_{\varepsilon}$
with poles of finite rank at each point of $\{\lambda\in
D_{\varepsilon}:\lambda(n-\lambda)\in \sigma_{pp}(L_g)\}$.
Moreover, $\{\lambda\in D_{\varepsilon}:\lambda(n-\lambda)\in
\sigma_{pp}(L_g)\}$ is discrete and finite. If $\lambda_1\in D_{\varepsilon}$ and
$\lambda_1(n-\lambda_1)\in \sigma_{pp}(L_g)$, then the residue of
$R_g(\lambda)$ at $\lambda_1$ is
$$\Pi(\lambda_1)=\frac{1}{2\pi i}\int_{\gamma_0}R_g(\lambda)ds$$
with kernel
$\kappa(\Pi(\lambda_1))=\sum_{j=1}^ku_i(\lambda_1)\otimes\overline{u_j(\lambda_1)}$
where $u_j(\lambda_1)\in \sum_{i=0}^3\mathscr{A}^{e_i(\lambda_1)}$
are the $L^2_0$-engenvectors of $L_g$ and $\gamma_0$ is a small circle
around $\lambda_1$.
\end{proposition}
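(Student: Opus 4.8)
The plan is to combine the preceding Fredholm proposition with the analytic Fredholm theorem and the elementary spectral theory of the self-adjoint operator $L_g$, in the spirit of the scalar case in [MM], [Ma], [GZ] and [JS]. First I would note that $\lambda\mapsto L_g(\lambda)=L_g-\lambda(n-\lambda)$ is a holomorphic (quadratic) family of Fredholm operators $x^aH_0^2(M;S^2({}^0T^*M)\otimes\Gamma_0^{\frac{1}{2}})\to x^aL_0^2(M;S^2({}^0T^*M)\otimes\Gamma_0^{\frac{1}{2}})$ on the (simply connected) domain $D_\varepsilon$ by the preceding proposition; its index is locally constant and, by the invertibility Lemma, vanishes at real $\lambda\geq\lambda_0$, hence is $0$ throughout $D_\varepsilon$. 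Since $L_g(\lambda)$ is invertible at such a point, the analytic Fredholm theorem yields that $\lambda\mapsto R_g(\lambda):=L_g(\lambda)^{-1}$ is meromorphic on $D_\varepsilon$ with principal parts of finite rank at its poles, and its boundedness between the weighted spaces off the poles follows from Proposition \ref{prop-map property of 0pseudo}. To place $R_g(\lambda)$ in the large class away from its poles I would iterate the parametrix identities exactly as in Proposition \ref{prop-property of inverse}: from $L_g(\lambda)Q_r=Id+R_r$ and $Q_lL_g(\lambda)=Id+R_l$ one gets $R_g=Q_r-R_gR_r=Q_l-R_lR_g$, whence $R_g=Q_r-Q_lR_r+R_lR_gR_r$; since $Q_r,Q_l\in\Psi_0^{-2,E_T,E_B}$ while $\kappa(R_r)$ vanishes to infinite order at the face $T$ and at the front face and $\kappa(R_l)$ at $B$ and at the front face, the composition calculus for the large $0$-class gives $\kappa(Q_lR_r),\kappa(R_lR_gR_r)\in R^{\infty}\mathscr{K}^{-\infty,E_T,E_B}(\overline{M}\times_0\overline{M};End(S^2({}^0T^*M))\otimes\Gamma_0^{\frac{1}{2}})$, so $R_g(\lambda)\in\Psi_0^{-2,E_T(\lambda),E_B(\lambda)}$.

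Next I would identify the poles with the $L_0^2$-point spectrum. Since the index is $0$, $\lambda_1\in D_\varepsilon$ is a pole iff $\ker L_g(\lambda_1)\neq\{0\}$; and if $0\neq u\in\ker L_g(\lambda_1)$ then $u=-R_l(\lambda_1)u$, which, $R_l(\lambda_1)$ being residual, lies in $\sum_{i=0}^3\mathscr{A}^{e_i(\lambda_1)}(M;S^2({}^0T^*M)\otimes\Gamma_0^{\frac{1}{2}})$ by Proposition \ref{prop-mapproperty}. For $\varepsilon$ small, $\min_i\Re s^i(\lambda_1)=\Re s^1(\lambda_1)=\Re\lambda_1>\tfrac n2$, so every component of $u$ vanishes at least like $x^{\Re\lambda_1}$ and hence $u\in L_0^2(M)$, giving $L_gu=\lambda_1(n-\lambda_1)u$, i.e. $\lambda_1(n-\lambda_1)\in\sigma_{pp}(L_g)$; conversely an $L_0^2$-eigenfunction for this eigenvalue with $\lambda_1\in D_\varepsilon$ lies in $\ker L_g(\lambda_1)$ and, being polyhomogeneous by the same argument, in every admissible $x^aH_0^2$, so $\lambda_1$ is a pole. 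The pole set is discrete by the analytic Fredholm theorem; by the Lemma $L_g(\lambda)$ is invertible whenever $\Im\lambda\neq0$ or $\lambda\in\mathbb{R}$, $\lambda\geq\lambda_0$, so every pole is real and lies in the bounded interval $(n-\varepsilon,\lambda_0)$. The essential spectrum of $L_g$ on $L_0^2$ is bounded below by $\tfrac{n^2}{4}$ (read off from the normal operator: on divergence-free $\mathscr{V}^1$-sections $L_g(\lambda)$ reduces to $\triangle_r-\lambda(n-\lambda)$, with threshold $\tfrac{n^2}{4}$, and the other indicial blocks only raise it), so for $\varepsilon$ small $\{\lambda(n-\lambda):\lambda\in(n-\varepsilon,\lambda_0)\}$ is a bounded set lying strictly below $\tfrac{n^2}{4}$, where the self-adjoint and bounded-below operator $L_g$ has only finitely many eigenvalues, each hit by at most two values of $\lambda$; hence the pole set is finite.

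Finally I would compute the residue: fix a pole $\lambda_1$. As $\Re\lambda_1>n-\varepsilon>\tfrac n2$ we have $n-2\lambda_1\neq0$, so $\lambda\mapsto z=\lambda(n-\lambda)$ is biholomorphic near $\lambda_1$ onto a neighbourhood of $z_1=\lambda_1(n-\lambda_1)$, with $z-z_1=(n-2\lambda_1)(\lambda-\lambda_1)+O((\lambda-\lambda_1)^2)$. Since $L_g$ is self-adjoint, $z_1$ is an isolated eigenvalue and $(L_g-z)^{-1}=-(z-z_1)^{-1}P_{z_1}+(\text{holomorphic})$, where $P_{z_1}$ is the orthogonal projection onto $\ker(L_g-z_1)$, with Schwartz kernel $\kappa(P_{z_1})=\sum_{j=1}^k\phi_j\otimes\overline{\phi_j}$ for an orthonormal basis $\{\phi_j\}$ of that eigenspace. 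Hence $R_g(\lambda)=(L_g-z)^{-1}$ has a simple pole at $\lambda_1$ and
\[
\Pi(\lambda_1)=\frac{1}{2\pi i}\int_{\gamma_0}R_g(\lambda)\,d\lambda=\frac{1}{2\lambda_1-n}\,P_{z_1};
\]
since $2\lambda_1-n>0$, putting $u_j(\lambda_1)=(2\lambda_1-n)^{-1/2}\phi_j$ gives $\kappa(\Pi(\lambda_1))=\sum_{j=1}^ku_j(\lambda_1)\otimes\overline{u_j(\lambda_1)}$, and each $u_j(\lambda_1)\in\sum_{i=0}^3\mathscr{A}^{e_i(\lambda_1)}$ by the regularity argument above.

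The main obstacle, apart from correctly setting up the analytic Fredholm theorem (index $0$ is supplied by the invertibility Lemma) and invoking the composition calculus of the large $0$-class (from [MM], [Ma]), is the finiteness of the pole set: this rests on locating the bottom of the essential spectrum of the \emph{matrix} operator $L_g$ on $L_0^2(M;S^2({}^0T^*M)\otimes\Gamma_0^{\frac{1}{2}})$ through the normal-operator analysis above; if a clean spectral bound turns out to be awkward, one may instead shrink $\varepsilon$ so that $[n-\varepsilon,\lambda_0]$ contains no accumulation point of poles, which is all that is needed in the sequel.
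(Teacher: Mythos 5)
Your argument is correct and follows essentially the same route as the paper's proof: the parametrix identities for $L_g(\lambda)$ combined with the analytic Fredholm theorem, the invertibility lemma for $\Re\lambda\geq\lambda_0$ or $\Im\lambda\neq 0$, and the composition argument of Proposition \ref{prop-property of inverse} to place $R_g(\lambda)$ in $\Psi_0^{-2,E_T(\lambda),E_B(\lambda)}$. The additional details you supply---the identification of poles with $\lambda(n-\lambda)\in\sigma_{pp}(L_g)$ via $u=-R_l(\lambda_1)u$, the finiteness argument using the bottom of the essential spectrum, and the normalization $\Pi(\lambda_1)=(2\lambda_1-n)^{-1}P_{z_1}$ of the residue---are points the paper leaves terse or implicit, and they are consistent with how the residue is used later (the factor $(2\lambda_0-n)$ in the residue of the Poisson operator).
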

\begin{proof}
Let $\lambda_0$ be the same as in the last Lemma. A similar proof as in
Proposition \ref{prop-property of inverse} shows that for
$\lambda\in D_{\varepsilon}$, $\Re\lambda\geq \lambda_0$ or
$\lambda\in D_{\varepsilon}$, $\Im \lambda\neq 0$,
$$R_g(\lambda)\in
\Psi_0^{-2,E_T(\lambda),E_B(\lambda)}(M;S^2({}^0T^*M)\otimes\Gamma_0^{\frac{1}{2}})$$
is holomorphic. By Analytic Fredholm Theorem,
$R_g(\lambda)=(L_g(\lambda))^{-1}$ is meoromorphic on
$D_{\varepsilon}$. Hence there are only finite poles in
$D_{\varepsilon}$. By choosing a limit from upper half plane, we have
$$R_g(\lambda)\in
\Psi_0^{-2,E_T(\lambda),E_B(\lambda)}(M;S^2({}^0T*M)\otimes\Gamma_0^{\frac{1}{2}})$$
for all $\lambda\in D_{\varepsilon}$, $\lambda_1(n-\lambda_1)\notin \sigma_{pp}(L_g)$. The finite rank property of poles follows directly from the Fredholm property of $L_g(\lambda)$.
\end{proof}

\begin{corallary} \label{cor-formalmap}
If $\lambda\in D_{\varepsilon}$ with $\varepsilon>0$, small and $N\in \mathbb{N}$ satisfies $N\geq 2n$, denoting by $e_i(\lambda)$ the $i$-th column
of $E_T(\lambda)$, then
$$R_g(\lambda):x^NC^{\infty}(\overline{M};S^2({}^0T^*M)\otimes
\Gamma_0^{\frac{1}{2}}(M))\longrightarrow
\sum_{i=0}^3\mathscr{A}^{e_i(\lambda)}(\overline{M};S^2({}^0T^*M)\otimes
\Gamma_0^{\frac{1}{2}}(M))$$ is bounded and meromorphic with poles
of finite rank at $\{\lambda\in
D_{\varepsilon}:\lambda(n-\lambda)\in \sigma_{pp}(L_g)\}$.
\end{corallary}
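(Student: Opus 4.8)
The plan is to argue exactly as in the proof of Proposition \ref{prop-map property of Jg}, now using the left parametrix $Q_l(\lambda)$ and the resolvent $R_g(\lambda)$ constructed above. From $Q_l(\lambda)L_g(\lambda)-Id=R_l(\lambda)$ with $R_l(\lambda)\in\Psi_0^{-\infty,E_T(\lambda),\infty}(M;S^2({}^0T^*M)\otimes\Gamma_0^{\frac{1}{2}})$, and $L_g(\lambda)R_g(\lambda)=Id$ off the polar set, composing on the right with $R_g(\lambda)$ gives
$$R_g(\lambda)=Q_l(\lambda)-R_l(\lambda)R_g(\lambda).$$
So it suffices to show that each operator on the right maps $x^NC^\infty(\overline{M};S^2({}^0T^*M)\otimes\Gamma_0^{\frac{1}{2}})$ boundedly into $\sum_{i=0}^3\mathscr{A}^{e_i(\lambda)}(\overline{M};S^2({}^0T^*M)\otimes\Gamma_0^{\frac{1}{2}})$, the first one holomorphically on $D_\varepsilon$ and the second one meromorphically with poles of finite rank at $\{\lambda\in D_\varepsilon:\lambda(n-\lambda)\in\sigma_{pp}(L_g)\}$.

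First consider $R_l(\lambda)R_g(\lambda)$. Since $N\ge 2n$ and $a\in(\frac{n}{2}-\delta,\frac{n}{2}+\delta)$, we have the inclusion $x^NC^\infty(\overline{M};\cdot)\subset x^aH_0^2(M;\cdot)$, so by the preceding proposition $R_g(\lambda):x^NC^\infty\to x^aL_0^2$ is bounded and meromorphic on $D_\varepsilon$ with poles of finite rank precisely on $\{\lambda:\lambda(n-\lambda)\in\sigma_{pp}(L_g)\}$. As $R_l(\lambda)\in\Psi_0^{-\infty,E_T(\lambda),\infty}$, the argument already used in the proof of that proposition gives $\mathrm{Ran}(R_l(\lambda)|_{x^aL_0^2})\subset\sum_{i=0}^3\mathscr{A}^{e_i(\lambda)}$, with $R_l(\lambda)$ bounded on these spaces and holomorphic in $\lambda$. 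Composing, $R_l(\lambda)R_g(\lambda)$ has the required mapping property.

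The remaining, and main, point is that $Q_l(\lambda):x^NC^\infty(\overline{M};\cdot)\to\sum_{i=0}^3\mathscr{A}^{e_i(\lambda)}(\overline{M};\cdot)$ is bounded and holomorphic. Since $Q_l(\lambda)\in\Psi_0^{-2,E_T(\lambda),E_B(\lambda)}(M;\cdot)$, this is the analogue of Proposition \ref{prop-mapproperty} but with source space $x^NC^\infty$ in place of $\dot{C}^{\infty}$; the proof of that proposition, equivalently the mapping rules of the large $0$-calculus recalled in Section \ref{sec-0calculus}, carries over once $x^N$ decays faster than every bottom-face exponent of $Q_l(\lambda)$, i.e. $N>\max_{i,j}\Re[E_B(\lambda)]_{ij}$, and faster than every indicial root $s^i(\lambda)$ of $L_g(\lambda)$, so that no slowly decaying or extra logarithmic terms appear in the image; both requirements hold for $N\ge 2n$ and $\lambda\in D_\varepsilon$, and holomorphy in $\lambda$ is inherited from that of $Q_l(\lambda)$ established in the parametrix proposition. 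This exponent bookkeeping — verifying that $N\ge 2n$ dominates all the indicial data attached to $L_g(\lambda)$ — is the only real obstacle; once it is settled the three pieces combine and $R_g(\lambda)=Q_l(\lambda)-R_l(\lambda)R_g(\lambda)$ has exactly the stated mapping property, meromorphic with finite-rank poles at $\{\lambda\in D_\varepsilon:\lambda(n-\lambda)\in\sigma_{pp}(L_g)\}$.
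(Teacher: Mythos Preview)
Your argument is correct, but it takes a detour that the paper avoids. You decompose $R_g(\lambda)=Q_l(\lambda)-R_l(\lambda)R_g(\lambda)$ and then invoke the mapping property of the large $0$-calculus separately on $Q_l(\lambda)\in\Psi_0^{-2,E_T(\lambda),E_B(\lambda)}$ and on $R_l(\lambda)\in\Psi_0^{-\infty,E_T(\lambda),\infty}$. This mirrors the proof of Proposition~\ref{prop-map property of Jg}, where the decomposition was genuinely needed because at that stage only the $L^2$-boundedness of $J_g^{-1}$ was known, not its kernel structure. Here, however, the immediately preceding proposition already shows $R_g(\lambda)\in\Psi_0^{-2,E_T(\lambda),E_B(\lambda)}$ itself, so the paper simply applies the pushforward argument (the one behind Proposition~\ref{prop-mapproperty}) directly to the Schwartz kernel of $R_g(\lambda)$: write $R_g(\lambda)f=\pi_{L*}\bigl(\kappa(R_g(\lambda))\cdot\pi_R^*f\bigr)$ on $\overline{M}\times_0\overline{M}$, use $N\ge 2n$ to guarantee the inclusion $x^NC^\infty\subset x^aH_0^2$ (so the resolvent acts and the meromorphy with finite-rank poles is inherited), and read off the top-face asymptotics from $E_T(\lambda)$. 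Your route works and the exponent check you flag is the right thing to verify, but the decomposition into $Q_l$ and $R_lR_g$ is redundant once you know $R_g(\lambda)$ lies in the same calculus class as $Q_l(\lambda)$.
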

\begin{proof}
For $N\geq 2n$, $x^NC^{\infty}(\overline{M};S^2({}^0T^*M)\otimes
\Gamma_0^{\frac{1}{2}}(M))\subset x^a H_0^2(M;S^2({}^0T^*M)\otimes
\Gamma_0^{\frac{1}{2}}(M))$ where $a\in (\tfrac{n}{2}-\delta,\tfrac{n}{2}+\delta)$ with $\delta$ small. Hence if $\{\lambda\notin
D_{\varepsilon}:\lambda(n-\lambda)\in \sigma_{pp}(L_g)\}$, the map
$$R_g(\lambda):x^NC^{\infty}(\overline{M};S^2({}^0T^*M)\otimes
\Gamma_0^{\frac{1}{2}}(M))\longrightarrow x^aL_0^2(M;S^2({}^0T^*M)\otimes
\Gamma_0^{\frac{1}{2}}(M))$$
is well defined and continuous. Moreover, for any $f\in C^{\infty}(\overline{M};S^2({}^0T^*M)\otimes
\Gamma_0^{\frac{1}{2}}(M))$, view $R_g(\lambda)$ as its Schwartz kernel, then
$$R_g(\lambda)f=\pi_L{}_{*}(R_g(\lambda)(\pi_R^{*}f))$$
where $\pi_L$ and $\pi_R$ are the projection map from $\overline{M}\times_0\overline{M}$ to the first and second copy of $\overline{M}$. It follows immediately that the image is contained in $\sum_{i=0}^3\mathscr{A}^{e_i(\lambda)}(\overline{M};S^2({}^0T^*M)\otimes
\Gamma_0^{\frac{1}{2}}(M))$.
\end{proof}

\subsection[]{Poisson Operator}
Let $N\geq 2n$ be a fixed integer in this subsection. Recall $D_{\varepsilon}=\{\lambda\in \mathbb{C}:\Re\lambda>n-\varepsilon, |\Im\lambda|<\varepsilon\}$. Fix $\varepsilon>0$ and small enough such that all the lemmas and propositions in Subsection 5.1-5.3 hold. Then for $\lambda\in D_{\varepsilon}$, define maps
$$\Phi_i(\lambda):C^{\infty}(\partial \overline{M};\mathscr{V}^i
\otimes \Gamma^{\frac{1}{2}}(\partial \overline{M})) \longrightarrow
x^{s_i(\lambda)}\mathscr{A}^0(\overline{M};S^2({}^0T^*M)\otimes
\Gamma_0^{\frac{1}{2}}(M))$$
for $i=0,1,2,3$ by formal series construction,  as we
do for finding formal solutions for the ODE in Subsection 5.2, such that
$$L_g(\lambda)\Phi_i(\lambda):
C^{\infty}(\partial \overline{M};\mathscr{V}^i\otimes \Gamma^{\frac{1}{2}}(\partial \overline{M}))
\longrightarrow
x^NC^{\infty}(\overline{M};S^2({}^0T^*M)\otimes \Gamma_0^{\frac{1}{2}}(M))$$
is continuous. Namely, for any $h_0\in C^{\infty}(\partial \overline{M};\mathscr{V}^i\otimes\Gamma^{\frac{1}{2}}(\partial \overline{M}))$,
$$\Phi_i(\lambda)h_0 = x^{s_i(\lambda)}(h_0+\sum_{k=1}^{N}\sum_{l=0}^3 u_{kl}x^k(\log
x)^l)\left|\frac{dx}{x^{n+1}}\right|^{\frac{1}{2}},$$
where $u_{kl}\in C^{\infty}(\partial \overline{M};S^2({}^0T^*M)|_{\partial\overline{M}}
\otimes \Gamma^{\frac{1}{2}}(\partial \overline{M}))$ for all $k,l$. In fact we can construct this formal series up to arbitrary large order. However, we only need finitely many terms here due to Corallary \ref{cor-formalmap}. Now, for any $\lambda\in D_{\varepsilon}$, we can define the \textit{Poisson operator} following Graham and Zworski in [GZ], i.e. for $i=0,1,2,3$
$$P_i(\lambda)=\Phi_i(\lambda)-R_g(\lambda)L_g(\lambda)\Phi_i(\lambda).$$
Hence $L_g(\lambda)P_i(\lambda)=0$. The mapping property of $P_i(\lambda)$ follows immediately.

\begin{lemma}For
$i=0,1,2,3$ and $\lambda\in D_{\varepsilon}$, denoting by $e_i(\lambda)$ the $i$-th column of $E_T(\lambda)$, then
\begin{equation*}
\begin{split}P_i(\lambda):C^{\infty}(\partial \overline{M});\mathscr{V}^i \otimes
\Gamma^{\frac{1}{2}}(\partial \overline{M})) \longrightarrow
x^{s_i(\lambda)}\mathscr{A}^0(\overline{M};S^2({}^0T^*M)\otimes
\Gamma_0^{\frac{1}{2}}(M))\quad\quad\quad\quad\\+\sum_{i=0}^3
\mathscr{A}^{e_i(\lambda)}(\overline{M};S^2({}^0T^*M) \otimes \Gamma_0^{\frac{1}{2}}(M))
\end{split}
\end{equation*}
is continuous and meromorphic with poles of finite rank at $\{\lambda\in D_{\varepsilon}: \lambda(n-\lambda) \in \sigma_{pp}(L_g)\}$. Namely, for any $h_0\in C^{\infty}(\partial \overline{M}); \mathscr{V}^i \otimes \Gamma^{\frac{1}{2}}(\partial \overline{M})))$,
$$P_i(\lambda)h_0=x^{s_i(\lambda)}F+\sum_{j=0}^{3}x^{s^j(\lambda)}G_j,$$
where $F,G_0,G_1,G_2,G_3\in \mathscr{A}^{0}(\overline{M}; S^2({}^0T^*M)) \otimes \Gamma_0^{\frac{1}{2}}(M))$
satisfying $F|\tfrac{dx}{x^{n+1}}|^{-\frac{1}{2}}|_{\partial \overline{M}}=h_0$.
\end{lemma}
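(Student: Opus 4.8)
The plan is to read everything off the definition $P_i(\lambda)=\Phi_i(\lambda)-R_g(\lambda)L_g(\lambda)\Phi_i(\lambda)$, composing the mapping properties already recorded for $\Phi_i(\lambda)$, for $L_g(\lambda)$, and for the resolvent $R_g(\lambda)$. First I would recall from the construction of $\Phi_i(\lambda)$ that, for $h_0\in C^\infty(\partial\overline{M};\mathscr{V}^i\otimes\Gamma^{\frac12}(\partial\overline{M}))$, one has $\Phi_i(\lambda)h_0=x^{s_i(\lambda)}F$ with
$$F=\Big(h_0+\sum_{k=1}^{N}\sum_{l=0}^{3}u_{kl}\,x^{k}(\log x)^{l}\Big)\Big|\tfrac{dx}{x^{n+1}}\Big|^{\frac12}\in\mathscr{A}^{0}(\overline{M};S^2({}^0T^*M)\otimes\Gamma_0^{\frac12}),$$
so that $F|\tfrac{dx}{x^{n+1}}|^{-\frac12}|_{\partial\overline{M}}=h_0$; that $\Phi_i(\lambda)$ is continuous and holomorphic in $\lambda\in D_\varepsilon$; and that, by the defining property of $\Phi_i(\lambda)$, the tensor $L_g(\lambda)\Phi_i(\lambda)h_0$ lies in $x^{N}C^\infty(\overline{M};S^2({}^0T^*M)\otimes\Gamma_0^{\frac12})$ and depends holomorphically on $\lambda$ (since $L_g(\lambda)=L_g-\lambda(n-\lambda)$ is polynomial in $\lambda$).

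Next I would plug this into Corollary \ref{cor-formalmap} with the fixed integer $N\geq 2n$: the resolvent carries $x^{N}C^\infty(\overline{M};S^2({}^0T^*M)\otimes\Gamma_0^{\frac12})$ continuously into $\sum_{j=0}^{3}\mathscr{A}^{e_j(\lambda)}(\overline{M};S^2({}^0T^*M)\otimes\Gamma_0^{\frac12})$, meromorphically in $\lambda$ with poles of finite rank at $\{\lambda\in D_\varepsilon:\lambda(n-\lambda)\in\sigma_{pp}(L_g)\}$; hence $R_g(\lambda)L_g(\lambda)\Phi_i(\lambda)h_0$ lies in that sum, with the same meromorphic dependence (composing $R_g(\lambda)$ on the right with the holomorphic family $L_g(\lambda)\Phi_i(\lambda)$ of continuous operators preserves both the meromorphy and the finite rank of the residues). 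Unwinding the definition of $\mathscr{A}^{e_j(\lambda)}$ — its $\mathscr{V}^j$-component is $O(x^{s^j(\lambda)})$ and its $\mathscr{V}^l$-components, $l\neq j$, are $O(x^{s^j(\lambda)+1})$ — shows $\mathscr{A}^{e_j(\lambda)}\subset x^{s^j(\lambda)}\mathscr{A}^{0}(\overline{M};S^2({}^0T^*M)\otimes\Gamma_0^{\frac12})$, so I can write $R_g(\lambda)L_g(\lambda)\Phi_i(\lambda)h_0=-\sum_{j=0}^{3}x^{s^j(\lambda)}G_j$ with each $G_j\in\mathscr{A}^{0}$. Subtracting, $P_i(\lambda)h_0=x^{s_i(\lambda)}F+\sum_{j=0}^{3}x^{s^j(\lambda)}G_j$, which already exhibits the claimed inclusion into $x^{s_i(\lambda)}\mathscr{A}^{0}+\sum_{j=0}^{3}\mathscr{A}^{e_j(\lambda)}$ together with the continuity and the meromorphy with poles of finite rank.

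The only remaining point is the normalization $F|\tfrac{dx}{x^{n+1}}|^{-\frac12}|_{\partial\overline{M}}=h_0$, i.e. that the resolvent term cannot interfere with the coefficient of $x^{s_i(\lambda)}$. Here I would invoke the comparison of indicial roots recorded in Subsection 5.1: for $\varepsilon$ small and $\lambda\in D_\varepsilon$ one has $\Re s_i(\lambda)\leq\Re s_1(\lambda)=n-\Re\lambda<\varepsilon$ while $\Re s^j(\lambda)\geq\Re s^1(\lambda)=\Re\lambda>n-\varepsilon$, so every $x^{s^j(\lambda)}G_j$ decays strictly faster than $x^{s_i(\lambda)}$ and contributes nothing at order $x^{s_i(\lambda)}$; the $x^{s_i(\lambda)}$-coefficient of $P_i(\lambda)h_0$ therefore comes entirely from $\Phi_i(\lambda)h_0=x^{s_i(\lambda)}F$, giving the asserted normalization. (That $L_g(\lambda)P_i(\lambda)=0$ is immediate from $L_g(\lambda)R_g(\lambda)=\mathrm{Id}$ where $R_g(\lambda)$ is regular, as already noted.)

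I do not expect a genuine obstacle here: the argument is a composition of mapping properties. The one thing requiring care lies in a hypothesis I am invoking rather than in the Lemma itself, namely that $\Phi_i(\lambda)$ is holomorphic across $\lambda=n$, where some indicial factors $f_l(s_i(\lambda)+k,\lambda)$ vanish and the naive term-by-term solution of $L_g(\lambda)\Phi_i(\lambda)h_0=O(x^N)$ develops poles. This is handled exactly as in the ODE construction of Subsection 5.2, by multiplying by the appropriate power of $(\lambda-n)$ and differentiating in $\lambda$ before evaluating, which is precisely why $\Phi_i(\lambda)h_0$ carries the bounded powers $(\log x)^l$, $l\leq 3$; once this is in place nothing further is needed.
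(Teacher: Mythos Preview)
Your proposal is correct and is exactly what the paper intends: the paper states this lemma without proof, remarking only that ``the mapping property of $P_i(\lambda)$ follows immediately'' from the definition $P_i(\lambda)=\Phi_i(\lambda)-R_g(\lambda)L_g(\lambda)\Phi_i(\lambda)$, and your argument spells out precisely this composition of the mapping properties of $\Phi_i(\lambda)$ with Corollary~\ref{cor-formalmap} for $R_g(\lambda)$. Your remark on the holomorphy of $\Phi_i(\lambda)$ across $\lambda=n$ via the $(\lambda-n)$-regularization (mirroring the ODE construction in Subsection~5.2) is also in line with how the paper handles the formal-series step.
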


Notice that for $\lambda\in D_{\varepsilon}$, $\min\{\Re s^0(\lambda), \Re s^1(\lambda),\Re s^2(\lambda), \Re s^3(\lambda)\} = \Re s^1(\lambda) = \Re\lambda$. Moreover, if $\lambda \notin n + \frac{\mathbb{N}_0}{2}$, we have
$$\{|s_i(\lambda)-s_j(\lambda)|,|s^i(\lambda)-s_j(\lambda)|: i,j=0,1,2,3\} \cap \mathbb{N}=\emptyset,$$
which implies that no logarithmic terms have involved in the construction of formal solutions to the ODE and hence we can choose $K=0$ in the definition of (\ref{defn-Aphg}), i.e, we can replace all the $\mathscr{A}_{phg}^{0}(\overline{M})$ by $C^{\infty}(\overline{M})$ in the previous part of this section. Namely,

\begin{lemma}\label{lem-Rgsmooth}
For $i=0,1,2,3$, $\lambda\in D_{\varepsilon}$ and $\lambda \notin n+\frac{\mathbb{N}_0}{2}$, the two maps
$$R_g(\lambda):x^NC^{\infty}(\overline{M};S^2({}^0T^*M)\otimes\Gamma_0^{\frac{1}{2}}(M))
\longrightarrow
\sum_{i=0}^3x^{s^i(\lambda)}C^{\infty}(\overline{M};S^2({}^0T^*M)\otimes
\Gamma_0^{\frac{1}{2}}(M))$$
\begin{equation*}\begin{split}
P_i(\lambda):
C^{\infty}(\partial \overline{M};\mathscr{V}^i \otimes \Gamma^{\frac{1}{2}}(\partial M))
\longrightarrow
x^{s_i(\lambda)}C^{\infty}(\overline{M};S^2({}^0T^*M)\otimes \Gamma_0^{\frac{1}{2}}(M))
\quad\quad\quad\quad\\
+\sum_{j=0}^3x^{s^j(\lambda)}C^{\infty}(\overline{M};S^2({}^0T^*M)\otimes \Gamma_0^{\frac{1}{2}}(M))
\end{split}\end{equation*}
are continuous and meromorphic with poles of finite rank at $\{\lambda \in D_{\varepsilon}:\lambda\notin n+\frac{\mathbb{N}_0}{2},\ \lambda(n-\lambda)\in \sigma_{pp}(L_g)\}$. Hence for any $h_0\in
C^{\infty}(\partial \overline{M};\mathscr{V}^1\otimes \Gamma^{\frac{1}{2}}(\partial \overline{M}))$, there exists a constant $0<c(\lambda)<1$ such that $$P_1(\lambda)h_0=x^{n-\lambda}F+x^{\lambda}G+O(x^{\lambda+c(\lambda)})$$
where $F,G\in C^{\infty}(\overline{M};S^2({}^0T^*M)\otimes\Gamma_0^{\frac{1}{2}}(M))$ with
$$F|\tfrac{dx}{x^{n+1}}|^{-\frac{1}{2}}|_{\partial \overline{M}}=h_0,\quad
G|\tfrac{dx}{x^{n+1}}|^{-\frac{1}{2}}|_{\partial \overline{M}}\in C^{\infty}(\partial \overline{M}; \mathscr{V}^1 \otimes \Gamma^{\frac{1}{2}}(\partial \overline{M})).$$
\end{lemma}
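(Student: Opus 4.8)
The plan is to show that once $\lambda\notin n+\tfrac{\mathbb{N}_0}{2}$, every polyhomogeneous space $\mathscr{A}_{phg}^{0}(\overline{M})$ produced in Sections \ref{sec-gauge fix}--\ref{sec-linear einstein op} may be taken with $K=0$, i.e.\ equals $C^{\infty}(\overline{M})$, so that the mapping properties already recorded in Corollary \ref{cor-formalmap} and in the construction of the $P_i(\lambda)$ improve to the stated smooth ones. The one input is the observation preceding the lemma: for such $\lambda$ no difference of two indicial roots, $s_i(\lambda)-s_j(\lambda)$ or $s^i(\lambda)-s_j(\lambda)$ (and, by the same elementary computation, $s^i(\lambda)-s^j(\lambda)$), is a positive integer.

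First I would revisit the formal-solution construction of Subsection 5.2. Building a solution of $\widetilde{L}_r(\lambda)v=0$ with leading term $\bar{x}^{s}e$, $s$ an indicial root, proceeds by solving away the powers $\bar{x}^{s+k}$, $k\in\mathbb{N}$, and a factor $\log\bar{x}$ is forced exactly when $f_j(s+k)=0$ for some $j$, i.e.\ when $s+k$ equals another indicial root $s'$, so that $s'-s=k\in\mathbb{N}$. By the observation this never occurs, so the recursion closes with smooth, logarithm-free coefficients; likewise the Bessel solutions $\bar{x}^{\frac{n}{2}}I_{\nu(\lambda)}$, $\bar{x}^{\frac{n}{2}}K_{\nu(\lambda)}$ are logarithm-free because $2\nu(\lambda)=s^0(\lambda)-s_0(\lambda)\notin\mathbb{N}$, and no $\partial_\lambda$-regularization is needed on $D_{\varepsilon}\setminus(n+\tfrac{\mathbb{N}_0}{2})$. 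Hence $U(\lambda,\bar{x},\hat{\xi})$, $V(\lambda,\bar{x},\hat{\xi})$, the Green function $G_1(\lambda,\bar{x},\bar{x}',\hat{\xi})$, and therefore the normal operator resolvent obtained from (\ref{eq-green fuction for NLg}), all lie in $\Psi_0^{-2,E_T(\lambda),E_B(\lambda)}$ with $K=0$.

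Next I would rerun the parametrix construction of Theorem \ref{thm-construct parametrix} with this log-free normal operator. Its successive corrections come from inverting the normal operator and from inverting $I(L_g(\lambda))$ at the exponents occurring in $E_T(\lambda)+\mathrm{Id}+c$, $c\in\mathbb{N}_0$; since $I(L_g(\lambda))$ is diagonal with roots $s_j(\lambda),s^j(\lambda)$ in the $j$-th slot, such an inversion fails only if $s^i(\lambda)+m$, $m\geq 1$ an integer, equals some $s_j(\lambda)$ or $s^j(\lambda)$, again excluded. So $Q_r(\lambda)$, $R_r(\lambda)$, and via the composition identity in the proof of Proposition \ref{prop-property of inverse} also $R_g(\lambda)=(L_g(\lambda))^{-1}$, have logarithm-free polyhomogeneous kernels. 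Combined with Corollary \ref{cor-formalmap}, $R_g(\lambda)(x^{N}C^{\infty})$ is then polyhomogeneous with exponents in $\bigcup_i\big(s^i(\lambda)+\mathbb{N}_0\big)$ and smooth coefficients, and since $\mathscr{A}^{e_i(\lambda)}\subset x^{s^i(\lambda)}C^{\infty}$ once $K=0$, this yields the first displayed mapping. For $P_i(\lambda)$ the same fact kills the $l>0$ terms in $\Phi_i(\lambda)h_0=x^{s_i(\lambda)}(h_0+\sum_{k,l}u_{kl}x^k(\log x)^l)|\tfrac{dx}{x^{n+1}}|^{\frac{1}{2}}$, so $\Phi_i(\lambda)h_0\in x^{s_i(\lambda)}C^{\infty}$; since $L_g(\lambda)\Phi_i(\lambda)h_0\in x^{N}C^{\infty}$, the identity $P_i(\lambda)=\Phi_i(\lambda)-R_g(\lambda)L_g(\lambda)\Phi_i(\lambda)$ gives the second displayed mapping, with meromorphy and finite-rank poles inherited from $R_g(\lambda)$.

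Finally, for the expansion of $P_1(\lambda)h_0$: among the exponents that can occur, $s_1(\lambda)=n-\lambda$ has the smallest real part and its coefficient $F$ satisfies $F|\tfrac{dx}{x^{n+1}}|^{-\frac{1}{2}}|_{\partial\overline{M}}=h_0\in\mathscr{V}^1$; the first exponent with real part $\leq\Re\lambda$ after the Taylor string $n-\lambda+\mathbb{N}_0$ is $\lambda=s^1(\lambda)$ itself, whose leading coefficient $G$ takes values in $\mathscr{V}^1$ because the minimal entry of the column $e_1(\lambda)$ of $E_T(\lambda)$ sits in the $\mathscr{V}^1$ slot; every remaining exponent exceeds $\lambda$ by a strictly positive amount, and taking $c(\lambda)\in(0,1)$ to be the minimum of $1$ and that gap gives the remainder. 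I expect the only delicate point to be the bookkeeping in the first two steps --- verifying that \emph{every} mechanism that could produce a logarithm (the ODE recursion, the order of the Bessel functions, and each indicial inversion in the parametrix) is governed by a difference of indicial roots, so that the single hypothesis $\lambda\notin n+\tfrac{\mathbb{N}_0}{2}$ removes them all; everything afterwards is a reorganization of expansions already in hand.
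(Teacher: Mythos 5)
Your proposal follows the paper's own route: the paper deduces this lemma directly from the preceding observation that for $\lambda\notin n+\tfrac{\mathbb{N}_0}{2}$ no relevant difference of indicial roots is a positive integer, so no logarithmic terms arise in the formal solutions and one may take $K=0$, replacing every $\mathscr{A}_{phg}$ by $C^{\infty}(\overline{M})$ in the earlier mapping statements (Corollary \ref{cor-formalmap} and the mapping lemma for $P_i(\lambda)$), and your walk-through of the ODE recursion, the Bessel orders, the indicial inversions in the parametrix, and the reading of the $x^{\lambda}$ coefficient from the column $e_1(\lambda)$ of $E_T(\lambda)$ is exactly this argument made explicit. The proposal is correct and essentially identical in approach to the paper's, which states the conclusion immediately after the root-difference remark rather than spelling out the bookkeeping.
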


\begin{proposition}\label{prop-poisson kernel}
For $\lambda\in D_{\varepsilon}$, $\lambda \notin n+\frac{\mathbb{N}_0}{2}$ and $\lambda(n-\lambda)\notin
\sigma_{pp}(L_g)$, the map
\begin{equation*}
\begin{split}
P_1(\lambda):C^{\infty}(\partial \overline{M};\mathscr{V}^1 \otimes \Gamma^{\frac{1}{2}}(\partial \overline{M})) \longrightarrow x^{n-\lambda}C^{\infty}(\overline{M};S^2({}^0T^*M)\otimes
\Gamma_0^{\frac{1}{2}}(M))\quad\quad\quad\quad\\
+\sum_{j=0}^3x^{s^j(\lambda)}C^{\infty}(\overline{M};S^2({}^0T^*M)\otimes \Gamma_0^{\frac{1}{2}}(M))
\end{split}
\end{equation*}
has kernel $\kappa(P_1(\lambda))=R_g(\lambda)x'^{-\lambda+\frac{n+1}{2}}|dx'|^{-\frac{1}{2}}|_{x'=0}$. As a distribution on $\overline{M}\times_0\partial\overline{M}$ with $$\mathrm{singsupp}(\kappa(P_1(\lambda)))\subset\partial\overline{M}\times_0\partial\overline{M}.$$
\end{proposition}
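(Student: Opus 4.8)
The plan is to exhibit an operator whose Schwartz kernel is the asserted right-hand side, verify that this kernel is well defined, and then identify the operator with $P_1(\lambda)$ by a boundary pairing together with uniqueness. Recall that $P_1(\lambda)=\Phi_1(\lambda)-R_g(\lambda)L_g(\lambda)\Phi_1(\lambda)$, with $\Phi_1(\lambda)h_0=x^{n-\lambda}(h_0+O(x))|\tfrac{dx}{x^{n+1}}|^{\frac{1}{2}}$ and $L_g(\lambda)\Phi_1(\lambda)h_0\in x^NC^{\infty}$, and that by Lemma~\ref{lem-Rgsmooth} (applicable since $\lambda\notin n+\tfrac{\mathbb{N}_0}{2}$) the correction $R_g(\lambda)L_g(\lambda)\Phi_1(\lambda)h_0$ lies in $\sum_{j=0}^3 x^{s^j(\lambda)}C^{\infty}$. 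Thus $P_1(\lambda)h_0$ is a solution of $L_g(\lambda)u=0$ with leading term $x^{n-\lambda}h_0$ inside the class $x^{n-\lambda}C^{\infty}+\sum_j x^{s^j(\lambda)}C^{\infty}$, and such a solution is unique: a solution lying in $\sum_j x^{s^j(\lambda)}C^{\infty}$ alone belongs to $L^2_0$ (as $\Re s^j(\lambda)\ge\Re\lambda>\tfrac{n}{2}$) and hence vanishes, since $\lambda(n-\lambda)\notin\sigma_{pp}(L_g)$.

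First I would make sense of the right-hand side. Since $R_g(\lambda)\in\Psi_0^{-2,E_T(\lambda),E_B(\lambda)}(M;S^2({}^0T^*M)\otimes\Gamma_0^{\frac{1}{2}})$, its kernel $\kappa(R_g(\lambda))$ is conormal of order $-2$ along $\mathrm{diag}_0(M)$, smooth up to the front face $F$, and polyhomogeneous at the bottom face $B\cong\overline{M}\times_0\partial\overline{M}$; in the $\mathscr{V}^1$--input column it behaves at $B$ like $\rho'^{s^1(\lambda)}$, with $s^1(\lambda)=\lambda$ the unique minimal $B$--exponent there because $\Re s^1(\lambda)<\Re s^3(\lambda)<\Re s^0(\lambda)=\Re s^2(\lambda)<\Re\lambda+2$. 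Because the trivialising half-density $\nu=|dvol_g|^{\frac{1}{2}}$ of $\Gamma_0^{\frac{1}{2}}$ equals $x'^{-(n+1)/2}|dx'\,dvol_{g_0}|^{\frac{1}{2}}(1+O(x'^2))$ in the second factor near $\partial\overline{M}$, multiplication by $x'^{-\lambda+\frac{n+1}{2}}|dx'|^{-\frac{1}{2}}$ removes exactly this weight and the extra $|dx'|^{\frac{1}{2}}$, so $\tilde{\kappa}:=x'^{-\lambda+\frac{n+1}{2}}|dx'|^{-\frac{1}{2}}\kappa(R_g(\lambda))|_{x'=0}$ is a well-defined section over $\overline{M}\times_0\partial\overline{M}$, smoothness of $\kappa(R_g(\lambda))$ up to $F$ letting the restriction extend across the corner $F\cap B$. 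Since $\overline{\mathrm{diag}_0(M)}$ is disjoint from $B$, the distribution $\tilde{\kappa}$ is polyhomogeneous conormal on $\overline{M}\times_0\partial\overline{M}$ and $C^{\infty}$ off the front face of the blow-up of $\mathrm{diag}(\partial\overline{M})$; that face and the whole lift of $\partial\overline{M}\times\partial\overline{M}$ lie in $\partial\overline{M}\times_0\partial\overline{M}$, giving $\mathrm{singsupp}(\tilde{\kappa})\subset\partial\overline{M}\times_0\partial\overline{M}$. Let $\widetilde{P}(\lambda)$ be the operator with kernel $\tilde{\kappa}$.

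It remains to prove $\widetilde{P}(\lambda)=P_1(\lambda)$. Testing against $f\in\dot{C}^{\infty}(M;S^2({}^0T^*M)\otimes\Gamma_0^{\frac{1}{2}})$ and putting $v:=R_g(\bar{\lambda})f$, one has $L_g(\bar{\lambda})v=f$ and $v=\sum_j x^{s^j(\bar{\lambda})}v_j+(\text{faster})$ by Lemma~\ref{lem-Rgsmooth}. Using $L_g(\bar{\lambda})^*=L_g(\lambda)$, equivalently $\kappa(R_g(\lambda))^t=\overline{\kappa(R_g(\bar{\lambda}))}$, pairing $\tilde{\kappa}$ with $f$ extracts the $\mathscr{V}^1$--part of the $x^{\bar{\lambda}}$--coefficient of $v$, with a universal constant that the exponent $-\lambda+\frac{n+1}{2}$ is chosen to normalise. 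On the other hand $\langle P_1(\lambda)h_0,f\rangle=\langle\Phi_1(\lambda)h_0,L_g(\bar{\lambda})v\rangle-\langle L_g(\lambda)\Phi_1(\lambda)h_0,v\rangle$; since $L_g$ is formally self-adjoint for $dvol_g$ and the indicial rates $n-\lambda$ of $\Phi_1(\lambda)h_0$ and $\bar{\lambda}$ of $v$ are complementary, Green's formula on $\{x>\epsilon\}$ and the limit $\epsilon\to0$ turn the right-hand side into a boundary term, which the boundary pairing formula of [GZ] and [JS]---adapted to $S^2({}^0T^*M)$ with the splitting $\mathscr{V}^0\oplus\mathscr{V}^1\oplus\mathscr{V}^2\oplus\mathscr{V}^3$ and the indicial data of $L_g(\lambda)$---evaluates to the \emph{same} multiple of $\langle h_0,\pi_{\mathscr{V}^1}v_1|_{\partial\overline{M}}\rangle$. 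Comparing gives $\langle\widetilde{P}(\lambda)h_0,f\rangle=\langle P_1(\lambda)h_0,f\rangle$ for all $f$ and $h_0$, hence $\kappa(P_1(\lambda))=\tilde{\kappa}$, and the singular support statement follows from the previous paragraph.

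The hard part will be the boundary pairing: showing that the regularised quantity $\langle L_g(\lambda)u,v\rangle-\langle u,L_g(\bar{\lambda})v\rangle$ converges as $\epsilon\to0$ for the $0$--elliptic system $L_g(\lambda)$ with its four indicial families, pinning down the correct universal constant, and verifying that the density bookkeeping---the $\tfrac{n+1}{2}$ shift and the factor $|dx'|^{-\frac{1}{2}}$---produces the \emph{same} constant on the $\widetilde{P}(\lambda)$ side, which is exactly what forces the exponent $-\lambda+\frac{n+1}{2}$ in the statement. A secondary point is the uniform control of $\tilde{\kappa}$ near the corner $F\cap B$, resting on the polyhomogeneity of $\kappa(R_g(\lambda))$ at $B$ being uniform up to the front face.
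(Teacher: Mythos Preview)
Your proposal is correct and follows essentially the same route as the paper: both identify $\kappa(P_1(\lambda))$ by pairing $P_1(\lambda)h_0$ against a test section $f$, rewriting this via $u_2=R_g(\lambda)f$ (the paper works first with $\lambda\in\mathbb{R}\cap D_\varepsilon$ so that $\bar\lambda=\lambda$, then analytically continues, while you carry $\bar\lambda$ throughout), and applying Green's formula on $\{x>\epsilon\}$ so that only the cross term between the $x^{n-\lambda}$ leading part of $P_1(\lambda)h_0$ and the $x^{\lambda}$ leading part of $R_g(\lambda)f$ survives in the limit; the paper's computation makes the universal constant explicit as $2\lambda-n$, which is the constant you allude to. Your preliminary discussion of why the restriction $x'^{-\lambda+\frac{n+1}{2}}|dx'|^{-\frac{1}{2}}\kappa(R_g(\lambda))|_{x'=0}$ is well defined and has singular support in $\partial\overline{M}\times_0\partial\overline{M}$ is more detailed than the paper's one-line justification, but the underlying argument is the same.
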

\begin{proof}
For any $h_0\in C^{\infty}(\partial \overline{M};\mathscr{V}^i\otimes \Gamma^{\frac{1}{2}}(\partial \overline{M}))$, $f\in x^NC^{\infty}(\overline{M};S^2({}^0T^*M\otimes\Gamma_0^{\frac{1}{2}}(M))$,
by last lemma,  
$$u_1=P_1(\lambda)(h_0)=x^{n-\lambda}F_1+x^{\lambda}G_1+O(x^{\lambda+c(\lambda)}),
\quad u_2=R_g(\lambda)(f)=x^{\lambda}G_2+O(x^{\lambda+c(\lambda)})$$
for some $0<c(\lambda)<1$ and $F_1,G_1,G_2\in C^{\infty}(\overline{M};S^2({}^0T^*M)\otimes\Gamma_0^{\frac{1}{2}}(M))$. Then
for $\lambda \in \mathbb{R}\cap D_{\varepsilon}$, $\lambda\notin n+\frac{\mathbb{N}_0}{2}$ and $\lambda(n-\lambda)\notin \sigma_{pp}(L_g)$,
\begin{eqnarray*}
\int_M P_i(\lambda)h_0\bar{f}&=& \int_M
u_1\overline{\triangle_gu_2}-\int_M\triangle_gu_1\bar{u}_2\\
&=& \lim_{\epsilon\rightarrow
0}\int_{x=\epsilon}u_1(\overline{\nabla u_2\cdot\vec{n}})
-\int_{x=\epsilon}(\nabla u_1\cdot \vec{n})\bar{u}_2\\
&=&(2\lambda-n)\int_{\partial
M}F_1\overline{G_2}x^{n+1}|dx|^{-1}|_{\partial M}\\
&=& (2\lambda-n)\int_{\partial M} h_0
\overline{x^{-\lambda+\frac{n+1}{2}}R_g(\lambda)f|dx|^{-\frac{1}{2}}|_{\partial
M}}
\\
&=& (2\lambda-n)\int_M
(R_g(\lambda)x^{-\lambda+\frac{n+1}{2}}|dx|^{-\frac{1}{2}})h_0\otimes
\delta_0(x)\bar{f}
\end{eqnarray*}
The result extend to whole domain $\{\lambda\in D_{\varepsilon}: \lambda \notin n + \frac{\mathbb{N}_0}{2},\ \lambda(n-\lambda) \notin \sigma_{pp}(L_g)\}$ by analytic continuation. Since $R_g(\lambda)$ only has singularities on $diag_0(M)$ and cornormal singularities on the boundary surfaces of $\overline{M} \times_0 \overline{M}$, $\kappa(P_1(\lambda))$ as a restriction on one boundary surfaces must be smooth when $x>0$.
\end{proof}

If $\lambda_0=n+\frac{l}{2}\in D_{\varepsilon}$, for some $l\in \mathbb{N}_0$, then
for any $h_0\in C^{\infty}(\partial \overline{M};\mathscr{V}^i\otimes
\Gamma^{\frac{1}{2}}(\partial \overline{M}))$,
$$P_1(h_0)=x^{n-\lambda_0}F+x^{\lambda_0}(\log x)H_1+x^{\lambda_0+1}(\log x)^2H_2 +O(x^{\lambda_0})$$
where $F,H_1,H_2\in C^{\infty}(\overline{M};S^2({}^0T^*M)\otimes\Gamma_0^{\frac{1}{2}}(M))$,
$H_2=0$ if $l\neq 0$ and $$F|\tfrac{dx}{x^{n+1}}|^{-\frac{1}{2}}|_{\partial \overline{M}}=h_0,\quad
H_1|\tfrac{dx}{x^{n+1}}|^{-\frac{1}{2}}|_{\partial \overline{M}}\in C^{\infty}(\partial \overline{M};\mathscr{V}^1 \otimes \Gamma^{\frac{1}{2}}(\partial \overline{M})),$$
$$H_2|\tfrac{dx}{x^{n+1}}|^{-\frac{1}{2}}|_{\partial \overline{M}}\in C^{\infty}(\partial \overline{M};\mathscr{V}^3 \otimes \Gamma^{\frac{1}{2}}(\partial \overline{M})).$$
$H_1|\tfrac{dx}{x^{n+1}}|^{-\frac{1}{2}}|_{\partial \overline{M}}$ and
$H_2|\tfrac{dx}{x^{n+1}}|^{-\frac{1}{2}}|_{\partial \overline{M}}$ are
determined by the formal construction of $\Phi_1$ up to order $N$, i.e. there exist some differential operators
$$A_{n+l}\in \textrm{Diff}^{n+l}(\partial \overline{M}; End(\mathscr{V}^1) \otimes \Gamma^{\frac{1}{2}}(\partial \overline{M})),\quad A_{n+l}\in \textrm{Diff}^{n+l+1}(\partial \overline{M};Hom(\mathscr{V}^1,\mathscr{V}^3) \otimes \Gamma^{\frac{1}{2}}(\partial \overline{M}))$$
such that $H_1|\tfrac{dx}{x^{n+1}}|^{-\frac{1}{2}}|_{\partial \overline{M}}=A_{n+l}h_0$, $H_2|\tfrac{dx}{x^{n+1}}|^{-\frac{1}{2}}|_{\partial \overline{M}}=A_{n+l+1}h_0$.

If $\lambda_0\in D_{\varepsilon}$, $\lambda_0(n-\lambda_0)\in
\sigma_{pp}(L_g)$, then the Poisson operator has residue of a finite rank operator at $\lambda_0$
$$Res_{\lambda=\lambda_0}(P_1(\lambda))=(2\lambda_0-n)\Pi(\lambda_0)x'^{-\lambda}|_{x'=0}.$$

\subsection{Scattering Matrix}
For $\lambda\in D_{\varepsilon}$, $\lambda \notin n+\frac{\mathbb{N}_0}{2}$ and $\lambda(n-\lambda)\notin \sigma_{pp}(L_g)$, we can define the \textit{Scattering matrix} 
$$S_1(\lambda):C^{\infty}(\partial \overline{M};\mathscr{V}^1 \otimes \Gamma^{\frac{1}{2}}(\partial \overline{M}))\longrightarrow C^{\infty}(\partial \overline{M};\mathscr{V}^1 \otimes \Gamma^{\frac{1}{2}}(\partial \overline{M}))$$
by
$$S_1(\lambda)h_0=Gx^{\frac{n+1}{2}}|dx|^{-\frac{1}{2}}|_{\partial \overline{M}}.$$ 
Since the choice of smooth boundary defining function for $\partial \overline{M}$ remains free, it is more natural to view $S_1(\lambda)$ as a map
$$S_1(\lambda):C^{\infty}(\partial \overline{M};\mathscr{V}^1\otimes
\Gamma^{\frac{1}{2}}(\partial \overline{M})\otimes |N^*\partial
\overline{M}|^{n-\lambda})\longrightarrow
C^{\infty}(\partial \overline{M};\mathscr{V}^1\otimes \Gamma^{\frac{1}{2}}(\partial \overline{M})\otimes
|N^*\partial \overline{M}|^{\lambda})$$

\begin{proposition}\label{prop-scattering pole}
For $\varepsilon >0$, small, the scattering matrix $S_1(\lambda)$ is meromorphic in $D_{\varepsilon}$ with poles at $n+\frac{\mathbb{N}_0}{2}\cup\{\lambda\in D_{\varepsilon}: \lambda(n-\lambda)\notin \sigma_{pp}(L_g)\}$. At any pole $\lambda_0$, we have
\begin{equation*}
Res_{\lambda=\lambda_0}S_1(\lambda)=\left\{\begin{array}{ll}
\pi(\lambda_0), & \mathrm{for}\ s_0\neq n+\frac{\mathbb{N}_0}{2},\\
\pi(\lambda_0)+A_{n+l}, & \mathrm{for}\ s_0=n+\frac{l}{2},
\end{array}\right.
\end{equation*}
where $\kappa(\pi(\lambda_0))=(2\lambda_0-n)x^{-\lambda_0}\Pi(\lambda_0)x'^{-\lambda_0}
|\frac{dxdx'}{x^{n+1}x'^{n+1}}|^{\frac{1}{2}}|_{\partial \overline{M}\times\partial \overline{M}}$.
\end{proposition}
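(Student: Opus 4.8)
The plan is to read the meromorphy of $S_1(\lambda)$ off the two ingredients of its construction, the formal solution operator $\Phi_1(\lambda)$ and the resolvent $R_g(\lambda)$, and then to isolate the residue at each pole from whichever of the two produces it. Recall that $S_1(\lambda)h_0$ is, in the defining normalization, the leading coefficient $Gx^{\frac{n+1}{2}}|dx|^{-\frac12}|_{\partial\overline{M}}$ of the $x^{\lambda}$-term in the expansion $P_1(\lambda)h_0=x^{n-\lambda}F+x^{\lambda}G+O(x^{\lambda+c(\lambda)})$, and that $P_1(\lambda)=\Phi_1(\lambda)-R_g(\lambda)L_g(\lambda)\Phi_1(\lambda)$. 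First I would show holomorphy away from the exceptional set: for $\lambda\in D_{\varepsilon}$ with $\lambda\notin n+\tfrac{\mathbb{N}_0}{2}$ and $\lambda(n-\lambda)\notin\sigma_{pp}(L_g)$, the indicial roots $s_i(\lambda),s^i(\lambda)$ stay holomorphic and separated modulo $\mathbb{N}$, so by Lemma~\ref{lem-Rgsmooth} the formal series defining $\Phi_1(\lambda)$ carries no logarithmic terms and is holomorphic in $\lambda$, while $R_g(\lambda)$ is holomorphic there as well; hence $P_1(\lambda)$ is holomorphic and, since extracting the $x^{\lambda}$-coefficient is then a $\lambda$-holomorphic continuous operation, $S_1(\lambda)$ is holomorphic. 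This already confines the poles to $\big(n+\tfrac{\mathbb{N}_0}{2}\big)\cup\{\lambda\in D_{\varepsilon}:\lambda(n-\lambda)\in\sigma_{pp}(L_g)\}$.

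Next, for a pole $\lambda_0$ with $\lambda_0(n-\lambda_0)\in\sigma_{pp}(L_g)$ and $\lambda_0\notin n+\tfrac{\mathbb{N}_0}{2}$, I would use the kernel identity $\kappa(P_1(\lambda))=R_g(\lambda)\,x'^{-\lambda+\frac{n+1}{2}}|dx'|^{-\frac12}|_{x'=0}$ of Proposition~\ref{prop-poisson kernel} together with $\mathrm{Res}_{\lambda=\lambda_0}R_g(\lambda)=\Pi(\lambda_0)$, whose Schwartz kernel is $\sum_j u_j(\lambda_0)\otimes\overline{u_j(\lambda_0)}$ with the $L^2_0$-eigenvectors $u_j(\lambda_0)\in\sum_{i=0}^{3}\mathscr{A}^{e_i(\lambda_0)}$. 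Since $\Re s^1(\lambda_0)=\Re\lambda_0<\Re s^3(\lambda_0)<\Re s^0(\lambda_0)=\Re s^2(\lambda_0)$, each $u_j(\lambda_0)$ has boundary asymptotics $x^{\lambda_0}w_j+O(x^{\lambda_0+c})$ with $w_j x^{\frac{n+1}{2}}|dx|^{-\frac12}|_{\partial\overline{M}}\in C^{\infty}(\partial\overline{M};\mathscr{V}^1\otimes\Gamma^{\frac12}(\partial\overline{M}))$. Restricting $\overline{u_j(\lambda_0)}\,x'^{-\lambda_0+\frac{n+1}{2}}|dx'|^{-\frac12}$ to $x'=0$ produces exactly $\overline{w_j}$, so taking the $x^{\lambda_0}$-coefficient of $\mathrm{Res}_{\lambda_0}(P_1(\lambda)h_0)=\sum_j u_j(\lambda_0)\,\langle\,\overline{u_j(\lambda_0)}\,x'^{-\lambda_0+\frac{n+1}{2}}|dx'|^{-\frac12}|_{x'=0},\,h_0\,\rangle$ leaves the operator with kernel $\sum_j w_j\otimes\overline{w_j}$; reinserting the factor $(2\lambda_0-n)$ that appears in the Green's-formula computation behind Proposition~\ref{prop-poisson kernel} identifies this with $\pi(\lambda_0)$, $\kappa(\pi(\lambda_0))=(2\lambda_0-n)x^{-\lambda_0}\Pi(\lambda_0)x'^{-\lambda_0}|\tfrac{dxdx'}{x^{n+1}x'^{n+1}}|^{\frac12}|_{\partial\overline{M}\times\partial\overline{M}}$. (Here $\pi(\lambda_0)=0$ unless $\lambda_0(n-\lambda_0)\in\sigma_{pp}(L_g)$, and the computation also uses $\mathrm{Res}_{\lambda_0}P_1(\lambda)=(2\lambda_0-n)\Pi(\lambda_0)x'^{-\lambda}|_{x'=0}$ recorded earlier.)

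Finally, for $\lambda_0=n+\tfrac{l}{2}\in D_{\varepsilon}$ the pole is produced at the formal step of $\Phi_1(\lambda)$ where $I(L_g(\lambda))(n-\lambda+l)$ degenerates at $\lambda_0$ (since $n-\lambda_0+l=\lambda_0=s^1(\lambda_0)$, and $=s^3(\lambda_0)$ as well when $l=0$): the coefficient of $x^{n-\lambda+l}$ in $\Phi_1(\lambda)h_0$ acquires a simple pole at $\lambda_0$, equivalently the limit carries the terms $x^{\lambda_0}(\log x)H_1+x^{\lambda_0+1}(\log x)^2H_2$ with $H_1 x^{\frac{n+1}{2}}|dx|^{-\frac12}|_{\partial\overline{M}}=A_{n+l}h_0$ and $H_2 x^{\frac{n+1}{2}}|dx|^{-\frac12}|_{\partial\overline{M}}=A_{n+l+1}h_0$, $A_{n+l}\in\textrm{Diff}^{n+l}$, $A_{n+l+1}\in\textrm{Diff}^{n+l+1}$. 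Writing $x^{\lambda}=x^{\lambda_0}(1+(\lambda-\lambda_0)\log x+\cdots)$ and $x^{n-\lambda+l}=x^{\lambda_0}(1-(\lambda-\lambda_0)\log x+\cdots)$, matching the coefficient of $x^{\lambda_0}\log x$ in $P_1(\lambda)h_0$ near $\lambda_0$ shows the $x^{\lambda}$-coefficient $G(\lambda)$ has a simple pole with $\mathrm{Res}_{\lambda_0}\big(G(\lambda)x^{\frac{n+1}{2}}|dx|^{-\frac12}|_{\partial\overline{M}}\big)=A_{n+l}h_0$; if moreover $\lambda_0(n-\lambda_0)\in\sigma_{pp}(L_g)$ the resolvent contributes $\pi(\lambda_0)$ as above and the two residues add, giving $\mathrm{Res}_{\lambda=\lambda_0}S_1(\lambda)=\pi(\lambda_0)+A_{n+l}$. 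I expect the main obstacle to be precisely this bookkeeping at the points $\lambda_0=n+\tfrac{l}{2}$: one must track how the two formal branches $x^{n-\lambda}\{\cdots\}$ and $x^{\lambda}\{\cdots\}$ coalesce, handle the $\mathscr{V}^1$-to-$\mathscr{V}^3$ coupling (the $A_{n+l+1}$, $(\log x)^2$ contribution when $l=0$), and verify that the residue comes from the genuine $x^{\lambda}$-coefficient and not from a spurious contribution of the lower-order terms $x^{n-\lambda+k}$, $k<l$, with no leftover density or normalization factors.
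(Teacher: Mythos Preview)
Your proposal is correct and is precisely the argument the paper has in mind: the proposition is stated without an explicit proof, but the two paragraphs immediately preceding it assemble exactly the ingredients you use --- the formula $\mathrm{Res}_{\lambda=\lambda_0}P_1(\lambda)=(2\lambda_0-n)\Pi(\lambda_0)x'^{-\lambda}|_{x'=0}$ for the eigenvalue poles, and the identification of the $x^{\lambda_0}\log x$ coefficient with the differential operator $A_{n+l}$ for $\lambda_0=n+\tfrac{l}{2}$. Your write-up simply makes explicit the passage from these facts to the residue of $S_1(\lambda)$ by extracting the $x^{\lambda}$-coefficient, which is the intended step.
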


To find the kernel of $S_1(\lambda)$, for $\lambda\in D_{\varepsilon}$, $\lambda \notin n+\frac{\mathbb{N}_)}{2}$ and $\lambda(n-\lambda) \notin \sigma_{pp}(L_g)$, we study the
asymptotic behavior of $\kappa(P_1)$ first. Recall the blow down
map $$\bar{\beta}:\overline{M}\times_0\partial \overline{M}\longrightarrow \overline{M}\times\partial \overline{M}, \quad \hat{\beta}:\partial \overline{M}\times_0\partial \overline{M}\longrightarrow \partial \overline{M}\times\partial \overline{M}. $$
The front face for $\bar{\beta}$ is $F\cap B$ and the front face of $\hat{\beta}$ is $F\cap B\cap T$. For any $(q,q)\in diag(\partial \overline{M})$ and a neighborhood $U\subset\overline{M} \times \partial \overline{M}$ around it, we can choose local coordinates in $\bar{\beta}^{-1}(U) \subset \overline{M} \times_0 \partial \overline{M}$ as
$$R=|x^2+|y-y'|^{\frac{1}{2}}|,\ \rho=\frac{x}{R}, \ Y=y-y', \ \omega=\frac{y-y'}{R},\ \rho_{F}=(1+|\omega|^2)^{-\frac{1}{2}}.$$ 
By Proposition \ref{prop-poisson kernel}, for any $i,j=0,1,2,3$, 
$\bar{\beta}^*(\kappa(P_1(\lambda)))=R^{-\lambda}F\nu\otimes\nu_0$, where
$$F=\left[\begin{array}{cccc}
0&C_{00}(\lambda)\rho^{s^0(\lambda)}F_{00}+C_{01}(\lambda)\rho^{s^1(\lambda)+2}F_{01}
+C_{02}(\lambda)\rho^{s^2(\lambda)+2}F_{02}+C_{03}(\lambda)\rho^{s^3(\lambda)+2}F_{03}&0&0\\
0&C_{10}(\lambda)\rho^{s^0(\lambda)+2}F_{10}+C_{11}(\lambda)\rho^{s^1(\lambda)}F_{11}
+C_{12}(\lambda)\rho^{s^2(\lambda)+1}F_{12}+C_{13}(\lambda)\rho^{s^3(\lambda)+1}F_{13}&0&0\\
0&C_{20}(\lambda)\rho^{s^0(\lambda)+2}F_{10}+C_{21}(\lambda)\rho^{s^1(\lambda)+1}F_{21}
+C_{22}(\lambda)\rho^{s^2(\lambda)}F_{22}+C_{23}(\lambda)\rho^{s^3(\lambda)+1}F_{23}&0&0\\
0&C_{30}(\lambda)\rho^{s^0(\lambda)+2}F_{30}+C_{31}(\lambda)\rho^{s^1(\lambda)+1}F_{31}
+C_{32}(\lambda)\rho^{s^2(\lambda)+1}F_{32}+C_{33}(\lambda)\rho^{s^3(\lambda)}F_{33}&0&0
\end{array}\right]$$
with $F_{ij}=F_{ij}(\rho,R,\omega)\in C^{\infty}(\overline{M}\times_0\partial \overline{M})$ and the coefficients $C_{ij}(\lambda)$ are meromorphic functions of $\lambda$. Let $s_{ij}(\lambda) = s^j(\lambda) +1-\delta_{ij}$ and 
$$\tau_{ij}(\lambda)=x^{-s_{ij}(\lambda)}\bar{\beta}_*(R^{-\lambda} F_{ij})\nu\otimes\nu_0 =\bar{\beta}_*(R^{-\lambda-s_{ij}(\lambda)} F_{ij})\nu\otimes\nu_0.$$
Since $s_{ij}> \lambda$ except for $i=j=1$, to study the scattering
matrix $S_1(\lambda)$, we only need to consider the asymptotic behavior of $\tau_{ij}(\lambda)$.

\begin{proposition}
For $\lambda\in D_{\varepsilon}$, $\lambda \notin n+\frac{\mathbb{N}_0}{2}$, $\lambda(n-\lambda) \notin \sigma_{pp}(L_g)$ and for $i,j=0,1,2,3$, $\tau_{ij}(\lambda)$ is a distributional section of $\Gamma^{\frac{1}{2}}(\overline{M}\times \partial \overline{M})$, which has conormal singularity at $diag(\partial \overline{M})$. Moreover, it has asymptotic expansion in $x$ as $x\downarrow 0$ in the sense that for any $f\in C^{\infty}(\partial \overline{M}\times\partial \overline{M})$, 
$$\langle\tau_{ij}(\lambda),f\nu_0\otimes\nu_0\rangle_{\partial \overline{M}\times\partial \overline{M}} =
(H_{\lambda}(x)+x^{n-\lambda-s_{ij}(\lambda)}G_{\lambda}(x)) \left| \frac{dx}{x^{n+1}} \right|^{\frac{1}{2}},$$ 
where $H_{\lambda},G_{\lambda}\in C^{\infty}([0,\epsilon))$ depend holomorphicly on $\lambda$. Moreover if $i=j=1$, 
$$H_{\lambda}(0)=\langle \hat{\beta}_*(R^{-2\lambda}F_{ij}|_{\rho=0})\nu_0\otimes\nu_0, f\nu_0\otimes\nu_0 \rangle_{\partial\overline{M}\times\partial\overline{M}}$$
$$G_{\lambda}(0)=\langle\langle x^{2\lambda-\frac{n-1}{2}}\tau_{ij}|_{F\cap B}, \rho_F^{-1-\frac{n}{2}}(\lambda) \rangle_{F\cap B}\delta_0(Y)\nu_0\otimes\nu_0, f\nu_0\otimes\nu_0\rangle_{\partial \overline{M}\times\partial \overline{M}}$$
\end{proposition}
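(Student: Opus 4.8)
The plan is to exploit that $\tau_{ij}(\lambda)$ is, by its definition, the pushforward under the blow-down $\bar\beta$ of $R^{-\lambda-s_{ij}(\lambda)}$ times a function $F_{ij}$ that is smooth up to every face of $\overline M\times_0\partial\overline M$, and then to localize the analysis as $x\downarrow0$ near $\mathrm{diag}(\partial\overline M)$, where the two terms of the asserted expansion come respectively from the lateral face $\{x=0\}=\partial\overline M\times_0\partial\overline M$ and from integration over the fibre of the front face $F\cap B$.

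First I would record the regularity of $F_{ij}$. Since $\kappa(P_1(\lambda))=R_g(\lambda)x'^{-\lambda+\frac{n+1}{2}}|dx'|^{-\frac{1}{2}}|_{x'=0}$ with $R_g(\lambda)\in\Psi_0^{-2,E_T(\lambda),E_B(\lambda)}$, the factor $F_{ij}$ is smooth up to $F\cap B$ automatically and, by Lemma~\ref{lem-Rgsmooth}, smooth up to $\{x=0\}$ as well whenever $\lambda\notin n+\tfrac{\mathbb N_0}{2}$ (so that no logarithmic terms enter the formal solutions), with holomorphic dependence on $\lambda\in D_\varepsilon$ off the excluded set. Conormality of $\tau_{ij}(\lambda)$ at $\mathrm{diag}(\partial\overline M)$ is then immediate from the standard fact that $R$ is a total boundary defining function for $F\cap B$, which $\bar\beta$ collapses onto $\mathrm{diag}(\partial\overline M)$: the pushforward of $R^{-z}\cdot C^\infty(\overline M\times_0\partial\overline M)$ is smooth for $\Re z$ small and continues meromorphically in $z$, with poles only at $z\in n+\mathbb N_0$, i.e.\ at $\lambda\in n+\tfrac{\mathbb N_0}{2}$ for $z=\lambda+s_{ij}(\lambda)$, hence excluded, to a distribution on $\overline M\times\partial\overline M$ conormal to $\mathrm{diag}(\partial\overline M)$ — this is the interaction of conormal distributions with the $0$-blow-up used in [MM], [Ma], [JS].

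Next I would cut $\tau_{ij}$ with a function equal to $1$ near $F\cap B$ and supported in a small collar of it. Off that collar, $\bar\beta$ is a diffeomorphism, $R$ is bounded below, so $R^{-\lambda-s_{ij}(\lambda)}F_{ij}$ is smooth up to $x=0$ and, paired against $f\,\nu_0\otimes\nu_0$, contributes a purely smooth, holomorphic-in-$\lambda$ term to $H_\lambda$. For the piece localized near $F\cap B$, I would, at fixed small $x>0$, substitute $y=y'+xz$ in the $y$-integral: then $dy=x^n\,dz$, $R=x\sqrt{1+|z|^2}+O(x^2)$, and $F_{ij}$ becomes a smooth function of $(x,z,y')$. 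Factoring out $x^{-\lambda-s_{ij}(\lambda)}\cdot x^n$ splits the result into (i) the contribution of bounded $z$ (equivalently $\rho=x/R$ bounded below), which after Taylor-expanding $F_{ij}$ in $R=x\sqrt{1+|z|^2}$ and $f$ in $xz$ becomes $x^{\,n-\lambda-s_{ij}(\lambda)}G_\lambda(x)|dx/x^{n+1}|^{\frac{1}{2}}$, the coefficients being integrals $\int z^\alpha(1+|z|^2)^{-(\lambda+s_{ij}(\lambda))/2}(\cdots)\,dz$ which converge for $\lambda\in D_\varepsilon$ since $\Re(\lambda+s_{ij}(\lambda))>n$ there, and continue holomorphically with poles only at $\lambda\in n+\tfrac{\mathbb N_0}{2}$; and (ii) the contribution of $|z|\to\infty$, i.e.\ $\rho\to0$, where one restricts $R^{-\lambda-s_{ij}(\lambda)}F_{ij}$ to $\{\rho=0\}$ and pushes down by $\hat\beta$, a term smooth in $x$. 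Setting $x=0$ in (i) replaces $F_{ij}$ by $F_{ij}|_{F\cap B}$, forces $f$ onto $\mathrm{diag}(\partial\overline M)$ (the factor $\delta_0(Y)$), and exhibits the residual $z$-integral as the fibre pairing against $\rho_F^{-1-\frac{n}{2}}(\lambda)$ — which for $i=j=1$ (where $\lambda+s_{11}(\lambda)=2\lambda$) is exactly the asserted $G_\lambda(0)$; setting $x=0$ in (ii) and in the far piece gives $H_\lambda(0)=\langle\hat\beta_*(R^{-2\lambda}F_{11}|_{\rho=0})\nu_0\otimes\nu_0,\ f\,\nu_0\otimes\nu_0\rangle$. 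For $(i,j)\neq(1,1)$ the same computation goes through with $2\lambda$ replaced by $\lambda+s_{ij}(\lambda)$, and $\Re(\lambda+s_{ij}(\lambda))>2\Re\lambda$ since $\Re s^j(\lambda)+1-\delta_{ij}>\Re s^1(\lambda)=\Re\lambda$.

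The hard part will be the bookkeeping at the corner $F\cap B\cap T$ of $\overline M\times_0\partial\overline M$, where the bounded-$z$ and large-$z$ regimes overlap: one must check that the two expansions are precisely the two polyhomogeneous families produced by Melrose's pushforward theorem for the b-fibration $\bar\beta$ — index set $\mathbb N_0$ at $\{x=0\}$ coming from the smoothness of $F_{ij}$, and $-\lambda-s_{ij}(\lambda)+\mathbb N_0$ shifted up by the fibre dimension $n$ at $F\cap B$ — and that no exceptional logarithms or coincidences of exponents appear. This is exactly what the standing hypotheses $\lambda\notin n+\tfrac{\mathbb N_0}{2}$, $\lambda(n-\lambda)\notin\sigma_{pp}(L_g)$, together with the strict ordering $\Re s^1(\lambda)<\Re s^3(\lambda)<\Re s^0(\lambda)=\Re s^2(\lambda)$, are there to guarantee; holomorphy of $H_\lambda$ and $G_\lambda$ in $\lambda$ then follows because every operation above — the cutoff, the substitution $y=y'+xz$, the meromorphically continued $z$-integrals, and the Taylor expansions — is holomorphic in $\lambda\in D_\varepsilon$ off the excluded set.
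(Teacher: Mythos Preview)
Your proposal is correct, but it takes a genuinely different route from the paper. The paper follows Joshi--S\'a Barreto [JS, Prop.~4.1] directly: it observes that $R\partial_R=\bar\beta^*(x\partial_x+Y\cdot\partial_Y)$ and applies the product of operators $\prod_{k=0}^m(x\partial_x-k)(x\partial_x+Y\cdot\partial_Y+\lambda+s_{ij}(\lambda)-k)$ to $\tau_{ij}$, which---because each factor kills one power in the smooth expansion at one of the two faces---produces $x^{m+1}$ times another pushforward of the same type. After pairing with $f$ and integrating by parts using $\mathrm{div}(Yg)=ng+Y\cdot\partial_Y g$, this becomes an ODE estimate $\prod_{k=0}^m(x\partial_x-k)(x\partial_x-n+\lambda+s_{ij}(\lambda)-k)u(x)=O(x^{m+1})$, and the asserted two-term expansion with $H_\lambda,G_\lambda\in C^\infty$ drops out of elementary ODE theory, since the two indicial roots $0$ and $n-\lambda-s_{ij}(\lambda)$ do not differ by an integer. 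The leading coefficients for $i=j=1$ are then identified by analytic continuation.

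Your approach instead invokes Melrose's pushforward theorem for the b-fibration $\bar\beta$ (equivalently, carries out the projective substitution $y=y'+xz$ by hand and splits into bounded-$z$ and large-$z$ regimes). This is cleaner conceptually---it makes transparent why the two index sets $\mathbb N_0$ and $n-\lambda-s_{ij}(\lambda)+\mathbb N_0$ appear and where each of $H_\lambda(0),G_\lambda(0)$ lives geometrically---and it scales better to more complicated corners. The paper's ODE argument, on the other hand, is more self-contained (it needs no black-box pushforward theorem) and gives the holomorphy in $\lambda$ essentially for free from the ODE coefficients. Either method is acceptable; yours is closer in spirit to [MM] and [Ma], the paper's to [JS].
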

\begin{proof}
The proof follows step by step from the proof of Proposition 4.1 in [JS] by Joshi and S\'{a} Barreto. We only need to do for such $f$ that $supp(f)$ is contained in a small neighborhood of $(q,q)\in diag(\partial M)$. In the radial coordinates around $(q,q)$,
$$R\partial _R=\bar{\beta}^*(x\partial x+Y\cdot\partial_Y),\quad\partial_x R=\rho, \quad x\partial_x \rho=\rho(1-\rho^2).$$
For any $m\in \mathbb{N}_0$, $$\Pi_{k=0}^{m}(x\partial_x-k)(x\partial_x+Y\partial_Y+\lambda+s_{ij}(\lambda)-k)
\tau_{ij}(\lambda) =x^{m+1}\bar{\beta}_*(R^{-\lambda-s_{ij}(\lambda)}F^m_{ij})$$
where $F^m_{ij}=F^m_{ij}(\rho,R,\omega)\in
C^{\infty}(\overline{M}\times_0\partial \overline{M})$. Let
$$u(x)=\int_{\partial \overline{M}}\int_{\mathbb{R}^n}\bar{\beta}_*(R^{-\lambda-s_{ij}(\lambda)}
F_{ij})(x,y,Y)f(y,Y)dYdy, \quad x>0.$$
Using the identity $\mathrm{div}(Yg(Y))=ng(Y)+Y\cdot\partial_Yg(Y)$, we deduce that
$$\Pi_{k=0}^{m}(x\partial_x-k)(x\partial_x-n+\lambda+s_{ij}(\lambda)-k)
u(x)\leq C|x|^{m+1},\quad x>0.$$
Since $\lambda+s_{ij}(\lambda)$ is not an integer, the ODE theory shows that for any large $m$, there
exists some constant $m'$ independent of $m$ and integers $p_1,p_2$, such that 
$$\left|u(x)-\sum_{k=0}^{p_1}x^kd_k-x^{n-\lambda-s_{ij}(\lambda)} \sum_{l=0}^{p_2}x^ld'_l\right|\leq C|x|^{m+1+m'},$$ 
where $d_k,d_l'$ are constants. Hence $H_{\lambda}(0)=d_0$, $G_{\lambda}(0)=d_0'$.

Furthermore, if $i=j=1$, then $s_{ij}(\lambda)=\lambda$, which can be extended holomorphically over $\mathbb{C}$. Hence $\tau_{11}=\bar{\beta}^*(R^{-2\lambda}F_{11})\nu\otimes\nu_0$ can be
extended meromorphically over $\mathbb{C}$.  As in [JS], the formula of $G_{\lambda}(0)$ and
$H_{\lambda}(0)$ follows directly by analytic continuation.
\end{proof}

\begin{corallary}
For $\lambda\in D_{\varepsilon}$, the kernel of the scattering matrix $S_1(\lambda)$ is a meromorphic family of pseudodifferential operators of order $2\lambda-n$, whose kernel can be expressed as 
$$\kappa(S_1(\lambda))= x^{-\lambda+\frac{n+1}{2}}R_g(\lambda)x'^{-\lambda+\frac{n+1}{2}}
|dxdx'|^{-\frac{1}{2}}|_{x=x'=0}=|Y|^{-2\lambda}\hat{F}(y,Y)$$ 
for some smooth function $\hat{F}\in C^{\infty}(\partial \overline{M}\times\mathbb{R}^n)$, and with poles at $n+\frac{\mathbb{N}_0}{2}\cap\{\lambda\in D_{\varepsilon}:\lambda(n-\lambda)\in \sigma_{pp}(L_g)\}$ of residue as in Proposition \ref{prop-scattering pole}. The principal symbol of $S_1(\lambda)$ is
$$\sigma_{2\lambda-n}(S_1(\lambda))=\mathscr{F}_{Y \rightarrow  \xi} (|Y|^{-2\lambda}\hat{F} (\lambda,y,Y/|Y|,0)),$$
where $\hat{F}(\lambda,y,Y/|Y|,|Y|)=\rho^{-\lambda}R_g(\lambda)\rho'^{-\lambda}|_{\rho=\rho'=0}$
and $\hat{F}(\lambda,y,Y/|Y|,0)=\rho^{-\lambda}R_g(\lambda)\rho'^{-\lambda}|_{R=\rho=\rho'=0}$.
\end{corallary}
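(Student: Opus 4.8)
The plan is to read off $\kappa(S_1(\lambda))$ from the Poisson kernel formula of Proposition \ref{prop-poisson kernel}. By the definition $S_1(\lambda)h_0 = G\,x^{\frac{n+1}{2}}|dx|^{-\frac{1}{2}}|_{\partial\overline{M}}$, where by Lemma \ref{lem-Rgsmooth} the section $G$ is the coefficient of $x^{s^1(\lambda)}=x^{\lambda}$ in the boundary expansion of $P_1(\lambda)h_0$, and since $\kappa(P_1(\lambda))=R_g(\lambda)\,x'^{-\lambda+\frac{n+1}{2}}|dx'|^{-\frac{1}{2}}|_{x'=0}$, performing the same extraction in the left variable formally yields
$$\kappa(S_1(\lambda))=x^{-\lambda+\frac{n+1}{2}}R_g(\lambda)\,x'^{-\lambda+\frac{n+1}{2}}|dxdx'|^{-\frac{1}{2}}\big|_{x=x'=0}.$$
The first task is to make this restriction rigorous. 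I would use that $R_g(\lambda)\in\Psi_0^{-2,E_T(\lambda),E_B(\lambda)}(M;S^2({}^0T^*M)\otimes\Gamma_0^{1/2})$, so away from $\mathrm{diag}_0(M)$ its kernel is polyhomogeneous conormal on $\overline{M}\times_0\overline{M}$ with leading exponents the entries of $E_T(\lambda)$ at the top face $T$ and of $E_B(\lambda)$ at the bottom face $B$. Because $\min_i\Re s^i(\lambda)=\Re s^1(\lambda)=\Re\lambda$ and every non-$(1,1)$ block carries exponent $s^j(\lambda)+1-\delta_{ij}$ strictly larger in real part than $\lambda$, the product $x^{-\lambda}x'^{-\lambda}\kappa(R_g(\lambda))$ restricts continuously to $\{x=x'=0\}$ and only the $(1,1)$ block survives. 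This is precisely the content of the preceding Proposition on the $\tau_{ij}(\lambda)$, with $\tau_{11}(\lambda)$ the surviving piece, so I would invoke that analysis rather than repeat it.

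Second, I would identify the surviving kernel near $\mathrm{diag}(\partial\overline{M})$ and deduce that $S_1(\lambda)$ is a classical pseudodifferential operator. In the radial coordinates $R,\rho,Y,\omega$ on $\overline{M}\times_0\partial\overline{M}$ one has $\bar\beta^*\kappa(P_1(\lambda))=R^{-\lambda}F\,\nu\otimes\nu_0$ with $F$ the displayed block matrix, and restriction to the corner $\{\rho=0\}$ (that is $x=0$) picks out exactly the $\rho^{s^1(\lambda)}$ term of the $(1,1)$ block, giving $\hat\beta^*\kappa(S_1(\lambda))\sim R^{-2\lambda}F_{11}|_{\rho=0}$. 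Since $R|_{x=0}=|y-y'|=|Y|$ and $F_{11}$ is smooth on $\partial\overline{M}\times_0\partial\overline{M}$, this has the form $|Y|^{-2\lambda}\hat{F}(y,Y)$ with $\hat F$ smooth (equivalently $\hat F(\lambda,y,Y/|Y|,|Y|)=\rho^{-\lambda}R_g(\lambda)\rho'^{-\lambda}|_{\rho=\rho'=0}$). Taylor-expanding $\hat F$ in $R=|Y|$ writes this kernel as an asymptotic sum $\sum_k|Y|^{-2\lambda+k}\hat F_k(\lambda,y,Y/|Y|)$ modulo a smooth remainder; since a distribution on $\mathbb{R}^n_Y$ homogeneous of degree $-2\lambda+k$ has Fourier transform homogeneous of degree $2\lambda-n-k$, the operator is a classical $\Psi$DO of order $2\lambda-n$ with principal symbol $\mathscr{F}_{Y\to\xi}(|Y|^{-2\lambda}\hat F(\lambda,y,Y/|Y|,0))$, the value $\hat F(\lambda,y,Y/|Y|,0)=\rho^{-\lambda}R_g(\lambda)\rho'^{-\lambda}|_{R=\rho=\rho'=0}$ coming from the $k=0$ term, i.e. the further restriction to the front face $F$.

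Third, for the meromorphic structure: $R_g(\lambda)$ is meromorphic in $D_\varepsilon$ with poles of finite rank exactly on $\{\lambda:\lambda(n-\lambda)\in\sigma_{pp}(L_g)\}$, and off $n+\mathbb{N}_0/2$ no logarithms enter the formal solutions $\Phi_1(\lambda)$, so the two boundary restrictions above produce a holomorphic family there. When $\lambda$ reaches $n+\ell/2$ two indicial roots of $L_g(\lambda)$ become integer-separated, a logarithmic term appears in $\Phi_1(\lambda)$, and the restriction acquires a simple pole; tracking the coefficient shows its residue is the local operator $A_{n+\ell}$ from the formal expansion together with the contribution $\pi(\lambda_0)$ obtained by pushing the residue $\Pi(\lambda_0)$ of $R_g$ through both restrictions, which reproduces Proposition \ref{prop-scattering pole}. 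I expect the main obstacle to be exactly the $0$-calculus bookkeeping in the first two steps: confirming that $x^{-\lambda}x'^{-\lambda}\kappa(R_g(\lambda))$ genuinely restricts continuously to $\{x=x'=0\}$, that only the $(1,1)$ block contributes, and that the surviving piece is a \emph{classical} conormal kernel at the diagonal rather than merely polyhomogeneous conormal, once the half-density and $S^2({}^0T^*M)$ bundle factors are kept track of; all of this is contained in the analysis of the $\tau_{ij}(\lambda)$, but must be assembled with care.
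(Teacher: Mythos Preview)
Your proposal is correct and follows essentially the same approach as the paper: both arguments use the Poisson kernel formula $\kappa(P_1(\lambda))=R_g(\lambda)x'^{-\lambda+\frac{n+1}{2}}|dx'|^{-1/2}|_{x'=0}$, observe that $\Re s_{ij}(\lambda)>\Re\lambda$ for $(i,j)\neq(1,1)$ so that only the $\tau_{11}$ piece survives the second restriction at $x=0$, and then read off the local form $|Y|^{-2\lambda}\hat F(\lambda,y,Y/|Y|,|Y|)$ near the diagonal to conclude $S_1(\lambda)$ is a classical pseudodifferential operator of order $2\lambda-n$. Your write-up is in fact more explicit than the paper's (which simply cites ``the preceding Proposition'' for the restriction and asymptotics), and your handling of the principal symbol via the $R=|Y|$ Taylor expansion and the meromorphic structure is exactly what is intended.
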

\begin{proof}
For $\lambda\in D_{\varepsilon}$, $\lambda\notin n+\frac{\mathbb{N}_0}{2}$ and $\lambda(n-\lambda)\notin \sigma_{pp}(L_g)$,  $\Re s_{ij}(\lambda)>\Re\lambda+c$ for some constant $c>0$ except for $i=j=1$. Hence the scattering matrix $S_1(\lambda)$ only involves $\tau_{11}(\lambda)$. The formula for the kernel follows directly from the preceeding Proposition. Hence $\kappa(S_1(\lambda))$ only has a conormal singularity at $diag(\partial \overline{M})$. Near $diag(\partial \overline{M})$
$$\kappa(S_1(\lambda))=|Y|^{-2\lambda}\hat{F}(\lambda,y,Y/|Y|,|Y|).$$
Therefore $S_1(\lambda)$ is a meromorphic family of pseudodifferntial operators on
$\partial \overline{M}$ of order $2\lambda-n$ for $\lambda\in D_{\varepsilon}$.
\end{proof}

\begin{corallary}\label{cor-elliptic of scattering}
If $\Sigma=\{h_0\in C^{\infty}(\partial\overline{M};\mathscr{V}^1\otimes \Gamma_0^{\frac{1}{2}}): \delta_{g_0}h_0=0\}$, then for $\lambda\in D_{\varepsilon}$, $S_1(\lambda)|_{\Sigma}$ is elliptic and
meromorphic in $D_{\varepsilon}$ with poles at $n+\frac{\mathbb{N}_0}{2} \cup \{\lambda\in D_{\varepsilon}: \lambda(n-\lambda)\in \sigma_{pp}(L_g)\}$. In particular, in this case, $\lambda_0=n$ is not a pole if $L_g$ is invertible.
\end{corallary}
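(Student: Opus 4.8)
The plan is to restrict the whole parametrix construction to the transverse--traceless sector, where, as observed right after the computation of $N_q(L_g(\lambda))$, the operator $L_g(\lambda)$ collapses to the scalar Laplacian acting on each component. The first step is to record two intertwining identities that hold because $g$ is Einstein and $L_g$ is, up to the additive constant $2n$, the Lichnerowicz Laplacian of $g$: namely $\delta_g L_g(\lambda)=J_g(\lambda)\,\delta_g$ and $\mathrm{tr}_g L_g(\lambda)=L_g^{(0)}(\lambda)\,\mathrm{tr}_g$, where $J_g(\lambda)=J_g-\lambda(n-\lambda)\in\mathrm{Diff}_0^2(M;{}^0T^*M)$ is the operator studied in Section~\ref{sec-gauge fix} and $L_g^{(0)}(\lambda)=\triangle_g+2n-\lambda(n-\lambda)$ acts on functions. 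Since $J_g\geq n$ (Section~\ref{sec-gauge fix}) and $\triangle_g\geq0$, while $\Re\,\lambda(n-\lambda)<n$ for every $\lambda\in D_{\varepsilon}$ once $\varepsilon$ is small, both $J_g(\lambda)$ and $L_g^{(0)}(\lambda)$ are invertible on $D_{\varepsilon}$. Combining this with Lemma~\ref{lem-Rgsmooth}, I would then show that for $h_0\in\Sigma$ the Poisson solution $P_1(\lambda)h_0$ is both transverse and traceless: $\delta_g P_1(\lambda)h_0$ and $\mathrm{tr}_g P_1(\lambda)h_0$ solve $J_g(\lambda)\,\cdot=0$ and $L_g^{(0)}(\lambda)\,\cdot=0$ with vanishing $x^{n-\lambda}$ leading coefficient, hence vanish. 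On this sector $E(\cdot)\equiv0$ and $L_g(\lambda)$ is, component by component, the scalar operator $\triangle_g-\lambda(n-\lambda)$, so $\Phi_1(\lambda)$, $R_g(\lambda)$, $P_1(\lambda)$ and $S_1(\lambda)$ all reduce near $\partial\overline{M}$ to the corresponding objects for the scalar Laplacian of $g$.

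Granting this reduction, ellipticity is immediate. By the preceding Corollary, $\sigma_{2\lambda-n}(S_1(\lambda))=\mathscr{F}_{Y\to\xi}(|Y|^{-2\lambda}\hat{F}(\lambda,y,Y/|Y|,0))$ and $\hat{F}(\lambda,\cdot,0)=\rho^{-\lambda}R_g(\lambda)\rho'^{-\lambda}|_{R=\rho=\rho'=0}$ depends only on the normal operator $L_r(\lambda)$, which on $\ker\sigma_1(\delta_{g_0})=\ker\sigma_1(\delta_r)$ is the scalar hyperbolic operator $\triangle_r-\lambda(n-\lambda)$. Its scattering kernel is the classical expression $c(\lambda)|Y|^{-2\lambda}$ with $c(\lambda)=2^{\,n-2\lambda}\,\Gamma(\tfrac{n}{2}-\lambda)/\Gamma(\lambda-\tfrac{n}{2})$ (Graham--Zworski [GZ]), so that $\sigma_{2\lambda-n}(S_1(\lambda))$ restricted to $\ker\sigma_1(\delta_{g_0})$ equals $c(\lambda)|\xi|^{2\lambda-n}$ times the identity. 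For $\lambda$ in a thin neighbourhood of $[n,\infty)$ this $c(\lambda)$ is finite and non-zero: $\Gamma$ has no zeros, $\Gamma(\lambda-\tfrac{n}{2})$ has no pole for $\Re\lambda>\tfrac{n}{2}-\varepsilon$, and, since $n$ is odd, the first pole of $\Gamma(\tfrac{n}{2}-\lambda)$ in $[n,\infty)$ occurs at $\lambda=\tfrac{n}{2}+\tfrac{n+1}{2}=n+\tfrac12$. Hence $S_1(\lambda)|_{\Sigma}$ is elliptic of order $2\lambda-n$. Meromorphy, and the fact that its poles lie in $(n+\tfrac{\mathbb{N}_0}{2})\cup\{\lambda\in D_{\varepsilon}:\lambda(n-\lambda)\in\sigma_{pp}(L_g)\}$, is inherited verbatim from Proposition~\ref{prop-scattering pole}.

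For the final assertion I would examine $\lambda_0=n$. By Proposition~\ref{prop-scattering pole}, $\mathrm{Res}_{\lambda=n}S_1(\lambda)=\pi(n)+A_n$; if $L_g$ is invertible then $0\notin\sigma_{pp}(L_g)$, so $\Pi(n)=0$ and $\pi(n)=0$. On $\Sigma$ the residual operator $A_n$ is, after the reduction above, exactly the obstruction in the scalar formal construction for $\triangle_g$ at spectral parameter $0$ to removing the $x^n$ term of the solution with leading term $1$. But $g(x)$ is even to order $n$ and $\mathrm{tr}_{g_0}g_n=0$, so $g_n$ enters neither $g(x)^{-1}$ nor $\det g(x)$ below order $x^{n+2}$; consequently that scalar solution is even in $x$ through order $n$, its $x^n$ coefficient vanishes since $n$ is odd, and therefore $A_n h_0=0$ for $h_0\in\Sigma$. (This is the familiar statement that the scalar scattering matrix of an asymptotically hyperbolic metric has poles only at $\tfrac{n}{2}+\mathbb{N}$, a set that misses $n$ when $n$ is odd.) Thus $\mathrm{Res}_{\lambda=n}(S_1(\lambda)|_{\Sigma})=0$, so $\lambda=n$ is not a pole of $S_1(\lambda)|_{\Sigma}$.

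The principal difficulty lies in the first paragraph: one must check that the transverse and traceless conditions are genuinely propagated, to all orders that enter the symbol and the residues, by $P_1(\lambda)$, $R_g(\lambda)$ and the formal solution operators $\Phi_i(\lambda)$ --- i.e.\ that no $\mathscr{V}^0,\mathscr{V}^2,\mathscr{V}^3$ components arise from a datum in $\Sigma$ --- and that the problem on this sector really is the scalar Laplacian of $g$ itself and not merely of its normal operator. This rests on the Lichnerowicz-type intertwinings $\delta_g L_g(\lambda)=J_g(\lambda)\delta_g$ and $\mathrm{tr}_g L_g(\lambda)=L_g^{(0)}(\lambda)\mathrm{tr}_g$ together with the invertibility of $J_g(\lambda)$ and $L_g^{(0)}(\lambda)$ on $D_{\varepsilon}$, and --- for the vanishing of $A_n$ --- on the transverse--traceless normalization $\mathrm{tr}_{g_0}g_n=\delta_{g_0}g_n=0$, which is precisely what keeps the reduced scalar problem even to order $n$.
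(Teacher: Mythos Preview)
Your second paragraph is correct and is essentially the paper's argument: the principal symbol of $S_1(\lambda)$ is determined by the restriction of the resolvent kernel to the front face, hence by the normal operator $L_r(\lambda)$, and on divergence-free sections of $\mathscr{W}^1$ the off-diagonal block $E$ vanishes, so $N_q(L_g(\lambda))$ is the scalar hyperbolic operator there. The paper's proof is exactly this observation together with the standard computation of the scalar scattering symbol from [JS]/[GZ], and it stops at the displayed formula $\sigma_{2\lambda-n}(S_1(\lambda)|_\Sigma)=2^{n-2\lambda}\Gamma(\tfrac{n}{2}-\lambda)/\Gamma(\lambda-\tfrac{n}{2})\cdot\sigma_{2\lambda-n}(\triangle_{g_0}^{\lambda-n/2})$.

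The gap is the stronger reduction you invoke in the first paragraph and rely on in the third. The intertwining identities $\delta_g L_g(\lambda)=J_g(\lambda)\delta_g$ and $\mathrm{tr}_g L_g(\lambda)=L_g^{(0)}(\lambda)\mathrm{tr}_g$, together with invertibility of $J_g(\lambda)$ and $L_g^{(0)}(\lambda)$, at best show that $P_1(\lambda)h_0$ is transverse and traceless for $h_0\in\Sigma$. They do \emph{not} show that $L_g(\lambda)$ acts as $\triangle_g-\lambda(n-\lambda)$ on TT sections: on such sections one still has $L_g=\triangle_g+2R+2n$, and the curvature endomorphism $R$ involves the Weyl tensor of $g$ and is not a scalar unless $g$ has constant sectional curvature. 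The scalar collapse you quote from Subsection~5.2 is a statement about the \emph{normal} operator $L_r$ on hyperbolic space, and it does not lift to $L_g$ on $M$. In particular, there is no reason for the formal operators $\Phi_1(\lambda)$ or for $R_g(\lambda)$ to agree with their scalar counterparts beyond leading order at the front face. You flag exactly this in your last paragraph, but the resolution you propose (intertwining plus invertibility) does not supply it.

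Consequently your argument that $A_n|_\Sigma=0$ does not go through: $A_n$ is produced by the formal series construction for the full $L_g(\lambda)$, so the evenness of the \emph{scalar} formal expansion is not directly relevant. The paper in fact does not prove the regularity at $\lambda=n$ inside this corollary. It is established in Section~\ref{sec-proof} by a different mechanism: Proposition~\ref{prop-gauge choice} exhibits, for any trace-free boundary datum $h_0$, a gauge-fixed solution $h$ of $L_gh=0$ with leading term $x^{-2}h_0$ whose first logarithmic term is $x^{n+1}\log x$, not $x^n\log x$; invertibility of $L_g$ then forces $h=P_1(n)h_0$ (uniqueness), and the absence of an $x^n\log x$ term gives $A_n h_0=0$. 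So the input is the gauge-fixing construction, not a reduction to the scalar Laplacian of $g$.
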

\begin{proof}
Let $\overline{\Sigma}=P_1(\Sigma)$, then the normal operator of $L_g(\lambda)|_{\overline{\Sigma}}$ is diagonal. Hence the Green function for $N_q(L_g(\lambda)|_{\overline{\Sigma}})$ is a function of $(R,\rho,\rho')$ by standard result for $\triangle_g$ acting on functions as in [MM]. By [JS] and the preceding Corallary, for $\lambda \in
D_{\varepsilon}$, $\lambda\notin n+\mathbb{N}_0$ and $\lambda(n-\lambda)\notin \sigma_{pp}(L_g)\}$,
$$\sigma_{2\lambda-n}(S_1(\lambda)|_{\Sigma})=2^{n-2\lambda} \frac{\Gamma(\frac{n}{2}-\lambda)} {\Gamma(\lambda-\frac{n}{2})} \sigma_{2\lambda-n}(\triangle^{\lambda-\frac{n}{2}}_{g_0}).$$
\end{proof}

\section{Proof of Theorem \ref{thm-main}}\label{sec-proof}
\begin{proof}
Under the hypothesis of Theorem \ref{thm-main}, given any $C^1$ family of boundary metrics $g_0(s)\in Met^{\infty}$, for $s\in (-\theta,\theta)$, $\theta >0$ and small, there exist a $C^1$ family of $C^{2,\alpha}$ compact conformally Poincar\'{e}-Einstein metric $g_+(s)$ such that $g_0(s)\in [x^2g_+(s)|_{T\partial \overline{M}}]$ and $g_+(0)=g$ is smooth. By Proposition \ref{prop-smoothness},
there exist a $C^1$ family of $C^{1,\alpha}$ diffeomorphisms $\Phi_s$ over $\overline{M}$, which are $C^{2,\alpha}$ in $M$, such that near the boundary
$$\tilde{g}_+(s)=\Phi_s^*g_+(s)=x^{-2}(dx^2+g_0(s)+x^2g_2(s))+\cdots+x^ng_n(s)+O(x^{n+1}).$$
Moreover, $\Phi_0$ is smooth. Hence $g=\tilde{g}_+(0)$ is smooth. Then
$$\tilde{h}=\frac{d\tilde{g}_+(s)}{dx}\vline_{s=0}=x^{-2}(g_0'+x^2g_2'+\cdots+x^ng_n'+O(x^{n+1}))$$
By Proposition \ref{prop-gauge choice}, there exist a family of $C^{n-1}$ diffeomorphisms $\Psi_s$ such that $\Psi_0=Id$, $\Psi_s|_{\partial \overline{M}}=Id_{\partial \overline{M}}$ and $h=\frac{d}{dx}\vline _{s=0} (\Psi_s\tilde{g}_+(s))$ satisfies $\beta_gh=0$ and $L_gh=0$, where $$h=x^{-2}(h_0+x^2h_2+\cdots+x^{n}h_n+O(x^{n+1}\log x));$$
here $h_0=g_0'-\tfrac{1}{n}tr_{g_0}g_0'$ and $h_n=g_n'-T_ng_0'$ for some differential operators $T_n$ of order $n$ with coefficients depending on the components of $g_0$ and $g_n$ and their derivatives. Moreover, if $tr_{g_0}g_0'=\delta_{g_0}g_0'=0$, then $T_n$ reduces to a differential operator $T_{n-1}$ of order $n-1$.

Recall that $D_{\varepsilon} = \{\lambda \in \mathbb{C}: \Re\lambda > n-\varepsilon, |\Im\lambda| < \varepsilon\}$. The uniqueness assumption implies that $L_g$ is invertible. Hence for $\varepsilon>0$ and small, the resolvent family of $R_g(\lambda)$ and $P_1(\lambda)$ are analytic in a neighborhood of $\lambda=n$ in $D_{\varepsilon}$. According to the decomposition of $S^2(^0T^*M)|_{\partial\overline{M}}$ in Section \ref{sec-linear einstein op}, $x^{-2}h_0\in C^{\infty}(\partial \overline{M};\mathscr{V}^1)$. Hence
$$h\nu=P_1(n)(x^{-2}h_0\nu_0).$$
Recall that $\nu=|dvol_g|^{\frac{1}{2}}$, $\nu_0=|dvol_{g_0}|^{\frac{1}{2}}$. Note that for $\lambda\in\mathbb{C}$, $0<|\lambda-n|<\varepsilon$, where $\varepsilon>0$ is small,
$$P_1(\lambda)(x^{-2}h_0\nu_0)=x^{n-\lambda}F+x^{\lambda}G+O(x^{\lambda+c})$$
for some $c>0$, where $F|\frac{dx}{x^{n+1}}|^{-\frac{1}{2}}|_{\partial \overline{M}} = x^{-2}h_0\nu_0$, $G|\frac{dx}{x^{n+1}}|^{-\frac{1}{2}}|_{\partial \overline{M}}\in C^{\infty}(\partial \overline{M}; \mathscr{V}^1 \otimes \Gamma^{\frac{1}{2}}(\partial \overline{M}))$. The fixed half density sections
also satisfy $\nu |\frac{dx}{x^{n+1}}|^{-\frac{1}{2}}|_{\partial \overline{M}}=\nu_0$. Hence we can omit them and only write
$$h=P_1(\lambda)(x^{-2}h_0)=x^{n-\lambda}\widetilde{F}+x^{\lambda}\widetilde{G}+O(x^{\lambda+c})$$
where $\widetilde{F},\widetilde{G}\in C^{\infty}(\partial \overline{M};S^2({}^0T^*M))$ and $\widetilde{F}|_{\partial \overline{M}}=x^{-2}h_0$,
$\widetilde{G}|_{\partial \overline{M}}=S_1(\lambda)(x^{-2}h_0)$. By Proposition \ref{prop-scattering pole}, $Res_{\lambda=n}S_1(\lambda)$ can only be some differential operator which corresponds to the $x^{n}\log x$ term in $h$. However, by Proposition \ref{prop-gauge choice}, after fixing the gauge, the largest logarithmic term is not bigger than $x^{n+1}\log x$, i.e. $S_1(\lambda)(h_0)$ must have no
pole at $\lambda=n$ for all $h_0\in x^2C^{\infty}(\partial\overline{M};\mathscr{V}^1)$. Hence $S_1(n):h_0\mapsto h_n$ is a pseudodifferential operator of order $n$. Therefore,
$$g_n'=h_n+T_gg_0'=S_1(n)(g_0'-\tfrac{1}{n}tr_{g_0}g_0')+T_ng_0'=\widetilde{T}_ng_0'$$
where $\widetilde{T}_n$ is a pseudodifferntial operator of order n, which is exactly the linearization of the Dirichlet-to-Neumann map $DN'$.

Furthermore, let $\Sigma=\{\gamma\in C^{\infty}(\partial\overline{M};T^*\partial\overline{M}): tr_{g_0}\gamma=\delta_{g_0}\gamma=0\}$. If $g_0'\in \Sigma$, $g_n'=S_1(n)g_0'+T_{n-1}g_0'$ for some
differential operator $T_{n-1}$ of order $n-1$. By Corollary \ref{cor-elliptic of scattering}, $S_1(n)|_{\Sigma}$ is elliptic. Hence $DN'|_{\Sigma}$ is elliptic with the same principal symbol
as $S_1(n)|_{\Sigma}$.

If $g_0'$ is a general symmetric 2-tensor on $\partial \overline{M}$, we first modify $g_0'$ to be an element of $\Sigma$ by a linearization of a $C^1$ family of diffeomorphisms and conformal changes, i.e., we want to find $h_0\in \Sigma$, $u\in C^{\infty}(\partial \overline{M})$ and $\omega\in C^{\infty}(\partial \overline{M};T^*\partial \overline{M})$ such that
\begin{eqnarray}\label{eq trace free and divergence free}
\left\{
\begin{array}{rcl}
h_0 &=& g_0'+ug_0+\delta_{g_0}^*\omega\\
tr_{g_0}h_0 &=& 0\\
\delta_{g_0}h_0 &=& 0
\end{array}
\right.
\end{eqnarray}
From the first two equations, we get $u=\frac{1}{n}(tr_{g_0}g_0'-tr_{g_0}\delta_{g_0}^*\omega)$. Replacing $u$ by $\frac{1}{n}(tr_{g_0}g_0'-tr_{g_0}\delta_{g_0}^*\omega)$ in the first equation and using the third one reduces the problem to finding a solution to the equation
\begin{equation}\label{eq-last}
\delta_{g_0}\mathrm{tf}\delta_{g_0}^*\omega=\delta_{g_0} \mathrm{tf} g_0';
\end{equation}
here '$\mathrm{tf}$' means the trace-free part of a symmetric 2-tensor.
Let $A=\delta_{g_0}\mathrm{tf}\delta_{g_0}^*\omega$. Then
$$A: H^2(\partial \overline{M}; T^*\partial \overline{M})\longrightarrow L^2(\partial \overline{M}; T^*\partial \overline{M}).$$
is continuous. It's easy to check that $A$ is an elliptic operator of second order. For any $q\in \partial \overline{M}$, choose normal coordinate neighborhood $(U;y^1,...,y^n)$ around $q$. Then the principal symbol of $A$ at $q$ is
$$\sigma_2(A)(\xi)|_q=\frac{1}{2}|\xi|^2Id_{n\times n}+(\frac{1}{2}-\frac{1}{n})
\left[\begin{array}{rlc}\xi_1\xi_1 & \cdots & \xi_1\xi_n\\
\vdots & &\vdots \\ \xi_1\xi_n & \cdots & \xi_n\xi_n\end{array}\right].$$
For $\xi_0=(1,0,...,0)$, $\sigma(A)(\xi_0)|_q$ is invertible. Hence $\sigma(A)(\xi)|_q$ is invertible for all $\xi\neq 0$ since it's invariant under the action by $O(n)$. In fact
$$\sigma_2(A)^{-1}(\xi)=|\xi|^{-4}\left(
2|\xi|^2Id_{n\times n}-\frac{n-2}{n-1}
\left[\begin{array}{lcr}
\xi_1\xi_1 &\cdots& \xi_1\xi^n\\
\vdots & & \vdots\\
\xi_n\xi_1 &\cdots&\ \xi_n\xi_n
\end{array}
\right]
\right)$$
Furthermore, $A$ is self-adjoint. Hence $Ran(A)=Null(A)^{\perp}$.  Moreover, for any $\omega\in H^2(\partial \overline{M}, T^*\partial \overline{M})$,
$$\langle A\omega,\omega \rangle_{L^2}= \parallel \mathrm{tf}\delta_{g_0}^*\omega\parallel^2_{L^2}\geq 0.$$
This implies that for any $\omega\in Null(A)$, $ \mathrm{tf}\delta_{g_0}^*\omega=0$, so
$$\langle \delta_{g_0} \mathrm{tf} g_0', \omega \rangle_{L^2} =\langle  \mathrm{tf} g_0
,  \mathrm{tf} \delta_{g_0}^*\omega\rangle_{L^2}=0.$$
Therefore, $\delta_{g_0} \mathrm{tf} g_0'\in Ran(A)$. Furthermore, by standard elliptic theory for compact manifolds, we can construc a generalized inverse $B$, which is an elliptic pseudodifferential operator of order $-2$ with principal symbol $\sigma_{-2}(B)=\sigma_2(A)^{-1}$ such that
$$AB=Id-\Pi,$$
where $\Pi$ is the projection map onto $Null(A)$, which is obviously of finite rank due to the Fredholm property. Let $\omega=B\delta_{g_0}tf g_0'$, then it is a solution to (\ref{eq-last}). Of course, it may not be unique. However, we only need one. Using this solution
$$h_0=g_0'-\frac{g_0}{n}(tr_{g_0} g_0'-tr_{g_0}\delta^*_{g_0}B\delta_{g_0}tf g_0')-\delta_{g_0}^*B\delta_{g_0}tf g_0'=tfg_0'-tf\delta_{g_0}^*B\delta_{g_0}tf g_0':=\Theta_{g_0}g_0'$$
where $\Theta_{g_0}$ defined above is a pseudodifferential operator of order $0$. The linearizaion of properties (\ref{diff inv}) and (\ref{conformal inv}) for the Dirichlet-to-Neumann map shows that
$$d\mathcal{N}(g_0')=d\mathcal{N}(h_0)+\tfrac{2-n}{2}u g_n +\delta_{g_n}^*\omega$$
Therefore, $\sigma_n(d\mathcal{N})=
\frac{\Gamma(-\frac{n}{2})}{2^n\Gamma(\frac{n}{2})}
\sigma_n(\triangle^{\frac{n}{2}}_{g_0}\Theta_{g_0})$. Theorem \ref{thm-main} is proved.
\end{proof}

Finally, we check that the principal symbol formula above is exactly a generalization of Graham's result. For $n\geq 5$, odd, let
$$\mathcal{W}:Met^{\infty}(\partial \overline{M})\longrightarrow C^{\infty}(\partial \overline{M};\otimes^4T^*\partial \overline{M})$$
denote the Weyl tensor and
$$W:C^{\infty}(\partial \overline{M};S^2(T^*\partial \overline{M}))\longrightarrow C^{\infty}(\partial \overline{M};\otimes^4(T^*\partial \overline{M}))$$
its linearization at $g_0$. By direct computation
$$\sigma_4(W^*W)=\tfrac{n-3}{n-2}\sigma_4(\triangle^2_{g_0}\Theta_{g_0}).$$
For $n=3$, let $$\mathcal{C}: Met^{\infty}(\partial \overline{M})\longrightarrow C^{\infty}(\partial\overline{M};S^2(T^*\partial\overline{M}))$$
be the Cotton-York tensor and
$$C: C^{\infty}(\partial\overline{M};S^2(T^*\partial\overline{M}))\longrightarrow C^{\infty}(\partial\overline{M};S^2(T^*\partial\overline{M}))$$
be its linearization at $g_0$. Then $C$ is self adjoint and
$$\sigma_3(|C|)=\sigma_3(\triangle_{g_0}^{\frac{3}{2}}\Theta_{g_0}),$$
where $|C|=(C^*C)^{\frac{1}{2}}$. Recall the result by Graham in [Gr1]
\begin{theorem}
Let $M=B^{n+1}$ and $\partial\overline{M}=S^{n}$. Then the linearization of the Dirichlet-to-Neumann map at the hyperbolic metric $g_0=d\theta^2$ is
\begin{equation}
d\mathcal{N}=
\left\{
\begin{array}{ll}
aW^*W(\triangle_{g_0}+c_1)\cdots(\triangle_{g_0}+c_m)\sqrt{\triangle_{g_0}+c_{m+1}}, & \textrm{if $n\geq 5$, odd;}\\
\frac{1}{3}|C|, & \textrm{if $n=3$,}
\end{array}
\right.
\end{equation}
where $m=\frac{n-5}{2}$, $a\neq 0$ and $c_i>0$ for $1\leq i\leq m+1$.
\end{theorem}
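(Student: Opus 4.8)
The plan is to exploit the homogeneity of the model. On $B^{n+1}$ with the hyperbolic metric the conformal infinity is the round sphere $(S^n,g_0)$, and by the Graham--Lee existence and uniqueness theorem the Poincar\'e--Einstein metric with boundary metric near $g_0$ is unique up to a diffeomorphism fixing $S^n$, so $d\mathcal{N}$ is well defined near $g_0$ and the analysis of Sections \ref{sec-gauge fix}--\ref{sec-linear einstein op} applies with $L_g$ invertible. By the conformal and diffeomorphism equivariance (\ref{diff inv})--(\ref{conformal inv}), linearised at $g_0$, it suffices to compute $d\mathcal{N}$ on the transverse--traceless subspace $\Sigma$; the pure-trace and $\delta^*_{g_0}$-image directions are recovered exactly as in the proof of Theorem \ref{thm-main}. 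First I would split an element of $\Sigma$ into transverse--traceless eigentensors of $\triangle_{g_0}$ (spherical tensor harmonics). By $SO(n+1)$-equivariance, $d\mathcal{N}|_\Sigma$ acts on each eigentensor by a scalar depending only on its $\triangle_{g_0}$-eigenvalue, so $d\mathcal{N}|_\Sigma$ is a priori a fixed function of $\triangle_{g_0}$, and likewise $S_1(n)|_\Sigma$ from Corollary \ref{cor-elliptic of scattering}.

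Next I would compute that function explicitly. Writing the hyperbolic metric in geodesic normal form $\rho^{-2}(d\rho^2+g(\rho))$ about $g_0$, the gauge-fixed linearised Einstein equation $L_gh=0$ of Section \ref{sec-linear einstein op}, restricted to one transverse--traceless tensor harmonic, reduces to a second-order linear ODE in $\rho$ of hypergeometric type, whose indicial roots at $\rho=0$ are $n-\lambda$ and $\lambda$, evaluated here at $\lambda=n$. The two Frobenius solutions have closed-form hypergeometric expressions, and the scattering coefficient --- the ratio of the coefficient of the $\rho^{\lambda}$ boundary term to that of the $\rho^{n-\lambda}$ term in the Poisson solution --- is, via the standard connection formula for the hypergeometric function, an explicit quotient of Gamma functions with arguments affine in the eigenvalue $\mu$. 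This makes the identity $d\mathcal{N}|_\Sigma=S_1(n)|_\Sigma$ (modulo lower-order differential operators, which in this homogeneous setting also reduce to functions of $\triangle_{g_0}$) completely explicit.

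It then remains to factor the Gamma quotient. Since $n$ is odd, the scalar factor $\Gamma(\tfrac{n}{2}-\lambda)/\Gamma(\lambda-\tfrac{n}{2})$ at $\lambda=n$ equals $\Gamma(-\tfrac{n}{2})/\Gamma(\tfrac{n}{2})$, finite and nonzero, while the $\mu$-dependent part is a ratio of Gamma functions whose arguments differ by a positive integer; repeated use of $\Gamma(z+1)=z\,\Gamma(z)$ together with the Legendre duplication formula collapses it to a polynomial in $\mu$ of degree $\tfrac{n-1}{2}$ times a single square-root factor $\sqrt{\mu+c_{m+1}}$, the square root coming from the antisymmetry of a product $\prod_j(\text{shift}+j)$ under the involution fixing $\mu$. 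Re-expressing $\mu$ through $\triangle_{g_0}$, the degree-two part of the polynomial with leading symbol that of $W^*W$ can be split off --- legitimate because on $S^n$ every $SO(n+1)$-equivariant operator on transverse--traceless $2$-tensors is a polynomial in $\triangle_{g_0}$ and $\sigma_4(W^*W)=\tfrac{n-3}{n-2}\sigma_4(\triangle^2_{g_0}\Theta_{g_0})$ --- leaving $m=\tfrac{n-5}{2}$ real factors $\triangle_{g_0}+c_i$; positivity $c_i>0$ follows by checking, from the indicial roots of Section \ref{sec-linear einstein op}, that the zeros of the Gamma quotient all occur at eigenvalues strictly below the bottom of the $\triangle_{g_0}$-spectrum on transverse--traceless tensors. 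For $n=3$ there is no Weyl tensor; the Gamma ratio squared factors as (polynomial)$^2\times$(linear), so its square root is the self-adjoint non-negative third-order operator whose leading symbol is that of the linearised Cotton--York tensor $C$, giving $|C|=(C^*C)^{1/2}$; matching symbols and a short direct computation fix the constant $\tfrac13$, just as matching symbols in the case $n\ge 5$ fixes $a\ne 0$.

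The hard part is the middle step: carrying out the radial hypergeometric ODE analysis for transverse--traceless tensor harmonics on hyperbolic space, extracting the correct Gamma-function scattering coefficient, and recognising the resulting polynomial-in-$\triangle_{g_0}$ factor as precisely $W^*W$ (resp.\ the Cotton--York linearisation) composed with shifted Laplacians --- a Fefferman--Graham/GJMS-type factorisation on symmetric $2$-tensors. Pinning down the signs $c_i>0$ and the exact scalars $a$ and $\tfrac13$ is equally delicate; these computations constitute the content of [Gr1].
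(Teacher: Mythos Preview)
The paper does not prove this theorem. It is stated explicitly as a quotation --- ``Recall the result by Graham in [Gr1]'' --- and no argument for it is supplied. What the paper does with it is purely a consistency check: having computed $\sigma_n(d\mathcal{N}|_\Sigma)=2^{-n}\frac{\Gamma(-n/2)}{\Gamma(n/2)}\sigma_n(\triangle_{g_0}^{n/2})$ from Theorem~\ref{thm-main}, and having separately computed $\sigma_4(W^*W)=\tfrac{n-3}{n-2}\sigma_4(\triangle_{g_0}^2\Theta_{g_0})$ and $\sigma_3(|C|)=\sigma_3(\triangle_{g_0}^{3/2}\Theta_{g_0})$, the paper matches principal symbols with Graham's formula and thereby reads off the constant $a=\frac{(n-2)\Gamma(-n/2)}{2^n(n-3)\Gamma(n/2)}$. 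That is all.

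Your outline is therefore not to be compared against anything in the paper; it is a reconstruction of the argument that lives in [Gr1]. As such it is broadly on the right track: the reduction to transverse--traceless tensor spherical harmonics by $SO(n+1)$-equivariance, the resulting radial ODE of hypergeometric type, and the extraction of a Gamma-function scattering coefficient are exactly the ingredients Graham uses. But you should be clear that the paper you are reading neither carries out nor claims to carry out any of these steps; it treats the theorem as input from the literature and uses its own Theorem~\ref{thm-main} only to confirm the leading symbol and to fix the previously unspecified constant $a$.
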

It is now clear that our result is compatible with Graham's. Moreover, we can compute the constant $a$ in his formula, namely,
$$a=\frac{(n-2)\Gamma(-\frac{n}{2})}{2^n(n-3)\Gamma(\frac{n}{2})}.$$

\section[]{N-Point Problem}\label{npoin}
The Dirichlet-to-Neumann map is of high nonlinearity provided that it is well defined. In the previous sections, we investigate its linearization as one way to describe properties of the map. In this section, we continue to the $N$-point problem, to give a better description.

To set up the problem, suppose the Dirichlet-to-Neumann map is well defined in a neighborhood of  $g_0\in Met^{\infty}(\partial \overline{M})$. Let $O$ be a small neighborhood of $0$ in the parameter space $\mathbb{R}^N$ and $g_+(s)\in C^{\infty}(O;PE^{2,\alpha})$ a smooth family of $C^{2,\alpha}$ conformally compact Poincar\'{e}-Einstein metrice. $g=g_+(0)$ is smooth. As in Section \ref{sec-geom setting}, via a smooth family of diffeomorphisms, we can assume
$$g_+(s)=x^{-2}(dx^2+g_0(s)+x^2g_2(s)+\cdots+x^ng_n(s)+O(x^{n+1})).$$
where $g_i(s)\in C^{\infty}(O;C^{\infty}(\partial\overline{M};S^2(T^*\partial\overline{M})))$. For simplicity as before, let
$$g=g_+(0)=x^{-2}(dx^2+g_0+\cdots+x^ng_n+O(x^{n+1}))$$
be a fixed smooth metric. Let $\beta=(\beta_1,...,\beta_N)$ with $\beta_i\in \mathbb{N}_0$ and denote
$$g^{(\beta)}(s)=\partial_s^{\beta}g_+(s), \quad g_0^{(\beta)}(s)=\partial_s^{\beta}g_0(s),\quad g_n^{(\beta)}(s)=\partial_s^{\beta}g_n(s).$$
By assumption, $g_n(s)=DN(g_0(s))$ is well defined. Then clearly, $g^{(\beta)}_n(0)$ is determined by $g_0^{(\gamma)}(0)$ for all $0\leq\gamma\leq \beta$. From previous sections, $g^{(\beta)}_n(0)$ linearly depends on $g^{(\beta)}_0(0)$ via a pseudodifferential operator same as the first order linearization. In this section, the dependence on $g_0^{(\gamma)}(0)$ for $0<\gamma<\beta$ is studied. For simplicity and as a start of induction, we only do the two point problem, i.e. $|\beta|=2$.

In this case, we can assume $N=2$ and WLOG, let $\beta=(1,1)$ in the two point problem. Consider the second order linearization of the Einstein equation,
\begin{equation}\label{eq-seclinear}
L_gg^{(1,1)}(0)-2\delta_g^*\beta_gg^{(1,1)}(0)+E(g^{(1,0)}(0),g^{(0,1)}(0))=0
\end{equation}
where $E(g^{(1,0)},g^{(0,1)})$ is a symmetric quadratic term, which is a differential operator of order $2$ with respect to each variable. For any $p\in M$, choose normal coordinates $(W;y^{\mu})$ around $p$, then for any $h,k\in C^{\infty}(M;S^{2}(^0T^*M))$,
\begin{eqnarray*}
E_{\mu\nu}(h,k) &=& -h^{\sigma\theta}(\partial_{\sigma}\Gamma_{\mu\nu\theta}(k)-\partial_{\nu}\Gamma_{\mu\sigma\theta}(k))
-k^{\sigma\theta}(\partial_{\sigma}\Gamma_{\mu\nu\theta}(h)-\partial_{\nu}\Gamma_{\mu\sigma\theta}(h))
\\&&
-g^{\sigma\rho}g^{\gamma\theta}(\partial_{\sigma}h_{\rho\gamma}\Gamma_{\mu\nu\theta}(k)+
\partial_{\sigma}k_{\rho\gamma}\Gamma_{\mu\nu\theta}(h)
-\partial_{\nu}h_{\rho\gamma}\Gamma_{\mu\sigma\theta}(k)
-\partial_{\nu}k_{\rho\gamma}\Gamma_{\mu\sigma\theta}(h))
\\&&
+\frac{1}{2}g^{\sigma\rho}g^{\gamma\theta}(\Gamma_{\mu\nu\rho}(h)\Gamma_{\sigma\theta\gamma}(k)
+\Gamma_{\mu\nu\rho}(k)\Gamma_{\sigma\theta\gamma}(h)
\\&&
-\Gamma_{\mu\sigma\gamma}(h)\Gamma_{\nu\theta\rho}(k)
-\Gamma_{\mu\sigma\gamma}(k)\Gamma_{\nu\theta\rho}(h))
\end{eqnarray*}
at $p$. Hence $E(g^{(1,0)},g^{(0,1)})\in C^{\infty}(M;S^{2}(^0T^*M))$.

Say a quadratic form $Q(h,k)$ satisfies the \textit{asymptotic property}, if for some $\epsilon>0$, writing
$$h=(h_0+xh_1+\cdots+x^nh_n+O(x^{n+\epsilon})),$$
$$k=(k_0+xk_1+\cdots+x^nk_n+O(x^{n+\epsilon})),$$
where $k_i,h_i\in C^{\infty}(\partial\overline{M};S^2(^0T^*\overline{M}))$ for $0\leq i\leq n$, then
$$Q(h,k)=(Q_0+xQ_1+\cdots+x^nQ_n+O(x^{n+\epsilon}))$$
with $Q_i\in C^{\infty}(\partial\overline{M};S^{2}(^0T^*M))$ for each $0\leq i\leq n$, which is a sum of quadratic terms in $(h_j,k_l)$ as a differenial operator or pseudodifferential operator of order $k$ with respect to each variable. Denote $\mathrm{ord}(Q)=k$. Obiously, $E$ satisfies the asymptotic property with $\mathrm{ord}(E)=2$.

Equation (\ref{eq-seclinear}) is not elliptic with respect to $g^{(1,1)}(0)$. To make it elliptic, we have to  make a suitable choice of the gauge. Following the ?? in Section \ref{sec-gauge fix}, there exist a smooth family of vector field $Y(s_1)$, with dual $1$-form $\varpi(s_1)$ for $(s_1,0)\in O$, which generate one parameter families of diffeomorphisms $\Psi^{s_1}_{s_2}$ for each $s_1$, such that if letting $\tilde{g}(s_1,s_2)=\Psi^{s_1}_{s_2}g_+(s_1,s_2)$, then $\tilde{g}^{(0,1)}(s_1,0)=g^{(0,1)}(s_1,0)+\delta_{g(s_1,0)}^*\varpi(s_1)$
satisfies
\begin{equation}\label{familygauge1}
\beta_{g(s_1,0)}\tilde{g}^{(0,1)}(s_1,0)=0.
\end{equation}
Note that $\tilde{g}(s_1,0)=g(s_1,0)$ for all $(s_1,0)\in O$ since $\Psi^{s_1}_0=Id$. Hence $\tilde{g}^{(1,0)}(0)=g^{(1,0)}(0)$.
Similarly, for each $s_2$, $(0,s_2)\in O$, we can choose a smooth family of vector fields $X(s_2)$, with dual $1$-form $\omega(s_2)$, which generate one parameter familes of diffeomorphisms $\Phi^{s_2}_{s_1}$ for each $s_2$ small, such that if letting $G(s_1,s_2)=\Phi^{s_2}_{s_1}\tilde{g}(s_1,s_2)$, then
$G^{(1,0)}(0,s_2)=\tilde{g}^{(1,0)}(0,s_2)+L_{X(s_2)}\tilde{g}(0,s_2)$ satisfies \
\begin{equation}\label{eq-familygauge}
\beta_{\tilde{g}(0,s_2)}G^{(1,0)}(0,s_2)=0.
\end{equation}
Note that $\Phi^{s_2}_0=Id$, so $G(0,s_2)=\tilde{g}(0,s_2)$ for all $(0,s_2)\in O$. In fact, it is only necessary that $\varpi(s_1)$ and $\omega(s_2)$ satisfy
\begin{eqnarray}
(\triangle_{g(s_1,0}+n)\varpi(s_1)+\beta_{g(s_1,0)}g^{(0,1)}(s_1,0)&=&0,\\
(\triangle_{\tilde{g}(0,s_2)}+n)\omega(s_2)+\beta_{\tilde{g}(0,s_2)}\tilde{g}^{(1,0)}(0,s_2)&=&0.
\end{eqnarray}
By direct computation at $s=0$, 
\begin{eqnarray*}
G^{(0,1)} &=& g^{(0,1)}-\delta_g^*(\triangle_g+n)^{-1}\beta_gg^{(0,1)},\\
G^{(1,0)} &=& g^{(1,0)}-\delta_g^*(\triangle_g+n)^{-1}\beta_gg^{(1,0)},\\
G^{(1,1)} &=& g^{(1,1)}-\delta_g^*(\triangle_g+n)^{-1}\beta_gg^{(1,1)}+Q(g^{(0,1)},g^{(1,0)}),\\
\beta_g(G^{(0,1)}) &=& \beta_g(G^{(1,0)})=0,\\
\beta_g(G^{(1,1)}) &=& R(G^{(0,1)},G^{(1,0)}),
\end{eqnarray*}
where $Q$ and $R$ are quadratic terms satisfying the asymptotic property with $\mathrm{ord}(Q)=2$, $\mathrm{ord}(R)=1$. Moreover,
$$Q(g^{(0,1)},g^{(1,0)})=(Q_0+xQ_1+\cdots+x^nQ_n+O(x^{n+\epsilon}))$$
with $Q_i=x^{-2}\sum_{j+k\leq i}C_{jk}(y)\hat{\nabla}^kg_0^{(0,1)}\hat{\nabla}^jg_0^{(1,0)}\in C^{\infty}(\partial\overline{M},S^2(^0T^*\overline{M}))$ for each $0\leq i\leq n-1$. In particular,
$$x^2Q_0=-\frac{1}{n}[(\mathrm{tr}_{g_0}g_0^{(0,1)})g_0^{(1,0)}
+(\mathrm{tr}_{g_0}g_0^{(1,0)})g_0^{(0,1)}]+\frac{1}{n}\mathrm{tr}_{g_0^{(1,0)}}g_0^{(0,1)}
+\frac{1}{n^2}(\mathrm{tr}_{g_0}g_0^{(0,1)}\mathrm{tr}_{g_0}g_0^{(1,0)})g_0,$$
$$x^2Q_n=\sum_{j+k\leq n-1}C_{jk}\hat{\nabla}^kg_0^{(0,1)}\hat{\nabla}^jg_0^{(1,0)}+(T_0(g_0,g_n,g_0^{(0,1)}))g_n^{(1,0)}
+(\widetilde{T}_0(g_0,g_0^{(1,0)}))g_n^{(0,1)}$$
where $T_0$ and $\widetilde{T}_0$ are differential operators of order $0$ with coefficients depending on the components of $g_0,g_n, g_0^{(0,1)}$ (resp. $g_0,g_n, g_0^{(1,0)})$ and their derivatives, which are linear in $g_0^{(0,1)}$ (resp. $g_0^{(1,0)}$).

Since $G(s_1,s_2)$ is again a smooth family of Poincar\'{e}-Einstein metrics, $G^{(1,1)}, G^{(1,0)}, G^{(0,1)}$ also satisfy the equation (\ref{eq-seclinear}). Hence
$$L_g(G^{(1,1)})-\tilde{E}(G^{(1,0)},G^{(0,1)})=0$$
where $\tilde{E}(G^{(1,0)},G^{(0,1)})=-E(G^{(1,0)},G^{(0,1)})+2\delta^*_gR(G^{(1,0)},G^{(0,1)})$ also satisfies the asymptotic property with $\mathrm{ord}(\tilde{E})=2$. Moreover, with respect to the decomposition of $S^2(^0T^*M)|_{\partial\overline{M}}$ in Section \ref{sec-linear einstein op}, $\Pi_1\widetilde{E}_0=0$. Writing
$H=G^{(1,1)}$, $h=G^{(1,0)}$ and $k=G^{(0,1)}$, 
\begin{eqnarray*}
H &=& H_0+xH_1+\cdots+x^nH_n +O(x^{n+\epsilon})\quad \textrm{with}\\
&& H_0=x^{-2}(\mathrm{tf}_{g_0}g_0^{(1,1)})+Q_0,\quad H_n=x^{-2}(g_n^{(1,1)}-T_ng_0^{1,1})+Q_n,\\
h &=& h_0+xh_1+\cdots+x^nh_n +O(x^{n+\epsilon})\quad \textrm{with}\\ && h_0= x^{-2}\mathrm{tf}_{g_0}g_0^{(1,0)},\quad h_n=x^{-2}(g_n^{(1,0)}-T_ng_0^{1,0}),\\
k &=& k_0+xk_1+\cdots+x^nk_n +O(x^{n+\epsilon})\quad \textrm{with}\\ && k_0 = x^{-2}\mathrm{tf}_{g_0}g_0^{(0,1)},\quad k_n=x^{-2}(g_n^{(0,1)}-T_ng_0^{0,1}),
\end{eqnarray*}
where $T_n$ is a differential operator of order $n$ with coefficients depending on the components of $g_0$ and $g_n$ and their derivatives. By Section \ref{sec-gauge fix}, $h=P_1(h_0)$ and $k=P_1(k_0)$, where $P_1$ is the Poisson operator defined in Section \ref{sec-linear einstein op}. Hence $P_1$ is defined by its kernel $$p(m;y')\in C^{-\infty}(M\times \partial\overline{M};Hom(\mathscr{V}^1,S^2(^0T^*M))).$$
Note that we omit involving the $0$-half density here since the backgound metric $g$ and $g_0$ trivialize  $\Gamma_0^{\frac{1}{2}}(M)$ and $\Gamma^{\frac{1}{2}}(\partial\overline{M})$ respectivley. Denote $m=(x,y)$ in a neighborhood of the boundary.
\begin{lemma} The kernel of $P_1$ is smooth for $x>0$ and has a cornormal singularty at $x=0$. Moreover,
$$p(x,y;y')=\sum_{i=0}^{n}x^{i}u_i(y,y')+O(x^{n+\epsilon})$$
for some $\epsilon>0$, where $u_i\in C^{-\infty}(\partial\overline{M}\times\partial\overline{M};Hom(\mathscr{V}^1,S^2(^0T^*M)))$ with $\mathrm{singsupp}(u_i)\in diag(\partial\overline{M})$ are distributions of finite order for all $i$. The asymptotic expansion of $p$ holds in the sense that for any $f\in C^{\infty}(\partial\overline{M}\times\partial\overline{M};S^2(^0T^*\partial\overline{M}))$, $$P_1(f)=\sum_{i=1}^{n}u_i(f)+O(x^{n+\epsilon}),$$
with $u_i(f)\in C^{\infty}(\partial\overline{M};S^2(^0T^*M))$.
\end{lemma}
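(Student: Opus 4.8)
The plan is to realize the Poisson operator $P_1=P_1(n)$ through the resolvent. Since the Dirichlet-to-Neumann map is assumed well defined near $g_0$, the argument of the proof of Theorem \ref{thm-main} shows that $L_g$ is invertible, so $R_g(\lambda)$ is holomorphic at $\lambda=n$ and, by analytic continuation of the formula of Proposition \ref{prop-poisson kernel} to $\lambda=n$,
$$\kappa(P_1)=R_g(n)\,x'^{-\frac{n-1}{2}}|dx'|^{-\frac12}\big|_{x'=0}$$
as a distributional section on $\overline{M}\times_0\partial\overline{M}$; the only subtlety at the exceptional value $\lambda=n$ is the possible appearance of the logarithmic terms $x^n\log x$ and $x^{n+1}(\log x)^2$ recorded in the discussion after Lemma \ref{lem-Rgsmooth}. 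Because $R_g(n)\in\Psi_0^{-2,E_T(n),E_B(n)}(M;S^2({}^0T^*M)\otimes\Gamma_0^{\frac12})$, its Schwartz kernel on $\overline{M}\times_0\overline{M}$ is conormal of order $-2$ along $diag_0(M)$, smooth up to the front face $F$, and polyhomogeneous conormal at $T$ and $B$ with index sets prescribed by $E_T(n)$, $E_B(n)$. Restricting to the bottom face $\{x'=0\}$ with the weight $x'^{-\frac{n-1}{2}}|dx'|^{-\frac12}$ is a legitimate operation in the $0$-calculus, and since $diag_0(M)$ meets $\{x'=0\}$ only over $\{x=0\}$, the kernel $p(x,y;y')$ is smooth for $x>0$ and carries a conormal singularity along $\{x=0,\ y=y'\}$, which gives the first assertion of the Lemma.

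Next I would read off the expansion at $T$, i.e.\ as $x\downarrow0$. At $\lambda=n$ the relevant indicial roots are $s_1(n)=0$, $s^1(n)=n$, $s^3(n)=n+1$, and $s^0(n)=\tfrac{n+\sqrt{n^2+8n}}{2}\in(n+1,n+2)$, so below order $x^{n+1}$ the expansion of $\kappa(P_1)$ at $T$ consists only of the integer powers $x^0,x^1,\dots,x^n$, possibly together with a single $x^n\log x$ term whose coefficient is $A_nh_0$ for some $A_n\in\mathrm{Diff}^n(\partial\overline{M};\mathrm{End}(\mathscr{V}^1)\otimes\Gamma^{\frac12})$. Exactly as in the proof of Theorem \ref{thm-main}, $A_n$ vanishes identically: every $h_0\in C^\infty(\partial\overline{M};\mathscr{V}^1)$ occurs (after the fixed rescaling by $x^{-2}$) as the leading coefficient $g_0'-\tfrac1n\mathrm{tr}_{g_0}g_0'\,g_0=h_0$ of the Bianchi-gauged linearization of a curve of Poincar\'{e}-Einstein metrics — take $g_0(s)=g_0+s\,h_0$, which stays in the neighbourhood on which $\mathcal{N}$ is defined — and by Proposition \ref{prop-gauge choice} that linearization has no logarithmic term below $x^{n+1}\log x$; since by uniqueness it equals $P_1(h_0)$, the $x^n\log x$ term is absent. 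Fixing any $0<\epsilon<1$ we therefore obtain $p=\sum_{i=0}^n x^i u_i+O(x^{n+\epsilon})$, where for $0\le i\le n-1$ the coefficient $u_i$ is the Schwartz kernel of the differential operator $T_i$ produced in the formal-solution construction of $\Phi_1(n)$ (hence supported on $diag(\partial\overline{M})$ and of finite order), and $u_n$ is the sum of the kernel of the order-$n$ formal-solution operator and the kernel of the scattering matrix $S_1(n)$; the latter has singular support on $diag(\partial\overline{M})$ and finite order by the results of Section \ref{sec-linear einstein op}.

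Finally, the strong form of the expansion is routine: pairing a distribution conormal to $diag(\partial\overline{M})$ with a smooth section of $S^2({}^0T^*\partial\overline{M})$ yields a smooth section, so $u_i(f)\in C^\infty(\partial\overline{M};S^2({}^0T^*M))$ for each $i$ (for $i<n$ it is a differential operator applied to $f$, and $u_n(f)=S_1(n)f$ plus a differential operator applied to $f$), while the remainder $P_1(f)-\sum_{i=0}^n x^i u_i(f)$ lies in $x^{n+\epsilon}\mathscr{A}^0(\overline{M};S^2({}^0T^*M)\otimes\Gamma_0^{\frac12})$ by Lemma \ref{lem-Rgsmooth} and Corollary \ref{cor-formalmap}. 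The main obstacle is the bookkeeping at the corner $F\cap B\cap T$: one has to check that restricting the polyhomogeneous conormal kernel of $R_g(n)$ to $\{x'=0\}$ and then expanding at $\{x=0\}$ commute, and, above all, that no hidden logarithm is generated by the coincidence of indicial roots at the exceptional value $\lambda=n$ — this last point is precisely where the invertibility of $L_g$ and the gauge estimate of Proposition \ref{prop-gauge choice} are used, just as in the proof of Theorem \ref{thm-main}.
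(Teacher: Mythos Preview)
The paper states this lemma without proof, treating it as a direct consequence of the analysis of the Poisson operator and scattering matrix in Section~\ref{sec-linear einstein op}. Your argument is correct and is precisely the intended one: you invoke Proposition~\ref{prop-poisson kernel} for the kernel formula, the $0$-calculus structure of $R_g(n)$ for smoothness in $\{x>0\}$ and conormality at $\{x=0\}$, the indicial-root bookkeeping at $\lambda=n$ for the integer expansion up to order $n$, and the gauge estimate of Proposition~\ref{prop-gauge choice} together with invertibility of $L_g$ to kill the $x^n\log x$ term, exactly as in the proof of Theorem~\ref{thm-main}.

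One small sharpening: your identification of $u_i$ for $0\le i\le n-1$ as Schwartz kernels of the differential operators coming from the formal construction of $\Phi_1(n)$ is right, and the reason is that $L_g\Phi_1\in x^NC^\infty$ with $N$ large forces $R_gL_g\Phi_1$ to contribute only at order $x^n$ and above (the minimal exponent among $s^j(n)$ being $s^1(n)=n$); you use this implicitly but it is worth saying. Likewise, the phrase ``$diag_0(M)$ meets $\{x'=0\}$ only over $\{x=0\}$'' is slightly imprecise---$diag_0(M)$ does not meet the bottom face $B$ at all in $\overline{M}\times_0\overline{M}$, which is why the restricted kernel is smooth for $x>0$; the singularity at $\{x=0,\,y=y'\}$ arises from the collapse of the front face under the blow-down $\bar\beta$.
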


Since $\widetilde{E}(h,k)=\widetilde{E}(P_1(h_0),P_2(k_0))$ is a linear operator on $h_0\otimes k_0$, we can denote its Schwartz kernel by
$$e(m;\bar{y},\tilde{y})\in C^{-2}(M\times\partial\overline{M}\times\partial\overline{M};
Hom(\mathscr{V}^1\otimes\mathscr{V}^1,S^2(^0T^*M))).$$
\begin{lemma} \label{lemma-asympofe}
The kernel of $\widetilde{E}$ is smooth in $x>0$ with only cornormal singularity at $x=0$. Moreover, near the boundary
$$e(x,y;\bar{y},\tilde{y})=\sum_{i=1}^{n}x^i e_i(y;\bar{y},\tilde{y})+O(x^{n+\epsilon})$$
for some $\epsilon>0$, where $e_i\in C^{-\infty}((\partial\overline{M})^3; Hom(\mathscr{V}^1\otimes \mathscr{V}^1,S^2(^0T^*\overline{M})))$ with 
$$\mathrm{singsupp}(e_i)\in \{y=\bar{y}\}\cup \{y=\tilde{y}\} \cup \{\bar{y}=\tilde{y}\}$$ 
for all $0\leq i\leq n$. The asymptotic expansion of $e$ holds in the sense that for any $f_1,f_2\in C^{\infty}(\partial\overline{M};\mathscr{V}^1)$,
$$\widetilde{E}(f_1\otimes f_2)=\sum_{i=0}^n x^i e_i(f_1\otimes f_2)+O(x^{n+\epsilon})$$
with $e_i(f_1\otimes f_2)\in C^{\infty}(\partial\overline{M};S^2(^0T^*\partial\overline{M}))$.
\end{lemma}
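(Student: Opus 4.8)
The plan is to argue exactly as in the proof of the preceding lemma on the kernel of $P_1$ and as in the proof of Proposition 4.1 in [JS], adding only the features peculiar to a bilinear operator. Recall that $\widetilde E(h,k)=-E(h,k)+2\delta_g^*R(h,k)$, where $E$ and $R$ are bilinear differential operators of order $\le 2$ in each argument with coefficients built from the components of $\overline g=x^2g$, from $x$, and from $x^{-1}$, and that $\widetilde E$ satisfies the asymptotic property with $\mathrm{ord}(\widetilde E)=2$. I would evaluate at $h=P_1(f_1)$, $k=P_1(f_2)$ for $f_1,f_2\in C^\infty(\partial\overline M;\mathscr V^1)$, so that, by the preceding lemma, the Schwartz kernel $p(x,y;y')$ of $P_1$ is $C^\infty$ on $\{x>0\}$, conormal along $\{y=y'\}$ at $x=0$, and has an expansion $p(x,y;y')=\sum_{i=0}^{n}x^{i}u_i(y,y')+O(x^{n+\epsilon})$ with $\mathrm{singsupp}(u_i)\subset\mathrm{diag}(\partial\overline M)$. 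The Schwartz kernel of $\widetilde E$ as a bilinear operator in $(f_1,f_2)$ is then $e(m;\bar y,\tilde y)=\widetilde E_m\bigl(p(m;\bar y),p(m;\tilde y)\bigr)$, obtained by applying the differential operator $\widetilde E$ in $m=(x,y)$ to $p(m;\bar y)\otimes p(m;\tilde y)$ and contracting with its coefficients.

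First I would note that on $\{x>0\}$ each Poisson factor is smooth in its arguments, so $e\in C^\infty$ there, giving the first assertion. Next, substituting the two expansions of $p$ and using the asymptotic property of $\widetilde E$, the powers of $x$ contributed by the Poisson factors, by the coefficients of $\widetilde E$, and by the $x^{-2}$ weight in $g$ combine—the bookkeeping being identical to that in the preceding lemma—to give $e(x,y;\bar y,\tilde y)=\sum_{i=0}^{n}x^i e_i(y;\bar y,\tilde y)+O(x^{n+\epsilon})$, where each $e_i$ is a finite sum of terms $\mathcal D(y)\,(A_1u_j(\cdot,\bar y))(y)\,(A_2u_l(\cdot,\tilde y))(y)$ with $\mathcal D$ a differential operator on $\partial\overline M$ of order $\le 2$ and $A_1,A_2$ the tangential operators (the $T_j$ of Section~\ref{sec-gauge fix}, and, at the top order $i=n$, also the scattering operator $S_1(n)$ of Section~\ref{sec-linear einstein op}) producing the coefficients $h_j,k_l$ from $f_1,f_2$. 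Each such term is a well-defined distributional section of finite order because the two conormal factors are conormal along the \emph{distinct} partial diagonals $\{y=\bar y\}$ and $\{y=\tilde y\}$ of $(\partial\overline M)^3$, which meet transversally along the full diagonal, so their product exists. Pairing against $f_1\otimes f_2$ yields the asserted weak expansion, with $e_i(f_1\otimes f_2)\in C^\infty(\partial\overline M;S^2({}^0T^*\partial\overline M))$.

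Finally, for the singular support: off $\{y=\bar y\}$ the first Poisson factor is smooth and off $\{y=\tilde y\}$ the second is; the $T_j$ are differential and $S_1(n)$ is pseudodifferential, hence microlocal, so neither enlarges the singular support of the factor it acts on. Therefore each term contributing to $e_i$ is smooth off $\{y=\bar y\}\cup\{y=\tilde y\}$, and in particular off $\{y=\bar y\}\cup\{y=\tilde y\}\cup\{\bar y=\tilde y\}$, which gives the claimed containment. I expect the only genuinely delicate point—and the one I would treat carefully—to be the top-order coefficient $e_n$, where a Poisson factor carries $S_1(n)$: one must check that $S_1(n)$, applied in the tangential variable to a distribution conormal along a partial diagonal, produces a distribution of finite order whose product with the remaining conormal factor is still defined, which follows from the conormality of $\kappa(S_1(n))$ along $\mathrm{diag}(\partial\overline M)$ (Corollary~\ref{cor-elliptic of scattering} and the surrounding analysis) together with transversality of the diagonals involved. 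Everything else is routine and modeled verbatim on [JS] and the preceding lemma.
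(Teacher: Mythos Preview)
Your approach is correct in outline but differs from the paper's, and there is one notational confusion worth flagging.

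First the confusion: the operators $A_1,A_2$ in your formula for $e_i$ should be the \emph{tangential differential operators coming from the coefficients of $\widetilde E$}, not the $T_j$ or $S_1(n)$. The latter are already encoded in the kernels $u_j(y,\bar y)$ of the preceding lemma (indeed $u_n$ is essentially the Schwartz kernel of $S_1(n)$). So each term of $e_i$ is simply $(D_1 u_j)(y,\bar y)\cdot(D_2 u_l)(y,\tilde y)$ with $D_1,D_2$ differential in $y$; there is no extra pseudodifferential composition to worry about, and your ``delicate point'' about $S_1(n)$ dissolves once this is sorted out.

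With that correction your transversality argument goes through: the pullbacks of $u_j(y,\bar y)$ and $u_l(y,\tilde y)$ to $(\partial\overline M)^3$ are conormal to $\{y=\bar y\}$ and $\{y=\tilde y\}$ respectively, their conormal bundles never sum to zero, so the product exists and has singular support in $\{y=\bar y\}\cup\{y=\tilde y\}$, which is contained in the stated set.

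The paper takes a different and more elementary route. It writes the kernel of each $\widetilde E_i$ as an integral
\[
\int_{\partial\overline M} w(y,y')\,u_l(y',\bar y)\,u_{l'}(y',\tilde y)\,dy',
\]
then argues directly by localization: if $y,\bar y,\tilde y$ are pairwise distinct, one chooses disjoint neighborhoods of each, and in each piece of the $y'$-integration at most one of $w,u_l,u_{l'}$ is singular, so the integral is a pairing of a distribution with a smooth function and depends smoothly on the parameters. This avoids any wavefront-set machinery but genuinely needs the condition $\bar y\neq\tilde y$ to separate the singularities of the two $u$-factors in the integrand, which is why the paper's singular-support statement includes $\{\bar y=\tilde y\}$. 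Your microlocal argument is cleaner and in fact yields the slightly sharper containment $\mathrm{singsupp}(e_i)\subset\{y=\bar y\}\cup\{y=\tilde y\}$.
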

\begin{proof}
First, for any $\epsilon>0$ and $x>\epsilon$, can choose a cut off function $\chi(x)\in C^{\infty}((0,\infty);[0,1])$ such that $\chi(x)=1$ for $x>\tfrac{1}{2}\epsilon$ and $\chi(x)=0$ for $x\leq \tfrac{1}{4}$. Then $e(m;\bar{y},\tilde{y})=\widetilde{E}(\chi(x)p(m;\bar{y}),\chi(x)p(m;\tilde{y}))$ is smooth since both $\chi(x)p(m;\bar{y})$ and $\chi(x)p(m;\tilde{y})$ are smooth, vanishing near the boundary, and $\widetilde{E}$ is a differential oeprator with respect to each variable. The asymptotic expansion of $e(x,y;\bar{y},\tilde{y})$ can be compute directly. Since for any $f_1,f_2\in C^{\infty}(\partial\overline{M};\mathscr{V}^1)$,
$$P_1(f_j)=\sum_{l=0}^{n}x^lu_l(f_j)+O(x^{n+\epsilon})$$ with $u_l(f_j)\in C^{\infty}(\partial\overline{M};S^2(^0T^*\partial\overline{M}))$. The asymptotic property of $\widetilde{E}$ shows that
$$\widetilde{E}(P_1f_1,P_1f_2)=\widetilde{E}_0+x\widetilde{E}_1+
\cdots+x^n\widetilde{E}_n+O(x^{n+\epsilon})$$
where $E_i\in C^{\infty}(\partial\overline{M};S^2(^0T^*\overline{M}))$ is a finite sum of quadratic forms on $u_l(f_1), u_{l'}(f_2)$ for $l,l'\leq i$ and is a pseudodifferential operator with respect to each variable. So the kernel of $\widetilde{E}_i$ is a sum of distributions of the form
$$\int_{\partial\overline{M}}w(y,y')u_l(y',\bar{y})u_{l'}(y',\tilde{y})dy'$$
with $\mathrm{singsupp}(w)\in \{y=y'\}$, $\mathrm{singsupp}(u_l)\in\{y'=\bar{y}\}$ and $\mathrm{singsupp}(u_{l'})\in \{y'=\tilde{y}\}$. If $y\neq \bar{y}$, $y\neq\tilde{y}$ and $\bar{y}\neq \tilde{y}$, there are small neighborhoods $U$ of $y$, $\bar{U}$ of $\bar{y}$ and $\tilde{U}$ of $\tilde{y}$, such that $U\cap\bar{U}=\emptyset$, $\bar{U}\cap\tilde{U}=\emptyset$ and $\tilde{U}\cap\bar{U}=\emptyset$.
Then for $y'$ in each of these small neighborhood, at most one of $w, u_l, u_{l'}$ can be singular. Hence the integral over such neighborhood will be well defined as a pairing of distribution with smooth function in the variable $y'$, smoothly depending on the parameter $(y,\bar{y},\tilde{y})\in U\cup\bar{U}\cup\tilde{U}$. On $\partial\overline{M}-U-\bar{U}-\tilde{U}$, every term will be smooth and hence 
$$\int_{\partial\overline{M}-U-\bar{U}-\tilde{U}}w(y,y')u_l(y',\bar{y})u_{l'}(y',\tilde{y})dy'$$
is smooth in $(y,\bar{y},\tilde{y})\in U\cup\bar{U}\cup\tilde{U}$, too. 
\end{proof}

\begin{lemma}\label{lemma-asym}
Suppose $\lambda_1(\lambda)$ and $\lambda_2(\lambda)$ are two nonconstant analytic functions on $\mathbb{C}$, satisfying $\lambda_1+\lambda_2>n$, and $P\in \Phi_0^{-2,\lambda_1,\lambda_2}(M)$ is a meromorphic family of pseudodifferential operators with kernel $\rho^{\lambda_1}\rho'^{\lambda_2}F$, where $F\in C^{\infty}(\overline{M}\times_0\overline{M})$. Then for any $k\in\mathbb{R}$ such that $\Re\lambda_2-n+k>0$ and $\lambda_1-k\notin \mathbb{N}_0$, and any $u\in C^{\infty}(\overline{M})$,
$$P(x^ku)=x^kG_1+x^{\lambda_1}G_2,$$
where $G_1,G_2\in C^{\infty}(\overline{M})$, and
$$G_1|_{\partial\overline{M}}=c(\lambda)u|_{x=0},\quad G_2|_{\partial\overline{M}}=(x^{-\lambda_1}P|_{x=0})(u),$$
where $c(\lambda)$ is some constant depending on $\lambda$.

If $\lambda_2-n+k=0$, then $x^kG_1$ is replaced by $x^k\log xF_1+x^kG_1$ with $F_1,G_1\in C^{\infty}(\overline{M})$ and $F_1=c'u$ for some constant $c'$.

If $\lambda_1-k=l\in \mathbb{N}_0$, then $x^{\lambda_1}G_2$ is replaced by $x^{\lambda_1}\log xF_2+x^{\lambda_1}G_2$ with $F_2,G_2\in C^{\infty}(\overline{M})$ and 
$$F_2=\sum_{j\leq l}T_{l-j} (\partial^{j}_{x}u)(0,y)$$ 
where $T_{l-j}$ are differential operators of order $l-j$ on the boundary for $0\leq j\leq l$.
\end{lemma}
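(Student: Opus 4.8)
The plan is to realise $P(x^{k}u)$ as the pushforward $\pi_{L*}\big(\kappa(P)\cdot\pi_{R}^{*}(x^{k}u)\big)$ under the left projection $\pi_{L}:\overline{M}\times_{0}\overline{M}\to\overline{M}$, and to extract the asymptotics at $\partial\overline{M}$ from the behaviour of $\kappa(P)=\rho^{\lambda_{1}}\rho'^{\lambda_{2}}F$ at the boundary hypersurfaces $T$, $B$, $F$ of the stretched product. I would first cut $\overline{M}\times_{0}\overline{M}$ with a partition of unity into a piece supported away from the front face $F$ and a piece supported in a collar of $F$. On the region away from $F$ the stretched product is the ordinary product with a neighbourhood of $\mathrm{diag}(\partial\overline{M})$ removed, $\kappa(P)=x^{\lambda_{1}}x'^{\lambda_{2}}\widetilde{F}$ with $\widetilde{F}$ smooth, and
\[
\int x^{\lambda_{1}}\,x'^{\lambda_{2}+k}\,\widetilde{F}(x,y,x',y')\,u(x',y')\,\frac{dx'\,dy'}{x'^{\,n+1}}
\]
converges (because $\Re\lambda_{2}-n+k>0$) and is $x^{\lambda_{1}}$ times a function smooth in $x$; so this piece only feeds $x^{\lambda_{1}}G_{2}$.

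The collar of $F$ is the substance of the proof. There I would use the projective coordinates $t=x'/x$, $z'=(y-y')/x$, in which $x$ is a boundary defining function for $F$, the density pulls back to a smooth nonvanishing multiple of $t^{-n-1}dt\,dz'$, and $\rho,\rho'$ are smooth functions of the fibre variables $(t,z')$ alone, with $\rho'$ vanishing simply at $B\cap F$ ($t=0$) and $\rho$ vanishing at $T\cap F$. In these coordinates
\[
P(x^{k}u)(x,y)=x^{k}\!\int \rho^{\lambda_{1}}\rho'^{\lambda_{2}}\,t^{\,k-n-1}\,F(x,y,t,z')\,u(tx,\,y-xz')\,dt\,dz'.
\]
Taylor--expanding $F(x,y,\cdot,\cdot)$ and $u(tx,y-xz')$ in the front-face variable $x$, with remainders controlled by the decay of $\rho^{\lambda_{1}}\rho'^{\lambda_{2}}$ towards $T$ and $B$ (the hypotheses $\Re\lambda_{1}+\Re\lambda_{2}>n$ and $\Re\lambda_{2}-n+k>0$ make every coefficient integral absolutely convergent), I get $P(x^{k}u)=x^{k}G_{1}+x^{\lambda_{1}}G_{2}$, where the $x^{\lambda_{1}}$-series is precisely the contribution of the $T$-behaviour of the pushed-forward kernel. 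Its leading coefficient is $(x^{-\lambda_{1}}P|_{x=0})(u)$ by the very definition of the restriction of $\kappa(P)$ to $T$, and the order-zero term of the $x^{k}$-series is $x^{k}c(\lambda)u(0,y)$ with $c(\lambda)=\int_{F_{q}}\rho^{\lambda_{1}}\rho'^{\lambda_{2}}F|_{F_{q}}\,t^{\,k-n-1}dt\,dz'$, i.e.\ the value at $k$ of the indicial operator of $P$, which is a constant for the operators to which the lemma is applied.

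Finally I would deal with the resonances. When $\lambda_{2}-n+k=0$ the leading coefficient integral is logarithmically divergent as $t\downarrow0$; the standard regularisation (differentiating an auxiliary holomorphic family, as in [GZ], [JS]) converts this simple pole into an extra term $x^{k}\log x\,F_{1}$ whose coefficient comes only from $F$ restricted to the corner $B\cap F$ and is therefore a constant multiple of $u(0,y)$. When $\lambda_{1}-k=l\in\mathbb{N}_{0}$ the $x^{k+m}$ series and the $x^{\lambda_{1}}$ series collide at order $l$: writing the order-$l$ Taylor term of $u(tx,y-xz')$ as $\sum_{j+|\alpha|=l}\tfrac{1}{j!\alpha!}t^{j}(-z')^{\alpha}(\partial_{x}^{j}\partial_{y}^{\alpha}u)(0,y)$ and pairing each monomial with $\int\rho^{\lambda_{1}}\rho'^{\lambda_{2}}F\,t^{\,j+k-n-1}(-z')^{\alpha}dt\,dz'$, the same regularisation produces $x^{\lambda_{1}}\log x\,F_{2}$ with $F_{2}=\sum_{j\le l}T_{l-j}(\partial_{x}^{j}u)(0,y)$, where $T_{l-j}$ is the differential operator of order $l-j$ on $\partial\overline{M}$ whose symbol assembles the $|\alpha|=l-j$ integrals. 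The genuine difficulty is the uniform control of all of these integrals across the corners $T\cap F$ and $B\cap F$ --- that is, Melrose's pushforward theorem for polyhomogeneous conormal distributions; the bookkeeping is identical to that already carried out in [MM], [Ma] for $\triangle_{g}$, only with the weights $\lambda_{1},\lambda_{2}$ in place of the fixed exponents there.
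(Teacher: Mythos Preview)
Your approach is correct and is a genuine alternative to the paper's proof. The paper does not invoke Melrose's pushforward theorem at all; instead it follows the ODE method of Joshi--S\'a Barreto [JS]: after the same partition-of-unity reduction to a neighbourhood of the front face, it pairs $P(\phi\psi x^{k}u)$ with a test function $f(y)\,dy$ on $\partial\overline{M}$, then applies the product of operators $\prod_{i=0}^{m}(x\partial_{x}-i)(x\partial_{x}+\lambda_{1}-k-i)$ to the resulting function of $x$ and shows the right-hand side is $O(x^{m+1})$. The general solution of that ODE with polynomial right side is $x^{k}(\text{smooth})+x^{\lambda_{1}}(\text{smooth})$ plus a remainder, which immediately gives the two-branch expansion; the leading coefficients are then read off by explicit limits in the front-face coordinates, and the resonance cases are handled by computing residues of the homogeneous distributions $t^{\lambda_{2}-n+k-1}$ and $R_{0}^{-\lambda_{1}+k-1}$.

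Your route via the pushforward theorem is arguably more conceptual and explains \emph{why} exactly two index branches $\{k+\mathbb{N}_{0}\}$ and $\{\lambda_{1}+\mathbb{N}_{0}\}$ appear (they are the images under $\pi_{L}$ of the index sets at $F$ and $T$), whereas the paper's ODE argument is more elementary and self-contained, requiring no black-box theorem. One caution: in your sketch the sentence ``Taylor-expanding \ldots\ I get $P(x^{k}u)=x^{k}G_{1}+x^{\lambda_{1}}G_{2}$'' elides the point that Taylor expansion in $x$ at fixed $(t,z')$ produces only the $x^{k}$-branch; the $x^{\lambda_{1}}$-branch comes from the top face, which in those projective coordinates sits at $t^{2}+|z'|^{2}\to\infty$, so it is not captured by any finite Taylor polynomial. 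You acknowledge this by deferring to the pushforward theorem, but if you want a proof that does not simply cite [MM], the paper's ODE device is exactly the cheap substitute that makes this step explicit.
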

\begin{proof}
Let $(U;y)$ be a small neighborhood in $\partial\overline{M}$ around $p$. Let $\phi (x)$ be a smooth cut off function such that for some $\epsilon>0$, $\phi (x)=1$ if $x\leq \epsilon$ and $\phi (x)=0$ if $x\geq 2 \epsilon$. Let $\psi (y)$ be some smooth cut off funtion around $p$ such that $\psi (y)=1$ if $|y-p|<\epsilon$ and $\psi (y)=0$ if $|y-p|>2\epsilon$. Then 
$$P(x^ku)=P((1-\phi)x^ku)+P(\phi (1-\psi)x^k u)+P(\phi\psi x^k u).$$
First,since $(1-\phi)x^ku\in C^{\infty}_c(\overline{M})$, clearly by Proposition \ref{prop-mapproperty},
$$P((1-\phi)x^ku)(x,p)=O(x^{\lambda_1}).$$ 
Secondly, 
$$P(\phi (1-\psi)x^k u)(x,p)=\int_0^{2\epsilon}\int_{|y'-p|>\epsilon} 
\beta_{*}(\rho^{\lambda_1}\rho'^{\lambda_2}F(\rho,\rho',R,\omega)) \phi(x') (1-\psi(y'))x'^k u(x',y')dvol_g.$$
Since $\beta_{*}(\rho^{\lambda_1}\rho'^{\lambda_2}F(\rho,\rho',R,\omega))\phi(x') (1-\psi(y'))\in x^{\lambda_1}x'^{\lambda_2}C^{\infty}(\overline{M}\times\overline{M})$, clearly, 
$$P(\phi (1-\psi)x^k u)(x,p)=O(x^{\lambda_1}).$$
For the third part $P(\phi\psi x^k u)$, we can choose $f\in C^{\infty}(\partial \overline{M})$, suppoted in $\{y\in U: |y-p|<\epsilon\}$. Recall that $z=y-y'$, $R^2=x'^2+x^2+|z|^2$, $\omega=z/R$, $\rho=x/R$ and $\rho'=x'/R$.
\begin{eqnarray*}
\langle P(\phi \psi x^ku),f(y)dy\rangle &=& \langle \pi_{L*}(\rho^{\lambda_1}\rho'^{\lambda_2}F\pi_R^*(\phi (x')\psi (y') x'^ku(y'))), f(y)dy\rangle\\
&=& x^{\lambda_1}\int\int\int R^{-\lambda_1-n+k}\rho'^{\lambda_2-n+k}F
\phi(x')\psi(y') u(x',y')f(z+y') \frac{d\rho'}{\rho'}dzdy'.
\end{eqnarray*}
Let $v(x',y')=\phi(x')\psi(y') u(x',y')$. Note that for any $m\in\mathbb{N}$,
\begin{eqnarray*}
&&\Pi_{i=0}^{m}(x\partial_x-i)(R\partial_R+\lambda_1+n-k-i)[R^{-\lambda_1-n+k}
\rho'^{\lambda_2-n+k}F(R,\rho,\rho',\omega,y')v(\rho'R,y')]\\
&=& x^{m+1} R^{-\lambda_1-n+k}\rho'^{\lambda_2-n+k}(F_0+F_1\rho' \partial_xv (\rho'R,y')+\cdots+F_m\rho'^m\partial_x^mv(\rho'R,y')),
\end{eqnarray*}
where $F_i\in C^{\infty}(\overline{M}\times_0\overline{M})$ for $0\leq i\leq m$. Hence
\begin{eqnarray*}
&&\Pi_{i=0}^{m}(x\partial_x-i)(x\partial_x+\lambda_1-k-i)\langle x^{-\lambda_1}P(\phi \psi x^ku),f(y)dy\rangle\\
&=& x^{m+1} \int \int \int R^{-\lambda_1-n+k} \rho'^{\lambda_2-n+k} (\sum_{i=0}^m F_i\rho'^i\partial^i_xv (\rho'R,y')) f(z+y') \frac{d\rho'}{\rho'}dzdy', 
\end{eqnarray*}
for any $m\in\mathbb{N}$. This implies that for some $m'$ independent of $m$, 
$$\langle P(\phi \psi x^ku),f(y)dy\rangle = x^k\sum_{i=0}^m x^{i}\widetilde{G}_{i1}+x^{\lambda_1}\sum_{i=0}^mx^{i}\widetilde{G}_{i2}+O(x^{m+1+m'}),$$
where $\widetilde{G}_{i1},\widetilde{G}_{i2}\in C^{\infty}(\overline{M})$ for $0\leq i\leq m$. 
If $k<\lambda_1$, let $\theta=z/x$, $t=x'/x$ and $\rho_F=(1+t^2+|\theta|^2)^{-\frac{1}{2}}$.
\begin{eqnarray*}
\widetilde{G}_{10}&=&\lim_{x\rightarrow 0} \int_{\partial\overline{M}}  \int_{\mathbb{R}}\int_{\mathbb{R}^n} \rho_F^{\lambda_1+\lambda_2}t^{\lambda_2-n+k}F(x\rho_F^{-1},\rho_F,t\rho_F,\rho_F\theta,y') v(tx,y')f(x\theta+y')\frac{dt}{t}d\theta dy'\\
&=& ( \int_{\mathbb{R}}\int_{\mathbb{R}^n}  \rho_F^{\lambda_1+\lambda_2} t^{\lambda_2-n+k} F(0,\rho_F,t\rho_F,\rho_F\theta,y') \frac{dt}{t}d\theta )\langle v(0,y), f(y)dy\rangle.
\end{eqnarray*}
Note that $\rho_F$ is just the boundary defining function on the front face $F_p$ with respect to its boundary surface $B\cap F_p$. So above integral is actually an integration over $F_p$ which is well defined since $\lambda_1+\lambda_2>n$. If $\lambda_1>k$, then letting $R_0=R|_{x=0}$,
\begin{eqnarray*}
\widetilde{G}_{20} &=& \int\int\int R_0^{-\lambda_1+k}\rho'^{\lambda_2-n+k}F(R_0,0,\rho',\omega,y')
v(\rho' R_0,y')f(R_0\omega+y') \frac{d\rho'}{\rho'}\frac{dR_0}{R_0}d\omega dy'\\
&=& \langle (x^{-\lambda_1}P|_{x=0})(x^kv)(y), f(y)dy \rangle
\end{eqnarray*}
The two formulae can be extended meromorphically to all $\{\lambda\in\mathbb{C}: \lambda_2(\lambda)-n+k>0, \lambda_1(\lambda)+\lambda_2(\lambda)>n\}$ by analytic continuation. Adding the three parts together, then for $m$ arbitrarily large, 
$$P(x^ku) = x^k\sum_{i=0}^m x^{i}G_{i1}+x^{\lambda_1}\sum_{i=0}^mx^{i}G_{i2}+O(x^{m+1+m'})$$
where 
$$G_{10}=c(\lambda) u(0,y),\quad G_{20}=(x^{-\lambda_1}P|_{x=0})(x^ku)(y);$$
$$c(\lambda)=\int_{\mathbb{R}}\int_{\mathbb{R}^n}  \rho_F^{\lambda_1+\lambda_2} t^{\lambda_2-n+k} F(0,\rho_F,t\rho_F,\rho_F\theta,y') \frac{dt}{t}d\theta.$$
The distribution $R_0^{-\lambda_1+k-1}$ on 1-dimentional space is a homogeneous distribution defined meromorphically over $\lambda_1\in \mathbb{C}$ with poles of order $1$ at $\lambda_1=k+\mathbb{N}_0$. For $l\in\mathbb{N}_0$, the residue is
\begin{eqnarray*}
Res_{\lambda_1=k+l}(G_{20})
&=& \int\int \rho'^{\lambda_2-n+k} \partial^l_{R_0} (F(R_0,0,\rho',\omega,y) u(\rho' R_0,y-R_0\omega))|_{R_0=0} \frac{d\rho'}{\rho'}d\omega \\
&=& \int\int \rho'^{\lambda_2-n+k} (\sum_{l_1+l_2+|\alpha|=l} \rho'^{l_2}\omega^{\alpha} 
\partial_{R_0}^{l_1}F(0,0,\rho',\omega,y')\partial_x^{l_2}\partial_y^{\alpha}u(0,y)) \frac{d\rho'}{\rho'}d\omega\\
&=&  \sum_{l_2\leq l}T_{l-l_2} (\partial^{l_2}_{x}u)(0,y)
\end{eqnarray*}
where $T_{l-l_2}$ are differential operators of order $l-l_2$ on the boundary for $0\leq l_2\leq l$. Similarly $t^{\lambda_2-n+k-1}$ can be extended meromorphically to all $\lambda_2\in \mathbb{C}$ with poles of order $1$ at $\lambda_2=n-k-l$ for $l\in \mathbb{N}_0$. At $\lambda_2=n-k$, the residue is
$$Res_{\lambda_2=n-k}(G_10)=\int_{\mathbb{R}^n}  \rho_F^{\lambda_1+\lambda_2}  F(0,\rho_F,0,\rho_F\theta,y') d\theta. $$
Here $\rho_F=(1+|\theta|^2)^{-\frac{1}{2}}$.
\end{proof}
Let $\Pi_1 : C^{\infty}(\partial\overline{M}; S^2(^0T^*M)) \longrightarrow C^{\infty}(\partial\overline{M};\mathscr{V}^1)$ be the projection map.

\begin{proposition} Given $H_0, h_0, k_0$ as above. Then
$H=P_1(\Pi_1H_0)+Q_g(h_0\otimes k_0)$ where $Q_g$ is a quadratic term with kernel $q(m;\bar{y},\tilde{y}) \in C^{-\infty}(M\times\partial\overline{M}\times\partial\overline{M};Hom(\mathscr{V}^1\otimes \mathscr{V}^1,S^2(^0T^*M)))$, defined by
$$q(m;\bar{y},\tilde{y})= \int_M R_g(m;m')e(m';\bar{y},\tilde{y})dvol_g(m').$$
Moreover, in a neiborhood of the boundary,
$$q(x,y;\bar{y},\tilde{y})=\sum_{i=0}^{n}x^iq_i(y;\bar{y},\tilde{y})+O(x^{n+c}),$$
for some $c>0$, where $q_i(y;\bar{y},\tilde{y})\in C^{-\infty}((\partial\overline{M})^3;Hom(\mathscr{V}^1\otimes \mathscr{V}^1,S^2(^0T^*M))))$ with $$\mathrm{singsupp}(q_i)\subset \{y=\bar{y}\}\cup\{y=\tilde{y}\}\cup\{\bar{y}=\tilde{y}\}.$$
\end{proposition}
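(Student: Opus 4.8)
The plan is to follow the gauge-fixing computation of Section \ref{sec-gauge fix} at second order, exactly as equation (\ref{eq-seclinear}) and the displayed formulas for $G^{(1,1)}$, $\beta_g(G^{(1,1)})$ were derived, and then to read off the asymptotic structure of the solution from Corollary \ref{cor-formalmap}, Lemma \ref{lem-Rgsmooth}, Proposition \ref{prop-poisson kernel}, and Lemma \ref{lemma-asym}. First I would recall that $H=G^{(1,1)}$ satisfies the elliptic equation $L_gH=\widetilde{E}(h,k)$ with $\widetilde{E}(h,k)\in C^{\infty}(M;S^2({}^0T^*M))$ by Lemma \ref{lemma-asympofe}, and $\widetilde{E}$ satisfies the asymptotic property with $\mathrm{ord}(\widetilde{E})=2$. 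Since the uniqueness hypothesis makes $L_g$ invertible (as used at the start of Section \ref{sec-proof}), a particular solution of the inhomogeneous equation is $R_g(n)\widetilde{E}(h,k)$; the general solution differs by an element of $\ker L_g$, which here is spanned by the range of $P_1$ in the $\mathscr{V}^1$ direction. Matching the prescribed leading term $\Pi_1 H_0$ forces $H=P_1(\Pi_1 H_0)+R_g(n)\widetilde{E}(h,k)$. Setting $Q_g=R_g(n)\circ\widetilde{E}$, and writing $\widetilde{E}(h,k)=\widetilde{E}(P_1 h_0,P_1 k_0)$ with Schwartz kernel $e(m;\bar y,\tilde y)$ as in Lemma \ref{lemma-asympofe}, the kernel of $Q_g$ is obtained by composing the kernels: $q(m;\bar y,\tilde y)=\int_M R_g(m;m')\,e(m';\bar y,\tilde y)\,dvol_g(m')$, which is the asserted formula. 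One must check this composition converges: $e(m';\bar y,\tilde y)\in x'^0 C^\infty$ near the front boundary (it is smooth for $x'>0$ with a conormal singularity at $x'=0$ and vanishes to no negative order), and $R_g(n)$ has kernel in $\Psi_0^{-2,E_T(n),E_B(n)}$ with $\min\Re E_B(n)>\tfrac n2$, so Proposition \ref{prop-mapproperty} and Corollary \ref{cor-formalmap} guarantee $R_g(n)$ maps $x^0$-behaved (in fact smooth-up-to-$O(x^n)$) sections to sums of $\mathscr{A}^{e_i(n)}$-sections, all with leading exponent $\geq n-\lambda|_{\lambda=n}=0$; in particular the integral defining $q$ makes sense as an iterated distributional pairing.

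Next I would establish the asymptotic expansion of $q$ near $\partial\overline M$. The key input is Lemma \ref{lemma-asym} applied with $\lambda_1=s^1(\lambda)$ (equal to $\lambda$, holomorphic) and $\lambda_2=\lambda$, using that near $\lambda=n$ one has $\lambda_1+\lambda_2=2\lambda>n$ and, since $L_g$ is invertible, $n\notin n+\tfrac{\mathbb N_0}{2}$-type obstructions are avoided so there is no pole. Because $\widetilde{E}(h,k)$ has an expansion $\sum_{i=0}^n x^i \widetilde E_i+O(x^{n+\epsilon})$ with $\widetilde E_i\in C^\infty(\partial\overline M;S^2({}^0T^*M))$ (Lemma \ref{lemma-asympofe}), I would feed each monomial term $x^i u$, $u\in C^\infty(\overline M)$, into $R_g(n)$ and use Lemma \ref{lemma-asym}: provided $i\le n$ one gets $R_g(n)(x^i u)=x^i G_{i1}+x^{\lambda}G_{i2}+\cdots$ with $G_{i1},G_{i2}\in C^\infty(\overline M)$, and since $\lambda=n$ the two families of exponents coincide only in a harmless way. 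Summing over $i=0,\dots,n$ produces $q(x,y;\bar y,\tilde y)=\sum_{i=0}^n x^i q_i(y;\bar y,\tilde y)+O(x^{n+c})$ for some $c>0$. The coefficients $q_i$ are built by composing $R_g(n)$ (or its restriction $x^{-\lambda}R_g(\lambda)|_{x=0}$, essentially the scattering kernel) with the $e_j$, hence are distributions of finite order on $(\partial\overline M)^3$.

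Finally I would locate the singular support. Each $q_i$ is a finite sum of iterated integrals of the schematic form $\int_{\partial\overline M} w(y,y')\,e_j(y';\bar y,\tilde y)\,dy'$, where $w$ comes from the scattering/resolvent kernel and so $\mathrm{singsupp}(w)\subset\{y=y'\}$, while by Lemma \ref{lemma-asympofe} $\mathrm{singsupp}(e_j)\subset\{y'=\bar y\}\cup\{y'=\tilde y\}\cup\{\bar y=\tilde y\}$. Running the same neighborhood-partition argument used in the proof of Lemma \ref{lemma-asympofe} (choose disjoint neighborhoods of $y,\bar y,\tilde y$ when these three points are mutually distinct; on each, at most one factor is singular, so the integral is a legitimate distribution-times-smooth pairing depending smoothly on the parameters, and away from all three it is smooth), one concludes $\mathrm{singsupp}(q_i)\subset\{y=\bar y\}\cup\{y=\tilde y\}\cup\{\bar y=\tilde y\}$. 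I expect the main obstacle to be the bookkeeping in applying Lemma \ref{lemma-asym} uniformly across all the $\mathscr{W}^i$-components of $R_g(n)$ — i.e. checking that the non-diagonal blocks of the normal operator do not introduce extra logarithmic terms below order $x^{n}$ and that the exponent arithmetic ($s^i(n)$ versus $n$) never hits a resonance that would spoil the clean $\sum x^i q_i$ form; this is where the hypothesis that $L_g$ is invertible (so $\lambda=n$ is not a pole of $R_g$ or $P_1$) and the smoothness statement of Lemma \ref{lem-Rgsmooth} must be invoked carefully.
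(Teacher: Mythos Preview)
Your overall strategy parallels the paper's, and your treatment of the singular support via the neighborhood-partition argument is essentially the same. However, there are two genuine gaps.

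First, in deriving $H=P_1(\Pi_1H_0)+R_g(n)\widetilde E(h,k)$ you apply $R_g(n)$ directly to $\widetilde E(h,k)$, but $\widetilde E(h,k)$ is only $O(x^0)$ at the boundary and so lies in none of the domains on which $R_g$ has been shown to act: Proposition \ref{prop-mapproperty} takes $\dot C^\infty$ input, and Corollary \ref{cor-formalmap} requires $x^N$ with $N\ge 2n$. The paper sidesteps this by a boundary-pairing (Green's formula) argument: for $\gamma\in\dot C^\infty$ and $H'=R_g\gamma$, one computes $\langle H,\gamma\rangle_M=n\int_{\partial\overline M}H_0H_n'\,dvol_{g_0}+\int_M\widetilde E(h,k)\,R_g\gamma\,dvol_g$, recognizes the first term as $\langle P_1(\Pi_1H_0),\gamma\rangle$ and the second as $\langle R_g\widetilde E(h,k),\gamma\rangle$ by self-adjointness, thereby \emph{defining} $R_g\widetilde E(h,k)$ by duality. (Convergence of the interior integral uses $\Pi_1\widetilde E_0=0$ together with the fact that the slowest-decaying component of $R_g\gamma$, namely the $x^n$ term, lives in $\mathscr V^1$.) Your particular-plus-homogeneous argument would also need $\Pi_1\widetilde E_0=0$ to see that $R_g(n)\widetilde E$ has vanishing $\mathscr V^1$ leading term, and you never invoke it.

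Second, and more seriously, your exclusion of an $x^n\log x$ term is incorrect. You claim that since $L_g$ is invertible, ``$n\notin n+\tfrac{\mathbb N_0}{2}$-type obstructions are avoided''; but $n=n+0$ certainly belongs to $n+\tfrac{\mathbb N_0}{2}$, and Lemma \ref{lem-Rgsmooth} explicitly excludes this value. Invertibility of $L_g$ only kills the $\sigma_{pp}$-type poles of $R_g(\lambda)$; it does nothing about the formal-series resonance at $\lambda=n$. Concretely, the case $\lambda_1-k\in\mathbb N_0$ of Lemma \ref{lemma-asym} is triggered for every $k=0,\dots,n$ when $\lambda_1=n$, so each $R_g(n)(x^k\widetilde E_k)$ can in principle contribute an $x^n\log x$ piece. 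The paper rules these out \emph{a posteriori}: since $Q_g(h_0\otimes k_0)=H-P_1(\Pi_1H_0)$, and both $H$ (by the second-order gauge analysis, as in Proposition \ref{prop-gauge choice}) and $P_1(\Pi_1H_0)$ (by the argument in Section \ref{sec-proof} that $S_1(\lambda)h_0$ is pole-free at $\lambda=n$ for all $h_0$) have no $x^n\log x$ term, the potential logarithm in $q$ must vanish. This comparison with the already-understood objects $H$ and $P_1$ is the essential step you are missing.
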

\begin{proof} 
For any $\gamma \in \dot{C}^{\infty}(M;S^2(^0T^*M))$, let $H'=R_g(\gamma)$. By Section \ref{sec-linear einstein op}, $H'=x^nH'_n+O(x^{n+c})$ for some $c>0$, where $H_n'\in C^{\infty}(\partial\overline{M};\mathscr{V}^1)$. Then
\begin{eqnarray*}
\langle H,\gamma\rangle_{M}&=&\int_M(HL_gH'-L_gHH')+\widetilde{E}(h,k)H'dvol_g\\
&=& n\int_{\partial\overline{M}}H_0H_n'dvol{g_0}+\int_M\widetilde{E}(h,k)R_g(\gamma) dvol_g\\
&=& \int_M(P_1(\Pi_1H_0)+R_g\widetilde{E}(h,k))\gamma dvol_g.
\end{eqnarray*}
Hence $Q_g(h_0\otimes k_0)=R_g(\widetilde{E}(P_1h_0,P_1k_0))$, whose kernel is
$$
q(m;\bar{y},\tilde{y}) = \int_M R_g(m;m')e(m';\bar{y},\tilde{y})dvol_g(m').
$$
Instead of dealing with the above integral directly, we introduce a parameter $\lambda\in\mathbb{C}$ as was done in Section \ref{sec-linear einstein op} and deal with
$$
q(\lambda,m;\bar{y},\tilde{y}) = \int_M R_g(\lambda,m;m')e(m';\bar{y},\tilde{y})dvol_g(m').
$$
for $|\lambda-n|<\varepsilon$ with $\varepsilon>0$ and small enough. 

Suppose $W_i$, $U_i$ are small open sets in  $\partial\overline{M}$ such that $W_i\subset\overline{W}_i\subset U_i$ for $i=1,2,3$ and $U_1,U_2,U_3$ are disjoint. Let $U_4=\partial\overline{M}-\cup_{i=1}^3 \overline{W}_i$ and $\{\psi_i\}_{i=1}^4$ is the partition of unity subject to the convering $\{U_i\}_{i=1}^4$ of $\partial\overline{M}$. Let $\phi(x)$ be a smooth cut off function such that for some $\epsilon>0$, $\phi (x)=1$ if $x\leq \epsilon$ and $\phi (x)=0$ if $x\geq 2 \epsilon$. Then for $(y,\bar{y},\tilde{y})\in U_1\times U_2\times U_3$ and $0<|\lambda-n|<\varepsilon$,
$$q(\lambda)=R_g(\lambda)((1-\phi)e)+\sum_{i=1}^4R_g(\lambda)(\phi\psi_i e).$$
First, by Lemma \ref{lemma-asympofe}, $(1-\phi)e\in C_c^{\infty}(M\times\partial\overline{M}\times \partial\overline{M})$. Clearly, 
$$R_g(\lambda)((1-\phi)e)=O(x^{\lambda}),$$
smoothly depend on $(y,\bar{y},\tilde{y})$. Secondly, $\phi\psi_1 e \in C^{\infty}(\overline{M}\times\partial\overline{M}\times \partial\overline{M})$. By a proof similar to that of Lemma \ref{lemma-asym}, we have 
$$R_g(\lambda)(\phi\psi_1 e)=G_{11}+x^{\lambda}G_{12}+O(x^{\lambda+c}),$$
where $G_{11},G_{12}\in C^{\infty}(\overline{M}\times\partial\overline{M}\times \partial\overline{M})$ and $c>0$. Moreover, 
$$G_{12}|_{x=0}=(x^{-\lambda}R_g(\lambda)|_{x=0})(\phi\psi_1 e).$$ 
Thirdly, for $i=2$ or $i=3$ or $i=4$, $R_g(\lambda)(x,y;x',y')\phi(x')\psi_i(y')\in \mathscr{A}^{E_T,E_B}(\overline{M}\times \overline{M})$. Hence 
$$R_g(\lambda)(\phi\psi_i e)=x^{\lambda}G_{i2} +O(x^{\lambda+c})$$
for some $G_{i2}\in C^{\infty}(\overline{M}\times\partial\overline{M}\times \partial\overline{M})$ and $c>0$. Moreover,
$$G_{i2}|_{x=0}=e((x^{-\lambda}R_g(\lambda)|_{x=0})\phi\psi_i).$$ 
Let $G_{11}=\sum_{i=0}^n q_i+O(x^{n+1}),$ and
$$q_{\lambda}=(x^{-\lambda}R_g(\lambda)|_{x=0})((1-\phi)e+\phi\psi_1 e)+ \sum_{i=2}^4e((x^{-\lambda}R_g(\lambda)|_{x=0})\phi\psi_i).$$
Then 
\begin{equation}\label{asympofQ}
q(\lambda)=\sum_{i=0}^n q_i+q_{\lambda}+O(x^{\lambda+c})
\end{equation} 
for some $c>0$. By the arbitrariness of $U_1, U_2, U_3$, 
\begin{equation}\label{singsupp}
\mathrm{singsupp}(q_i)\cup \mathrm{singsupp}(q_{\lambda}) 
\subset \{y=\bar{y}\}\cup\{y=\tilde{y}\}\cup\{\bar{y}=\tilde{y}\}.
\end{equation}
According to Lemma \ref{lemma-asym}, the results above can be extended meromorphically to $\{\lambda\in \mathbb{C}:|\lambda-n|<\varepsilon\}$ with a pole at $\lambda=n$. The residue at $\lambda=n$ corresponds to the $x^n\log x$ terms in $q(n)$. However, this can not happen. By paring with $h_0(\bar{y})\otimes k_0(\tilde{y})$ for $h_0,k_0\in C^{\infty}(\partial\overline{M};\mathscr{V}^1)$, 
$$Q_g(h_0\otimes k_0)=H-P_1(\Pi_1H_0).$$
Recall that the asymptotic expansion of neither $H$ nor $P_1(\Pi_1H_0)$ contains any $x^n\log x$ term, so the asymptotic expansion of $Q_g(h_0\otimes k_0)$ can not contain any $x^n\log x$ term, either. Therefore (\ref{asympofQ}) and (\ref{singsupp}) hold for all $|\lambda-n|<\varepsilon$.
\end{proof}

\begin{corallary}
$g_n^{(1,1)}=T_ng_0^{(1,1)}+R_n(g_0^{(0,1)},g_0^{(1,0)})$ where $T_n$ is a pseudodifferential operator of order $n$ and $R_n$ is quadratic form defined by kernel $$r_n(y;\bar{y},\tilde{y})\in C^{-\infty}((\partial\overline{M})^3;Hom(S^2(T^*\partial\overline{M})\otimes S^2(T^*\partial\overline{M}), S^2(T^*\partial\overline{M})))$$ with $\mathrm{singsupp}(r_n)\subset \{y=\bar{y}\}\cup\{y=\tilde{y}\}\cup\{\bar{y}=\tilde{y}\}.$
\end{corallary}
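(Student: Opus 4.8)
The plan is to extract the coefficient of $x^n$ from the identity $H=P_1(\Pi_1H_0)+Q_g(h_0\otimes k_0)$ established in the previous proposition, and to organize the resulting terms into a linear pseudodifferential part acting on $g_0^{(1,1)}$ and a quadratic remainder in $(g_0^{(0,1)},g_0^{(1,0)})$ with the stated singular support.

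First I would analyse $[P_1(\Pi_1H_0)]_n$. As in the proof of Theorem~\ref{thm-main}, the uniqueness hypothesis forces $L_g$ to be invertible, so $\lambda=n$ is not a pole of $P_1$; and since the gauge-fixed second linearization $H$ (like $h$ and $k$) carries no $x^n\log x$ term -- the largest logarithmic term produced by the gauge fixing of Section~\ref{sec-gauge fix} is $x^{n+1}\log x$ -- the would-be residue of $S_1(\lambda)$ at $\lambda=n$ must vanish on the inputs occurring here, and $S_1(n)$ acts on them as a pseudodifferential operator of order $n$. Writing $P_1(n)(\Pi_1H_0)=F+x^nG+O(x^{n+c})$ with $F|_{\partial\overline M}=\Pi_1H_0$ and $G|_{\partial\overline M}=S_1(n)(\Pi_1H_0)$, the $x^n$-coefficient is the sum of $S_1(n)(\Pi_1H_0)$ and a differential operator of order $\le n$ applied to $\Pi_1H_0$ coming from the formal series $\Phi_1$; hence $[P_1(\Pi_1H_0)]_n=\Psi_n(\Pi_1H_0)$ for a pseudodifferential operator $\Psi_n$ of order $n$ whose only nonlocal part is $S_1(n)$. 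Since $\Pi_1H_0=x^{-2}\mathrm{tf}_{g_0}g_0^{(1,1)}+\Pi_1Q_0$ with $\Pi_1Q_0$ a pointwise quadratic expression in $(g_0^{(0,1)},g_0^{(1,0)})$, this splits into the linear term $\Psi_n(x^{-2}\mathrm{tf}_{g_0}g_0^{(1,1)})$ and a quadratic term whose Schwartz kernel on $(\partial\overline M)^3$, being $\Psi_n$ post-composed with the pointwise bilinear map $Q_0$, is singular only near $\{y=\bar y\}\cup\{\bar y=\tilde y\}$.

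Next I would combine this with the two remaining pieces. From the previous proposition, $[Q_g(h_0\otimes k_0)]_n=q_n$ with $\mathrm{singsupp}(q_n)\subset\{y=\bar y\}\cup\{y=\tilde y\}\cup\{\bar y=\tilde y\}$; and $H_n=x^{-2}(g_n^{(1,1)}-T_ng_0^{(1,1)})+Q_n$. Equating $H_n=[P_1(\Pi_1H_0)]_n+q_n$ and solving for $g_n^{(1,1)}$ gives
\[
g_n^{(1,1)}=x^2\bigl(x^{-2}T_n+\Psi_n\circ x^{-2}\mathrm{tf}_{g_0}\bigr)g_0^{(1,1)}+x^2\bigl(\Psi_n(\Pi_1Q_0)+q_n-Q_n\bigr),
\]
where the first bracket is a pseudodifferential operator of order $n$, which we rename $T_n$. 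For the quadratic remainder $R_n$ I would use the explicit form of $Q_n$ recalled before Lemma~\ref{lemma-asympofe}: it is a pointwise quadratic differential expression in $(g_0^{(0,1)},g_0^{(1,0)})$ together with terms of the shape $T_0(\cdot)\,g_n^{(1,0)}$ and $\widetilde T_0(\cdot)\,g_n^{(0,1)}$; since $g_n^{(1,0)}$ and $g_n^{(0,1)}$ are the images of $g_0^{(1,0)}$ and $g_0^{(0,1)}$ under pseudodifferential operators of order $n$ (the first linearization of the Dirichlet-to-Neumann map, Theorem~\ref{thm-main}), each such term is a pointwise multiplication composed with an order-$n$ pseudodifferential operator, hence has kernel singular only on $\{y=\bar y\}\cup\{y=\tilde y\}$. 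Combined with the singular-support estimates for $\Psi_n(\Pi_1Q_0)$ and $q_n$, the whole remainder defines a quadratic form $R_n$ whose kernel $r_n$ satisfies $\mathrm{singsupp}(r_n)\subset\{y=\bar y\}\cup\{y=\tilde y\}\cup\{\bar y=\tilde y\}$, which is the assertion. The legitimacy of reading off the $x^n$-coefficient (the absence of an $x^n\log x$ term in $q(n)$) is exactly the argument already used in the previous proposition: pairing against $h_0\otimes k_0$ identifies $Q_g(h_0\otimes k_0)=H-P_1(\Pi_1H_0)$, and neither $H$ nor $P_1(\Pi_1H_0)$ carries such a term.

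The step I expect to be the main obstacle is not a single hard estimate -- all the analytic input (the structure of $R_g(\lambda)$, $P_1$, $S_1$, and the expansion of $q$) is already available -- but the careful bookkeeping of singular supports under composition, in particular controlling the nonlocal contributions $g_n^{(1,0)},g_n^{(0,1)}$ hidden inside $Q_n$ and the effect of the Poisson and resolvent operators when applied to pointwise-quadratic sources.
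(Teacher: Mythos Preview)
Your proposal is correct and follows the natural derivation of the corollary from the preceding proposition; the paper gives no explicit proof here, and what you have written is precisely the intended argument, namely reading off the $x^n$-coefficient of $H=P_1(\Pi_1H_0)+Q_g(h_0\otimes k_0)$, splitting $\Pi_1H_0$ into its linear and quadratic parts, and tracking the singular supports of each contribution. Your handling of the hidden nonlocal pieces in $Q_n$ via the first linearization is exactly the right extra step, and the absence of an $x^n\log x$ term is already secured by the proposition.
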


Finally, for the $N$ point problem with $N\geq 3$ and $\beta=(1,...,1)\in \mathbb{N}^N$, we can proceed in the same way as was for the $2$-point problem, i.e.
\begin{itemize}
\item[(1)] Fix a suitable gauge such that $\beta_g(\partial^{\alpha}g)=F(\partial^{\gamma}g;\gamma<\alpha)$ for all $\alpha\leq \beta$
such that the linearized equation is linear and elliptic with repect to $\partial^{\beta}g$;
\item[(2)] Use boundary paring formula and the resovent of $L_g$ to write out the kernel of nonliear Poisson operators and Scattering matrix.
\end{itemize}
All the information about singularities of the Dirichlet-to-Neumann map we can get is from the analysis of the kernel in the second step. However, the first step is a key to make the problem solvable.


\begin{thebibliography}{MM}
\bibitem[An1]{An1} M.T. Anderson, \textit{Boundary regularity, uniqueness,
and non-uniqueness for AH Einstein metrics on 4-manifolds}, Adv.
Math. \textbf{179}(2003), 205-249.
\bibitem[An2]{An2} M.T. Anderson, \textit{Einstein metrics with prescribed conformal
infinity on 4-manifolds}, math.DG/0105243,2001/2004.
\bibitem[An3]{An3} M.T. Anderson, \textit{Geometric aspects of the AdS/CFT
Correspondence}, in: Ads-CFT Correspondence: Einstein Metrics and
their Conformal Boundaries, IRMA Lectures in Mathematics and
Theoretical Physics \textbf{8}, European Mathematical Society
(2005), 59-71.
\bibitem[An4]{An4} M.T. Anderson, \textit{Some results on the structure of conformally compact Einstein
metrics}, arXiv: math. DG/0402198.
\bibitem[Ag]{Ag} S. Agmon, \textit{On the spectral theory of the Laplacian on non-compact
hyperbolic manifolds}, Journ\'{e}es \'{E}quations aux
d\'{e}riv\'{e}es partielles, (1987), 1-16.
\bibitem[CDLS]{CDLS} P.T. Chru\'{s}ciel, E. Delay, J.M. Lee, D.N.
Skinner, \textit{Boundary regularity of conformally compact
Einstein metrics}, J. Diff. Geom. \textbf{69}(2005), 111-136.
\bibitem[CL]{CL} E. Coddington, N. Levinson, \textit{Theory of ordinary differential
equations}, McGraw Hill, New York, (1955).
\bibitem[FG]{FG} C. Fefferman and C. R. Graham, \textit{The ambient metric}, 2007. arXiv:0710.0919.
\bibitem[Gr1]{Gr1} C.R. Graham, \textit{Dirichlet-to-Neumann map for Poincar¨¦-Einstein
metrics}, Oberwolfach Reports 2 (2005), 2200-2203
\bibitem[Gr2]{Gr2} C.R. Graham, \textit{Volume and area renormalizations for conformally compact Einstein
metrics}, Rend. Circ. Mat. Palermo, Ser.II, Suppl.
\textbf{63}(2000), 31-42.
\bibitem[GL]{GL} C.R. Graham and J.M. Lee, \textit{Einstein metrics with prescribed conformal infinity on the ball},
Advances in Math. \textbf{87}, (1991), 186-225.
\bibitem[GZ]{GZ} C. Graham, M. Zworski, \textit{Scattering matrix in conformal
geometry}, Invent. Math. \textbf{152} (2003), 89-118.
\bibitem[He]{He} D.W. Helliwell, \textit{Boundary regularity for conformally compact einstein metrics in
even dimemsions}, Ph.D. thesis, University of Washington, (2005).
\bibitem[JS]{JS} M. S. Joshi and a. S\'{a} Barreto, \textit{Inverse scattering on
asymptotically hyperbolic manifolds}, Acta Math. \textbf{184}
(2000),41-86.
\bibitem[Le]{Le} J. M. Lee, \textit{the spectrum of an asymptotically hyperbolic Einstein
manifold}, Comm. Anal. Geom. \textbf{3} (1995), no. 1778, 171-185.
\bibitem[Ma]{Ma} R. Mazzeo, \textit{the Hodge cohomology of a conformally
compact metric}, J. Diff. Geom, \textbf{28} (1988), 309-339.
\bibitem[MM]{MM} R. Mazzeo, R. Merose, \textit{Meromorphic exension
of the resolvent on complete spaces with asymptotically constant
negative curvature}, J. Funct. Anal. \textbf{75} (1987), 260-310.
\end{thebibliography}
\end{document}